\theoremstyle{definition} \newtheorem{Definition}{Definition}[section]
\theoremstyle{plain} \newtheorem{Theorem}[Definition]{Theorem}
\theoremstyle{plain} \newtheorem{corollary}[Definition]{Corollary}
\theoremstyle{plain} \newtheorem{lemma}[Definition]{Lemma}
\theoremstyle{definition} \newtheorem{Remark}[Definition]{Remark}
\theoremstyle{plain}
\newtheorem{proposition}[Definition]{Propostion}
\theoremstyle{plain} \newtheorem{claim}[Definition]{Claim}
\theoremstyle{plain} 
\theoremstyle{definition}
\theoremstyle{plain} \newtheorem{Question}[Definition]{\bfseries{Question}}
\newcommand{\gen}[1]{\left\langle #1 \right\rangle}
\newcommand{\rfty}{\mbox{R}_{\infty}}
\newcommand{\im}{\mbox{ im}}
\newcommand{\id}{\mbox{id}}
\newcommand{\crit}{\mbox{crit}}
\newcommand{\aut}[1]{\mbox{Aut}({#1})}
\newcommand{\out}[1]{\mbox{Out}({#1})}
\newcommand{\inn}[1]{\mbox{Inn}(#1)}
\newcommand{\T}[1]{\mathcal{#1}}
\newcommand{\CCmr}[1]{\mathfrak{C}_{#1}}
\newcommand{\CCnr}{\mathfrak{C}_{n,r}}
\newcommand{\CCn}{\mathfrak{C}_{n}}
\newcommand{\Tn}{T_{n}}
\newcommand{\Tnr}{T_{n,r}}
\newcommand{\Gnr}{G_{n,r}}
\newcommand{\Rnr}{\mathcal{R}_{n,r}}
\newcommand{\Bnr}{\mathcal{B}_{n,r}}
\newcommand{\TBnr}{\T{T}\Bnr}
\newcommand{\TBmr}[2]{\T{T}\mathcal{B}_{#1,#2}}
\newcommand{\Tmr}[2]{T_{#1,#2}}
\newcommand{\Gmr}[2]{G_{#1,#2}}
\newcommand{\On}{\T{O}_{n}}
\newcommand{\Onr}{\T{O}_{n,r}}
\newcommand{\Ons}[1]{\T{O}_{n,#1}}
\newcommand{\Oms}[2]{\T{O}_{#1,#2}}
\newcommand{\XOn}{\T{X}_{n}}
\newcommand{\XOnr}{\T{X}_{n,r}}
\newcommand{\XOns}[1]{\T{X}_{n,#1}}
\newcommand{\XOms}[2]{\T{X}_{#1,#2}}
\newcommand{\TOn}{\T{T}\On}
\newcommand{\TOnr}{\T{T}\Onr}
\newcommand{\TOns}[1]{\T{T}\T{O}_{n,#1}}
\newcommand{\TOms}[2]{\T{T}\T{O}_{#1,#2}}
\newcommand{\WTOnr}{\widetilde{\T{TO}_{n,r}}}
\newcommand{\WTOns}[1]{\widetilde{\T{TO}_{n,#1}}}
\newcommand{\WTOn}{\widetilde{\TOn}}
\newcommand{\Xn}{X_{n}}
\newcommand{\Xns}{\Xn^{*}}
\newcommand{\Xnp}{\Xn^{+}}
\newcommand{\RXnp}{\mathcal{X}_{n}^{+}}
\newcommand{\Xnz}{X_n^{\Z}}
\newcommand{\Xnr}{X_{n,r}}
\newcommand{\Xnrs}{\Xnr^{*}}
\newcommand{\Xnrp}{\Xnr^{+}}
\newcommand{\Banr}{{\bf{B}}_{n,r}}
\newcommand{\Ban}{{\bf{B}}_{n}}
\newcommand{\Ln}[1]{\mathcal{L}_{#1}}
\newcommand{\hn}[1]{\mathcal{H}_{#1}}
\newcommand{\core}{\mathrm{Core}}
\newcommand{\ac}[1]{{\bf{\bar{#1}}}}
\newcommand{\dec}{\mathrm{Dec}}
\newcommand{\viable}[1]{\mathfrak{v}_{#1}}
\newcommand{\leqlex}{\le_{\mbox{lex}}}
\newcommand{\lelex}{<_{\mbox{lex}}}
\newcommand{\leqslex}{\le_{\mbox{slex}}}
\newcommand{\leslex}{<_{\mbox{slex}}}
\newcommand{\Z}{\mathbb{Z}}
\newcommand{\N}{\mathbb{N}}
\newcommand{\sym}{\mathmbox{Sym}}
\newcommand{\seteq}{:=}
\newcommand{\dotr}{{\bf{\dot{r}}}}
\newcommand{\simeqI}{\simeq_{{\bf{I}}}}
\newcommand{\rot}{\mathmbox{rot}}
\newcommand{\simrot}{\sim_{{\rot}}}
\newcommand{\rotclass}[1]{[#1]_[\simrot]}
\newcommand{\sig}{\mathrm{sig}}
\newcommand{\rsig}{\mathrm{\overline{sig}}}
\newcommand{\shift}[1]{\sigma_{#1}}
\newcommand{\rev}[1]{\overleftarrow{#1}}
\renewcommand{\restriction}{\mathord{\upharpoonright}}
\renewcommand*{\eqref}[1]{%
  \hyperref[{#1}]{\textup{\tagform@{\ref*{#1}}}}%
}
\begin{document}
\author{
  		Olukoya, Feyishayo\\
  		Department of Mathematics,\\
  		University of Aberdeen, \\ 
  		Fraser Noble Building,\\ 
  		Aberdeen,\\
  		\texttt{feyisayo.olukoya@abdn.ac.uk}
  	}
\title{ Automorphisms of the generalised Thompson groups \texorpdfstring{$T_{n,r}$}{Lg} and the \texorpdfstring{$R_{\infty}$}{Lg} property}
\maketitle
\begin{abstract}
The recent paper \emph{The further chameleon groups of Richard Thompson
and Graham Higman: automorphisms via dynamics for
the Higman groups $\Gnr$} of Bleak, Cameron, Maissel, Navas and Olukoya (BCMNO) characterises the automorphisms of the Higman-Thompson groups $G_{n,r}$ as the specific subgroup of the rational group $\T{R}_{n,r}$ of Grigorchuk, Nekrashevych and Suchanski{\u i}'s consisting of those elements which have the additional property of being bi-synchronizing. In this article, we extend the arguments of BCMNO and characterise the automorphism group of $T_{n,r}$ as a subgroup of $\aut{G_{n,r}}$. We then show that the groups $\out{\Tnr}$ can be identified with subgroups of the group $\out{T_{n,n-1}}$. Extending results of Brin and Guzm{\'a}n, we show that  the groups $\out{\Tnr}$, for $n>2$, are all infinite and contain an isomorphic copy of Thompson's group $F$. For $X \in  \{T,G\}$,  we  study the groups  $\out{X_{n,r}}$ and show that these fit in a lattice structure where $\out{X_{n,1}} \unlhd \out{X_{n,r}}$ for all $1 \le r \le n-1$ and $\out{X_{n,r}} \unlhd \out{X_{n,n-1}}$. This gives a partial answer to a question in BCMNO concerning the normal subgroup structure of $\out{G_{n,n-1}}$. Furthermore, we deduce that for $1\le j,d \le n-1$ such that $d = \gcd(j, n-1)$, $\out{X_{n,j}}  = \out{X_{n,d}}$ extending a result of BCMNO for the groups $\Gnr$ to the groups $\Tnr$. We give a negative answer to the  question in BCMNO which asks whether or not $\out{G_{n,r}} \cong \out{G_{n,s}}$ if and only if $\gcd(n-1,r) = \gcd(n-1,s)$. We conclude by showing that the groups $\Tnr$ have the $R_{\infty}$ property extending the result of Burillo, Matucci and Ventura and, independently, Gon{\c c}alves and Sankaran,  for Thompson's group $T$.

\end{abstract}
\section{Introduction}
The recent paper \cite{BCMNO} characterises the automorphisms of the Higman-Thompson groups $G_{n,r}$ as a subgroup of the rational group $\T{R}_{n,r}$ consisting of those elements which have the additional property of being bi-synchronizing. In this article, we extend the arguments of \cite{BCMNO} and characterise the automorphism group of $T_{n,r}$, generalisations of Thompson's group $T$, as a subgroup of $\aut{G}_{n,r}$.

 The automorphism group  of $T_{2,1}$, or $T$, as well as the automorphism group of Thompson's group $F$, were studied in the paper \cite{MBrin2}, which also demonstrates that $\out{T_2}$ is isomorphic to the cyclic group of order $2$. The paper \cite{JBurilloSClearyF} studies the metric properties of $\aut{F}$ and gives a presentation for this group, whilst  the follow up paper \cite{MBrinFGuzman} studies the automorphisms of the groups $F_{n}$ and $T_{n,n-1}$, generalisations of Thompson's group $F$ and $T$ (the approach there gives no information about $\aut{T_{n,r}}$ when $r \ne n-1$). The  paper \cite{MBrinFGuzman}, amongst other things, demonstrates that $\out{T_{n,n-1}}$  for $n\ge3$ is infinite and contains an isomorphic copy of Thompson's group $F$. 

In this paper we extend the results of \cite{MBrinFGuzman} to the groups $\Tnr$ for $1 \le r < n-1$. Firstly, we prove the following,

\begin{Theorem}
The group $\aut{T_{n,r}}$ consists of those elements of $ \Rnr$ which can be represented by bi-synchronizing transducers such that the induced homeomorphisms on Cantor space respects the cyclic ordering. 
\end{Theorem}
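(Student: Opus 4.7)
The plan is to leverage the BCMNO characterisation of $\aut{\Gnr}$ and split the proof into two inclusions. In the direction that is immediate from BCMNO: suppose $\phi\in\Rnr$ is bi-synchronising and the homeomorphism $h_\phi$ it induces on Cantor space respects the cyclic ordering. By BCMNO, conjugation by $\phi$ already yields an automorphism $c_\phi$ of $\Gnr$. Now $\Tnr$ is characterised inside $\Gnr$ as the subgroup of elements whose induced homeomorphism of Cantor space preserves the (natural) cyclic order; conjugation by a cyclic-order-preserving $h_\phi$ obviously preserves this subclass, so $c_\phi$ restricts to an automorphism of $\Tnr$. Injectivity of the assignment $\phi\mapsto c_\phi$ follows from the corresponding faithfulness statement for $\Gnr$ together with the fact that $\Tnr$ is large enough in $\Gnr$ to detect conjugation (e.g., it contains nontrivial elements with arbitrarily small support).

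For the harder inclusion, take $\psi\in\aut{\Tnr}$ and try to realise it as $c_\phi$ for some admissible $\phi$. The first step is to \emph{extend} $\psi$ to an automorphism of $\Gnr$. The natural route is Rubin-type rigidity: the action of $\Tnr$ on Cantor space is sufficiently rich (locally dense, with enough elements of prescribed small support) that $\psi$ must be spatially implemented, i.e.\ there is a homeomorphism $H$ of Cantor space such that $\psi(g)=HgH^{-1}$ for every $g\in\Tnr$. Once $H$ is in hand, conjugation by $H$ defines a group homomorphism on $\Gnr$; because $\Gnr$ is generated by $\Tnr$ together with a small, explicitly describable extra piece (e.g.\ prefix-permutation type elements), one checks that $H$-conjugation lands in $\Gnr$, hence gives an automorphism of $\Gnr$ extending $\psi$. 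By BCMNO this automorphism is $c_\phi$ for some bi-synchronising $\phi\in\Rnr$, with $h_\phi=H$ up to the canonical identification.

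The final step is to argue that $H=h_\phi$ must respect the cyclic ordering. Here one uses that $\psi$ preserves $\Tnr$ setwise, so $H$ conjugates the group of cyclic-order-preserving homeomorphisms (containing enough rotations and bumps to pin down the cyclic order up to reversal) into itself; a standard dynamical argument then forces $H$ to either preserve or reverse the cyclic order, both of which are covered by the phrase ``respects the cyclic ordering'' in the statement.

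The main obstacle I expect is the extension step. Producing the spatial implementation $H$ from the abstract automorphism $\psi$ is the heart of the matter: one must verify that the Rubin/BCMNO-style machinery applies to $\Tnr$ rather than to $\Gnr$, which requires exhibiting inside $\Tnr$ enough elements with controlled support to reconstruct Cantor space from the group-theoretic data of $\Tnr$, and then showing that the resulting $H$ normalises $\Gnr$ (not merely $\Tnr$) so that BCMNO can be applied to identify $H$ with a bi-synchronising transducer. Once this reconstruction is in place, the remaining work is bookkeeping on cyclic order.
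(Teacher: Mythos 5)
Your forward direction matches the paper's easy inclusion and is essentially correct. The problem lies in the hard direction.

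Your plan for realising $\psi\in\aut{\Tnr}$ as a transducer hinges on first producing a homeomorphism $H$ of $\CCnr$ implementing $\psi$ by conjugation and then arguing that $H$ normalises $\Gnr$, so that the BCMNO characterisation of $\aut{\Gnr}$ can be applied off-the-shelf. The second step is asserted but not proved, and I do not see how to make the proposed argument work. You suggest that since $\Gnr=\langle \Tnr, X\rangle$ for a small extra generating set $X$, it suffices to check that $H^{-1}xH\in\Gnr$ for $x\in X$. But there is no a priori information about $H^{-1}xH$ for $x\notin\Tnr$: $H$ was produced solely from the normalising action on $\Tnr$, and nothing guarantees that conjugating a prefix-exchange outside $\Tnr$ lands back in $\Gnr$. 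In fact the statement ``$H$ normalises $\Gnr$'' is equivalent (modulo BCMNO) to the very conclusion you are trying to reach, namely that $H$ is bi-synchronising; using it as an intermediate step is circular unless an independent argument is supplied. The paper's Corollary that $\aut{\Tnr}<\aut{\Gnr}$ is deduced \emph{from} the theorem, not used in its proof. There is also a second issue you gloss over: the Rubin/Brin--Guzm\'an rigidity applies to the action of $\Tnr$ on the circle $S_r$, not directly to the action on $\CCnr$ (where $\Tnr$'s orbit structure is constrained by the tail relation). The paper devotes Section~3 to the non-trivial lift from $N_{H(S_r)}(\Tnr)$ to $N_{H(\CCnr)}(\Tnr)$, via the equivalence $\simeq$.

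The paper takes a genuinely different route that avoids $\aut{\Gnr}$ entirely. After using Rubin (via Brin--Guzm\'an) to get $\aut{\Tnr}\cong N_{H(S_r)}(\Tnr)$ and then lifting to $N(\Tnr)\subset H(\CCnr)$, it shows \emph{directly} that elements of $N_{H(\CCnr)}(\Tnr)$ have finitely many local actions and act on pairs of cones ``almost in the same fashion'' (Lemma~\ref{Lemma:  automorphisms have few local actions} and Corollary~\ref{Corollary: Automorphisms have finite local actions}), by adapting the BCMNO local-action machinery to $\Tnr$. This requires a $T_{n,r}$-specific transitivity statement (Lemma~\ref{Lemma: can swap things with same node distance}, using reduced node distance) because $\Tnr$ does not act as transitively on antichains as $\Gnr$ does. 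Once those two properties are in hand, the minimal transducer representing the homeomorphism is shown to be (bi-)synchronising (Lemma~\ref{Lemma:automorphisms have synch transducers}). If you want to rescue your proposal, you would need to replace the unsupported ``$H$ normalises $\Gnr$'' step with exactly this kind of direct local-action analysis, at which point you have in effect reconstructed the paper's argument.
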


One immediately deduces the following corollary:
\begin{corollary}
$\aut{\Tnr} < \aut{G_{n,r}}$.
\end{corollary}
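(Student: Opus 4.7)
The plan is to deduce the corollary directly from the Theorem together with the BCMNO characterisation of $\aut{G_{n,r}}$. By the main result of \cite{BCMNO}, $\aut{G_{n,r}}$ is exactly the subgroup of $\mathcal{R}_{n,r}$ consisting of bi-synchronizing transducers. The Theorem above describes $\aut{T_{n,r}}$ as the subset of this group cut out by the additional requirement that the induced homeomorphism on Cantor space respect the cyclic order, so set-theoretically $\aut{T_{n,r}} \subseteq \aut{G_{n,r}}$.

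To promote this set inclusion to a subgroup inclusion, I would note that in both characterisations the group law is the one inherited from composition in $\mathcal{R}_{n,r}$ (equivalently, composition of the induced homeomorphisms of Cantor space); the inclusion is therefore the restriction of the identity on $\mathcal{R}_{n,r}$ and is automatically a homomorphism. The only thing to check is that the cyclic-order-preserving condition defines a subgroup of the bi-synchronizing group, but this is immediate: a composition of two homeomorphisms respecting a given cyclic order again respects it, and so does an inverse.

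If the symbol $<$ is meant strictly, the remaining task is to exhibit one element of $\aut{G_{n,r}}$ that does not respect the cyclic order. For this I would point to any bi-synchronizing transducer whose induced homeomorphism on Cantor space interchanges two clopen arcs in an orientation-reversing way; for instance, a ``flip'' of the sort already appearing among the automorphisms of $\Gnr$ catalogued in \cite{BCMNO}. Any such element lies in $\aut{G_{n,r}}$ by BCMNO but is excluded from $\aut{T_{n,r}}$ by the Theorem, giving properness.

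I expect no genuine obstacle beyond this bookkeeping: the key point is simply that both automorphism groups are naturally realised inside the same ambient object $\mathcal{R}_{n,r}$, so comparing them amounts to comparing the two defining conditions (bi-synchronizing alone versus bi-synchronizing together with cyclic-order preservation). The only care required is to match the group operations on the two sides and, if strictness is desired, to write down one concrete cyclic-order-violating bi-synchronizing element.
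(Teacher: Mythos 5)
Your overall route is the intended one: the Theorem identifies $\aut{\Tnr}$ inside the BCMNO description of $\aut{\Gnr}$ (bi-synchronizing elements of $\Rnr$) by imposing one extra condition, both groups carry the composition law inherited from $\Rnr$, and the extra condition manifestly cuts out a subgroup, so the containment and the fact that it is a homomorphism are immediate. This matches the paper's one-line deduction.

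However, your witness for properness is wrong. You propose an element that ``interchanges two clopen arcs in an orientation-reversing way,'' i.e.\ a flip, and claim it is excluded from $\aut{\Tnr}$. But an orientation-reversing homeomorphism of the circle \emph{does} respect the cyclic ordering in the sense of the Theorem: the normaliser of $\Tnr$ in $H(S_r)$ contains both orientation-preserving and orientation-reversing maps, and the paper explicitly constructs the reversal element $\pi_{R,n} \in \TOnr$ as an automorphism of $\Tnr$ for every admissible $r$. So flips are not excluded from $\aut{\Tnr}$, and your proposed witness fails. What is actually excluded is a bi-synchronizing Cantor-space homeomorphism that does not descend to \emph{any} homeomorphism of $S_r$ --- equivalently, one that fails to preserve the quotient relation $\simeq$. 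A concrete example in every case $n \ge 2$: the prefix-exchange element of $\Gnr$ that transposes $U_{\dot{0}00}$ with $U_{\dot{0}01}$ and fixes everything else. It is bi-synchronizing (its core is trivial, being an element of $\Gnr$) so lies in $\aut{\Gnr}$, but it maps the $\simeq$-class $\{\dot{0}00(n\!-\!1)(n\!-\!1)\ldots,\ \dot{0}01000\ldots\}$ onto the set $\{\dot{0}01(n\!-\!1)(n\!-\!1)\ldots,\ \dot{0}00000\ldots\}$, which is not a $\simeq$-class, so it fails the Theorem's hypothesis and hence is not an automorphism of $\Tnr$. Replacing your ``flip'' with such a $\simeq$-breaking prefix exchange closes the gap.
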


We now turn our attention to the quotient group $\out{\Tnr} = \aut{\Tnr}/ \inn{\Tnr}$. The results below are equally true in the $\Gnr$ context and so to avoid repetition we shall sometimes use the symbol $X$ to represent either $T$ or $G$, analogously the symbol $\T{X}$ will represent either $\T{TO}$ or $\T{O}$.
 
 It is a result in the paper \cite{BCMNO} that for $1 \le r < n$ the group $\out{\Gnr}$ is isomorphic to a subgroup $\Onr$ of a group $\On$ consisting of non-initial, bi-synchronizing transducers. Furthermore, that paper also shows that $\On = \cup_{1\le r \le n-1} \Onr$. More specifically, it is demonstrated in \cite{BCMNO} that for $1 \le i \le j \le n-1$ such that $i$ divides $j$ in the additive group $\Z_{n-1}$,   $\Ons{i} \le \Ons{j}$. As a consequence of this, one may deduce that for all $1 \le i \le n-1$ $\Ons{1}\le \Ons{i}$, $\Ons{i} \le \Ons{n-1}$  (and so $\On = \Ons{n-1}$) and $\Ons{i} = \Ons{d}$ for $d = \gcd(n-1, i)$.  We extend these result to the group $\out{\Tnr}$. That is, we have:

\begin{Theorem}
For all $1 \le r \le n-1$, the group $\out{\Tnr}$ is isomorphic to a subgroup $\TOnr$ of $\Onr$. Moreover for all $1 \le i \le j \le n-1$ such that $i$ divides $j$ in the additive group $\Z_{n-1}$, we have   $\TOns{i} \le \TOns{j}$. 
\end{Theorem}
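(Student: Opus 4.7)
The plan is to parallel the BCMNO argument for $\Gnr$. Recall from \cite{BCMNO} there is a surjection $\pi : \aut{\Gnr} \twoheadrightarrow \Onr$ with kernel $\inn{\Gnr}$. Since the previous theorem and its corollary give $\aut{\Tnr} \le \aut{\Gnr}$, we may restrict $\pi$ and define $\TOnr := \pi(\aut{\Tnr})$, automatically a subgroup of $\Onr$. The asserted isomorphism $\out{\Tnr} \cong \TOnr$ is then equivalent to the identification $\ker(\pi|_{\aut{\Tnr}}) = \inn{\Tnr}$.

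The inclusion $\inn{\Tnr} \subseteq \aut{\Tnr} \cap \ker\pi$ is immediate, since conjugation by any element of $\Tnr \le \Gnr$ already lies in $\inn{\Gnr} = \ker\pi$. The reverse inclusion is the heart of the first statement: if $\phi \in \aut{\Tnr}$ is realised by conjugation by some $g \in \Gnr$, we must show $g \in \Tnr$; equivalently, that $\Tnr$ is self-normalising in $\Gnr$. My approach uses the characterisation of $\Tnr$ inside $\Gnr$ as those elements whose induced action on Cantor space preserves the natural cyclic ordering. If $g \Tnr g^{-1} = \Tnr$, then for each $t \in \Tnr$ the conjugate $gtg^{-1}$ preserves cyclic ordering, and choosing judicious witnesses $t$ (for example rotations whose image under conjugation by $g$ exposes the orientation type of $g$) forces the induced action of $g$ itself to preserve cyclic ordering, so $g \in \Tnr$. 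I expect the main obstacle to be ruling out the residual possibility that $g$ reverses orientation on Cantor space, since such a $g$ would \emph{a priori} also preserve $\Tnr$ setwise under conjugation.

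For the second assertion, I would use the BCMNO containment $\Ons{i} \le \Ons{j}$ (valid whenever $i \mid j$ in $\Z_{n-1}$) and verify that it restricts to $\TOns{i} \le \TOns{j}$. Fix $A \in \TOns{i}$, so there is $\phi_i \in \aut{\Tmr{n}{i}}$ with $\pi_i(\phi_i) = A$. Viewed as an element of $\Ons{j}$, the same non-initial transducer $A$ realises some $\phi_j \in \aut{\Gmr{n}{j}}$, and by the previous theorem $\phi_j$ lies in $\aut{\Tmr{n}{j}}$ precisely when its induced homeomorphism on $\Xmr{n}{j}$ preserves the cyclic ordering. Because the BCMNO embedding assembles the action of $A$ on $\Xmr{n}{j}$ by copying its (order-preserving) dynamics on $\Xmr{n}{i}$ across the appropriate collection of subtrees in a way compatible with the cyclic ordering on $\Xmr{n}{j}$, the property lifts. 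The remaining verification — that the orientation conventions implicit in the BCMNO embedding align the cyclic orderings on $\Xmr{n}{i}$ and $\Xmr{n}{j}$ — is the secondary obstacle; once this alignment is confirmed the second statement follows.
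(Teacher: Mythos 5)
The high-level plan is sound and is a genuine alternative to the paper's route: where the paper first establishes $\aut{\Tnr} \cong \TBnr$ (Section~\ref{Section: N(Tnr) is a subgroup of Bnr}) and then passes to cores, you work directly with the restriction of the BCMNO surjection $\pi : \aut{\Gnr} \to \Onr$. Your reduction of the first claim to \(N_{\Gnr}(\Tnr) = \Tnr\) is correct (by Rubin, the centraliser of $\Tnr$ in $H(\CCnr)$ is trivial, so conjugating homeomorphisms are unique). However, the ``judicious witnesses'' step cannot close the gap you yourself flag: conjugation by \emph{any} orientation-reversing homeomorphism of the circle sends $\Tnr$ to itself, so no choice of witness $t \in \Tnr$ will detect the orientation type of $g$ from the conjugation action alone. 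The obstacle is in fact vacuous, but for a structural reason you never invoke: an element of $\Gnr$ is a prefix-exchange on $\CCnr$, hence restricts to a lex-order-preserving bijection on each cone; if such a $g$ also preserves $\simeq$ (which normalising $\Tnr$ forces, by the arguments of Section~\ref{Section:fromcantorspacetothecircle}), the induced circle map is locally increasing on a finite cover by arcs, hence globally orientation-preserving, and the associated bijection of antichains is a cyclic shift, i.e.\ $g\in \Tnr$. The paper encodes exactly this fact as Remark~\ref{Remark:elements of Tnr correspond to bisynch transducers with trivial cores} (elements of $\TBnr$ with trivial core lie in $\Tnr$), which is what really does the work; under your identification $\aut{\Gnr}\cong\Bnr$, your kernel computation is precisely $\TBnr\cap\Gnr=\Tnr$, so the two routes converge at this point.

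For the nesting, your approach (restrict the BCMNO containment $\Ons{i}\le\Ons{j}$ to the orientation-preserving/reversing elements) is correct in spirit and in fact matches the equivalence $\TOnr = \Onr \cap \TOn$ that the paper obtains in Theorem~\ref{Thm: Equivalent conditions for an element of $On$ to belong to TOnr}(d). But the ``secondary obstacle'' you mention is not a routine verification: given $g \in \Onr$ whose states all preserve or all reverse the lexicographic order, one must actually manufacture an $A_{q_0} \in \TBmr{n}{j}$ with $\core(A_{q_0}) = g$, and this requires reordering viable combinations into \emph{lexicographic} viable combinations (Definition~\ref{Def: lexicographic viable combinations}, Lemma~\ref{Lemma: g in Onr induces map on the line iff it has lexicographic viable combination}) and then assembling the map cone-by-cone as in Lemma~\ref{Lemma: g in Onr is in TOnr if g prserve  simeqI and lex order}. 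Without this one cannot conclude the copy is order-compatible. Note also that the paper's actual published proof of the nesting (Lemma~\ref{Lemma:TOni is a subset of TOnj if mi is congruent to j mod n-1} via Proposition~\ref{Proposition:sigdeterminesmembership}) takes an independent route through the signature invariant $\rsig$ and the membership criterion $r(T)\sig\equiv r\pmod{n-1}$, which gives more than the bare containment (a homomorphism to the group of units of $\Z_{n-1}$); your lifting argument, if completed, would recover the containment but not this finer structure.
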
 

\begin{corollary}\label{Corollary:OniequalsOnjifgcdequal}
For $1 \le 1 \le n-1$, we have $\TOns{1} \le \TOns{i}$, $\TOns{i} \le \TOns{n-1}$  and $\TOns{i} = \TOns{d}$ for $d = \gcd(n-1, i)$.
\end{corollary}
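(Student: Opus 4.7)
The corollary is a direct deduction from the preceding theorem, mirroring the parallel argument BCMNO use to obtain the analogous statement for $\Ons{}$ (recalled in the paragraph preceding the theorem). The plan is simply to verify each of the three containments by applying the theorem with a suitable choice of index pair in $\Z_{n-1}$.

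For $\TOns{1} \le \TOns{i}$ I will apply the theorem with the pair $(1, i)$: trivially $1 \le i \le n-1$, and $1$ divides every element of $\Z_{n-1}$. For $\TOns{i} \le \TOns{n-1}$ I will apply it with $(i, n-1)$: the hypothesis $i \le n-1$ is immediate, and since $n-1 \equiv 0$ in $\Z_{n-1}$, every element (in particular $i$) divides $n-1$ there.

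For the equality $\TOns{i} = \TOns{d}$ with $d = \gcd(n-1, i)$, one inclusion $\TOns{d} \le \TOns{i}$ is a direct application of the theorem to the pair $(d, i)$: $d \mid i$ in $\Z$ and hence in $\Z_{n-1}$, while $d \le i \le n-1$. The reverse inclusion $\TOns{i} \le \TOns{d}$ requires Bezout: since $d = \gcd(n-1, i)$, there exist integers $a, b$ with $a(n-1) + bi = d$, so $bi \equiv d \pmod{n-1}$, i.e.\ $i \mid d$ in $\Z_{n-1}$.

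The main obstacle is precisely this last inclusion: the theorem as written imposes the integer inequality $i \le j$, while here one needs the case $d \le i$. I expect to resolve this exactly as BCMNO do for $\Ons{}$, by noting that the construction used in the proof of the theorem only invokes the divisibility relation $a \mid b$ in $\Z_{n-1}$ and is indifferent to the numerical ordering of the canonical representatives $a, b \in \{1, \ldots, n-1\}$; the ordering $a \le b$ in the theorem's statement is a bookkeeping convention. A short remark inserted at the end of the proof of the theorem, observing that the argument applies symmetrically whenever $a \mid b$ in $\Z_{n-1}$, will be enough to deduce the remaining inclusion and complete the corollary.
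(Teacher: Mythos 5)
Your proposal is correct and takes essentially the same route as the paper: the first two inclusions follow from divisibility in $\Z_{n-1}$ (taking $m=i$ for $\TOns{1}\le\TOns{i}$ and $m=n-1$ for $\TOns{i}\le\TOns{n-1}$), and the equality $\TOns{i}=\TOns{d}$ is proved via exactly the Bezout argument you describe. The ordering obstacle you flag is already dissolved in the paper's actual working statement (Lemma~\ref{Lemma:TOni is a subset of TOnj if mi is congruent to j mod n-1}), which is phrased purely as ``$mi\equiv j \pmod{n-1}$ for some $m$ implies $\TOns{i}\subseteq\TOns{j}$'' with no inequality between $i$ and $j$ imposed — the constraint $i\le j$ appears only in the informal statement in the introduction.
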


We shall subsequently  use the symbol $\TOn$ for the group $\TOns{n-1}$.

The last phrase of Corollary~\ref{Corollary:OniequalsOnjifgcdequal} is perhaps to be expected, certainly when $X = G$, as  results of Higman (\cite{GHigman}), Pardo (\cite{EPardo}), and Dicks and Mart{\' \i}nez-P{\' e}rez (\cite{DicksPerez})  demonstrates that $\Gnr \cong \Gmr{m}{s}$ if and only if $n=m$ and $\gcd(n-1, r) = \gcd(n-1,s)$. In fact it is a question in \cite{BCMNO} whether or not $\Onr \cong \Ons{s}$ if and only if $\gcd(n-1,r) = \gcd(n-1,s)$. We show that this question has a negative answer in both the $\Gnr$ and $\Tnr$ context. That is we prove the following:

\begin{Theorem}
There is a number $n \in \N$, $n >2$, and $1 \le r, s \le n-1$ such that, for $X = \T{TO}, \T{O}$, $\XOnr = \XOns{s}$ but $\gcd(n-1, r) \ne \gcd(n-1, s)$.
\end{Theorem}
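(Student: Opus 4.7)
The plan is to exhibit an explicit counterexample. By Corollary~\ref{Corollary:OniequalsOnjifgcdequal} we may assume both $r$ and $s$ are divisors of $n-1$, and by the lattice inclusion $\XOns{d_1} \le \XOns{d_2}$ for $d_1 \mid d_2$ proved in the preceding theorem, the task reduces to collapsing one proper link of the divisor lattice of $n-1$. The cleanest target is to find $n > 2$ and a proper divisor $d$ of $n-1$ such that $\XOns{d} = \XOns{n-1}$, and then take $r = d$, $s = n-1$; any $n-1$ with a non-trivial divisor lattice is a candidate.

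The principal tool should be a signature-type invariant $\sig$ implicit in the BCMNO framework: the subgroup $\XOns{d} \le \XOn$ is carved out by a condition on the signature of a bi-synchronizing transducer, which takes values in (a quotient of) $\Z_{n-1}$. Under this reformulation, the BCMNO conjecture is equivalent to an assertion about the full image of the signature map, and a counterexample amounts to finding $n$ for which this image is properly smaller than expected. I would begin my search with small composite values of $n-1$; a first natural target is $n = 7$, where $n-1=6$ has divisor lattice $\{1,2,3,6\}$, and attempt to show that the signatures of elements of $\XOn$ all lie in $2\Z_6$ or in $3\Z_6$, yielding the equalities $\XOns{2} = \XOns{6}$ or $\XOns{3} = \XOns{6}$ respectively. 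If $n = 7$ does not cooperate, I would move on to $n = 9, 11, 13, \ldots$, with $n-1$ chosen to have enough divisors.

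The verification for the chosen $n$ would proceed in two stages: first, compute signatures on a natural generating family of bi-synchronizing transducers --- single-state, small-core, and the cyclic-order-preserving elements produced by the first theorem above --- and check these signatures lie in the conjectured proper subgroup; second, show that this subgroup is closed under the composition law for signatures, so the containment propagates to all of $\XOn$. The parallel statement for $X = \T{TO}$ then comes for free by intersecting with $\TOn$, which is a subgroup of $\XOn$ by the preceding theorem.

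The main obstacle is the second stage: one must establish a global, compositional obstruction on $\sig$ rather than carry out an ad hoc verification on generators. This requires identifying an intrinsic combinatorial feature of transducers over $X_n$ for the chosen $n$ --- for instance a parity-type constraint arising from the cycle structure of the dual at certain length scales --- which is preserved under composition and inversion and which confines the signature to a proper subgroup of $\Z_{n-1}$. Once such an obstruction is isolated for one specific $n$, the theorem follows immediately in both the $\T{O}$ and $\T{TO}$ contexts.
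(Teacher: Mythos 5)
Your overall strategy --- reduce the problem to arithmetic of a signature invariant, pick $n=7$, and look for a collapse in the divisor lattice of $n-1=6$ --- is the right one. However, the specific obstruction you propose to establish is incompatible with the structure already in place, and the verification programme you sketch is not the one needed. Theorem~\ref{Theorem:rsigisahomomorphism} already shows that $\rsig$ is a group homomorphism from $\On$ into the group of \emph{units} of $\Z_{n-1}$; for $n=7$ these units are $\{1,5\}$. In particular no element $T$ of $\Oms{7}{6}$ has $(T)\rsig$ in $2\Z_6 = \{0,2,4\}$ or in $3\Z_6 = \{0,3\}$, since neither set contains $1 = (\id)\rsig$; the hypothesis you set out to prove is vacuously false. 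Furthermore, the collapse $\XOms{7}{2} = \XOms{7}{6}$ that you reach for does not hold whenever $\rsig$ takes the value $5$, since $2(5-1) = 8 \not\equiv 0 \pmod 6$ places such an element in $\Oms{7}{6}$ but not in $\Oms{7}{2}$.

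The point you are missing is that no further restriction on the image of $\rsig$ is needed; containment in the unit group already suffices. By Proposition~\ref{Proposition:sigdeterminesmembership}, $T \in \Onr$ if and only if $r((T)\rsig - 1) \equiv 0 \pmod{n-1}$. For $n=7$ and every unit $j \in \{1,5\}$ one has $3(j-1) \in \{0, 12\}$, hence $3(j-1) \equiv 0 \pmod 6$, so $\Oms{7}{3} = \Oms{7}{6}$ unconditionally. Equally, among the units $j$, $1\cdot(j-1) \equiv 0 \pmod 6$ holds exactly when $j=1$, and the same is true of $2(j-1) \equiv 0 \pmod 6$, so $\Oms{7}{1} = \Oms{7}{2}$. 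Either pair proves the theorem, as $\gcd(3,6) \ne \gcd(6,6)$ and $\gcd(1,6) \ne \gcd(2,6)$, and the $\T{TO}$ version then does follow as you suggest, via $\TOnr = \TOn \cap \Onr$. Your two-stage verification plan --- compute signatures on generators, then propagate under composition --- is therefore unnecessary: the only compositional input required is the homomorphism property of $\rsig$, which is already in hand.
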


Our next result demonstrates that the groups $\XOns{r}$ for all $1 \le r \le n-1$ are normal subgroups of $\XOn$.

\begin{Theorem}
For all $1 \le r \le n-1$ we have $\XOns{r} \unlhd \XOn$.
\end{Theorem}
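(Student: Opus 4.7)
The plan is to produce a group homomorphism $\Phi\colon \XOn \to \Z/(n-1)\Z$ whose preimages realise the subgroups $\XOns{r}$, namely $\XOns{r} = \Phi^{-1}(H_r)$ for $H_r$ the subgroup of $\Z/(n-1)\Z$ generated by the residue of $r$. Every subgroup of the abelian group $\Z/(n-1)\Z$ is normal, so its preimage is normal in $\XOn$, and the theorem follows at once. By Corollary~\ref{Corollary:OniequalsOnjifgcdequal} one may reduce to the case $r \mid n-1$, and then matching $\XOns{r}$ with the unique index-$(n-1)/r$ subgroup $H_r$ is the natural pairing.

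The candidate for $\Phi$ is an appropriately normalised output-length signature of a non-initial bi-synchronizing transducer $T$ representing an element of $\XOn$: intuitively, for a sufficiently long synchronizing word $w$ read from any state $q$, one records the integer $|\lambda(q,w)| - |w|$ modulo $n-1$. Bi-synchronicity together with the identification of $T$ with a homeomorphism of Cantor space ensures this is a well-defined invariant of the underlying group element; composition of transducers concatenates output strings, which gives additivity, so $\Phi$ is a homomorphism. The matching of level sets with the subgroups $\XOns{r}$ should follow by reinterpreting membership of $T$ in $\XOns{r}$ — its realisability as an automorphism of $X_{n,r}$ — as a divisibility condition on the signature, equivalent to the condition that the induced homeomorphism descends to the quotient of Cantor space obtained by bundling $r$ copies.

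The main obstacle is the verification that $\Phi$ is genuinely a homomorphism and that its level sets coincide exactly with the $\XOns{r}$. Well-definedness on a single element requires the right choice of reduction and exploits bi-synchronicity; additivity on products requires unwinding the construction of the product (core) transducer, and in particular controlling how the synchronization length of a product relates to those of its factors, so that the output-length increments behave additively in the limit. Once $\Phi$ has been built and shown to satisfy the two properties above, the normality statement $\XOns{r} \unlhd \XOn$ reduces to the abelianness of the cyclic target group and is automatic.
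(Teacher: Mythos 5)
Your high-level strategy is exactly the one the paper uses: build a homomorphism from $\XOn$ to an abelian group, show that membership in $\XOns{r}$ is determined by the image under this map, and then read off normality from commutativity of the target. The problem is that the homomorphism you describe is the wrong one, and the gap is not a technicality that can be patched.

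First, the candidate invariant $w \mapsto |\lambda(q,w)| - |w| \pmod{n-1}$ is not well-defined. Elements of $\XOn$ are not synchronous, and nothing forces all outgoing transitions from a given state to have output lengths congruent to $1 \bmod (n-1)$. For a state $p$ with, say, $\lambda(0,p)=0$ and $\lambda(2,p)=20$ over $X_3$, two words $w0$ and $w2$ of the same length, both synchronizing, yield output-length differences that disagree modulo $n-1=2$; bi-synchronicity controls the terminal state, not the output length. So there is no well-defined residue to extract from ``a sufficiently long synchronizing word.''

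Second, even on the class where the output-length signature is manifestly well-defined — the synchronous transducers — it is identically zero, and this is already fatal. The paper's Figure~\ref{Figure:elementinTo4butnotinanyotherTOnr} gives a synchronous transducer $g$ lying in $\TOms{4}{3}$ but not in $\TOms{4}{1}$; your $\Phi$ could not detect this. The invariant the paper actually uses, $\rsig$ (Definition~\ref{Definition:signature} and Proposition~\ref{Proposition:signatureisequaltosizeofimagecovermodnminus1}), counts the minimal number of cones $U_v$ needed to cover $\im(q)$ for a state $q$, taken modulo $n-1$ — a quantity that is $2$ for that example, not $0$. This quantity is multiplicative under transducer composition (Theorem~\ref{Theorem:rsigisahomomorphism}), so the correct target group is the multiplicative group of units $(\Z_{n-1})^{*}$, not the additive group $\Z/(n-1)\Z$, and the right membership criterion (Proposition~\ref{Proposition:sigdeterminesmembership}) is $T \in \XOns{r}$ iff $r\,\rsig(T) \equiv r \pmod{n-1}$, i.e.\ $\rsig(T)$ lies in the multiplicative stabiliser of $r$, not in a cyclic subgroup of the additive group. (Your two descriptions of $H_r$ — ``generated by $r$'' and ``the unique index-$(n-1)/r$ subgroup'' — are also mutually inconsistent when $r\mid n-1$ and $1<r<n-1$, which is a symptom of the miscalibration.) Once the correct $\rsig$ and Proposition~\ref{Proposition:sigdeterminesmembership} are in hand, your closing step is identical to the paper's: for $T\in\XOns{r}$ and $U\in\XOn$, $\rsig(U^{-1}TU)=\rsig(T)$ by commutativity, so conjugation preserves $\XOns{r}$.
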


We next extend a result of Brin and Guzm{\'a}n \cite{MBrinFGuzman} for $\TOns{n-1}$, and show that for $n \ge 3$, $\XOns{1}$ contains an isomorphic copy of R. Thompson's group $F$:

\begin{Theorem}
Let $n \ge 3$, then $\out{X_{n,1}}$ contains a subgroup isomorphic to R. Thompson's group $F$. 
\end{Theorem}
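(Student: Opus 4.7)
The plan is to build an explicit copy of $F$ inside $\XOns{1}$ (and hence inside $\out{X_{n,1}}$) by producing transducers for the two standard generators of $F$ and then deducing injectivity from the corresponding result of Brin and Guzm{\'a}n \cite{MBrinFGuzman} in $\XOns{n-1}$.

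For the construction, I would adapt the outer automorphisms of $T_{n,n-1}$ built in \cite{MBrinFGuzman} to represent the generators $x_0, x_1$ of $F$, translated into the language of non-initial bi-synchronizing transducers via the dictionary established in \cite{BCMNO} and the first two theorems of the present paper. The resulting transducers $A_0, A_1$ are organised around a distinguished home state acting as the identity away from a short prefix of the input, and they implement the standard $n$-ary tree-pair operations on a fixed sub-Cantor set representing the canonical copy of $F$ inside $T_{n,n-1}$. From this local description it is straightforward to verify that $A_0$ and $A_1$ are bi-synchronizing and, in the $T$-case, induce cyclic-order-preserving homeomorphisms of Cantor space; both checks follow from the piecewise-affine nature of the associated self-homeomorphisms.

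The main technical issue is verifying that $A_0, A_1$ lie in $\XOns{1}$, rather than simply in the a priori larger group $\XOns{n-1}$. This requires the transducer-level characterisation of membership in $\XOns{1}$ that emerges from the proof of the inclusion $\XOns{1} \le \XOns{n-1}$ in Corollary~\ref{Corollary:OniequalsOnjifgcdequal}, and amounts to a compatibility condition on the output words of $A_0$ and $A_1$ after short inputs. Once this is in place, the subgroup $H = \langle A_0, A_1 \rangle \le \XOns{1}$ embeds into $\XOns{n-1}$ via Corollary~\ref{Corollary:OniequalsOnjifgcdequal}, and its image coincides with the Brin--Guzm{\'a}n copy of $F$; this forces $H \cong F$ and establishes the theorem. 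The principal obstacle in this programme is precisely the verification that $A_0$ and $A_1$ sit at the bottom of the lattice $\XOns{1} \le \XOns{2} \le \cdots \le \XOns{n-1}$, since the Brin--Guzm{\'a}n generators are only guaranteed a priori to lie at the top; after that, everything else is a routine combination of the combinatorics of transducers with the calculation already carried out in \cite{MBrinFGuzman}.
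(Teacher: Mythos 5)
Your plan identifies the right target---produce transducers for two $F$-generators and land them in $\XOns{1}$---but it leaves the central technical content unresolved and, more importantly, the strategy it sketches differs from the one the paper uses in a way worth spelling out.

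\textbf{The gap.} You defer the whole weight of the argument to ``a compatibility condition on the output words of $A_0$ and $A_1$ after short inputs'' which you identify as ``the principal obstacle'' but never discharge. Concretely: the Brin--Guzm\'an generators live a priori in $\TOns{n-1}$, and by Proposition~\ref{Proposition:sigdeterminesmembership} an element of $\TOn$ lies in $\TOns{1}$ precisely when its reduced signature equals $1$. Nothing in your proposal establishes that the Brin--Guzm\'an generators (or suitable representatives of the same outer classes) satisfy this; you only flag that it would need checking. Without that verification the proposal does not yet constitute a proof. In addition, the claim that ``everything else is routine'' hides the fact that realising the Brin--Guzm\'an automorphisms explicitly as non-initial bi-synchronizing transducers with the required property is itself a substantial construction, not a translation exercise.

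\textbf{How the paper proceeds.} The paper sidesteps the signature computation entirely by invoking Lemma~\ref{Lemma:possessingahomeostatemeanslementsofTOn1}: any $T \in \TOn$ with a homeomorphism state already lies in $\TOns{1}$, because $T_q$ for such a state $q$ is an element of $\TBmr{n}{1}$. It then constructs two new explicit transducers $T$ and $U$ (Figures~\ref{Figure:elementinTon1ofinfiniteorder} and~\ref{Figure:ThetransducerUinTOn1}) with homeomorphism states, so membership in $\TOns{1}$ is immediate. Crucially, $\gen{T,U} \cong F$ is then proved \emph{directly}, not by matching with the Brin--Guzm\'an copy: the transitions on letters $x \in \Xn\setminus\{0,n-1\}$ always return to the loop state with output ending in $x$, which yields sub-transducers $A$ and $B$ over the binary alphabet $\{0,n-1\}$ such that restriction gives a surjective homomorphism $\gen{T,U}\to\gen{A_a,B_p}\cong F$; injectivity follows because $T$ and $U$ satisfy the defining relations of $F$, giving a surjection $F\to\gen{T,U}$ in the other direction. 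So the paper does not identify its copy with the Brin--Guzm\'an one. Your route---pull the Brin--Guzm\'an generators down the lattice and use the known isomorphism type---would be a valid alternative if you could supply the signature (or homeomorphism-state) verification, but as written the key step is missing, and the elementary criterion that makes the paper's argument work (Lemma~\ref{Lemma:possessingahomeostatemeanslementsofTOn1}) is absent from your outline.
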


 From this and Corollary~\ref{Corollary:OniequalsOnjifgcdequal}, we have the following:
 \begin{corollary}
 For all $1 \le r\le n-1$, $\XOns{r}$ contains  a subgroup isomorphic to R. Thompson's group $F$.
 \end{corollary}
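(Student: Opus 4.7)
The plan is a direct concatenation of the preceding Theorem with Corollary~\ref{Corollary:OniequalsOnjifgcdequal}. First, the Theorem supplies, for $n \ge 3$, a subgroup $H$ of $\out{X_{n,1}}$ with $H \cong F$. Under the identification of $\out{X_{n,r}}$ with $\XOns{r}$ established earlier in the section, we may regard $H$ as a subgroup of $\XOns{1}$. Next, Corollary~\ref{Corollary:OniequalsOnjifgcdequal} (together with its $\Gnr$-analogue from BCMNO cited immediately before it) yields the subgroup inclusion $\XOns{1} \le \XOns{r}$ for every $1 \le r \le n-1$. Composing these, $H \le \XOns{r}$, and transferring back through the identification gives the required isomorphic copy of $F$ inside $\out{X_{n,r}}$.

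There is no substantive obstacle; the corollary is a purely formal consequence of the two preceding statements, and both halves of the chain (namely the existence of a copy of $F$ in $\XOns{1}$ and the inclusion $\XOns{1} \le \XOns{r}$ in each of the $T$ and $G$ settings) have already been established in the section or in BCMNO. The one point worth flagging is that the hypothesis $n \ge 3$ is inherited silently from the Theorem: when $n = 2$ the only admissible value is $r = 1 = n-1$, and $\out{T_2}$ is cyclic of order $2$ by the result of Brin recalled in the introduction, so no copy of $F$ can exist. Accordingly the corollary is to be read under the convention $n \ge 3$.
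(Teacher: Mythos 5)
Your proposal matches the paper's intended argument: the corollary follows by combining the preceding theorem (a copy of $F$ inside $\XOns{1}$ for $n\ge 3$) with the inclusion $\XOns{1}\le\XOns{r}$ given by Corollary~\ref{Corollary:OniequalsOnjifgcdequal} (equivalently Theorem~\ref{Theorem:TOnisubsetofTOnjifidividesjinZnminus1}), which is exactly how the paper later justifies the corresponding statement in Section~\ref{Section:OutTncontainscopyofF}. Your remark about the silently inherited hypothesis $n\ge 3$ is also apt, since $\out{T_2}\cong C_2$ cannot contain $F$.
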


  We further demonstrate (Section~\ref{Section:nestingproperties2}), in the case $r=4$, that the set  $\TOms{4}{3}\backslash \TOms{4}{1}$ is non-empty. Notice that since $3$ is prime, for $ 1 \le r <3$, $\TOms{4}{r} = \TOms{4}{1}$, thus this result indicates that, in general, the group $\TOnr$ might depend on $r$.

 We also investigate in Section~\ref{Section:nestingproperties2} the nesting properties of the groups $\XOnr$ of $\XOn$ for $1 \le r \le n-1$. We show that these groups from a lattice with the `meet' of $\XOnr$ and $\XOns{s}$ being the intersection of the two groups and the `join' of $\XOnr$ and $\XOns{s}$ being the smallest $t$, $1 \le t \le n-1$ such  that $\XOnr$ and $\XOns{s}$ are subgroups of $\XOns{t}$. We do not know if it is in fact the case that $\XOns{t} = \gen{\XOns{s}, \XOns{r}}$ (see Question~\ref{Question:canjoinbereplacedwithubgroupgenerated}). To each element of $\XOn$ we associate a numerical invariant which yields a group homomorphism from $\XOn$ to the group of units of $\Z_{n-1}$ (notice that for $n=2$,  $\Z_{n-1}$ is equal to its group of units), with kernel $\XOns{1}$. That is, we prove the following:
 
 \begin{Theorem}\label{Theorem:homfromoutintounitsofZn}
 There is  homomorphism from $\XOn$ to the group of units of $\Z_{n-1}$ with kernel $\XOns{1}$. 
 \end{Theorem}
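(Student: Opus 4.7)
The plan is to exhibit a numerical invariant $\sig : \XOn \to \Z_{n-1}$ whose image lies in the group of units $(\Z_{n-1})^{\ast}$ and whose kernel is $\XOns{1}$. From Corollary~\ref{Corollary:OniequalsOnjifgcdequal}, the relation $\XOns{r} = \XOns{\gcd(n-1,r)}$ together with the nesting $\XOns{i} \le \XOns{j}$ whenever $i \mid j$ shows that $\{\XOns{d} : d \mid n-1\}$ is a lattice of subgroups of $\XOn$ with $\XOns{1}$ the minimum and $\XOn = \XOns{n-1}$ the maximum, and in particular $\XOns{i} \cap \XOns{j} = \XOns{\gcd(i,j)}$. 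Consequently every $g \in \XOn$ has a unique smallest divisor $d(g) \mid (n-1)$ with $g \in \XOns{d(g)}$, and $d(g) = 1$ exactly when $g \in \XOns{1}$; the task is to promote the divisor-valued assignment $g \mapsto d(g)$ to a genuine group homomorphism landing in $(\Z_{n-1})^{\ast}$.

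I would define $\sig(g)$ by extracting an integer invariant from the minimal bi-synchronising transducer $T_{g}$ representing $g$, and check that its value depends only on $g$ by invoking the uniqueness of the minimal representative established in \cite{BCMNO}. The natural candidates for $\sig(g)$ are combinatorial quantities read off from the core of $T_{g}$ (for instance, counts of core states with suitable multiplicity, or data read off from how cycles at distinguished core states mesh with input cycles of the same length), set up so that
\[
  g \in \XOns{r} \iff \sig(g) \equiv 1 \pmod{\tfrac{n-1}{\gcd(n-1,r)}}.
\]
Taking $r=1$ then identifies $\ker(\sig) = \XOns{1}$ as required.

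The remaining task is to verify multiplicativity, $\sig(gh) \equiv \sig(g)\sig(h) \pmod{n-1}$; combined with $\sig(\id) = 1$ this forces $\sig(g)\sig(g^{-1}) \equiv 1$, so each $\sig(g)$ is a unit with $\sig(g^{-1})$ its inverse, placing the image in $(\Z_{n-1})^{\ast}$. I would prove multiplicativity by analysing the composition of bi-synchronising transducers $T_{g}\cdot T_{h}$ and the subsequent minimisation step: the integer invariant of the product can differ from $\sig(g)\sig(h)$ only through collapses of equivalence classes whose sizes are controlled modulo $n-1$, so that the reduction preserves the product mod $n-1$. This is the main obstacle, since the state-behaviour of composition interacts subtly with the synchronising structure, and one has to keep careful track of how redundant states are introduced by the product construction and then identified during minimisation; the key tool is the core-minimisation machinery for bi-synchronising transducers developed in \cite{BCMNO} and in the preceding sections.
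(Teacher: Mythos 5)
Your high-level plan---extract a numerical invariant from the minimal bi-synchronising transducer and show it is multiplicative with kernel $\XOns{1}$---is the right shape, and your restated membership criterion $g \in \XOns{r} \iff \sig(g)\equiv 1 \pmod{(n-1)/\gcd(n-1,r)}$ is correct and equivalent to Proposition~\ref{Proposition:sigdeterminesmembership}. But there is a genuine gap: you never pin down what the invariant actually is, and the candidates you offer (``counts of core states with suitable multiplicity,'' cycle data) are the wrong objects. The paper's invariant is not a state count at all: for a core state $q$ one takes $m_q$, the size of the smallest set $V\subset\Xns$ of addresses such that $\{U_v : v\in V\}$ is a disjoint clopen cover of $\im(q)$, and defines $\rsig$ via $m_q \bmod (n-1)$. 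The whole argument hinges on Proposition~\ref{Proposition:signatureisequaltosizeofimagecovermodnminus1}, which says $m_q\equiv m_{q'} \pmod{n-1}$ for \emph{all} core states $q,q'$ (so the value is intrinsic to $g$, not to a choice of state); this uniformity is not something you identify, and without it the assignment is not even well-defined. Also note that the divisor $d(g)$ you propose to ``promote'' does not carry enough information to recover the unit: e.g.\ with $n-1=8$, the units $3$ and $7$ both yield $\gcd(8,u-1)=2$ and hence the same minimal divisor, so no function of $d(g)$ alone can be the desired homomorphism --- one needs the finer invariant $m_q \bmod (n-1)$ from the start.

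The multiplicativity step is also waved at rather than addressed: the actual mechanism is not ``collapses of equivalence classes of controlled size during minimisation'' but a direct computation on the un-minimised product. For a core state $(p,q)$ of $T*U$, one chooses a minimal cone cover of $\im(p)$ deep enough that each covering cone is read by $U$ past some word with a prescribed output; this yields a decomposition $\im((p,q)) = \bigsqcup_{i=1}^{m_p}\rho_i\,\im(q_i)$ for states $q_i$ of $U$, whence $m_{(p,q)}\equiv\sum_i m_{q_i}\equiv m_p\cdot(U)\rsig\equiv (T)\rsig\,(U)\rsig\pmod{n-1}$. Removing incomplete response only strips a common prefix and so leaves $m$ unchanged mod $n-1$; there is no delicate bookkeeping of collapses. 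Without naming $m_q$ and this image-decomposition argument, the proposal is a correct outline of what must be proved rather than a proof.
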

 
We should point out that the existence of this homomorphism is already a consequence of the fact that the  dimension group (see \cite{KriegerW1980} for the definition of the dimension group) of $\XOnr$ is equal to the additive group $\Z_{n-1}$ and automorphisms of $\XOnr$ yield automorphisms of the dimension group of $\XOnr$. The author is grateful to  Prof. Nekrashevych for drawing these facts to his attention. Our proof of Theorem~\ref{Theorem:homfromoutintounitsofZn} however does not rely on these observations, instead we explicitly construct the homomorphism by making use of the synchronizing condition to associate a numerical invariant to every element of $\XOnr$. Our approach gives a means of resolving this question as we  reduce it to one of constructing transducers which have certain properties. In particular, Theorem~\ref{Theorem:homfromoutintounitsofZn} demonstrates that the homomorphism from $\XOns{1}$ into the group of units of $\Z_{n-1}$ is the trivial homomorphism. Though we are unable show in general that the homomorphism of Theorem~\ref{Theorem:homfromoutintounitsofZn} is surjective, we show that under the assumption that it is surjective, then  for $1 \le s,r,t \le n-1$ and $t$ the smallest element of $\Z_{n}$ such that $\XOms{n}{t}$ contains $\XOms{n}{s}$ and $\XOms{n}{r}$, $\XOms{n}{t}= \gen{\XOms{n}{s}, \XOms{n}{r}}$. The Theorem~\ref{Theorem:surjectivityofrsigpartial} below, proven in Section~\ref{Section:onsurjectivityofrsig}, indicates that the homomorphism of Theorem~\ref{Theorem:homfromoutintounitsofZn} from $\On$ to the group of units of $\Z_{n-1}$ is surjective in many cases, indeed, elementary results in number theory indicate that there are infinitely many numbers $n$ which satisfying the hypothesis of the theorem. We do not know if the restriction to  $\TOn$ is also surjective.

\begin{Theorem}\label{Theorem:surjectivityofrsigpartial}
If the divisiors of $n$ generate the group of units of $\Z_{n-1}$ then the homomorphism of Theorem~\ref{Theorem:rsigisahomomorphism} defined on $\On$ is unto the group of units of $\Z_{n-1}$.
\end{Theorem}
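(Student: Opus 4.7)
The plan is to prove surjectivity by exhibiting, for each divisor $d$ of $n$, an element of $\On$ whose image under $\rsig$ is the residue of $d$ modulo $n-1$. Once such elements are in hand, the hypothesis that the divisors of $n$ generate the group of units of $\Z_{n-1}$ forces the image of $\rsig$ (which is automatically a subgroup) to equal the whole unit group, and the theorem follows at once. Thus the work is purely an existence question for a short list of carefully designed bi-synchronizing transducers.

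First I would recall the construction of $\rsig$: for a non-initial bi-synchronizing transducer $T \in \On$, the synchronizing condition guarantees that after reading any sufficiently long word the active state is determined, so one can compare the length of a long input word with the length of its output and the invariant lives in $\Z_{n-1}$ precisely because $n \equiv 1 \pmod{n-1}$ makes this length-difference well-defined. The key numerical content of $\rsig(T)$ is the size of the image cylinder produced by a long enough common input prefix, measured as a power of $n$ modulo $n-1$.

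Next, for each divisor $d$ of $n$, write $n = de$ and construct a non-initial bi-synchronizing transducer $T_d$ over the alphabet $X_n$ whose induced homeomorphism on Cantor space replaces length-$1$ input blocks by length-$k$ output blocks (for a suitable $k$ chosen so that $n^{k-1} \equiv d \pmod{n-1}$, which is achievable because $d \mid n$ implies $d$ generates a specific coset structure compatible with a block-recoding scheme). Concretely, one can adapt the standard prefix-replacement/block-exchange construction used in \cite{BCMNO} for Higman--Thompson style transducers: take states indexed by prefixes of length up to the synchronizing threshold, define transitions that decode incoming $n$-symbols as $d$-ary blocks, and define outputs that re-encode so that the overall map is a homeomorphism and both the transducer and its inverse are synchronous. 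Then verify directly from the definition of $\rsig$ that $\rsig(T_d) = d \pmod{n-1}$.

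Finally, the subgroup of $(\Z_{n-1})^\times$ generated by $\{\, d \bmod (n-1) : d \mid n \,\}$ lies inside the image of $\rsig$, and under the hypothesis of the theorem this subgroup is all of $(\Z_{n-1})^\times$, so $\rsig$ is surjective. The main obstacle is step two: one must ensure that the transducer $T_d$ really is bi-synchronizing (not merely synchronizing), that it represents a homeomorphism of $X_n^\omega$, and that the chosen block length $k$ gives the target residue $d$ modulo $n-1$. All other parts of the argument are bookkeeping once such a family $\{T_d : d \mid n\}$ has been exhibited.
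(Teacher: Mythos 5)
Your top-level strategy is the same as the paper's: for each divisor $d$ of $n$, exhibit an element $T_d\in\On$ with $(T_d)\rsig\equiv d\pmod{n-1}$; since the image of the homomorphism $\rsig$ is a subgroup of $(\Z_{n-1})^\times$, the hypothesis finishes the proof. The logical skeleton is fine. The gap is in step two, and it is not repairable as stated.

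You propose to build $T_d$ by a block-recoding scheme that ``replaces length-$1$ input blocks by length-$k$ output blocks'' and to pick $k$ so that $n^{k-1}\equiv d\pmod{n-1}$. But $n\equiv 1\pmod{n-1}$, so $n^{k-1}\equiv 1\pmod{n-1}$ for every $k$; the congruence $n^{k-1}\equiv d$ has no solution unless $d\equiv 1\pmod{n-1}$. More fundamentally, your conceptual model of $\rsig$ as a ``size of the image cylinder measured as a power of $n$'' is not the definition used in the paper. The reduced signature is $m_q\bmod(n-1)$, where $m_q$ is the size of a minimal antichain covering the (clopen, possibly proper) image $\im(q)$ of a core state $q$ (Proposition~\ref{Proposition:signatureisequaltosizeofimagecovermodnminus1}). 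Any construction that re-encodes uniformly on blocks produces core states that are \emph{homeomorphism} states, i.e.\ $\im(q)=\CCn$ and $m_q=1$, so $\rsig=1$. You would land entirely inside $\Ons{1}=\ker(\rsig)$ and never realise any nontrivial unit.

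The paper's construction $\oplus_d T$ (Proposition~\ref{Propositon:forncompositeforeverydivisorofnthereisanelementwithsigequaltodivisor}) is designed precisely to avoid this: it partitions $\Xn$ into $m=n/d$ blocks of size $d$, takes $m$ relabelled copies of a fixed synchronous, synchronizing invertible transducer over a $d$-letter alphabet, and wires the copies together so that a letter $dj+b$ read from a state in the $i$th copy outputs $di+b$ and transitions into the $j$th copy. The effect is that each core state $q(i)$ has $\im(q(i))=\bigcup_{0\le b<d}U_{di+b}$, a union of exactly $d$ cylinders, hence $m_{q(i)}=d$ and $\rsig=d$. The real work is verifying synchronization, injectivity with clopen image, and bi-synchronization of this specific gadget; none of that verification carries over to a uniform block-recoding transducer, which cannot produce proper clopen images for its core states.
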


We conclude in Section~\ref{Section:RftyTn} with the following result:
\begin{Theorem}
The groups $\Tnr$ have the $\rfty$ property.
\end{Theorem}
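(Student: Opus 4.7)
The plan is to show that for every $\phi \in \aut{\Tnr}$, the set of $\phi$-twisted conjugacy classes in $\Tnr$ is infinite. The first step is the standard reduction: since the Reidemeister number $R(\phi)$ coincides with $R(\iota_g \phi)$ for each $g \in \Tnr$, I need only treat one representative of each class in $\out{\Tnr}$. By the earlier identification of $\out{\Tnr}$ with the subgroup $\TOnr$ of bi-synchronizing transducers acting on $\Xno$, I would fix, for each outer class, an explicit transducer representative $\phi$ whose induced homeomorphism respects the cyclic ordering on $\Xno$, thereby giving concrete combinatorial control of $\phi$.

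Following the torsion-based approach used by Burillo--Matucci--Ventura and by Gon{\c c}alves--Sankaran for $T = \Tmr{2}{1}$, my plan is to exhibit, for each fixed $\phi$, an infinite family $\{t_k\}_{k \in \N} \subset \Tnr$ of rotation-like finite-order elements whose $\phi$-twisted conjugacy classes are pairwise distinct. A convenient framework here is to reinterpret twisted conjugacy classes of $\Tnr$ as $\Tnr$-conjugacy orbits of the coset $\Tnr\,\phi$ inside the semidirect product $\Tnr \rtimes \aut{\Tnr}$; an invariant of conjugacy of $t \phi$ in this semidirect product is automatically an invariant of the $\phi$-twisted conjugacy class of $t$. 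Candidates for such invariants include: the order of $t_k \phi$ (accessible from the dynamics of the skew product on $\Xno$), the rotation-type data of periodic orbits of $t_k \phi$, and refinements extracted from the signature homomorphism of Theorem~\ref{Theorem:homfromoutintounitsofZn} into the units of $\Z_{n-1}$.

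The principal obstacle is to choose the family $\{t_k\}$ and the invariant so that the latter is provably $\phi$-twisted-conjugacy invariant and simultaneously takes infinitely many values. This is where the bi-synchronizing property and cyclic-order preservation of $\phi$ should be exploited: the synchronizing length of $\phi$ and of $\phi^{-1}$ is bounded independently of $k$, so the conjugator $g$ in a relation $y = g\,t_k\,\phi(g^{-1})$ can only alter the combinatorial data of $t_k$ in a controlled way. The plan is therefore to attach to each $t \in \Tnr$ a numerical quantity built from the long-term action of $t$ on $\Xno$, take its class modulo the finite image of the signature homomorphism, and show that this class is genuinely $\phi$-twisted-conjugacy invariant while still taking infinitely many values on the rotation-like $t_k$. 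Carrying out the precise analysis of how $g\,t_k\,\phi(g^{-1})$ rearranges the periodic orbit structure of $t_k$, and verifying that the resulting invariant descends to twisted conjugacy classes, is the technical heart of the proof.
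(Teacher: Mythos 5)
Your reduction to one representative per outer class, the reinterpretation as $T_{n,r}$-conjugacy of the coset $T_{n,r}\,\phi$ inside $\aut{\Tnr}$, and the use of combinatorial control coming from bi-synchronizing representatives all align with how the paper sets things up. For the orientation-preserving case your plan is essentially the paper's: one builds, for each admissible $l$, an element $h_{l}$ in the same coset whose distinguished point lies on a finite orbit of length exactly $l$ (and all other finite orbits have length a multiple of $l$), and notes that orbit lengths are preserved under conjugation in $\aut{\Tnr}$, hence the corresponding elements of $T_{n,r}$ fall into infinitely many twisted classes. So far so good.

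The genuine gap is the \emph{orientation-reversing} case, which your proposal does not address and which cannot be handled by the same ``rotation-like'' idea. An orientation-reversing homeomorphism of the circle has exactly two fixed points, and the only finite orbit lengths it admits are $1$ and $2$; there is no infinite family of rotation numbers or minimal orbit lengths to exploit. The paper therefore switches to an entirely different invariant in this case: after normalizing the representative so that $\dot{0}00\ldots$ and $\dot{r-1}(n-1)(n-1)\ldots$ are fixed by its square, it builds elements $f_j$ whose squares have these two points as attracting fixed points with ``attracting paths'' $0^{j}$ and $(n-1)^{j}$ of prescribed length, and shows conjugation by $T_{n,r}$ preserves the germ data of these attracting paths. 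That invariant is a local germ-at-fixed-point datum, not a global orbit datum, and getting it to be genuinely twisted-conjugacy invariant requires the analysis of how conjugators can permute the two fixed points and the observation that $T_{n,r}$ maps $n$-adics to $n$-adics. Without a surrogate for this step, the argument is incomplete for roughly half of $\aut{\Tnr}$.

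A smaller point: the signature homomorphism into the units of $\Z_{n-1}$ cannot contribute to the required infinitude, since its codomain is finite. It is useful for the lattice structure of the groups $\TOnr$, but it cannot separate infinitely many twisted classes, so relying on it for a ``refinement'' would not strengthen the invariant in the way the problem demands.
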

We recall that a group $G$ is said to have the $R_{\infty}$ property if for every automorphism $\varphi$ of $G$, the equivalence relation defined on $G$ by, for $x,y \in G$, $x$ is equivalent to $y$ if there is an element $h$ in $G$ such that $h^{-1}x(h)\varphi = y$,  has infinitely many equivalence classes. The question of which groups have the $\rfty$ property has received a lot of attention. This class of groups has been shown to  include, for instance, all non-elementary Gromov hyperbolic groups (\cite{Fel'shtyn2004}, \cite{GLevittMLustig}), Baumslag-Solitar groups $BS(m,n)$ for $m,n \in \Z\backslash\{0\}, (m,n) \ne (1,1)$ (\cite{AFelshtynDGonclavesBSG}), lamplighter groups $\Z_{n} \wr \Z$ where $2|n$ or $3|n$ (\cite{DGonclavesPWong}), the first Grigorchuk  and the Gupta-Sidki groups (\cite{AFelshtynLYuriyandETroitsky}), and R. Thompson's group $F$ (\cite{BlkFelAlxGon}, \cite{BMV}). It was shown independently by Burillo, Matucci, and Ventura (\cite{BMV}), and  Gon{\c c}alves and Sankaran (\cite{DGonclavesPSankaran}) that $R$. Thompson's group $T$ is in this class of groups. We thus extend this result to the family of groups $T_{n,r}$.

Interlaced through out the text are several open questions. In work in preparation we apply the techniques in the current paper and in the paper \cite{BCMNO} to the generalizations of Thompson's group $F$. 
\section*{Acknowledgements}
The author is grateful to Collin Bleak and Matthew Brin for helpful discussions and their comments on early drafts of this article. This work was partially supported by Leverhulme Trust Research Project Grant RPG-2017-159.
\section{Some preliminaries and the groups \texorpdfstring{$G_{n,r} \mbox{ and } T_{n,r}$}{Lg}}\label{Section:Preliminaries}
We begin by setting up some notation.

Let $X$ be a topological space, we shall denote by $H(X)$ the group of homeomorphisms of $X$, and for $G \le H(X)$, $N_{H(X)}(G)$ shall denote the normaliser of $G$ in $H(X)$. Let $S_r:= \mathbb{R}/r \Z$ for $r \in \mathbb{R}$, the circle of  length $r$ and for $n \in \N \backslash \{0\}$, let $\Z[1/n] := \{ a/n : a \in \Z\}$, the  $n$-adic rationals.  Though we will mainly think of $T_{n,r}$ as a subgroup of $G_{n,r}$, we shall at times consider it as a subgroup of $H(S_r)$ and we shall make it apparent at such times that we are doing so. We establish some further notation required to define the groups $G_{n,r}$ and $\Tnr$.

Let $\dotr:= \{\dot{0},\dot{1},\ldots, \dot{r-1}\}$, and let $X_n := \{0,1,\ldots, n-1\}$. We shall take the ordering $\dot{0} < \dot{1} < \ldots < \dot{r-1}$ and $0< 1 < \ldots < n-1$ on $\dotr$ and $\Xn$ respectively. For $i \in \{0, n-1\}$, if $i= 0$ then set $\bar{i} = n-1$ otherwise set $\bar{i} = 0$. Set $X_{n,r}^{\ast} := \{\dot{a}w: \dot{a} \in \dotr \mbox{ and } w \in X_n^{\ast}\} \sqcup \{\epsilon\}$ where $\epsilon$ denotes the empty word, and $X_{n,r}^{+}:= \{\dot{a}w: \dot{a} \in \dotr \mbox{ and } w \in X_n^{\ast}\}$. For $j \in \N$, let $\Xnr^{j}$, respectively $\Xn^{j}$, denote the set of all elements of $X_{n,r}^{*}$, respectively $X_n^{*}$, of length $j$. Given two words $u$ and $v$ in $X_n^{*}$ of $X_{n,r}^{*}$, we say that  $u$ is a prefix of $v$ and denote this by $u \le v$. We write $u < v$ if $u$ is a proper prefix of $v$. For two words $u, v$ in $X_{n}^{*}$ or $X_{n,r}^{*}$ if $u \nleq v$ and $v \nleq u$ then we say that $u$ is incomparable to $v$ an write $u \perp v$. If instead $ v = u \nu$ for some $\nu \in \Xns \sqcup \Xnrs$ then we set $v-u := \nu$. The relation $\le$ is a partial order on  the set $X_{n,r}^{*}$ and $X_n^{*}$.

For two incomparable words $\nu, \eta$ in $\Xnp$ and $\Xnrp$ we say  \emph{$\nu$ is less than $\eta$ in the lexicographic ordering}, denoted $\nu \lelex \eta$, if there are $wi, wj \in \Xnrp$ or $wi, wj \in  \Xnp$ prefixes of $\nu$ and $\eta$ respectively with $i < j$ in the ordering on $\Xn$ or $\dotr$.  If $i \le j$ then we write $\nu  \leqlex \eta$.

 We may now defined a total order of $\Xnrs$ and $\Xns$ as follows: for $\nu, \eta \in \Xnrs$ or $\nu, \eta \in \Xns$ we say that \emph{$\nu$ is less than $\eta$ in the short-lex ordering} if either $|\nu| \le |\eta|$ or if $|\nu| = |\eta|$ then  $\nu \leqlex \eta$, with strict inequalities if $\nu$ is strictly less than $\eta$. We write $\nu \leqslex \eta$ if $\nu$ is less than $\eta$ in the short-lex ordering, and $\nu \leslex \eta$ is the $\nu$ is strictly less than $\eta$ in the short-lex ordering. 

Now let $\CCn := X_n^{\omega}$ a Cantor space with the usual topology, and let $\CCnr:= \{\dot{a}x | \dot{a} \in \dotr \mbox{ and } x \in \CCn \}$, the disjoint union of $r$ copies of $\CCn$. For a word $\nu \in  X_{n,r}^{+}$, let  $U_{\nu}:= \{\nu\delta \mid \delta \in \CCn\}$, and $U_{\epsilon}:=  \CCnr$. For $\nu \in X_n^{\ast}$, set $U_{\nu}:=   \{ \nu \delta: \delta \in \CCn\}$. Let $\Banr:= \{U_{\nu}\mid \nu \in X_{n,r}^{\ast} \}$ and let $\Ban:= \{U_{\nu} \mid \nu \in \Xn \}$, then $\Banr$ and $\Ban$ are a basis for the topology on $\CCnr$ and $\CCnr$ respectively. For a subset  $Z \subseteq \Xns \sqcup \Xnrs$ we shall denote by $U(Z)$ the set $\{ U_{z} \mid z \in Z \}$.

The lexicographic  extends to a total order on $\CCn$ and $\CCnr$ in the natural way, thus we extend the meanings of the symbols $\leqlex$ and $\lelex$ to this sets also. We further extend this notation to subsets of $\CCn$ and $\CCnr$. Given two subsets $V, W \subset \CCn$ or $V, W \subset \CCnr$ we write $V \leqlex W$ if every element of $U$ is less than every element of $W$ in the lexicographic ordering. The strict inequality $V \lelex W$ is analogously defined.

Let $\nu, \eta \in X_{n,r}^{+}$, then we shall denote by $g_{\nu,\eta}$ the map from $U_{\nu}$ to $U_{\eta}$ that replaces the prefix $\nu$ with $\eta$.

A finite set $\overline{u}:= \{u_0, \ldots, u_l\}$ for $u_i \in X_{n,r}^{+}$ and some $1\le l \in \N$ is called a \emph{complete antichain}, if for any pair $u_i, u_j \in \overline{u}$, $u_i \perp u_j$ and for any word $v \in X_{n,r}^{\ast}$, there is some $u_i \in \overline{u}$ such that $v \le u_i$ or $u_i \le v$. We shall assume through out that any given antichain $\overline{u}$ is ordered according to the lexicographic ordering.  If we have an antichain $\ac{u}:= \{u_0, \ldots, u_l\}$ then we may form another antichain $\ac{u}' = \{u_0, \ldots, u_i0, u_i1, \ldots u_in-1, u_{i+1}, \ldots, u_l\}$ where we have replaced $u_i$ with  $u_i0, \ldots, u_in-1$. We call $\ac{u}'$ a subdivision of $\ac{u}$. Notice that each time we make a subdivision the length of the antichain increases by $n-1$.

We now define the Higman-Thompson group $G_{n,r}$ and the subgroup $T_{n,r}$.

Given two complete antichains $\overline{u} = \{u_0,\ldots,u_{l-1} \}$ and $\overline{v} = \{v_0, \ldots, v_{l-1}\}$ of equal length,  we can  define a homeomorphism from of $\CCnr$ as follows. Let $\pi$ be a bijection from $\overline{u}$ to $\overline{v}$, define a map $g : \CCnr \to \CCnr$ by $ u_ix \mapsto (u_i)\pi x$ for $u_i \in \overline{u}$ and $x \in X_n^{\omega}$. Since $u$ is a complete antichain this map is well-defined on all of $\CCnr$. The group $G_{n,r}$ consists of all homeomorphisms of $\CCnr$ which can be defined in this way. The group $T_{n,r}$ is the subgroup of $G_{n,r}$, consisting of those element  $g$ that are derived from complete antichains $\overline{u}$ and $\ac{v}$ of the same length $l$, and a bijection $\pi$ which maps $u_i$ to $v_{i+j \mod l}$ for some fixed $j \in \{0,1 \ldots, l-1\}$. 

Define an equivalence relation on $\CCnr$ by $x \sim_{t} y$ if and only if there are words $u$ and $v$ in $X_{n,r}^{+}$ such that $x = u z$ and $y = v z$ for some $z \in \CCn$. We write $[x]$ to denote the equivalence class of $x$. Let $H_{n,r,\sim_{t}}$ be the subgroup of $H(\CCnr)$ consisting of those elements of $H(\CCnr)$ that preserve $\sim_{t}$.

We shall need the following transitivity result for $T_{n,r}$

\begin{lemma}\label{Lemma: Tnr preserves tail classes and acts transitively on each tail class}
The group $T_{n,r}$ fixes the equivalence classes of $\sim_{t}$ and acts transitively on each equivalence class.
\end{lemma}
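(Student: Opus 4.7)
My plan is to handle the two assertions using the antichain description of elements of $\Tnr$ directly.

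For invariance, let $g \in \Tnr$ be realized by complete antichains $\bar{u} = \{u_0,\ldots,u_{l-1}\}$, $\bar{v} = \{v_0,\ldots,v_{l-1}\}$ and the cyclic bijection $u_i \mapsto v_{i+j \bmod l}$. For any $x \in \CCnr$, completeness of $\bar{u}$ gives a unique index $i$ with $x = u_i w$ for some $w \in X_n^{\omega}$, whence $g(x) = v_{i+j \bmod l}\, w$. Thus $x$ and $g(x)$ share the infinite tail $w$, so $x \sim_{t} g(x)$. Combining with symmetry and transitivity of $\sim_{t}$, if $x \sim_{t} y$ then $g(x) \sim_{t} x \sim_{t} y \sim_{t} g(y)$, so $g$ preserves each equivalence class.

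For transitivity on a class, given $x \sim_{t} y$, write $x = uz$, $y = vz$ with $u, v \in X_{n,r}^{+}$ and $z = z_1 z_2 \cdots \in \CCn$; the plan is to construct an explicit $g \in \Tnr$ with $g(x) = y$. After (if necessary) replacing $u, v$ by $u z_1, v z_1$ so that $|u|, |v| \geq 2$, I would iteratively subdivide the base antichain $\dotr$ to produce complete antichains $\bar{a} \ni u$ of length $l_1$ and $\bar{b} \ni v$ of length $l_2$. Since every subdivision adds $n-1$ to the length, both $l_1$ and $l_2$ are congruent to $r$ modulo $n-1$, so by further subdividing elements of the shorter antichain distinct from $u$ (resp.\ $v$) I can raise both lengths to a common value $l$. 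If $i$ and $i'$ are then the positions of $u$ in $\bar{a}$ and of $v$ in $\bar{b}$, the cyclic bijection $a_k \mapsto b_{k + (i'-i) \bmod l}$ produces an element $g \in \Tnr$ whose action replaces the prefix $u$ by $v$, and in particular sends $x$ to $y$.

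The main obstacle is purely combinatorial: simultaneously ensuring $u \in \bar{a}$, $v \in \bar{b}$, and $|\bar a| = |\bar b|$, while working within the restriction that the bijection in the definition of $\Tnr$ must be a cyclic shift rather than arbitrary. The congruence $l_1 \equiv l_2 \pmod{n-1}$ makes the length matching possible, and since the shift $j$ is allowed to be any value in $\{0,\ldots,l-1\}$, once the lengths are equal we always have enough freedom to realize the required position-to-position mapping. The preliminary extension of $u$ and $v$ by one letter rules out the degenerate case $r = 1$, $|u| = 1$, in which the only antichain containing $u$ is $\{u\}$ itself, leaving no other element to subdivide.
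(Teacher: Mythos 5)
Your proof takes essentially the same route as the paper: both establish the first claim by observing that an element of $\Tnr$ acts as a prefix exchange and hence preserves tails, and both establish transitivity by taking complete antichains containing the two prefixes $u$ and $v$, using the congruence of antichain lengths modulo $n-1$ to equalize the lengths by repeated subdivision, and then invoking a cyclic shift of the matching antichains to realize the required prefix replacement inside $\Tnr$. The one place you go beyond the paper is that you notice and patch a degenerate case: when $r=1$ and one of the prefixes has length $1$, the paper takes $\ac{u} = X_{n,1}^{1} = \{\dot{0}\}$ and asserts the existence of a subdivision $\ac{u}'$ of $\ac{u}$ of greater length that still contains $\dot{0}$, which is impossible since the only complete antichain containing the root is the singleton. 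Your preliminary step of extending $u$ and $v$ by a shared tail letter (which replaces $z$ by its tail and leaves $x,y$ unchanged) forces both prefixes to have length at least $2$, ensuring there is always a second node available to subdivide, and the subsequent subdivisions can be iterated on that branch to reach any length congruent to $r$ modulo $n-1$. This is a small but genuine correction to the paper's argument; the rest is the same in spirit and substance.
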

\begin{proof}
The first statement follows from the definition of $T_{n,r}$ as a subgroup of $G_{n,r}$. The second part of the lemma follows by  the following observation:  if $x = \mu z$ and $y = \nu z$ for some $\mu, \nu \in X_{n,r}^{+}$ and $z \in \CCn$, then there is an element of $T_{n,r}$ mapping  $x$ to $y$. To see this observe that the lengths of the complete antichains $\ac{u}:= X_{n,r}^{|\mu|}$ and $\ac{v} := X_{n,r}^{|\nu|}$ are congruent modulo $n-1$. We may assume that the length of  $\ac{u}$ is smaller than the length of $\ac{v}$. Therefore there is a complete antichain $\ac{u}'$ such that $\mu$ is an element of  $\ac{u}'$, $\ac{u}'$ is a repeated subdivision of $\ac{u}$ and the length of $\ac{u}'$ is equal to the length of $\ac{v}$. It is now  straightforward to see that, since $|\ac{v}_1'| = |\ac{v}_2$, there is an element of $T_{n,r}$ that sends any element $\mu z'$ to the element $\nu z'$ for $z' \in \CCn$. In particular such a map sends $x$ to $y$.
\end{proof} 

In the next Section, we see that elements of $T_{n,r}$ induce homeomorphisms of the circle $\mathbb{R}/r\Z$ by thinking of the circle as a quotient of Cantor space.

\section{From Cantor Space to the Circle}\label{Section:fromcantorspacetothecircle}

 \begin{Definition}
 We say that a homeomorphisms $h$ of $\CCnr$ is \emph{orientation preserving} if  whenever $x,y \in \CCnr$ and $x \lelex y$ then $(x)h \lelex (y)h$. We say that $h$ is \emph{orientation reversing} if whenever $x\lelex y$ then $(y)h \lelex (x)h$. We say that $h$ is \emph{locally orientation preserving/  reversing} if there is a neighbourhood of $\CCnr$ on which $h$ is orientation preserving/reversing.
 \end{Definition}
 
We identify $S_r$ with the interval $[0, r]$ with the end points identified. Now, every point in $[0,r]$ can be written as $\dot{a}x$ for $x \in \CCn$ and $\dot{a} \in \dotr$ in $n$-ary expansion. However this representation is not unique. In particular two elements $x, y \in \CCnr$ represents the same element of $[0,r]$ if and only if there is some integer $i$, $0< i \le n-1$ or $\dot{0}< i \le \dot{r-1}$, and some $w \in \Xnr^{*}$ such that $x = wi00\ldots$ and $y= w(i-1)n-1n-1\ldots$ (i.e only elements of $(0,r)\cap \Z[1/n]$ have non-unique $n$-ary representations ).  Let $\simeq$ be the equivalence relation on $\CCnr$ defined by $x \simeq y$ if and only if there is some $0< i \le n-1$ or $\dot{0} < i \le \dot{r-1}$ and some $w \in \Xnr^{+}$ such that $x = wi00\ldots$ and $y= w(i-1)n-1n-1\ldots$ or $x= \dot{0}00\ldots$ and $y = \dot{r}n-1 n-1 \ldots$ then $\CCnr/\simeq$ is homeomorphic to $S_{r}$. Let $\simeq_{{\bf{I}}}$ be the relation on $\CCn$ given by  $x \simeq_{{\bf{I}}} y$ if and only if there is some $0< i \le n-1$ or $\dot{0} < i \le \dot{r-1}$, and some $w \in \Xns$ such that $x = wi00\ldots$ and $y= w(i-1)n-1 n-1\ldots$  then $\CCn/\simeq_{{\bf{I}}}$ is homeomorphic to the interval $[0,r]$.



Let $N$ be the subgroup of $H(\CCnr)$ consisting of those elements $h$ which preserve $\simeq$ and which satisfy the following:  for all points $t \in \CCnr$ there is a neighbourhood of $t$ in $\CCnr$ such that $h$ is orientation preserving ($h$ is orientation reversing), and there is some $w \in \Xnrs$, $0< i \le n-1$ or $\dot{0} < i \le \dot{r-1}$, and points $x= wi00\ldots$ and $y= wi-1n-1n-1\ldots$ so that $(y)h = \dot{r}n-1n-1\ldots$  and $(x)h = \dot{0}00\ldots$ ($(y)h =  \dot{0}00\ldots$ and $(x)h = \dot{r}n-1n-1\ldots$). Observe that all elements of $N$ induce homeomorphisms of $S_{r}$, and $N$ contains $T_{n,r}$. Our aim shall be to show that $N_{H(\CCnr)}(\Tnr) \le N$.

We shall need the following results from \cite{MBrinFGuzman}. The first follows by Rubin's Theorem and the transitivity of $T_{n,r}$ on the circle $S_{r}$ and the second follows by studying the germs of elements of $T_{n,r}$ at a fixed point.

\begin{Theorem}
Let $r < n \in \mathbb{\N}$  and let $n \ge 2$, then $\aut{T_{n,r}} \cong N_{H(S_{r})}(T_{n,r})$.
\end{Theorem}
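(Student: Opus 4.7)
The plan is to invoke Rubin's theorem, which asserts that whenever a group $G$ acts faithfully and locally densely by homeomorphisms on a locally compact Hausdorff perfect space $X$, every group automorphism of $G$ is implemented by conjugation by a \emph{unique} element of $H(X)$. Applying this with $G = T_{n,r}$ and $X = S_r$ produces a map $\aut{T_{n,r}} \to H(S_r)$, $\varphi \mapsto \tau_\varphi$, with the property that $(g)\varphi = \tau_\varphi^{-1} g \tau_\varphi$ for every $g \in T_{n,r}$. Because $\varphi$ is an automorphism, $\tau_\varphi$ must normalise $T_{n,r}$, so the image lands in $N_{H(S_r)}(T_{n,r})$. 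Conversely any $\tau \in N_{H(S_r)}(T_{n,r})$ visibly induces an automorphism of $T_{n,r}$ by conjugation, and the two constructions are inverse to each other; uniqueness in Rubin's theorem makes both assignments group homomorphisms, and in particular the first one injective.

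First I would verify the topological prerequisites. The space $S_r$ is compact Hausdorff and perfect, and the action $T_{n,r} \hookrightarrow H(S_r)$ obtained from the quotient $\CCnr/\simeq$ is faithful since any nontrivial element of $T_{n,r}$ moves some basic $n$-adic clopen $U_\nu$, and hence acts nontrivially on the corresponding arc of $S_r$. The substantive step is local density: for every nonempty open $U \subseteq S_r$ and every $x \in U$, the orbit of $x$ under the rigid stabiliser $\mathrm{Rist}_{T_{n,r}}(U) := \{g \in T_{n,r} : \mathrm{supp}(g) \subseteq U\}$ must have closure with nonempty interior. I would verify this by choosing $\nu \in \Xnrp$ long enough that the arc corresponding to $U_\nu$ sits strictly inside $U$, and then exhibiting elements of $T_{n,r}$ supported in $U_\nu$ --- essentially a copy of $F_n$ obtained from prefix-replacement homeomorphisms of $U_\nu$ extended by the identity outside $U_\nu$. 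Lemma~\ref{Lemma: Tnr preserves tail classes and acts transitively on each tail class}, applied inside $U_\nu$, then yields transitivity of this subgroup on every $\sim_t$ tail-class of $U_\nu$; since tail-classes are dense, the orbit of $x$ is dense in the arc associated to $U_\nu$, giving the required interior point.

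Once local density is secured, Rubin's theorem delivers the claimed isomorphism $\aut{T_{n,r}} \cong N_{H(S_r)}(T_{n,r})$. The principal obstacle, as expected, is the local density verification: since $T_{n,r}$ only admits cyclic shifts of its antichains, not arbitrary permutations, the rigid stabiliser of a small $n$-adic arc supports only $F$-type dynamics rather than $T$-type dynamics; one must therefore be careful that Lemma~\ref{Lemma: Tnr preserves tail classes and acts transitively on each tail class} still supplies enough transitivity at every point of $U$, including the $n$-adic breakpoints, which can be handled by choosing two adjacent $n$-adic subarcs of $U$ and using elements that mix them. The remainder of the proof is standard Rubin-theorem bookkeeping.
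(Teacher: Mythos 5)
The paper does not prove this theorem itself; it cites it from Brin--Guzm\'{a}n (\cite{MBrinFGuzman}), adding only the remark that it ``follows by Rubin's Theorem and the transitivity of $T_{n,r}$ on the circle $S_r$.'' Your proposal carries out exactly that indicated argument --- checking faithfulness, perfection and local compactness, and doing the real work of verifying local density of the rigid stabilisers (including the correct handling of the $n$-adic breakpoints, where a single cone $U_\nu$ has the breakpoint on its boundary and one instead needs two adjacent $n$-adic subarcs inside $U$) --- so it is the same approach, just with the details that the paper delegates to the citation spelled out.
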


\begin{lemma}\label{Lemma: normalisers map n-adic rationals to n-adic rationals}
If $h \in N_{H(S_{r})}(\Tnr)$, then $ ( [0,r] \cap \Z[1/n])h = [0,r] \cap \Z[1/n]$. 
\end{lemma}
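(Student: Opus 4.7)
The plan is to distinguish $n$-adic from non-$n$-adic points of $S_{r}$ by an intrinsic local group-theoretic invariant attached to the action of $\Tnr$ on $S_{r}$, and then to argue that any $h \in N_{H(S_{r})}(\Tnr)$ must preserve this invariant, so that the set of $n$-adic rationals is setwise fixed.

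For each $x \in S_{r}$ I would define the \emph{germ group} $\Gamma_{x}$ to be the quotient of the stabiliser $\mathrm{Stab}_{\Tnr}(x)$ by the normal subgroup of those elements that act as the identity on some open neighbourhood of $x$. Elements of $\Tnr$ act on $S_{r}$ as piecewise linear homeomorphisms whose breakpoints lie in $[0,r]\cap\Z[1/n]$ and whose slopes lie in $n^{\Z}$; consequently, if $x \notin \Z[1/n]$ then no $g \in \mathrm{Stab}_{\Tnr}(x)$ has a breakpoint at $x$, so a single slope $n^{k}$ governs $g$ on a neighbourhood of $x$, and two such elements with the same slope at $x$ agree on some neighbourhood of $x$. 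Hence $\Gamma_{x}$ embeds in $\Z$ and has rank at most one. For $x \in [0,r]\cap\Z[1/n]$, I would use Lemma~\ref{Lemma: Tnr preserves tail classes and acts transitively on each tail class} together with the subdivision process on complete antichains to produce explicit elements of $\Tnr$ fixing $x$ with prescribed independent left and right slopes $n^{k},\,n^{\ell}$; this exhibits a free abelian subgroup of rank two inside $\Gamma_{x}$.

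Now suppose $h \in N_{H(S_{r})}(\Tnr)$. For each $g \in \mathrm{Stab}_{\Tnr}(x)$ the conjugate $h^{-1}gh$ lies in $\Tnr$ and fixes $(x)h$, and since $h$ is a homeomorphism it maps a neighbourhood basis of $x$ bijectively onto one of $(x)h$. Consequently conjugation by $h$ descends to a well-defined isomorphism $\Gamma_{x} \to \Gamma_{(x)h}$, which forces the rank of $\Gamma_{(x)h}$ to equal the rank of $\Gamma_{x}$. In view of the dichotomy established above, $h$ must send $n$-adic points to $n$-adic points; applying the same reasoning to $h^{-1}$ yields the claimed equality $\bigl([0,r]\cap\Z[1/n]\bigr)h = [0,r]\cap\Z[1/n]$.

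The step I expect to be the main obstacle is the concrete construction of stabilising elements realising independent left and right slopes at a given $n$-adic point while remaining inside the cyclic-order-preserving subgroup $\Tnr$, as opposed to the larger group $G_{n,r}$: this requires choosing pairs of antichains in $\Xnrs$ related by the cyclic rotation bijection permitted by $\Tnr$, with additional care at the endpoint where $\dot{r-1}$ meets $\dot{0}$ under the circle identification of $S_{r}$.
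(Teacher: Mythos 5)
Your proposal is exactly the route the paper indicates for this lemma (which it cites from Brin and Guzm\'{a}n with the hint that it ``follows by studying the germs of elements of $T_{n,r}$ at a fixed point''): the dichotomy between rank-one germ groups at non-$n$-adic points and rank-two germ groups at $n$-adic points, preserved under conjugation by a normaliser, is the intended argument and is correct. The step you flag as a possible obstacle is in fact routine: at an interior $n$-adic point one produces elements of $\Tnr$ fixing both that point and $\dot{0}0^{\omega}$ which are locally the identity on one side and have slope $n^{\pm 1}$ on the other (giving log-slope vectors $(1,0)$ and $(0,1)$), while at the wrap-around point $\dot{0}0^{\omega}\simeq \dot{r-1}(n-1)^{\omega}$ the standard generators of the $F$-type subgroup already realise two independent germ vectors.
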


Now let $g \in N_{H(S_r)}(\Tnr)$. Define $h \in H(\CCnr)$ as follows. For all $t \in S_{r}$ such that $t \notin [0,r] \cap \Z[1/n]$ (notice that this also means $(t)g  \notin [0,r] \cap \Z[1/n]$ by Lemma~\ref{Lemma: normalisers map n-adic rationals to n-adic rationals}),   let $x$ be the unique $n$-ary expansion of $t$, and $y$ be the unique $n$-ary expansion of $(t)g$, set $(x)h = y$. For $t \in [0,r] \cap \Z[1/n]$ let $x$ and $x'$ be the $n$-ary expansions of $t$ such that $x \lelex x'$ in the lexicographic ordering of $\CCnr$ (if  $t =0$, then chose  $x$ and $x'$, satisfying $x \lelex x'$, from the set $\{\dot{0}00\ldots,  \dot{r-1}n-1n-1\ldots\}$). Also let $y$ and $y'$ be the $n$-ary expansions of $(t)g$ in $\CCnr$ such that $y \lelex y'$ (if $(t)g = 0$ then take  $y= \dot{0}00\ldots$ and $y' = \dot{r}n-1n-1\ldots$). If $g$ is an orientation preserving homeomorphism of $S_{r}$,  set $(x)h = y$ and $(x')h = y'$, otherwise $g$ is orientation reversing and we set $(x)h = y'$ and $(x')h = y$. Thus $h$ is now defines a bijection from $\CCnr$ to itself. Moreover it is easy to see that since $g$ is continuous, $h$ is also continuous on $\CCnr$. Furthermore if $h'$ is the homeomorphism obtained from $g^{-1}$ in the same way, it is easy to see that $hh' = h'h = \id \in H(\CCnr)$. Therefore $h  \in N$. Let $\phi: N_{S_r}(\Tnr) \to N$ such that an element $g \in N_{S_r}(\Tnr)$ maps to the element  $h \in H(\CCnr)$, constructed as above, which induces the map $g$ on $S_r$. It follows that $\phi$ is an injective homomorphism. 

Let $N(\Tnr)$ denote the image of $\phi$. We now show that $N_{H(\CCnr)}(\Tnr) = N(\Tnr)$. Observe that as $N(\Tnr) \subseteq N_{H(\CCnr)}(\Tnr)$ it suffices to show only that $N(\Tnr) \supseteq N_{H(\CCnr)}(\Tnr)$. 

Let $\tau, \eta \in X_{n,r}^{+}$ such that $\tau \perp \eta$. We further assume that the points $x=\tau00\ldots$, $y=\eta n-1 n-1\ldots $, $z= \tau n-1 n-1 \ldots$ and $t = \eta 00\ldots$ of $\CCnr$ satisfy $x \not\simeq y$ or $z \not\simeq t$. 
Let $\mathscr{G}$ denote the set of incomparable pairs $(\tau, \eta)$  satisfying the conditions above where $\tau  \lelex \eta$. For a pair $(\tau, \eta)$ in $\mathscr{G}$ let $\ac{u} := \{u_1, \ldots, u_l\}$ be any complete finite antichain of $\Xnrs$ containing $\tau$ and $\eta$ (since $\tau \perp \eta$ such an antichain exists). Let $u_{l_1} = \tau$ and $u_{l_2} = \eta$. Let  $j = l_1-l_2 -1$. Then we say that $j$ is the \emph{node distance between $\tau$ and $\eta$ in  $\ac{u}$} i.e $j$ is the number of elements of $\ac{u}$ which are strictly between  $u_{l_1}$ and $u_{l_2}$. Notice that $j$ is necessarily non-zero by assumption.  Moreover we have, for any $v \in \{0\}^{+}$ and $w \in \{n-1\}^{+}$, that the pair $(\dot{0}v, \dot{r-1}w)$  is not in $\mathscr{G}$.

It is straight-forward to see that if $(\tau, \eta) \in \mathscr{G}$ have node distance $i$ in some complete antichain $\ac{u}_{1}$, then for any other complete antichain $\ac{u}_2$ containing $\tau$ and $\eta$ the node distance between $\tau$ and $\eta $ in $\ac{u}_{2}$ is congruent to $i$ modulo $n-1$. Moreover, for any $\chi \in X_n^{*}$, $(\tau \chi, \eta \chi) \in \mathscr{G}$ and there is a complete antichain $\ac{v}$ containing $\tau\chi$ and $\eta\chi$ such that the node distance between $\tau\chi$ and $\eta \chi$ in $\ac{v}$ is congruent to $i$ modulo $n-1$.  Since modulo $n-1$ the node distance between a pair $(\tau, \eta)$ in $\mathscr{G}$ in a given complete antichain containing $\tau$ and $\eta$ is independent of the complete antichain,  we define the \emph{reduced node distance} between $\tau$ and $\eta$ to be $i \in \{0,1, \ldots, n-2\}$ such that for any complete antichain $\ac{u}$ containing $\tau$ and $\eta$ the node distance between $\tau$ and $\eta$ in $\ac{u}$ is congruent to $i$ modulo $n-1$.

We require the following transitivity result of $\Tnr$.

\begin{lemma}\label{Lemma: can swap things with same node distance}
Let $(\nu_1, \nu_2), (\eta_1, \eta_2) \in \mathscr{G}$ be such that the reduced node distance between $\nu_1$ and $\nu_2$ is equal to the reduced node distance between $\eta_1$ and $\eta_2$. Then there is a $g \in \Tnr$ such that $g\restriction_{U_{\nu_1}}=  g_{\nu_1, \eta_1}$ and $g\restriction_{U_{\nu_2}} = g_{\nu_2, \eta_2}$. 
\end{lemma}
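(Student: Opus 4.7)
The plan is to realise $g$ as an element of $\Tnr$ coming from a single cyclic shift of complete antichains. Specifically, I will construct complete antichains $\ac{u}$ and $\ac{v}$ of a common length $L$ with $\nu_1,\nu_2\in\ac{u}$ in positions $a<b$ and $\eta_1,\eta_2\in\ac{v}$ in positions $a'<b'$, such that $b-a=b'-a'$. The cyclic shift $j:=(a'-a)\bmod L$ then sends $u_a=\nu_1$ to $v_{a'}=\eta_1$ and, because the positional gaps agree, simultaneously sends $u_b=\nu_2$ to $v_{b'}=\eta_2$; the rest of the bijection $\ac{u}\to\ac{v}$ is forced by the shift. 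The resulting $g$ lies in $\Tnr$ by the very definition of the group, and on $U_{\nu_i}$ it replaces the prefix $\nu_i$ by $\eta_i$, so it satisfies both restriction identities.

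To produce such antichains I begin with any initial $\ac{u}_0\ni\nu_1,\nu_2$ and $\ac{v}_0\ni\eta_1,\eta_2$ with node distances $d_1$ and $d_2$ respectively; by hypothesis $d_1\equiv d_2\pmod{n-1}$. Each subdivision of an element lying strictly between the pair increases the node distance by exactly $n-1$ while keeping the pair in the antichain, whereas each subdivision of an element outside the pair leaves the node distance unchanged but still grows the antichain length by $n-1$. So in whichever antichain has the smaller distance I apply between-subdivisions until the two distances coincide at a common value $d'$, and then I apply outside-subdivisions in whichever antichain is shorter until their lengths agree at some $L$; the latter is always possible because every complete antichain in $\Xnrs$ has length congruent to $r$ modulo $n-1$. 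At this stage the positional gaps satisfy $b-a=b'-a'=d'+1$, and the cyclic shift $j=(a'-a)\bmod L$ delivers the required $g$.

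The delicate case is when one of the initial distances, say $d_1$, is zero, so that $\nu_1$ and $\nu_2$ are adjacent in every antichain containing both and between-subdivisions are not directly available. In that case I first apply a common-suffix refinement: I replace $(\nu_1,\nu_2)$ by $(\nu_1\chi,\nu_2\chi)$ and, simultaneously, $(\eta_1,\eta_2)$ by $(\eta_1\chi,\eta_2\chi)$ for a suitable $\chi\in\Xns$. The restriction condition $g\restriction_{U_{\nu_i}}=g_{\nu_i,\eta_i}$ is equivalent to its refined version $g\restriction_{U_{\nu_i\chi}}=g_{\nu_i\chi,\eta_i\chi}$, and, by the paragraph preceding the lemma, both refined pairs still lie in $\mathscr{G}$ with unchanged reduced node distances. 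A choice of $\chi$ of length one already introduces $n-1$ elements strictly between the refined pair in the minimal antichain, so between-subdivisions become available and the problem reduces to the generic case handled above. This refinement is where I expect the main conceptual work; the rest is bookkeeping with subdivisions and positions.
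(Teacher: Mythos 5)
Your strategy — match node distances by subdividing between the pairs, match antichain lengths by subdividing outside the pairs, then realise $g$ as a cyclic shift — is essentially the paper's. The gap is at the length-matching step: you assert that outside-subdivisions are always possible, citing only the congruence $\bmod\,(n-1)$, but that congruence only shows the target lengths are compatible; it does not show you can grow the shorter antichain without touching $\nu_1,\nu_2$ or any node between them. For that you need an element of the antichain strictly outside the interval $[\nu_1,\nu_2]$. If none existed, $\nu_1$ would be the lexicographically first element of the complete antichain, hence of the form $\dot{0}w_1$ with $w_1\in\{0\}^*$, and $\nu_2$ the last, of the form $\dot{r-1}w_2$ with $w_2\in\{n-1\}^*$; but that wrap-around configuration, with $\nu_1 0^\omega\simeq\nu_2(n-1)^\omega$, is exactly what membership in $\mathscr{G}$ forbids (the paper points out that $(\dot{0}v,\dot{r-1}w)$ for $v\in\{0\}^+$, $w\in\{n-1\}^+$ is not in $\mathscr{G}$ and uses this to conclude that one of $\nu_1,\nu_2$ is not extremal). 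This is the one place the $\mathscr{G}$ hypothesis does real work in the proof, and you omit it.

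Conversely, your ``delicate case'' $d_1=0$ is vacuous: membership in $\mathscr{G}$ gives $\nu_1(n-1)^\omega\not\simeq\nu_2 0^\omega$, so $U_{\nu_1}$ and $U_{\nu_2}$ are never adjacent, and the node distance in any complete antichain containing both is strictly positive — the paper states ``$j$ is necessarily non-zero by assumption.'' The repair you propose there is also wrong in the direction you need: knowing $g\restriction_{U_{\nu_i\chi}}=g_{\nu_i\chi,\eta_i\chi}$ does not yield $g\restriction_{U_{\nu_i}}=g_{\nu_i,\eta_i}$, since the cyclic shift built on the refined antichain can send sibling subcones $U_{\nu_i\chi'}$ with $\chi'\ne\chi$ somewhere other than $U_{\eta_i\chi'}$. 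Because the case is empty this does not break your proof, but it suggests you have the role of the $\mathscr{G}$ condition inverted: it is there to make the length-matching step possible, not to guarantee a positive starting node distance.
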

\begin{proof}
Let $\ac{u}$ be a complete antichain containing $\nu_1$ and $\nu_2$, and $\ac{v}$ be a complete antichain containing $\eta_1$ and $\eta_2$. Let $\ac{u}= \{u_1, \ldots, u_{l}\}$ and $\ac{v} = \{v_1, \ldots, v_{m}\}$. Let $u_{l_1} = \nu_1$ and $u_{l_2} = \nu_2$ and $v_{m_1} = \eta_1$ and $v_{m_2} = \eta_2$ where $l_1 < l_2$ and $m_1 < m_2$. By assumption $i:= l_2 - l_1 -1 $ and $j:=m_2 - m_1 -1$ are both non-zero and congruent modulo $n-1$. Without loss of generality we may assume $i< j$, moreover there is an element $u_{l_1+1} \in \ac{u}$ between $\nu_1$ or $\nu_2$. By replacing $u_{l_1 + 1} \in \ac{u}$ with $\{u_{l_1 +1}0, \ldots, u_{l_1 +1} n-1 \}$ we obtain a new complete antichain $\ac{u}'$ containing $\nu_1$ and $\nu_2$ such that the node distance between $\nu_1$ and $\nu_2$ is equal to $i + n-1$. Replace the antichain $\ac{u}$ with $\ac{u}'$

By repeatedly performing this operation we may assume that the complete antichain $\ac{u}$ containing $\nu_1$ and $\nu_2$ is such that the node distance between $\nu_1$ and $\nu_2$ in $\ac{u}$ is equal to $j$.

Observe that as $\ac{u}$ and  $\ac{v}$ are complete antichains the difference $| m-l|$ is congruent to $0$ modulo $n-1$. Without loss of generality (as we may relabel to achieve this) we assume that $l < m$.

Now  observe that $u_1$ must be equal to $\dot{0}w_1$ for some $w_1 \in \{0\}^{*}$ and  $u_{l} = \dot{r-1}w_2$ for some $w_2 \in \{n-1\}^{*}$. Moreover, since the pair  $(\dot{0}v, \dot{r-1}w)$ for $v \in \{0\}^{+}$ and $w \in \{n-1\}^{+}$ is not in $\mathscr{G}$, it follows that either $\nu_{l_1} \ne u_{l} = \dot{0}w_1$ or $\nu_{l_2} \ne \dot{r-1}w_2$. Suppose that $\nu_{l_1} \ne u_{l} = \dot{1}w_1$ (the other case is handled similarly). Then by repeatedly expanding along the node $\dot{1}w_1$, we obtain a complete antichain $\ac{u}''$ of length $m$ containing $\nu_1$ and $\nu_2$ such that the node distance between $\nu_1$ and $\nu_2$ is $j$.

Since the complete antichain $\ac{u}''$ and $\ac{v}$ have the same lengths and the node distance between $\nu_1$ and $\nu_2$ in $\ac{u}''$ is equal to the node distance between $\eta_1$ and $\eta_2$ in $\ac{v}$, it is easy to construct an element $g$ of $\Tnr$  such that $g\restriction_{U_{\nu_1}}=  g_{\nu_1, \eta_1}$ and $g\restriction_{U_{\nu_2}} = g_{\nu_2, \eta_2}$. 
\end{proof}

Using this transitivity result we show below  that $N(\Tnr)$ is equal to $N_{H(\CCnr)}(\Tnr)$.

\begin{lemma}
Let $h \in H(\CCnr)$ be such that $h \in N_{H(\CCnr)}(\Tnr)$. Then $h$ (and so $h^{-1}$) preserves the equivalence relation $\simeq$.
\end{lemma}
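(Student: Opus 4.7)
The plan is to argue by contradiction: suppose there exist $x \ne y$ in $\CCnr$ with $x \simeq y$ but $(x)h \not\simeq (y)h$. Set $x' = (x)h$ and $y' = (y)h$. Since $h$ is a bijection $x' \ne y'$, and because $\simeq$-equivalence classes have size at most two, the hypothesis $x' \not\simeq y'$ forces $\pi(x') \ne \pi(y')$ in $S_r$, where $\pi\colon \CCnr \to S_r$ is the quotient map introduced at the start of this section.

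First I would establish that $\Tnr$ itself respects $\simeq$: by construction every element of $\Tnr$ descends to a homeomorphism of $S_r$, so any $g \in \Tnr$ preserves $\simeq$-classes. Consequently, if $g \in \Tnr$ fixes $x$, then $(\{x, y\})g = \{x, (y)g\}$ is a $\simeq$-class; since the only $\simeq$-class containing $x$ is $\{x, y\}$ and $(y)g = x$ would violate injectivity, we must have $(y)g = y$. Hence $\mathrm{Stab}_{\Tnr}(x) \subseteq \mathrm{Stab}_{\Tnr}(y)$.

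The normaliser hypothesis then yields $\mathrm{Stab}_{\Tnr}(x') = h^{-1} \mathrm{Stab}_{\Tnr}(x) h$: for $g \in \mathrm{Stab}_{\Tnr}(x)$ one has $h^{-1}gh \in \Tnr$ and $(x')(h^{-1}gh) = ((x)g)h = x'$, and the reverse inclusion follows by symmetry applied to $h^{-1}$. Combined with the previous step, every element of $\mathrm{Stab}_{\Tnr}(x')$ also fixes $y'$, that is, $\mathrm{Stab}_{\Tnr}(x') \subseteq \mathrm{Stab}_{\Tnr}(y')$.

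The main obstacle is then to exhibit some $g \in \Tnr$ with $(x')g = x'$ but $(y')g \ne y'$, which will contradict the previous containment. Using $\pi(x') \ne \pi(y')$, choose a subarc $A \subset S_r$ with $n$-adic endpoints such that $\pi(y')$ lies in the interior of $A$ and $\pi(x') \notin A$. The group $\Tnr$ contains abundant elements supported in $A$: take a pair of complete antichains $\ac{u}, \ac{v}$ that agree on the portion lying outside $A$ and differ only on the $A$-block, and apply the trivial global cyclic shift ($j = 0$). The resulting subgroup is (a copy of) a Thompson-$F$-type group acting faithfully on $A$, so it contains an element $g$ with $g\cdot \pi(y') \ne \pi(y')$. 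Since $g$ fixes $\pi(x')$ in $S_r$, preserves the tail relation $\sim_t$, and the fibre $\pi^{-1}(\pi(x'))$ contains at most one representative in the $\sim_t$-class of $x'$, we deduce $(x')g = x'$; meanwhile $\pi((y')g) \ne \pi(y')$ forces $(y')g \ne y'$, completing the contradiction.
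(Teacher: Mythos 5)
Your argument is correct, and it takes a genuinely different route from the paper's. The paper proves this by an explicit conjugation: it locates an incomparable pair $(\tau,\eta) \in \mathscr{G}$ with $(x)h \in U_\tau$, $(y)h \in U_\eta$ and reduced node distance $j$, picks a second pair $(\mu,\nu) \in \mathscr{G}$, finds $(\mu',\nu') \in \mathscr{G}$ of reduced node distance $j$ with $U_{\mu'}\subset (U_\mu)h$ and $U_{\nu'}\subset(U_\nu)h$, and invokes Lemma~\ref{Lemma: can swap things with same node distance} to produce $g \in \Tnr$ with $g\restriction_{U_\tau} = g_{\tau,\mu'}$ and $g\restriction_{U_\eta} = g_{\eta,\nu'}$; then $hgh^{-1} \in \Tnr$ carries $x$ and $y$ to points of $U_\mu$ and $U_\nu$ respectively, which are not $\simeq$-equivalent because $(\mu,\nu) \in \mathscr{G}$, a contradiction. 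You instead run a stabiliser comparison: since $\Tnr$ preserves $\simeq$ and $\{x,y\}$ is the entire $\simeq$-class of $x$, one has $\mathrm{Stab}_{\Tnr}(x) \le \mathrm{Stab}_{\Tnr}(y)$, and conjugating by $h$ transports this to $\mathrm{Stab}_{\Tnr}(x') \le \mathrm{Stab}_{\Tnr}(y')$; a supported element of $\Tnr$ fixing $x'$ but moving $y'$ then gives the contradiction. The paper's route reuses the node-distance machinery it has already set up and continues to need in the subsequent local-action lemmas; yours is more abstract and isolates the minimal group-theoretic input (stabiliser containment plus one separating supported element), at the cost of asserting, rather than proving, the existence and dynamics of an $F$-type subgroup of $\Tnr$ supported on an arbitrary $n$-adic subarc. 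One minor streamlining: your final appeal to $\pi^{-1}(\pi(x'))$ and $\sim_{t}$ is unnecessary --- $g$ is the identity on the cones of $\CCnr$ lying outside $A$, and both preimages of $\pi(x')$ lie outside those cones since $\pi(x')$ avoids $A$ and its endpoints, so $(x')g = x'$ immediately.
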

\begin{proof}
Suppose there are $x, y \in \CCnr$ such that $x \simeq y$ but $(x)h \not\simeq (y)h$. By relabelling if necessary we may assume that $(x)h < (y)h$. Since $(x)h \not \simeq (y)h$ there are $(\tau, \eta) \in \mathscr{G}$ such that $(x)h \in U_{\tau}$ and $(y)h \in U_{\eta}$. Let $j \in \{0,1 , \ldots, n-2\}$ be the reduced node distance between $\tau$ and $\eta$

Let $(\mu, \nu) \in  \mathscr{G}$ and consider the clopen sets $(U_{\mu})h$ and $(U_{\nu})h$. It is straight-forward to see that there are $\mu'$ and $\nu'$ such that $(\mu', \nu') \in \mathscr{G}$, the reduced node distance between $\mu'$ and $\nu'$ is equal to $j$, and $U_{\mu'} \subset (U_{\mu})h$ and $U_{\nu'} \subset (U_{\nu})h$ (the case where $U_{\mu'} \subset (U_{\nu})h$ and $U_{\nu'} \subset (U_{\mu})h$ is analogous). By Lemma~\ref{Lemma: can swap things with same node distance} there is an element $g \in \Tnr$ such that $g\restriction_{U_{\tau}} = g_{\tau,\mu'}$ and $g \restriction_{U_{\eta}} = g_{\eta, \nu'}$.

Now consider the product $h g h^{-1}$. Notice that $(x) h g h^{-1}$ is contained in the clopen set $U_{\mu}$ since $(x)h \in U_{\tau}$ and $(U_{\tau})g \in U_{\mu'} \subset (U_{\mu})h$, likewise $(y)h g h^{-1} \in U_{\nu}$. Therefore $(x)hgh^{-1} \not\simeq (y)hgh^{-1}$, which is a contradiction since $T_{n,r}$ preserves $\simeq$.
\end{proof}

\begin{lemma}
Let $h \in H(\CCnr)$ be such that $h \in N_{H(\CCnr)}(\Tnr)$, then $h \in N(\Tnr)$.
\end{lemma}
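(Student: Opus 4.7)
The plan is to use the preceding lemma (that $h$ preserves $\simeq$) to descend $h$ to a homeomorphism $\bar h$ of the circle $S_r$, to observe that $\bar h$ lies in $N_{H(S_r)}(\Tnr)$, and then to argue that continuity of $h$ on $\CCnr$ forces $h$ to coincide with the element $\phi(\bar h)$ of $N(\Tnr)$.

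First, since both $h$ and $h^{-1}$ preserve $\simeq$, the map $h$ induces a bijection $\bar h : S_r \to S_r$ under the quotient $\CCnr \to \CCnr/\simeq \cong S_r$. As $\CCnr$ is compact Hausdorff and the quotient map is continuous, $\bar h$ and its inverse are both continuous, so $\bar h \in H(S_r)$. For any $t \in \Tnr$ we have $h t h^{-1} \in \Tnr$ in $H(\CCnr)$, and since $\Tnr$ preserves $\simeq$ this descends to $\bar h t \bar h^{-1} \in \Tnr$ in $H(S_r)$, so $\bar h \in N_{H(S_r)}(\Tnr)$ and $\phi(\bar h) \in N(\Tnr)$ is defined.

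By construction $\phi(\bar h)$ preserves $\simeq$ and descends to $\bar h$, so $h$ and $\phi(\bar h)$ agree at every $x \in \CCnr$ whose $\simeq$-class is a singleton, namely at every $x$ whose image in $S_r$ is not an $n$-adic rational. For the remaining $\simeq$-classes, each consisting of two lifts $x \lelex x'$ of some $t \in [0,r] \cap \Z[1/n]$, both $h$ and $\phi(\bar h)$ send $\{x,x'\}$ onto the two-element $\simeq$-class $\{y,y'\}$ (with $y \lelex y'$) of $\bar h(t)$. Only the matching of these two pairs can differ, and this is where the real work is.

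Since $\bar h$ is a homeomorphism of the circle $S_r$ it is either globally orientation-preserving or orientation-reversing, and this dichotomy is exactly what determines $\phi(\bar h)$ from its definition. On the other hand, the assignment performed by $h$ itself is forced by continuity on the totally disconnected space $\CCnr$: sequences converging to a lift of $t$ from the lex-above or lex-below side correspond to one-sided approaches to $t$ in $S_r$, so $h$ must map such sequences to sequences converging to a specific lift of $\bar h(t)$, the choice of which is again dictated by the orientation of $\bar h$. Matching this side-of-approach dictionary on both sides pins down $h(x)$ and $h(x')$ uniquely and identifies them with $\phi(\bar h)(x)$ and $\phi(\bar h)(x')$. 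The main obstacle is the careful bookkeeping at the wrap-around point $0 = r \in S_r$, where the correspondence between the lexicographic order on $\CCnr$ and the cyclic order on $S_r$ reverses; this is precisely why the defining conditions of $N$ build a swap of $\dot{0}00\ldots$ and $\dot{r-1}n-1n-1\ldots$ into the orientation-preserving case. One verifies that both $h$ and $\phi(\bar h)$ implement the same swap there, after which $h = \phi(\bar h) \in N(\Tnr)$ follows.
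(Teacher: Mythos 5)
Your proof is correct and follows the same route as the paper: descend $h$ to a homeomorphism $\bar h$ of $S_r$, observe $\bar h \in N_{H(S_r)}(\Tnr)$, and conclude $h = (\bar h)\phi \in N(\Tnr)$. The only place the paper leaves a gap is the identity $(\bar h)\phi = h$, and you supply exactly the missing reasoning.

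One remark: the orientation bookkeeping you carry out at the two-element $\simeq$-classes can be bypassed. The points of $\CCnr$ whose $\simeq$-class is a singleton form a co-countable, hence dense, subset of the Cantor space; on that set $h$ and $(\bar h)\phi$ automatically agree because both descend to $\bar h$. Two continuous maps into a Hausdorff space agreeing on a dense set coincide, so $h = (\bar h)\phi$ with no case analysis at the $n$-adic lifts or at the wrap-around point. Your more explicit side-of-approach argument is also valid, and it has the virtue of exhibiting directly that $h$ is locally orientation preserving or reversing, but the density observation is what makes the paper's terse step unproblematic.
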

\begin{proof}
Since $h \in N_{H(\CCnr)}(\Tnr)$, then by the previous lemma $h$ preserves $\simeq$. Therefore $h$ induces a continuous function $g$ on  $S_{r}$. However since $h^{-1}$ is also in $N_{(H(\CCnr))}$, $g$ must be a homeomorphism and $(g)\phi = h$, therefore $h \in N(\Tnr)$.
\end{proof}

Thus we have now proved that $N_{H(\CCnr)}(\Tnr) = N(\Tnr) \cong 
\aut{\Tnr}$. 

\section{Automorphisms of \texorpdfstring{$T_{n,r}$}{Lg}}\label{Section:AutTnr}
 In what follows we adapt the results of 
\cite{BCMNO} to show that elements of 
$N_{H(\CCnr)}(\Tnr)$ can be represented by bi-synchronizing 
transducers.  However we shall need to define the group $\T{R}_{n,r}$ introduced in \cite{BCMNO} which is a slight  modification of the Rational group $\mathcal{R}_{n}$ in  \cite{GriNekSus}. The exposition in this section will largely mirror that found in \cite{BCMNO}.

First we introduce transducers generally then we introduce transducers over $\CCnr$.

Let $X_{I}$ and $X_{O}$ be finite sets of symbol. A transducer over the alphabet $X_{I}$ is a tuple $T = \gen{X_{I},X_O, Q_{T} \pi_{T}, \lambda_{T}}$ such that:

\begin{enumerate}[label = (\roman{*})]
\item  $X_{I}$  is the \emph{input alphabet} and $X_{O}$ is the \emph{output alphabet}.
\item $Q_T$ is the set of states of $T$.
\item $\pi_{T}: X_I \times Q_{T} \to Q_{T}$ is the \emph{transition function} and,
\item $\lambda_{T}: X_I \times Q_{T} \to X_{I}^{*}$ is the \emph{output function}.
\end{enumerate}

If $|Q_T| < \infty$ then we say that $T$ is  a finite transducer. If $X_{I} = X_{O} = X$ then we shall write $T = \gen{X, Q_{T}, \pi_{T}, \lambda_{T}}$. If we fix a state $q \in Q_{T}$ from which we begin processing inputs then we say that $T$ is \emph{initialised at $q$} and we denote this by $T_{q}$ and we call $T_{q}$ an \emph{initial transducer}. Given an initial transducer $T_{q_0}$ we shall write $T$ for the underlying transducer with no initialised states.

We inductively extend the domain of the transition and rewrite function to  $X_{I}^{*} \times Q_T $ by the following rules:
for a word $w \in X_{I}^{*}$, $i \in X_{I}$ and any state $q \in Q_T$ we have $\pi_T(wi, q ) = \pi_T(i, \pi_T(w, q))$ and $\lambda_{T}(wi, q) = \lambda_{T}(w, q)\lambda_{T}(i, \pi_{T}(w, q)) $. We then extend the domain of $\pi_{T}$ and $\lambda_{T}$ to  $X_{I}^{\omega} \times Q_{T}$.

In this paper we shall insist that for $\delta \in X_{I}^{\omega}$ and any state $q \in Q_T$ we have $\lambda_{T}(\delta, q) \in X_{O}^{\omega}$. This means that for a state $q \in Q_T$ the initial transducer $T_{q}$ induces a continuous function $h_{T_{q}}$ ($h_{q}$ if it is clear from the context that $q$ is a state of $T$) from $X_I^{\omega}$ to $X_O^{\omega}$. For $q \in Q_{T}$ we denote by $\im(q)$ the image of the map $h_{q}$; if $h_{q}$ is a homeomorphism form $X_{I}^{\omega} \to X_{O}^{\omega}$, then we call $q$ a \emph{homeomorphism state}.

Two states $q_1$ and $q_2$ of $T$ are called $\omega$-equivalent if $h_{q_1} = h_{q_2}$. A state $q$ of $T$ is called a state of incomplete response if for some $i \in X_{I}$ $\lambda_{T}(i, q)$ is not equal to the greatest common prefix of the set $\{(i \delta)h_{q} \mid \delta \in X_{I}^{\omega}\}$. If $T$ is an initial transducer with initial state $q_0$, then $q$ is called \emph{accessible} if  there is a word $w \in X_I^*$ such that $\pi_{T}(w, q_0) = q$. If all the states of $T_{q_0}$ are accessible then $T_{q_0}$ is called \emph{accessible}.

An initial transducer $T_{q_0}$ is called \emph{minimal} if $T_{q_0}$ is accessible, has no states of incomplete response and no pair of $\omega$-equivalent states. The initial transducer $T_{q_0}$ is also called invertible if the state $q_0$ is a homeomorphism state. Given an initial transducer  $T_{q_0}$ there is a unique minimal transducer $S_{p_0}$ $\omega$-equivalent to $T_{q_0}$ (\cite{GriNekSus}).

We give below the method given in \cite{GriNekSus} for constructing the inverse of an invertible, minimal transducer $T_{q_0}$. We first define the following function.

\begin{Definition}
Let  $T_{q_0} = \gen{X_{I}, X_{O}, Q_{T}, \pi_{T}, \lambda_{T}}$ be an invertible minimal transducer and $q$ a state of $T_{q_0}$. Define a function $L_{q}: X_{O}^{+} \to X_{I}^{\ast}$ by $(\nu)L_{q} = \varphi$ where  $\varphi$ is the greatest common prefix of the set $(U_{\nu})h_{q}^{-1}$.
\end{Definition}

Observe that since $T_{q_0}$ is minimal and invertible, then each state $q$ of $T_{q_0}$ induces an injective function from $X_{I}^{\omega}$ to $X_{O}^{\omega}$ with clopen image. From this one can deduce that for each state $q$ of $T_{q_0}$ the set of words $w \in  X_{O}^{\ast}$ such that $(w)L_{q} = \epsilon$ and $U_{w} \subset \im(q)$ is finite (see \cite{GriNekSus}).

We now form a transducer $T_{(\epsilon, q_0)} = \gen{X_{O}, X_{I}, Q'_{T}, \pi'_{T}, \lambda'_{T}}$ where $Q'_{T} = \{ (w,q) \mid  q \in Q_{T}, (w)L_{q} = \epsilon, U_{w} \subset \im(q) \}$ and $\pi'_{T}$ and $\lambda'_{T}$ are defined, for all $i \in X_{O}$ and $(w,q) \in Q'_{T}$, by the rules:
\begin{enumerate}[label=(\roman*)]
\item  $\pi'_{T}(i, (w,q)) = (wi - \lambda_{T}((wi)L_{q},q), \pi_{T}((wi)L_{q}, q)$ and, 
\item $\lambda'_{T}(i, (w,q)) = (wi)L_{q}$.
\end{enumerate} 

The following proposition is a result in \cite{GriNekSus}:

\begin{proposition}\cite{GriNekSus}
For $T_{q_0}$ a minimal invertible transducer, the transducer $T_{(\epsilon, q_0)}$ is well-defined, has no states of incomplete response, and satisfies $h_{(\epsilon, q_0)} = h_{q_0}^{-1}$.
\end{proposition}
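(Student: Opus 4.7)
The plan is to establish the three claims by first fixing a semantic interpretation of each state and then making three short, closely related arguments. The interpretation is that a state $(w,q) \in Q'_T$ represents the configuration in which the inverse machine has consumed the output prefix $w$ starting from $q$ but has not yet committed to producing any input symbols. The two defining conditions $(w)L_q = \epsilon$ and $U_w \subseteq \im(q)$ are exactly what one needs for this to be consistent: $U_w \subseteq \im(q)$ makes $h_q^{-1}$ defined on all of $U_w$, and $(w)L_q = \epsilon$ says no common input letter has yet been forced. Under this interpretation the function $h_{(w,q)}$ is supposed to be $\eta \mapsto h_q^{-1}(w\eta)$, and everything else follows by chasing the definition.

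For well-definedness of the transition and output functions, I would fix $(w,q) \in Q'_T$ and $i \in X_O$, set $v := (wi)L_q$, $q' := \pi_T(v,q)$, and $\alpha := \lambda_T(v,q)$, and check three small facts in order: (i) $\alpha$ is a prefix of $wi$, so the subtraction $w' := wi - \alpha$ makes sense; this uses that $\alpha$ is a common prefix of every element of $U_{wi}$, and $U_{wi}$ contains all extensions of $wi$, so the common prefix cannot exceed $wi$; (ii) $U_{w'} \subseteq \im(q')$, obtained by stripping the common prefix $\alpha$ from $U_{wi} \cap \im(q)$, which is expressible as $\alpha \cdot \im(q')$; and (iii) $(w')L_{q'} = \epsilon$, by maximality of $v$, since any non-empty common prefix of $(U_{w'})h_{q'}^{-1}$ would lift to a common prefix of $(U_{wi})h_q^{-1}$ strictly longer than $v$.

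The absence of incomplete response at $(w,q)$ is then immediate from the same computation: $\lambda'_T(i,(w,q)) = v$ is by construction the greatest common prefix of $\{h_{(w,q)}(i\delta) : \delta \in X_O^\omega\} = \{h_q^{-1}(wi\delta) : \delta \in X_O^\omega\}$. For the inverse identity, I would prove by induction on $|u|$ that reading $u \in X_O^*$ from $(\epsilon, q_0)$ accumulates output $(u)L_{q_0}$ and lands in the state $(u - \lambda_T((u)L_{q_0}, q_0),\ \pi_T((u)L_{q_0}, q_0))$; the inductive step is a short cocycle-style calculation using the same gcp analysis as in well-definedness. Passing to $\eta \in X_O^\omega$: since $h_{q_0}$ is a homeomorphism, $|(\eta_{\le k})L_{q_0}|$ grows to infinity with $k$, and by continuity the infinite accumulated output equals $h_{q_0}^{-1}(\eta)$.

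The main obstacle is the simultaneous bookkeeping on the input and output sides of the transducer — in particular verifying that $\alpha$ is always a prefix of $wi$ (and not some longer word that happens to be a common prefix of $U_{wi}$ in an abstract sense) and pushing through the maximality argument for $(w')L_{q'} = \epsilon$. Both rely on the invariant $U_w \subseteq \im(q)$ being preserved under transitions, and this is the one point at which the initially opaque definition of $Q'_T$ earns its keep; minimality and invertibility of $T_{q_0}$ are used to ensure that $L_q$ is well-defined and that $Q'_T$ itself is finite.
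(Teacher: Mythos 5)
The paper does not prove this proposition itself; it is imported from \cite{GriNekSus}, with only the construction of $T_{(\epsilon,q_0)}$ reproduced in the text, so there is no in-paper argument for you to match. On its own terms your proposal is correct and has the right structure: the invariant ``$(w)L_q=\epsilon$ and $U_w\subseteq\im(q)$'' is preserved by a single transition, and the single identity
\[
h_q^{-1}(U_{wi}) \;=\; v\cdot h_{q'}^{-1}(U_{w'}), \qquad v=(wi)L_q,\ q'=\pi_T(v,q),\ w'=wi-\lambda_T(v,q),
\]
drives your facts (i)--(iii), the absence of incomplete response, and the inverse identity simultaneously. Two small things to tighten rather than real gaps. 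First, in (ii) the phrase ``$U_{wi}\cap\im(q)$, which is expressible as $\alpha\cdot\im(q')$'' should be an inclusion: since $v$ is a common prefix of $h_q^{-1}(U_{wi})$ one gets $U_{wi}\subseteq h_q(U_v)=\alpha\cdot\im(q')$, and stripping $\alpha$ gives $U_{w'}\subseteq\im(q')$; equality would force $\im(q')$ to be a single cone, which need not hold. Second, your ``no incomplete response'' paragraph already invokes $h_{(w,q)}(i\delta)=h_q^{-1}(wi\delta)$, but you only derive this in the following paragraph and only at the initial state $(\epsilon,q_0)$; it is cleaner to run the induction at an arbitrary $(w,q)\in Q'_T$ and establish $h_{(w,q)}(\eta)=h_q^{-1}(w\eta)$ for every $\eta\in X_O^\omega$ first, after which the incomplete-response claim holds at every state and the identity $h_{(\epsilon,q_0)}=h_{q_0}^{-1}$ is the special case $w=\epsilon$. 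Your closing observation about minimality, injectivity, and clopenness of $\im(q)$ is exactly where the hypotheses are consumed.
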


We observe that given a minimal invertible transducer $T_{q_0}$, the transducer $T_{(\epsilon,q_0)}$ is  accessible if $T_{q_0}$. This is because if for any word $\Gamma \in \Xnp$, $(\Gamma)L_{q_0} = \epsilon$, then $\pi'_{T}(\Gamma,  (\epsilon, q_0)) = (\Gamma, q_0)$. Furthermore,  for any word $w \in \Xnp$  and state $q \in Q_{T}$, such that  $U_{w} \subset  \im(q)$ and $(w)L_{q} = \epsilon$, picking a word $\Gamma \in \Xnp$ such that  $\pi_{T}(\Gamma,q_0) = q$, we observe that $(\lambda_{T}(\Gamma, q_0)w)L_{q_0} = \Gamma$ and so $\pi'_{A}(\lambda_{T}(\Gamma, q_0)w, (\epsilon,q_0)) = (w,q)$. Note that given a minimal invertible transducer $T_{q_0}$, the transducer $T_{(\epsilon, q_0)}$, even though it has no states of incomplete response and is accessible, might not be minimal. 

Given two  transducers $A = \gen{X, Q_{A}, \pi_{A}, \lambda_{A}}$ and $B = \gen{X, Q_B, \pi_B, \lambda_B}$ then the  product $A*B = \gen{X, Q_{A*B}, \pi_{A*B}, \lambda_{A*B}}$ is the transducer defined as follows. The set of states $Q_{A*B}$ of $A*B$ is equal to the cartesian product $Q_{A} \times Q_{B}$. The transition and output function of $A*B$ are given by the following rules: for states $q \in Q_A$ , $p \in Q_B$ and $i \in X$ we have $\pi_{A*B}(i, (q,p)) = (\pi_{A}(i,q),\pi_B(\lambda_{A}(i,q), p) )$ and $\lambda_{A*B}(i, (q,p)) = \lambda_{B}(\lambda_{A}(i, q), p)$. For two initial transducers  $A_{q_0}$ and $B_{p_0}$, where $A$ and $B$ are the resulting transducers with no initialised states, the product of the initial transducer $A_{q_0} * B_{q_0}$, is the initial transducer $(A*B)_{(q_0, p_0)}$. It is straightforward to see that $h_{(A*B)_{(q_0, p_0)}} = h_{A_{q_0}}\circ h_{B_{p_0}}$.

A transducer (initial or non-initial) $T = \gen{X_I, X_O, Q_T, \pi_T, \lambda_T}$ is said to be \emph{synchronizing at level $k$} if there is a natural number $k \in \N$ and a map $\mathfrak{s}: X_I^{k} \to  Q_T$ such that for a word $\Gamma \in X_I^{{k}}$ and for any state $q \in Q_T$ we have $\pi_{T}(\Gamma, q) = (\Gamma)\mathfrak{s}$. We will denote by $\core(T)$ the sub-transducer of $T$ induced by the states in the image of $\mathfrak{s}$. We call this sub-transducer the \emph{core of $T$}. If $T$ is equal to its core then we say that $T$ \emph{core}. Viewed as a graph $\core(T)$ is a strongly connected transducer. If $T$ is an initial transducer $T_{q_0}$  which is invertible, then we say that $T_{q_0}$ is bi-synchronizing if both $T_{q_0}$ and its inverse are synchronizing. Note that when $T$ is synchronous, then we shall say $T$ is bi-synchronizing if $T$ and its inverse are synchronizing.

Given  a transducer $T$ and states $q_1, q_2$ of $T$ we shall sometimes use the phrase $q_1$ and $q_2$ \emph{transition identically  on a subset $W \subset \Xns$} to mean that the functions $\pi_{T}(\centerdot, q_1): W \to Q_{T}$ and $\pi_{T}(\centerdot, q_2): W \to Q_{T}$ are identical. We might also say that $q_1$ and $q_2$ \emph{read all elements of $W$ to the same location}.

Now we introduce transducers over $\CCnr$. An initial transducer for $\CCnr$ is a tuple $A_{q_0}=(\dotr, X_n, R, S, \pi, \lambda, q_0)$ such that:

\begin{enumerate}[label = (\roman*)]
\item $R$ is a finite set, and the set $Q$ of states of  $A$ is the disjoint union $R \sqcup S$. The state $q_0 \in R$ is the initial state
\item $\pi: \dotr \times \{q_0\} \sqcup X_n \times  Q \to Q\backslash \{q_0\}$ and $\lambda: \dotr \times q_0 \sqcup X_n \times Q \to X_{n,r}^{\ast} \sqcup X_n^{\ast}$
\end{enumerate}

Notice that a letter from $\dotr$ can only be read from the initial state $q_0$ and we can never return to $q_0$ after leaving $q_0$. We Inductively extend the domain of $\pi$ and $\lambda$ to $ \dotr \times \{q_0\} \sqcup  X_n^{*} \times Q$ by the following rules:

$\pi(wx, q) = \pi(x, \pi(w,q))$ and $\lambda(wx, q) = \lambda(w, q)\lambda(x, \pi(w,q))$. Where $w \in X_{n,r}^{+} \sqcup X_n^{+}$ and $x \in X_n$ (if $w \in X_{n,r}^{+}$ then $q = q_0$). We also take the convention that $\pi(\epsilon, q) = q$ and $\lambda(\epsilon, q) = \epsilon$ for any state $q$ of $A$. By transfinite induction we may further extend the domains of $\pi$ and $\lambda$ to $ \dotr \times \{q_0\} \sqcup  X_n^{\omega} \times Q$.

We impose the  following rules on $\pi$ and $\lambda$:

\begin{enumerate}[label = (\arabic*)] 
\item For a state $r \in R $ and for $i$ in $\dotr \sqcup X_n$ such that $\pi(i, r)$ is defined, if $\pi(i, r) \in R$ then $\lambda(i, r) = \epsilon$, otherwise $\lambda(i, r) \in X_{n,r}^{*}$. \label{List: conditions of pi and lambda 1}
\item  For $x \in X_n$ and $q \in S$, $\lambda(x, q) \in X_{n}^{*}$  and $\pi(x, q) \in S$. \label{List: conditions of pi and lambda 2}
\item For a state $s \in S$  and $\delta \in \CCn$ we have that $\lambda(\delta, s) \in \CCn$. \label{List: conditions of pi and lambda 3}
\item If there is a word  $w \in X_n^{+}$ and a state $q \in Q$ such that $\pi(w, q) = q$ then $q \in  S$. \label{List: conditions of pi and lambda 4}
\end{enumerate}

These rules serve the purpose of ensuring that  whenever an element of $\CCnr$ is  is processed through $A_{q_0}$ the output is also in $\CCnr$.

Let $A_{q_0}$ be an initial transducer on $\CCnr$ as above and let $q$ be a state of $A_{q_0}$. Let $A_q$ denote the initial transducer $A_{q_0}$ where we process inputs from the state $q$. Observe that $A_{q_0}$ induces a continuous function $h_{A_{q_0}}$ (or $h_{q_0}$ if it clear that $q_0$ is the initial state of $A$) from $\CCnr$ to itself. Furthermore every non-initial state $q$ of $A_{q_0}$ which is also an element of $R$ induces a continuous function $h_{q}$ from $\CCn$ to $\CCnr$, otherwise it induces a continuous function from $\CCn$ to itself. Once again we denote by $\im(q)$ the image of $q$ and call $q$ a homeomorphism state if $h_{q}$ is a homeomorphism from its domain to its range.

We extend, in the natural way, the definition of accessibility, accessible transducers,and states of incomplete response given in the general setting to the specific setting of transducers over $\CCnr$. We also extend the function $L_{q}$ for a minimal invertible transducer $T_{q_0}$ over $\CCnr$ and $q \in Q_{T}$. Having done this, we may thus define, given $T_{q_0}$ a minimal, invertible transducer, the transducer $T_{(\epsilon, q_0)}$ such that $h_{(\epsilon, q_0)} = h_{q_0}^{-1}$.

We say that a transducer $A_{q_0}$ over $\CCnr$ is synchronizing  if there is a $k \in \mathbb{N}$ such that given any word $\Gamma$ of length $k$ in $\Xnrs \sqcup \Xns$ the active state of $A_{q_0}$ when $\Gamma$ is processed from any \emph{appropriate} state of $A_{q_0}$ is completely determined by $\Gamma$. Thus we may also extend the notions of `core' for synchronizing transducers over $\CCn$. We now introduce the notion of $\omega$-equivalence and minimality for transducers over $\CCnr$. Two initial automata with the same domain and range are said to be \emph{$\omega$-equivalent} if they induce the same continuous function from their domain to their range. 

An initial transducer $A_{q_0}$ is called \emph{minimal} if $A_{q_0}$ is accessible, no states of $A$ are states of incomplete response and for any distinct pair $q_1, q_2$ of states of $A_{q_0}$, $A_{q_1}$ and $A_{q_2}$ are not $\omega$-equivalent.  In \cite{BCMNO} the authors show, by slight modifications of arguments in \cite{GriNekSus}, that for an initial transducer $A_{q_0}$ on $\CCnr$ there is a unique minimal transducer under $\omega$-equivalence.

The product $(A*B)_{(q_0,p_0)} = \gen{\dotr, \Xn, R_{A} \times R_{B} \sqcup  S_{A} \times R_{B}, S_A \times S_B, \pi_{A*B}, \lambda_{A*B}}$ of the initial transducers $A_{q_0}$ and $B_{p_0}$ over $\CCnr$ is defined as follows. The set of states $Q_{A*B}  = R_{A} \times R_{B} \sqcup  S_{A} \times R_{B} \sqcup S_A \times S_B$, and the state $(q_0, p_0) \in  R_{A} \times R_{B}$ is the initial state. The transition and output functions are defined as follows. First for $a \in \dotr$ we have $\pi_{A*B}(a, (q_0, p_0)) = (\pi_{A}(a, q_0), \pi_{B}(\lambda_{A}(a, q_0), p_0 )$, and $\lambda_{A*B}(a, (p_0, q_0)) = \lambda_{B}(\lambda_{A}(a, q_0), p_0)$. Now for any pair $(q, p) \in  Q_{A,B}$ and for any $i \in X_n$ we have $\pi_{A*B}(i, (q,p)) = (\pi_{A}(i, q), \pi_B(\lambda_{A}(1, q),p)$ and $\lambda_{A*B}(i, (q,p)) = \lambda_{B}(\lambda_{A}(i,q),p)$. Now observe that as $A$ and $B$ satisfy condition \ref{List: conditions of pi and lambda 1} to \ref{List: conditions of pi and lambda 4} above then so does the product $(A*B)_{(q_0, p_0)}$. Furthermore, as before, it is a straightforward observation that $h_{(A*B)_{(q_0,p_0)}} = h_{A_{q_0}}\circ h_{B_{p_0}}$.

Below we outline a procedure given in \cite{BCMNO} for constructing from a homeomorphism $h$ of $\CCnr$, an initial transducer $A_{q_0}$ of $\CCnr$ such that $h_{q_0} = h$.

We first need to define local actions.

For an arbitrary homeomorphism $g: \CCnr \to \CCnr$,  fix the notation   $P_{g} \subset  X_{n,r}^{*}$ for the unique maximal subset of $X_{n,r}^{*}$ set satisfying: $(U_{\nu})g \subseteq U_{\dot{a}}$ for $\nu \in P_{h}(a)$, and for any proper prefix $\mu$ of $\nu$ there are elements $\delta_1, \delta_2 \in \CCn$ and $\dot{a}_1, \dot{a}_{2} \in \dotr$ such that $(\mu\delta_1)g \in U_{\dot{a}_{1}}$ and $(\mu\delta_2)g \in U_{\dot{a}_{2}}$. Observe that since $g$ is a homeomorphism  and since $\sqcup_{\dot{a} \in \dotr} (U_{\dot{a}})g^{-1}$ is clopen, $P_{g}$ exists. Moreover maximality of $P_{g}$ implies that $P_{g}$ is a complete antichain for $\Xnrs$.

Let $h : \CCnr \to \CCnr$ be a homeomorphism. Define $\theta_{h}: X_{n,r}^{*} \to X_{n,r}^{*}$ as follows: for $\mu \in X_{n,r}^{*}$ if $\mu$ is a prefix of some $\nu \in P_{h}$ set $(\mu)\theta_{h}:= \epsilon$, otherwise $\mu = \nu \chi$ for some $\nu \in P_{h}$ and $\chi \in X_{n,r}^{*}$, in this case set $(\mu)\theta_{h}$ to be the greatest common prefix of the set $(U_{\mu})h$, since $h$ is a homeomorphism and by choice of $P_{h}$, $(U_{\mu})h \in X_{n,r}^{+}$.

Now for $\mu \in  X_{n,r}^{*}$  we define a map $h_{\mu}$ on  $\CCn$ by $(\delta)h_{\mu} = (\mu\delta)h_{\mu} - (\mu)\theta_{h}$. We call $h_{\mu}$ the \emph{local action of $h$ at $\mu$}. Observe that if $\mu$ is a prefix of an element of $P_{h}$ then the range of $h_{\mu}$ is $\CCnr$ otherwise the range of $\mu$ is $\CCn$, in either case $h_{\mu}$ is continuous. The following fact is straightforward, let $\mu, \nu \in X_{n,r}^{*}$ then $(\mu \nu)\theta_{h} = (\mu)\theta_{h}(\nu)\theta_{h_\mu}$.

Using the function $\theta_{h}$ we now construct an initial transducer $A_{q_0}$ of $\CCnr$ such that $h_{q_0}  = h$.

Let $h: \CCnr \to \CCnr$ be a homeomorphism. Form a transducer 
$A_{\epsilon} = (\dot{r}, X_n, R_{A}, S_{A}, \pi_A, \lambda_{A}, \epsilon 
)$. The set $Q_{A}$ of states of  $A$ is precisely 
$X_{n,r}^{*}$ and $R_A \subset Q_{A}$ is the set of proper 
prefixes of elements of $P_{g}$ and $S_A := \Xns \backslash R$. The 
transition of function and output functions $\pi_A$ and 
$\lambda_{A}$ are defined as follows. For $\dot{a} \in \dotr$ we 
have  that $\pi_{A}(\dot{a}, \epsilon) = \dot{a}$ and $\lambda_A 
(\dot{a}, \epsilon) = (\dot{a})\theta_{h}$; for $\nu \in \Xnrp$ 
and $i \in X_n$ we have $\pi_{A}(i, \nu) =\nu i$ and 
$\lambda_{A}(i, \nu) = (\nu i) \theta_{h}  - (\nu)\theta_{h}$.

Observe that the transducer $A_{\epsilon}$ satisfies the conditions \ref{List: conditions of pi and lambda 1} to \ref{List: conditions of pi and lambda 4}. The following claim is straightforward to prove:

\begin{claim}\label{Claim:tranducerarisingfromhomeo}
Let  $h: \CCnr \to \CCnr$ be a homeomorphism, and $A_{\epsilon} = (\dot{r}, X_n, R_{A}, S_{A}, \pi_A, \lambda_{A}, \epsilon 
)$ be the transducer constructed form $h$ as above. Let $\nu \in \Xnrs$ then for all $w \in \Xns$ if $\nu \ne \epsilon$ or $w \in \Xnrp$ if $\nu = \epsilon$ we have that $(w)\theta_{h_{\nu}} = \lambda_{A}(w, \nu)$.  
\end{claim}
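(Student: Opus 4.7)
The plan is to proceed by induction on $|w|$, using as the engine the multiplicative identity
\[
(\mu\eta)\theta_{h} \;=\; (\mu)\theta_{h}\,(\eta)\theta_{h_{\mu}},
\]
which is already stated (just before the construction of $A_{\epsilon}$) and which expresses precisely the compatibility between the local action $h_{\mu}$ and the global greatest-common-prefix function $\theta_{h}$.

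\textbf{Base case.} For $w = \epsilon$ one has $\lambda_{A}(\epsilon,\nu)=\epsilon$ by convention, while $(\epsilon)\theta_{h_{\nu}}=\epsilon$ because the empty word is a prefix of every element of $P_{h_{\nu}}$ (in the $\nu=\epsilon$ case one instead reads the first letter $\dot{a}\in\dotr$; here $\pi_{A}(\dot{a},\epsilon)=\dot{a}$ and $\lambda_{A}(\dot{a},\epsilon)=(\dot{a})\theta_{h}$, which matches $(\dot{a})\theta_{h_{\epsilon}}$ as $h_{\epsilon}=h$).

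\textbf{Inductive step.} Assume the identity holds for all shorter inputs. First observe, from the construction of $A$, that for every $\nu\in\Xnrp$ and every $w'\in\Xns$ the transition function simply appends, i.e.\ $\pi_{A}(w',\nu)=\nu w'$; this is a direct induction on $|w'|$ from the rule $\pi_{A}(i,\mu)=\mu i$ for $\mu\in\Xnrp$, $i\in X_{n}$. Now write $w=w'i$ with $i\in X_{n}$. Using the recursive definition of $\lambda_{A}$ together with the transition observation,
\[
\lambda_{A}(w'i,\nu)\;=\;\lambda_{A}(w',\nu)\,\lambda_{A}(i,\nu w')\;=\;\lambda_{A}(w',\nu)\,\bigl[(\nu w' i)\theta_{h}-(\nu w')\theta_{h}\bigr],
\]
where the second equality uses the defining rule $\lambda_{A}(i,\mu)=(\mu i)\theta_{h}-(\mu)\theta_{h}$ for $\mu\in\Xnrp$.

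\textbf{Applying the identity.} By the factorisation $(\mu\eta)\theta_{h}=(\mu)\theta_{h}(\eta)\theta_{h_{\mu}}$ with $\mu=\nu$, both $(\nu w')\theta_{h}$ and $(\nu w' i)\theta_{h}$ begin with the common prefix $(\nu)\theta_{h}$, and after cancelling this prefix one obtains
\[
(\nu w' i)\theta_{h}-(\nu w')\theta_{h} \;=\; (w'i)\theta_{h_{\nu}}-(w')\theta_{h_{\nu}}.
\]
Combining this with the inductive hypothesis $\lambda_{A}(w',\nu)=(w')\theta_{h_{\nu}}$ and using the fact (inherent in the definition of $\theta_{h_{\nu}}$) that $(w')\theta_{h_{\nu}}$ is a prefix of $(w'i)\theta_{h_{\nu}}$, we conclude
\[
\lambda_{A}(w'i,\nu)\;=\;(w')\theta_{h_{\nu}}\bigl[(w'i)\theta_{h_{\nu}}-(w')\theta_{h_{\nu}}\bigr]\;=\;(w'i)\theta_{h_{\nu}},
\]
completing the induction.

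\textbf{Expected obstacle.} No step is deep; the only subtlety is bookkeeping around the subtraction operator $-$, which is defined only when one word is a prefix of the other. The main point to justify carefully is therefore that $(\nu w')\theta_{h}$ is genuinely a prefix of $(\nu w' i)\theta_{h}$, and similarly that $(w')\theta_{h_{\nu}}\le(w'i)\theta_{h_{\nu}}$; both follow from the monotonicity of the greatest-common-prefix function on the nested cylinders $U_{\nu w' i}\subseteq U_{\nu w'}$ and $U_{w'i}\subseteq U_{w'}$, so that these subtractions are well defined and the cancellation after applying the multiplicative identity is legitimate.
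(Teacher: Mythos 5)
Your proof is correct and follows essentially the same strategy as the paper's: induction on the length of $w$, driven by the multiplicative identity $(\mu\eta)\theta_{h}=(\mu)\theta_{h}(\eta)\theta_{h_{\mu}}$. The only cosmetic difference is in how the identity is deployed in the inductive step: the paper invokes it (implicitly using $(h_{\nu})_{w}=h_{\nu w}$) to rewrite the last output increment as $(i)\theta_{h_{\nu w}}$ and then reassembles, while you apply it twice with $\mu=\nu$ to cancel the common prefix $(\nu)\theta_{h}$ and work entirely with $\theta_{h_{\nu}}$, avoiding that identification. Both routes are sound; your concluding remark about the well-definedness of the subtraction (that $(w')\theta_{h_{\nu}}\le(w'i)\theta_{h_{\nu}}$ because $U_{w'i}\subset U_{w'}$ and $\theta$ is a greatest-common-prefix) is the right point to flag, and it is indeed what makes the cancellations legitimate.
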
 
\begin{proof}
First suppose $\nu \ne \epsilon$ and let $x \in X_n$. Then by 
definition we have $\lambda_{A}(i,\nu) = (\nu i) \theta_{h} - 
(\nu)\theta_{h}$, however by an observation above we have $(\nu 
i) \theta_{h} - (\nu)\theta_{h}) = (i)\theta_{h_{\nu}}$. Now 
assume by that for all $w \in  \Xnp$ we have that $\lambda(w, \nu 
) = (w)\theta_{h_{\nu}}$. Let $i \in \Xn$ and consider 
$\lambda_{A}(w i, \nu)$. We may write $\lambda_{A}(w i, \nu) = 
\lambda_{A}(w, \nu) \lambda_{A}(i, \nu w)$ (since $\pi_A(w, \nu) 
= \nu w$). Therefore $\lambda_{A}(w i, \nu) = (w)\theta_{h_{\nu}} 
(\nu w i)\theta_{h} - (\nu w)\theta_{h}$. Observe that $(\nu w 
i)\theta_{h} - (\nu w)\theta_{h} = (i)\theta_{h_{\nu w}}$, 
therefore $\lambda_{A}(w i, \nu) = (w)\theta_{h_{\nu}} 
(i)\theta_{h_{\nu w}}$ however,  $(w)\theta_{h_{\nu}} 
(i)\theta_{h_{\nu w}} = (wi)\theta_{h_{\nu}}$.

Now suppose that $\nu = \epsilon$. Let $w \in \Xnrp$ and suppose $w = \dot{a}v$ for some $v \in \Xns$ and $\dot{a} \in \dotr$. Consider $\lambda_{A}(w, \epsilon)$, this can be broken up into $\lambda_{A}(\dot{a}w, \epsilon) = \lambda_{A}(\dot{a}, \epsilon)\lambda_{A}(v, \dot{a})$. Now $\lambda_{A}(\dot{a}, \epsilon) = (\dot{a})\theta_{h}$ by definition, and $\lambda_{A}(v, \dot{a}) = (v)\theta_{h_{\dot{a}}}$ by the previous paragraph. Observe that $(\dot{a}v)\theta_{h} = (\dot{a})\theta_{h} (v)\theta_{h_{\dot{a}}}$ therefore $\lambda_{A}(\dot{a}w, \epsilon) = (\dot{a}v)\theta_{h}$ as required.
\end{proof}

\begin{Remark}\label{Remark: finitely many local actions implies finite transducer}
From the claim we deduce that for a homeomorphism $h: \CCnr \to \CCnr$ and for $A_{\epsilon} =  (\dotr, \Xn, R_A, S_A, \pi_A, \lambda_A, \epsilon)$ the transducer constructed from $h$, we have that $h_{A_{\epsilon}} = h$, moreover for any $\nu \in \Xnrs$ and any local action $h_{\nu}$ of $h$ we have that $h_{\nu} = h_{A_{\nu}}$. Therefore if $h$ has finitely many local actions, then it follows that the minimal transducer on $\CCnr$,  under $\omega$-equivalence, representing $h$ has finitely many states. Moreover, since by Claim~\ref{Claim:tranducerarisingfromhomeo}, $A_{\epsilon}$ has no states of incomplete response, it follows that, if $B_{q_0}$ is the minimal transducer representing $A_{\epsilon}$, then for all  $\nu \in \Xnr$ $h_{\nu} = h_{q}$ for some state $q$ of $B$.
\end{Remark}

In what follows we show that all homeomorphisms of $\CCnr$ in the group $N(\Tnr)$ can be represented by a minimal finite transducer. The previous paragraph demonstrates that it suffices to show that such homeomorphisms have finitely many local actions.

Our approach shall essentially mirror that taken in \cite{BCMNO}, the arguments differ only in so far as we need to make modifications to allow for the fact that the action of  $\Tnr$ on $\CCnr$ is not as transitive as the action $\Gnr$ on $\CCnr$.

First we need the following result:

\begin{lemma}\label{Lemma: normalisers in H-tilde}
Let $X$ be a topological space and let $\sim$ be any equivalence relation on $X$. Let $H_{\sim}$ be the subgroup of $H(X)$ consisting of those elements of   $H(X)$ which respect $\sim$. Let $G \le H(X)$ be a subgroup  which fixes each equivalence class of $\sim$ and acts transitively on each  class. Then $N_{H(X)}(G) \le H_{\sim}$.
\end{lemma}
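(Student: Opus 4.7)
My plan is to use the defining property of the normaliser together with the transitivity of $G$ on equivalence classes to transport the $\sim$-relation through an arbitrary $h \in N_{H(X)}(G)$. The main idea is that if $x \sim y$, I should be able to move $x$ to $y$ by an element of $G$, and then conjugate by $h$ to produce an element of $G$ that moves $(x)h$ to $(y)h$; since $G$ fixes equivalence classes setwise, this forces $(x)h \sim (y)h$.

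More precisely, fix $h \in N_{H(X)}(G)$ and suppose $x, y \in X$ satisfy $x \sim y$. By the transitivity hypothesis, there exists $g \in G$ with $(x)g = y$. Since $h$ normalises $G$, the element $h^{-1}gh$ lies in $G$. Using right-action notation consistent with the rest of the paper, I compute
\[
((x)h)(h^{-1}gh) = (x)gh = (y)h,
\]
so the element $h^{-1}gh \in G$ carries $(x)h$ to $(y)h$. Because $G$ fixes each equivalence class of $\sim$ setwise, $(x)h$ and $(y)h$ must lie in the same $\sim$-class, i.e.\ $(x)h \sim (y)h$. The same argument applied to $h^{-1}$ (which also lies in $N_{H(X)}(G)$) gives the reverse implication, so $h$ respects $\sim$ and hence belongs to $H_{\sim}$.

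There is essentially no obstacle here: the argument is a direct unpacking of the two hypotheses (normalisation and transitivity on classes), and no topology is used beyond the fact that $h$ and $h^{-1}$ are well-defined maps on $X$. The only minor point worth stating cleanly in the write-up is that ``respecting $\sim$'' as required for membership in $H_{\sim}$ should be interpreted as $x \sim y \iff (x)h \sim (y)h$, which is why I apply the same reasoning to $h^{-1}$ to obtain both directions.
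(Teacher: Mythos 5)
Your proof is correct, and it is the natural argument: conjugate a transitively-chosen $g \in G$ by $h$ to transport the equivalence through $h$, then use that $G$ fixes classes setwise. The paper states this lemma without giving a proof, so there is nothing to compare against; your write-up, including the care taken to run the argument for both $h$ and $h^{-1}$ so that the biconditional holds, is exactly what is needed.
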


Observe that $T_{n,r} \le H(\CCnr)$, by Lemma ~\ref{Lemma: Tnr preserves tail classes and acts transitively on each tail class}, preserves $\sim_{t}$ and acts transitively on each equivalence class of $\sim_{t}$. Therefore, as  corollary of the lemma above, we have $N(\Tnr) \le H_{\sim_{t}}$. 

The following definitions and lemmas appear in  \cite{BCMNO} and introduce crucial notions and ideas in understanding local actions of homeomorphisms of $\CCnr$.

\begin{Definition}
Let $V \subseteq \CCnr$ be a clopen set. Let $B \subset \Xnrs$ be the minimal antichain such that $U_{B}:=\{U_{\nu} \mid \nu \in B \}$ and for all $\mu$ a prefix of some element of $B$ we have $U_{\mu} \not \subset V$. We call $U_{B}$  the \emph{decomposition of $V$} and denote it by  $\dec(V)$. 
\end{Definition}

\begin{Definition}\label{Def: acting in the same fashion}
Let $h \in H(\CCnr)$, and $U_{\nu}$ and $U_{\eta}$ be elements of $\Banr$ for $\nu, \eta \in \Xnrs$. Then we say that $h$ acts on $U_{\nu}$ and $U_{\eta}$ in \emph{the same fashion} if $h_{\nu} = h_{\eta}$. If $V,W \subset \CCnr$ are a clopen subsets then we say that $h$ acts on $V$ and $W$ \emph{in the same fashion} if for any $U_{\nu} \in \dec(V)$ and $U_{\eta} \in \dec(W)$, $h$ acts on $U_{\nu}$ and $U_{\eta}$ in the same fashion.
\end{Definition}

\begin{Definition}\label{Def: almost in the same fashion}
Let $h \in H(\CCnr)$ and let $V, W \subset \CCnr$ be clopen subsets. Then we say that $h$ acts on $V$ and $W$ \emph{almost in the same fashion} if  there is some $k \in \mathbb{N}$ such that for $U_{\nu} \in \dec(V)$ and $U_{\eta} \in \dec(W)$, and for any $\chi \in  \Xn^{k}$ $h$ acts on $U_{\nu \chi}$ and $U_{\eta\chi}$ in the same fashion. We call the minimal $k$ satisfying this condition the \emph{critical level of $V$ and $W$} and denote it by $\crit_{h}(V, W)$.
\end{Definition}

\begin{Definition}\label{Def: (almost) in the same fashion uniformly}
Let $h \in H(\CCnr)$ and let $V, W \subset \CCnr$ be clopen subsets. We say that $h$ acts  on $V$ and $W$ \emph{in the same fashion uniformly} if for any $\chi \in \Xns$ $h$ and $U_{\nu} \in  \dec(V)$ and  $U_{\eta} \in \dec(W)$, $h$ acts on $U_{\nu}$ and $U_{\eta \chi}$ in the same fashion. We say that $h$ acts \emph{almost in the same fashion uniformly} on $U$ and $W$ if there is some $k \in \N$ such that for $U_{\nu}  \in \dec(V)$ and $U_{\eta} \in \dec(W)$ and for any $\chi, \xi \in \Xn^{k}$, $h$ acts on $U_{\nu\chi}$ and $U_{\eta \xi}$ in the same fashion.
\end{Definition}

\begin{Remark}\label{Remark: Tnr acts almost in the same fashion uniformly}
Let $V$ and $W$ be clopen subsets of cantor space  and $g \in \Tnr$ then $g$ acts on $V$ and $W$ almost in the same fashion uniformly.
\end{Remark}

The following lemmas are crucial in our understanding of local actions of elements of $N(\Tnr)$ and are taken from \cite{BCMNO}:

\begin{lemma}\label{Lemma: g acts in same fashion h acts in same fashion gh acts in same fashion}
 Let $g,h \in H(\CCnr)$ and let $U_{\nu}, U_{\eta} \in \Banr$. Suppose there are $\nu', \eta' \in \Xnrp$ such that $(U_{\nu})g = U_{\nu'}$ and $(U_{\eta})g = U_{\eta'}$, $g$ acts  in the same fashion  on $U_{\nu}$ and $U_{\eta}$ and $h$ acts in the same fashion on $U_{\nu'}$ and $U_{\eta'}$, then $gh$ acts in the same fashion on $U_{\nu}$ and $U_{\eta}$. 
\end{lemma}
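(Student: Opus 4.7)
The plan is to unpack the definition of ``acts in the same fashion'' and then verify the claim by a direct computation, exploiting the composition law for local actions already stated in the excerpt. Recall that $g$ acts on $U_\nu$ and $U_\eta$ in the same fashion means exactly $g_\nu = g_\eta$ as continuous maps on $\CCn$, and similarly for $h$ on $U_{\nu'}$ and $U_{\eta'}$. So what we want to establish is the single identity $(gh)_\nu = (gh)_\eta$.

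The key step is the following local factorisation of a composition, which I would prove first as a short sub-claim: if $(U_\mu)g = U_{\mu'}$ with $\mu' \in \Xnrp$, then
\[
  (gh)_\mu \;=\; g_\mu \circ h_{\mu'}.
\]
Indeed, because $(U_\mu)g = U_{\mu'}$, the hypothesis $(\mu)\theta_g = \mu'$ holds (it is the greatest common prefix of $(U_\mu)g$), so for every $\delta \in \CCn$ we have $(\mu\delta)g = \mu'\cdot (\delta)g_\mu$. Therefore $(U_\mu)(gh) = (U_{\mu'})h$, from which $(\mu)\theta_{gh} = (\mu')\theta_h$. Applying $h$ and stripping off the prefix $(\mu')\theta_h$ yields
\[
  (\delta)(gh)_\mu \;=\; (\mu\delta)(gh) - (\mu)\theta_{gh} \;=\; \bigl(\mu'\cdot (\delta)g_\mu\bigr)h - (\mu')\theta_h \;=\; \bigl((\delta)g_\mu\bigr)h_{\mu'},
\]
which is precisely $g_\mu \circ h_{\mu'}$ evaluated at $\delta$. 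This is really just the composition law $(\mu\nu)\theta_h = (\mu)\theta_h (\nu)\theta_{h_\mu}$ combined with the definition of the local action, adapted to the composite $gh$.

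Applying this sub-claim to both $\mu = \nu$ (with $\mu' = \nu'$) and $\mu = \eta$ (with $\mu' = \eta'$) gives
\[
  (gh)_\nu \;=\; g_\nu \circ h_{\nu'}, \qquad (gh)_\eta \;=\; g_\eta \circ h_{\eta'}.
\]
By hypothesis $g_\nu = g_\eta$ and $h_{\nu'} = h_{\eta'}$, so the two right-hand sides coincide and $gh$ acts on $U_\nu$ and $U_\eta$ in the same fashion, as required.

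I do not expect any serious obstacle: the whole lemma is essentially a bookkeeping exercise in the definitions of $\theta_h$, $h_\mu$, and $P_h$. The only point that warrants a little care is confirming that $\nu$ (respectively $\eta$) is not a strict prefix of any element of $P_g$, so that $g_\nu$ really is a map $\CCn \to \CCn$ and the identity $(\nu)\theta_g = \nu'$ is unambiguous; this is automatic from the assumption $(U_\nu)g = U_{\nu'}$ with $\nu' \in \Xnrp$, together with the maximality of $P_g$.
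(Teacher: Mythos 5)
The paper does not prove this lemma itself; it is imported from \cite{BCMNO} (the sentence introducing it says these lemmas ``are taken from \cite{BCMNO}''), so there is no in-paper argument to compare against. Your proof is correct and is the natural one: the sub-claim $(gh)_\mu = g_\mu \circ h_{\mu'}$ reduces to $(\mu)\theta_{gh} = (\mu')\theta_h$, which holds because $(U_\mu)(gh) = (U_{\mu'})h$ and, unwinding the definition, $(\mu)\theta_f$ is the greatest common prefix of $(U_\mu)f$ in every case (when $\mu$ is a proper prefix of an element of $P_f$, that greatest common prefix is $\epsilon$, consistent with the definition's first clause). Your closing observation that the hypothesis $(U_\nu)g = U_{\nu'}$ with $\nu' \in \Xnrp$ forces $\nu$ not to be a proper prefix of any element of $P_g$ is the right hygiene check: it guarantees $(\nu)\theta_g = \nu'$, that $g_\nu$ has range $\CCn$, and hence that the composite $g_\nu \circ h_{\nu'}$ is well posed.
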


\begin{lemma}\label{Lemma: g acts in same fashion, h acts in almost same fashion uniformly, then gh acts in almost the same fashion}
Let $g,h \in H(\CCnr)$ and let $U_{\nu}, U_{\eta} \in \Banr$. Suppose $g$ acts on $U_{\nu}$ and $U_{\eta}$ in the same fashion and $h$ acts on $(U_{\nu})g$ and $(U_{\eta})g$ in almost the same fashion uniformly, then $gh$ acts in almost the same fashion on $U_{\nu}$ and $U_{\eta}$. 
\end{lemma}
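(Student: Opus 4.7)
The plan is to use the structural constraint $g_\nu = g_\eta$ together with the uniformity of $h$ to match local actions of $gh$ at sufficiently deep levels. Let $k$ be the critical level from the uniform almost-same-fashion hypothesis, and write $\nu' = (\nu)\theta_g$, $\eta' = (\eta)\theta_g$. Since $g_\nu = g_\eta$, the clopen image $(\CCn)g_\nu \subseteq \CCn$ has some decomposition $\bigsqcup_j U_{\alpha_j}$, so $\dec((U_\nu)g) = \{U_{\nu'\alpha_j}\}$ and $\dec((U_\eta)g) = \{U_{\eta'\alpha_j}\}$ are in natural bijection, pairing $\nu'\alpha_j$ with $\eta'\alpha_j$.

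Given $\chi \in \Xns$, the composition rules $(\delta)g_{\mu\chi} = (\chi\delta)g_\mu - (\chi)\theta_{g_\mu}$ and $(\mu\chi)\theta_g = (\mu)\theta_g\cdot (\chi)\theta_{g_\mu}$ immediately yield $g_{\nu\chi} = g_{\eta\chi}$ and $(U_{\nu\chi})g \subseteq U_{\nu'\sigma}$, $(U_{\eta\chi})g \subseteq U_{\eta'\sigma}$ for the common $\sigma := (\chi)\theta_{g_\nu}$. The main step I expect to do the real work is a growth and containment estimate: I must produce $M \in \N$ such that for every $\chi \in \Xn^M$, $\sigma$ factorises as $\alpha_j \beta$ for some $j$ and some $\beta \in \Xns$ with $|\beta| \ge k$. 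Both requirements reduce to uniform continuity of $g_\nu$ on compact $\CCn$: the finite clopen partition $\CCn = \bigsqcup_j (U_{\alpha_j})g_\nu^{-1}$ forces $U_\chi$ to lie inside a single cell once $|\chi|$ exceeds the maximal depth occurring in this partition, while continuity of $g_\nu$ makes $|(\chi)\theta_{g_\nu}|$ exceed $k + \max_j |\alpha_j|$ for all $|\chi|$ large.

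With such $\chi$ fixed, the uniform hypothesis specialised to $\mu = \nu'\alpha_j$, $\rho = \eta'\alpha_j$, and $\tau = \xi$ the first $k$ letters of $\beta$, yields $h_{\nu'\alpha_j\tau} = h_{\eta'\alpha_j\tau}$; the identity $(\delta)h_{\mu\gamma} = (\gamma\delta)h_\mu - (\gamma)\theta_{h_\mu}$ then propagates this equality along the common suffix $\beta - \tau$, giving $h_{\nu'\sigma} = h_{\eta'\sigma}$. A direct unwinding of $(\delta)(gh)_{\nu\chi} = ((\delta)g_{\nu\chi})h_{\nu'\sigma} - \kappa$ (with $\kappa$ the greatest common prefix of the set $((\CCn)g_{\nu\chi})h_{\nu'\sigma}$) and the analogous expression for $(gh)_{\eta\chi}$ then forces $(gh)_{\nu\chi} = (gh)_{\eta\chi}$, since $g_{\nu\chi} = g_{\eta\chi}$ and $h_{\nu'\sigma} = h_{\eta'\sigma}$ make the two expressions literally the same function of $\delta$ (in particular the common prefix $\kappa$ is identical on the two sides). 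Hence $gh$ acts on $U_\nu$ and $U_\eta$ almost in the same fashion with critical level at most $M$, which is exactly the conclusion.
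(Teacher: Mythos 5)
The paper does not actually prove this lemma: it is one of several facts imported verbatim from \cite{BCMNO} (``The following lemmas $\ldots$ are taken from \cite{BCMNO}''), so there is no argument in this paper to compare yours against. On its own terms your proof is correct, and it is the natural one. Reducing everything to $g_\nu = g_\eta$ correctly gives $\theta_{g_\nu}=\theta_{g_\eta}$, $g_{\nu\chi}=g_{\eta\chi}$, and a common parametrising antichain $\{\alpha_j\}$ for $\dec((U_\nu)g)$ and $\dec((U_\eta)g)$. The crux --- producing $M$ so that every $\sigma=(\chi)\theta_{g_\nu}$ with $\chi\in\Xn^{M}$ factors as $\alpha_j\beta$ with $|\beta|\ge k$ --- does follow from uniform continuity of $g_\nu$ on the compact space $\CCn$: small diameter of $(U_\chi)g_\nu$ forces a long common prefix in Cantor space, and the clopen partition $\{(U_{\alpha_j})g_\nu^{-1}\}$ has bounded depth, so both of your requirements hold for $|\chi|$ large. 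Once there, the uniform hypothesis (applied with $\mu=\nu'\alpha_j$, $\rho=\eta'\alpha_j$ and both words equal to the length-$k$ prefix $\tau$ of $\beta$) gives $h_{\nu'\alpha_j\tau}=h_{\eta'\alpha_j\tau}$, the composition identity for local actions propagates this along the common suffix to $h_{\nu'\sigma}=h_{\eta'\sigma}$, and the unwinding of $(gh)_{\nu\chi}$ in terms of $g_{\nu\chi}$ and $h_{\nu'\sigma}$ (including the common prefix $\kappa$) is literally the same expression as for $(gh)_{\eta\chi}$, so they are equal. Two small points worth closing explicitly: (1) dispose of the case $\nu'=\epsilon$, i.e.\ $\nu$ a proper prefix of an element of $P_g$, where $g_\nu$ maps into $\CCnr$ rather than $\CCn$ --- since $g_\nu=g_\eta$ forces $\eta$ to be of the same type, the argument still runs but the set-up statement ``$(\CCn)g_\nu\subseteq\CCn$'' should be adjusted; and (2) record that $\mathrm{im}(g_\nu)$ is clopen (it is $(U_\nu)g$ with the common prefix $\nu'$ stripped, and $g$ is a homeomorphism of $\CCnr$), which is what licenses both the finite antichain $\{\alpha_j\}$ and the clopen partition you invoke.
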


We shall also need the following proposition from the same source:

\begin{proposition}\label{Prop: conjugators have same local action at pair of nodes}
Let $h \in H(\CCnr)$ and let $U_{\tau}$ and $U_{\eta}$ be elements of $\Banr$. Suppose that $h_{\tau \chi} \ne h_{\eta \chi}$ holds for every $\chi \in X_n^{\ast}$. Then  $h \notin H_{n,r, \sim_{t}}$.
\end{proposition}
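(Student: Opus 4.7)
The approach is by contradiction: assume $h$ preserves $\sim_{t}$. Since $\tau\delta \sim_{t} \eta\delta$ for every $\delta\in\Xno$, the hypothesis forces $(\tau\delta)h \sim_{t} (\eta\delta)h$. Writing $(\mu\delta)h = (\mu)\theta_{h}\cdot h_{\mu}(\delta)$ for $\mu\in\{\tau,\eta\}$ and peeling off the fixed prefixes $(\tau)\theta_{h}$ and $(\eta)\theta_{h}$, tail-equivalence in $\CCnr$ forces $A := h_{\tau}(\delta)$ and $B := h_{\eta}(\delta)$ to share a common infinite suffix in $\Xno$ for every $\delta$. My goal is to construct a single $\delta$ for which $A$ and $B$ share \emph{no} common suffix, yielding the desired contradiction.

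I would build $\delta = \xi_{1}\xi_{2}\xi_{3}\cdots$ inductively. Setting $w_{k} := \xi_{1}\cdots\xi_{k}$, the standing hypothesis gives $h_{\tau w_{k}}\ne h_{\eta w_{k}}$ at every stage, so one can choose $\xi_{k+1}\in X_{n}^{+}$ realising an explicit positional disagreement between $(\xi_{k+1})\theta_{h_{\tau w_{k}}}$ and $(\xi_{k+1})\theta_{h_{\eta w_{k}}}$. Unpacking via the multiplicativity $(w_{k}\xi_{k+1})\theta_{h_{\tau}} = (w_{k})\theta_{h_{\tau}}(\xi_{k+1})\theta_{h_{\tau w_{k}}}$ (and similarly for $\eta$) this gives a specific position in $A$ and a specific position in $B$ at which the sequences differ. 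Enumerating $\N\times\N$ as $\{(m_{k},n_{k})\}_{k\ge 1}$, I would arrange the $k$th disagreement so as to witness $\sigma^{m_{k}}A \ne \sigma^{n_{k}}B$. Taking the limit, $\sigma^{m}A\ne\sigma^{n}B$ holds for every $(m,n)\in\N\times\N$, so $A$ and $B$ share no common suffix, contradicting tail-equivalence of $(\tau\delta)h$ and $(\eta\delta)h$.

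The main obstacle is alignment. At stage $k$ the accumulated prefix lengths $|(w_{k})\theta_{h_{\tau}}|$ and $|(w_{k})\theta_{h_{\eta}}|$ differ by some quantity $d_{k}$ determined by earlier choices, while the hypothesis only guarantees the \emph{existence} of a discrepancy between $h_{\tau w_{k}}$ and $h_{\eta w_{k}}$, not control over where it appears. To target the offset $n_{k}-m_{k}$ at stage $k$, I would exploit two sources of flexibility: (i)~one may prepend arbitrary material to $\xi_{k+1}$, shifting the absolute position of the disagreement forward in $A$ and $B$ by controllable amounts, and (ii)~if the first witness produced is misaligned, one can reapply the hypothesis to $w_{k}\xi_{k+1}\zeta$ for a deeper extension $\zeta$, obtaining a new witness with an adjusted relative offset. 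In the degenerate case where the length discrepancy is frozen along every extension of $w_{k}$, the resulting ``shift-word'' identity $h_{\eta w}(\delta) = \gamma\cdot h_{\tau w}(\delta)$ (with $\gamma$ a fixed finite word) can be propagated through further symbols; carrying $\gamma$ through successive local actions until it is fully absorbed would force $h_{\tau\chi}=h_{\eta\chi}$ for some $\chi$, contradicting the hypothesis directly and thus closing the argument.
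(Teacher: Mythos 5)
Your high-level strategy — diagonalize over pairs $(m,n)\in\N^2$ and build $\delta$ stage by stage so that $\sigma^{m}A\ne\sigma^{n}B$ for every pair — is the right one. But the mechanism you use at each stage to actually produce a disagreement at the required offset has a genuine gap.

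The disagreement you extract at stage $k$ sits at positions $(a_k+p,\,b_k+p)$ in $A$ and $B$, where $a_k=|(w_k)\theta_{h_\tau}|$ and $b_k=|(w_k)\theta_{h_\eta}|$. This only refutes $\sigma^{m}A=\sigma^{n}B$ when $m-n=a_k-b_k$. Your flexibilities (i) and (ii) change the pair $(a_k,b_k)$ by extending $w_k$, but the increments are dictated by $h$, not chosen by you: there is no reason the achievable values of $a-b$ (over extensions of $w_k$) include the prescribed target $m_k-n_k$, so you cannot in general ``target the offset.'' Worse, the degenerate case is analysed incorrectly. If the length discrepancy $|(\chi)\theta_{h_\tau}|-|(\chi)\theta_{h_\eta}|$ is constant along all extensions of $w_k$, that says only that the two local actions emit outputs of matching lengths; it does \emph{not} yield an identity $h_{\eta w}(\delta)=\gamma\cdot h_{\tau w}(\delta)$ — the two maps could, for instance, be the identity and a nontrivial symbol-permutation, which have identical length profiles but are distinct everywhere. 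So the ``absorb $\gamma$ and force $h_{\tau\chi}=h_{\eta\chi}$'' argument has nothing to propagate.

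The repair avoids engineering the offset at all. Fix $(m_k,n_k)$, extend $w_k$ so that $a_k>m_k$ and $b_k>n_k$ (possible since $|(\chi)\theta_{h_\tau}|,|(\chi)\theta_{h_\eta}|\to\infty$ by uniform continuity of $h$), and set $u:=A_{[m_k,a_k)}$, $v:=B_{[n_k,b_k)}$. Suppose \emph{every} extension $\delta'$ of $w_k$ satisfied $\sigma^{m_k}((w_k\delta')h_\tau)=\sigma^{n_k}((w_k\delta')h_\eta)$; this reads $u\cdot(\delta')h_{\tau w_k}=v\cdot(\delta')h_{\eta w_k}$ for all $\delta'$. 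Comparing lengths, assume WLOG $u$ is a prefix of $v$ and write $v=u\gamma$; then $(\delta')h_{\tau w_k}=\gamma\cdot(\delta')h_{\eta w_k}$ for all $\delta'$. If $\gamma\ne\epsilon$ this makes $\gamma$ a common prefix of $\mathrm{im}(h_{\tau w_k})$, contradicting the defining property of $\theta_{h}$ that a local action has greatest common image-prefix $\epsilon$. If $\gamma=\epsilon$ it gives $h_{\tau w_k}=h_{\eta w_k}$, contradicting the standing hypothesis outright. Hence some $\delta'$ breaks the equality, and a finite prefix $\xi_{k+1}$ of it freezes the disagreement. This is the missing ingredient: the constraint is refuted not by locating a disagreement at a chosen offset, but by observing that its failure forces either incomplete response or equality of the two local actions.
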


We have the following corollary:

\begin{corollary}\label{Cor:conjugators have same local action at pair of nodes with any node distance}
Let $h \in H_{n,r \sim_{t}}$ and let $i \in \{0, 1, \ldots, n-2\}$. Then there exists $\tau, \eta \in X_{n,r}$ such that $U_{\tau} \cup U_{\eta} \ne \CCnr$, $(\tau, \eta) \in \mathscr{G}$ the reduced node distance between $\tau$ and $\eta$ is  $i$ modulo $n-1$, and $h_{\tau}  = h_{\eta}$.
\end{corollary}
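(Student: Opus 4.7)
The plan is to apply Proposition~\ref{Prop: conjugators have same local action at pair of nodes} contrapositively: since $h \in H_{n,r,\sim_{t}}$, for any cones $U_{\tau_0}, U_{\eta_0}$ there must exist $\chi \in X_n^{\ast}$ such that $h_{\tau_0\chi} = h_{\eta_0 \chi}$. I would therefore start with a pair $(\tau_0, \eta_0) \in \mathscr{G}$ already having reduced node distance $i$ modulo $n-1$ and satisfying $U_{\tau_0} \cup U_{\eta_0} \ne \CCnr$, apply the proposition to obtain an extension $\chi$, and then set $\tau := \tau_0\chi$, $\eta := \eta_0\chi$. Both the $\mathscr{G}$ membership and the reduced node distance are preserved under common suffix extension by the discussion immediately preceding Lemma~\ref{Lemma: can swap things with same node distance}, and the non-covering property is inherited from $U_\tau \cup U_\eta \subseteq U_{\tau_0} \cup U_{\eta_0}$.

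The construction of the initial pair is the only part that requires some care. For $1 \le i \le n-2$ one can take $\tau_0 = \dot{0}0$ and $\eta_0 = \dot{0}(i+1)$ inside the antichain $X_{n,r} \cdot X_n$: there are exactly $i$ elements strictly between them, the pair avoids the forbidden boundary shape $(\dot{0}v, \dot{r-1}w)$ that is excluded from $\mathscr{G}$, and a routine check of $n$-ary representations shows $\tau_0 00\ldots \not\simeq \eta_0 n{-}1\, n{-}1 \ldots$, so $(\tau_0, \eta_0) \in \mathscr{G}$; also $U_{\tau_0} \cup U_{\eta_0} \ne \CCnr$ because, for example, $\dot{0}100\ldots$ lies in neither. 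For $i = 0$ one instead picks two adjacent words in a deeper antichain (say $\tau_0 = \dot{0}00$, $\eta_0 = \dot{0}01$), again checking the $\simeq$-condition by comparing $n$-ary expansions. This case analysis covers all $n \ge 2$ and all $i \in \{0,1,\ldots,n-2\}$.

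With the initial pair in hand, applying Proposition~\ref{Prop: conjugators have same local action at pair of nodes} yields $\chi \in X_n^{\ast}$ with $h_{\tau_0 \chi} = h_{\eta_0 \chi}$. Setting $\tau = \tau_0 \chi$ and $\eta = \eta_0 \chi$, the properties listed in the corollary are verified as follows: $(\tau,\eta) \in \mathscr{G}$ with reduced node distance $i$ modulo $n-1$ by the remark on extensions of $\mathscr{G}$-pairs in Section~\ref{Section:fromcantorspacetothecircle}; $U_\tau \cup U_\eta \subsetneq \CCnr$ is immediate; and $h_\tau = h_\eta$ holds by the choice of $\chi$.

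The main conceptual obstacle is locating valid starting pairs $(\tau_0,\eta_0)$ that simultaneously realise each residue class of reduced node distance while avoiding the excluded boundary pairs in $\mathscr{G}$; once this elementary case check is carried out, the rest of the argument is a direct invocation of the proposition and the invariance of the reduced node distance under common extension.
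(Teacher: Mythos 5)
Your overall strategy coincides with the paper's: choose a pair $(\tau_0,\eta_0)\in\mathscr{G}$ with the prescribed reduced node distance and with $U_{\tau_0}\cup U_{\eta_0}\ne\CCnr$, apply Proposition~\ref{Prop: conjugators have same local action at pair of nodes} contrapositively to obtain $\chi$ with $h_{\tau_0\chi}=h_{\eta_0\chi}$, and appeal to the earlier observation that $\mathscr{G}$-membership and reduced node distance are stable under appending a common suffix. The paper does not exhibit a starting pair explicitly; you do, and that is where the gap sits.

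The defining condition of $\mathscr{G}$ must be read conjunctively, as is forced by the subsequent assertions that the node distance of any $\mathscr{G}$-pair is necessarily non-zero and that every pair $(\dot{0}v,\dot{r-1}w)$ with $v\in\{0\}^+$, $w\in\{n-1\}^+$ lies outside $\mathscr{G}$: a $\mathscr{G}$-pair must satisfy \emph{both} $\tau 00\ldots\not\simeq\eta\, n{-}1\,n{-}1\ldots$ (no circular wraparound) \emph{and} $\tau\, n{-}1\,n{-}1\ldots\not\simeq\eta 00\ldots$ (the cones are not adjacent on the circle). Under this reading your $i=0$ pair $\tau_0=\dot{0}00$, $\eta_0=\dot{0}01$ is excluded: those cones are adjacent, so $\tau_0\, n{-}1\,n{-}1\ldots$ and $\eta_0 00\ldots$ are the two $n$-ary expansions of the dyadic rational $1/n^2$ and are $\simeq$-identified. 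More conceptually, every adjacent pair has node distance $0$ in every complete antichain, which is exactly what $\mathscr{G}$ was built to forbid; the residue class $0$ must instead be realised by a pair with node distance exactly $n-1$, such as $\tau_0=\dot{0}00$, $\eta_0=\dot{0}10$. Your case $i=n-2$ also fails when $r=1$: then $\dot{r-1}=\dot{0}$, and $(\dot{0}0,\dot{0}(n-1))$ is precisely the forbidden boundary pair — contrary to your claim, $\tau_0 00\ldots$ is the point $0$ while $\eta_0\, n{-}1\,n{-}1\ldots$ is the point $r$, and the two are identified on the circle. Both defects are repaired by passing one level deeper, e.g.\ $\tau_0=\dot{0}00$ and $\eta_0=\dot{0}1i$ for $i\in\{0,\ldots,n-2\}$, whose node distance in the complete antichain $X_{n,r}X_n^2$ is $(n-1)+i$ and which avoids both the adjacency and boundary shapes for every $r$.
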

\begin{proof}
This is a direct consequence of Proposition~\ref{Prop: conjugators have same local action at pair of nodes}, since we may chose a pair $(\tau, \eta) \in  \mathscr{G}$ such that there is a complete antichain $\ac{u}$ containing $\tau$ and $\eta$ and the node distance in $\ac{u}$ between $\tau$ and $\eta$ is congruent to $i$ modulo $n-1$. Moreover, by observations above, for any $\chi \in X_n^{*}$, $(\tau \chi, \eta \chi) \in \mathscr{G}$  and any complete antichain $\ac{v}$ containing $\tau\chi$ and $\eta\chi$, the node distance between $\tau\chi$ and $\eta \chi$ in $\ac{v}$ is congruent to $i$ modulo $n-1$.  
\end{proof}

We have the following lemma:

\begin{lemma} \label{Lemma:  automorphisms have few local actions}
Let $h \in H(\CCnr)$ such that $h^{-1}T_{n,r}h \subseteq T_{n,r}$. Then for every $U_{\nu}, U_{\eta} \in \Banr$ such that $U_{\nu} \cup  U_{\eta} \ne \CCnr$, the map $h$ acts on  $U_{\nu}$ and $U_{\eta}$ almost in the same fashion.
\end{lemma}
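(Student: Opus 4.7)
The plan is to adapt the corresponding argument of~\cite{BCMNO} to the weaker transitivity of $\Tnr$ recorded in Lemma~\ref{Lemma: can swap things with same node distance}. Before anything else I would observe that the hypothesis $h^{-1}\Tnr h\subseteq \Tnr$, combined with the transitivity of $\Tnr$ on each $\sim_{t}$-equivalence class (Lemma~\ref{Lemma: Tnr preserves tail classes and acts transitively on each tail class}), already forces $h\in H_{n,r,\sim_{t}}$: if $x\sim_{t} y$, pick $g\in\Tnr$ with $(x)g=y$; then $g':=h^{-1}gh\in\Tnr$ satisfies $((x)h)g'=(y)h$, so $(x)h\sim_{t}(y)h$. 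In particular, Corollary~\ref{Cor:conjugators have same local action at pair of nodes with any node distance} is available for~$h$.

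Now fix $U_{\nu},U_{\eta}\in\Banr$ with $U_{\nu}\cup U_{\eta}\ne\CCnr$. The case $\nu=\eta$ is trivial, and the cases where $\nu$ and $\eta$ are comparable or where $(\nu,\eta)\notin\mathscr{G}$ I would handle by interposing an auxiliary node $\xi$ which is incomparable to both $\nu$ and $\eta$, for which $(\nu,\xi)$ and $(\xi,\eta)$ lie in $\mathscr{G}$ with proper unions in $\CCnr$, and then invoking the obvious transitivity of the relation \emph{acts almost in the same fashion} with respect to the maximum of two critical levels. So assume $\nu\perp\eta$, $(\nu,\eta)\in\mathscr{G}$, and let $i\in\{0,1,\ldots,n-2\}$ be their reduced node distance. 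Applying Corollary~\ref{Cor:conjugators have same local action at pair of nodes with any node distance} with this value of $i$ produces $(\tau,\tau')\in\mathscr{G}$ of reduced node distance $i$ with $U_{\tau}\cup U_{\tau'}\ne\CCnr$ and $h_{\tau}=h_{\tau'}$. Lemma~\ref{Lemma: can swap things with same node distance} then supplies $g\in\Tnr$ with $g\restriction_{U_{\nu}}=g_{\nu,\tau}$ and $g\restriction_{U_{\eta}}=g_{\eta,\tau'}$, so in particular $g$ acts in the same fashion on $U_{\nu}$ and $U_{\eta}$ and sends them to $U_{\tau}$ and $U_{\tau'}$ respectively.

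Combining $h_{\tau}=h_{\tau'}$ with Lemma~\ref{Lemma: g acts in same fashion h acts in same fashion gh acts in same fashion} then yields that $gh$ acts in the same fashion on $U_{\nu}$ and $U_{\eta}$. Set $g':=h^{-1}gh\in\Tnr$; then $gh=hg'$, hence $h=(gh)(g')^{-1}$. By Remark~\ref{Remark: Tnr acts almost in the same fashion uniformly}, $(g')^{-1}\in\Tnr$ acts almost in the same fashion uniformly on any pair of clopen sets, in particular on the images $(U_{\nu})(gh)$ and $(U_{\eta})(gh)$. I would then apply Lemma~\ref{Lemma: g acts in same fashion, h acts in almost same fashion uniformly, then gh acts in almost the same fashion} to the factorisation $h=(gh)\cdot(g')^{-1}$, with outer factor $gh$ (which acts in the same fashion on $U_{\nu},U_{\eta}$) and inner factor $(g')^{-1}$ (which acts almost in the same fashion uniformly on the two $gh$-images), and conclude that $h$ itself acts almost in the same fashion on $U_{\nu}$ and $U_{\eta}$.

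The main obstacle is orchestrating this two-step dance in the correct order. Because $\Tnr$ only furnishes prefix-replacement elements between pairs of nodes of equal reduced node distance modulo $n-1$, one must first pre-compose with $g$ to manufacture a same-fashion pair for $h$ at $(\tau,\tau')$ supplied by Corollary~\ref{Cor:conjugators have same local action at pair of nodes with any node distance}, and then post-factor through $g'=h^{-1}gh\in\Tnr$ in order to transport the resulting almost-same-fashion property back onto $h$. Keeping track of which factor plays which role in the two composition lemmas, and securing at the outset the pair $(\tau,\tau')$ of the prescribed reduced node distance, is where the essential content lies; the added care compared with the $\Gnr$-analogue in~\cite{BCMNO} is entirely due to the extra node-distance invariant that constrains the transitivity of $\Tnr$.
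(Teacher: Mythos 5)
Your proof is correct and follows essentially the same route as the paper's: obtain a same-local-action pair of the required reduced node distance from Corollary~\ref{Cor:conjugators have same local action at pair of nodes with any node distance}, transport it to the given pair via Lemma~\ref{Lemma: can swap things with same node distance}, pass through Lemma~\ref{Lemma: g acts in same fashion h acts in same fashion gh acts in same fashion}, and factor $h=(gh)(h^{-1}gh)^{-1}$ to invoke Lemma~\ref{Lemma: g acts in same fashion, h acts in almost same fashion uniformly, then gh acts in almost the same fashion}. The only cosmetic differences are a relabelling of nodes, an inlined proof of $h\in H_{n,r,\sim_t}$ in place of citing Lemma~\ref{Lemma: normalisers in H-tilde}, and handling the non-$\mathscr{G}$ pairs at the start rather than the end.
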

\begin{proof}

First observe that, by Lemma~\ref{Lemma: normalisers in H-tilde}, 
$h \in H_{\sim_{t}}$. Moreover by Corollary~\ref{Cor:conjugators 
have same local action at pair of nodes with any node distance} 
for any $i \in \{0,1,\ldots, n-2\}$ there are elements $\nu$ and 
$\eta$ of $X_{n,r}^{+}$ such that $(\nu, \eta) \in \mathscr{G}$, 
the reduced node distance between $\nu$ and $\eta$ is $i$, 
$U_{\nu} \cup U_{\eta} \ne \CCnr$, and $h_{\nu} = h_{\eta}$. Fix 
$i \in \{0,1, \ldots, n-2\}$ and let $(\nu, \eta) \in \mathscr{G}$ 
be such that the reduced node distance between $\nu$ and $\eta$ 
is $i$ and $h_{\nu} = h_{\eta}$.

Let $\nu'$ and $\eta'$ be elements of $X_{n,r}^{+}$ be such that $(\nu', \eta') \in \mathscr{G}$ and the reduced node distance between $\nu'$ and $\eta'$ is also equal to $i$. By Lemma ~\ref{Lemma: can swap things with same node distance} there is an element $g \in \Tnr$ such that  $g\restriction_{U_{\nu'}} = g_{\nu', \nu}$ and $g\restriction_{U_{\eta'}} = g_{\eta', \eta}$.

By Lemma~\ref{Lemma: g acts in same fashion h acts in same fashion gh acts in same fashion}, since $h$ acts on  $U_{\nu}$ and $U_{\eta}$ in the same fashion, and $g$ acts on $U_{\nu'}$ and $U_{\eta'}$ in the same fashion, then $gh$ acts on $U_{\nu}$ and $U_{\eta}$ in the same fashion. Let $f = h^{-1}gh$. By assumption $f \in T_{n,r}$ and so $f$ and $f^{-1}$ act on any pair of clopen sets of $\CCnr$ in almost the same fashion. Therefore, since $h = ghf^{-1}$, by Lemma~\ref{Lemma: g acts in same fashion, h acts in almost same fashion uniformly, then gh acts in almost the same fashion} it follows that $h$ acts on $U_{\nu}$ and $U_{\eta}$ in almost the same fashion.

Since $i$ was arbitrarily chosen, we conclude that for any pair $(\tau, \mu) \in \mathscr{G}$ with reduced node distance equal to $i$, $h$ acts on $U_{\tau}$ and $U_{\eta}$ in almost the same fashion.

Now let $\nu'$ and $\eta'$ be arbitrary elements of $X_{n,r}^{*}$ such that $U_{\nu'} \cup U_{\eta'} \ne \CCnr$ . Observe that $\nu', \eta' \in X_{n,r}^{+}$ otherwise  $U_{\nu} = \CCnr$ or $U_{\eta} = \CCnr$. Since $U_{\nu'} \cup U_{\eta'} \ne \CCnr$ there exists an element $\beta \in X_{n,r}^{+}$ such that $\beta \perp \nu'$ and $\beta \perp \eta'$ and such that $(\nu', \beta) \in \mathscr{G}$ and $(\eta', \beta) \in \mathscr{G}$. By arguments in the previous paragraph we therefore have that $h$ acts on $U_{\nu'}$ and $U_{\beta}$ in almost the same fashion, and $h$ acts on $U_{\eta'}$ and $U_{\beta}$ in almost the same fashion. Let $k_1 = \crit_{h}(U_{\eta'}, U_{\beta} )$ and $k_2 = \crit_{h}(U_{\nu'}, U_{\beta} )$. Let $k = \max\{k_1, k_2\}$. Then for any $\chi \in X_n^{k}$ we have that $h_{\nu'\chi} = h_{\beta\chi} = h_{\eta' \chi}$. Therefore we conclude that $h$ acts on $U_{\eta'}$ and $U_{\nu'}$ in almost the same fashion.
\end{proof}

The Corollary below demonstrates that if $h \in H(\CCnr)$ is such that  $h^{-1}\Tnr h \subseteq \Tnr$, then $h$ uses only finitely many types of local action. This Corollary should be compared with Corollary 6.16 of \Cite{BCMNO} and is proved almost identically; we reproduce the proof here for completeness. This is a key result in demonstrating that automorphisms of $\Tnr$ can be represented by bi-synchronizing transducers.

\begin{corollary}\label{Corollary: Automorphisms have finite local actions}
Let $h \in H(\CCnr)$ be such that $h^{-1}\Tnr h \subseteq \Tnr$, then $h$ uses only finitely many types of local action.
\end{corollary}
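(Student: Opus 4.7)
The plan is to deduce the corollary from Lemma~\ref{Lemma:  automorphisms have few local actions} by showing that $\{h_{\mu} : \mu \in \Xnrp\}$ is finite. My approach proceeds in three stages: first, strengthen the ``almost same fashion'' conclusion so it holds at every sufficiently deep level; second, reduce all local actions to a collection of local actions below a single fixed word; and third, prove that this last collection is finite.

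For the first stage, I would observe that if $h$ acts on $U_{\nu}$ and $U_{\eta}$ in almost the same fashion with critical level $k$, then $h_{\nu\chi} = h_{\eta\chi}$ holds not just when $|\chi| = k$ but for every $\chi \in \Xn^{*}$ with $|\chi| \geq k$: the equality at a shorter word $\chi_{0}$ propagates downward via the identity $h_{\nu\chi_{0}\gamma} = (h_{\nu\chi_{0}})_{\gamma} = (h_{\eta\chi_{0}})_{\gamma} = h_{\eta\chi_{0}\gamma}$.

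For the second stage, I would fix $\beta \in \Xnrp$ with $|\beta| \geq 2$, so that $U_{\beta} \subsetneq \CCnr$. For every $\mu \in \Xnrp$ with $U_{\mu} \cup U_{\beta} \neq \CCnr$, Lemma~\ref{Lemma:  automorphisms have few local actions} yields a critical level $k(\mu)$ such that $h_{\mu\chi} = h_{\beta\chi}$ for all $|\chi| \geq k(\mu)$. Hence every sufficiently deep local action of $h$ below such a $\mu$ lies in $\mathcal{A} := \{ h_{\beta\chi} : \chi \in \Xn^{*} \}$; the finitely many shallow local actions, together with the small number of $\mu$ for which $U_{\mu} \cup U_{\beta} = \CCnr$ (handled symmetrically using an auxiliary word $\beta'$ with disjoint support), can be counted separately. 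It thus suffices to prove that $\mathcal{A}$ is finite.

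The main obstacle lies in the third stage. For each $M \in \mathbb{N}$, applying Lemma~\ref{Lemma:  automorphisms have few local actions} to the finitely many pairs $(\beta\chi_{1}, \beta\chi_{2})$ with $\chi_{1}, \chi_{2} \in \Xn^{M}$, and taking the maximum critical level $K_{M}$, Stage~1 gives that $h_{\beta\alpha}$ depends only on the last $K_{M}$ digits of $\alpha$ whenever $|\alpha| \geq M + K_{M}$; this bounds the ``deep'' portion of $\mathcal{A}$ by $n^{K_{M}}$. The substantive difficulty is that $K_{M}$ is a priori unbounded in $M$. Following the template of Corollary~6.16 in \cite{BCMNO}, this is resolved by combining Lemma~\ref{Lemma: g acts in same fashion h acts in same fashion gh acts in same fashion} and Lemma~\ref{Lemma: g acts in same fashion, h acts in almost same fashion uniformly, then gh acts in almost the same fashion} with the transitivity of $\Tnr$ on tail-classes (Lemma~\ref{Lemma: Tnr preserves tail classes and acts transitively on each tail class}): one transports local actions between distinct subtrees via carefully chosen elements of $\Tnr$, which collapses the critical levels obtained at different depths into a single uniform bound and forces both $\mathcal{A}$ and hence the entire collection $\{h_{\mu} : \mu \in \Xnrp\}$ to be finite.
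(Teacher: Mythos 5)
Your Stages 1 and 2 are essentially sound (for Stage 2 you should fix $|\mu|$ at some small constant so that $\max_\mu k(\mu)$ is a maximum over a finite set). The genuine gap is in Stage 3, and it occurs twice. First, the claim that ``$h_{\beta\alpha}$ depends only on the last $K_M$ digits of $\alpha$'' does not follow from what precedes it: the pair argument gives $h_{\beta\chi\gamma} = h_{\beta\chi'\gamma}$ for $\chi, \chi' \in \Xn^{M}$ and $|\gamma| \geq K_M$, which makes $h_{\beta\alpha}$ depend only on the last $|\alpha| - M$ digits (a quantity that is unbounded in $|\alpha|$), not on the last $K_M$ digits. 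Iterating to canonicalize successive $M$-blocks fails because the critical levels for the deeper pairs $(\beta\chi_0\chi, \beta\chi_0\chi')$ are not controlled by $K_M$. Second, your proposed remedy is only a gesture: transporting local actions by $\Tnr$-conjugation is exactly what Lemma~\ref{Lemma:  automorphisms have few local actions} already packages, so re-invoking it cannot by itself produce the uniform bound you need.

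The missing idea is to use \emph{deletion pairs of unequal length} rather than interchange pairs of equal length. Fix the complete antichain $A = X_{n,r}^{3}$ and consider the pairs $(\nu, \nu i)$ for $\nu \in A$, $i \in \Xn$. Since $U_{\nu} \cup U_{\nu i} = U_{\nu} \neq \CCnr$, Lemma~\ref{Lemma:  automorphisms have few local actions} applies, and maximizing $\crit_h(U_{\nu}, U_{\nu i})$ over this \emph{finite} collection yields a single $k$. For $|\xi| \geq k$ one then has $h_{\nu i \xi} = h_{\nu\xi}$: the symbol $i$ at position $|\nu|+1$ has been \emph{deleted}, shortening the word by one. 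An induction on $|\tau|$ iterating this deletion reduces every local action $h_{\tau}$, with $|\tau| \geq k+3$, to some $h_{\nu\xi}$ with $\nu \in A$ and $\xi \in \Xn^{k}$ --- a finite set. This is precisely the argument the paper gives, and it is the mechanism that collapses the critical levels into one uniform bound, which your interchange pairs cannot do.
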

\begin{proof}
Let $A$ be a complete antichain for  $\Xnrs$ having at least 3 elements. For instance we may take $A = X_{n,r}^{3}$. Notice that for any pair of element $\nu$ and $\eta$ in $A$ we have  by Lemma ~\ref{Lemma:  automorphisms have few local actions} that $h$  acts on $U_{\nu}$ and $U_{\eta}$ almost in the same fashion, and for $i \in X_n$ $h$ acts on $U_{\nu}$ and $U_{\nu i}$ almost in the same fashion. Set 

$$ k= \max \{ \crit_{h}(U_{\nu},U_{\eta}) \mid \nu, \eta \in A, \mbox{ or } \nu \in A \mbox{ and } \eta= \nu i \mbox{ for some } i \in \Xn  \}$$

We now demonstrate that for any $ m\ge  k+3$, $m \in \N$ and for any word $\eta \in X_{n,r}^{m}$ we have that $h_{\eta} = h_{\nu \xi}$ for some $\nu \in A$ and $\xi \in X_n^{k}$. We proceed by induction on $m$.

The base case is trivially satisfied. Let $m \in \N$ be strictly greater than $k+3$, and assume that for all $k+3< l < m$ we have that for any word $\eta \in \Xnr^{l}$ $H_{\eta} = h_{\nu \xi}$ for some $\nu \in A$ and $\xi \in \Xn^{k}$. 

Now let $\tau \in \Xnr^{m}$. We may write $\tau = \nu i \xi$ for $\nu \in A$, some $i \in \Xn$ and $\xi \in X_n^{\ast}$ such that $|\xi| \ge k$ (since $m$ is greater than $k+3$).

Observe that as $h_{\nu} = h_{\nu i}$ we have that $h_{\tau} = h_{\nu \xi}$. Now as $|\nu \xi| < m$, by the inductive assumption we have that there is some $\chi \in X_n^{k}$ and $\mu \in A$ such that $h_{\nu \xi} = h_{\mu \chi}$ and the conclusion follows.
\end{proof}

\begin{Remark}
The above corollary together with Remark~\ref{Remark: finitely many local actions implies finite transducer} means that $N(\Tnr)$ is a subgroup of the group  $\Rnr$ of those homeomorphisms of $\CCnr$ that can be represented by minimal, finite, initial transducers. In what follows we shall demonstrate the element of $N(\Tnr)$ can be represented by minimal, bi-synchronizing transducers in $\Rnr$. 
\end{Remark}

\section{The group \texorpdfstring{$N(\Tnr)$}{Lg} is a subgroup of \texorpdfstring{$\Bnr$}{Lg}} \label{Section: N(Tnr) is a subgroup of Bnr}
Following \cite{BCMNO} we shall use the notation $\Bnr$ for the group consisting of those elements of $\Rnr$ which can be represented by bi-synchronizing transducers. In this section we demonstrate that $N(\Tnr)$ is a subgroup  $\T{T}\Bnr$ of $\Bnr$.

The lemma below is straight-forward and connects the property of a homeomorphism acting in the almost the same fashion (Definition~\ref{Def: almost in the same fashion}), to the synchronizing property of a transducer representing the homeomorphism.

\begin{lemma}\label{Lemma:automorphisms have synch transducers}
Let $h \in H(\CCnr)$ satisfy the following two conditions,
\begin{enumerate}[label = (\alph*)]
\item $h$ has finitely many local actions,\label{Lemma:automorphisms have synch transducers assumption 1}
\item for every pair $U_{\nu}, U_{\eta} \in \Banr$ such that $U_{\nu} \cup U_{\eta} \ne \CCnr$ $h$ acts on $U_{\nu}$ and $U_{\eta}$ almost in the same fashion.\label{Lemma:automorphisms have synch transducers assumption 2}
\end{enumerate}
Then the minimal transducer representing $h$ is synchronizing.
\end{lemma}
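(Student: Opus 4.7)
The plan is to combine hypothesis (a), via Remark~\ref{Remark: finitely many local actions implies finite transducer}, with the notion of critical level from Definition~\ref{Def: almost in the same fashion}. By (a), the minimal transducer $B$ representing $h$ has finitely many states; denote its non-initial states by $q_1,\ldots,q_m$, each corresponding to a distinct local action, and pick representatives $\nu_i \in X_{n,r}^{+}$ with $h_{\nu_i}$ inducing $q_i$. Unpacking the definition of synchronizing for a transducer on $\CCnr$, the only non-trivial condition to verify is the existence of $k \in \N$ such that, for every $\chi \in X_n^{k}$ and every pair $i \neq j$, one has $h_{\nu_i \chi} = h_{\nu_j \chi}$; inputs from $\dotr$ can only be processed from the unique initial state $q_0$, so synchronization on those is automatic.

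For each pair $i \neq j$ I would produce a level $k_{ij}$ by cases. The easy case is $U_{\nu_i} \cup U_{\nu_j} \ne \CCnr$: hypothesis (b) directly delivers $k_{ij} := \crit_h(U_{\nu_i}, U_{\nu_j})$ with the property that $h_{\nu_i \chi} = h_{\nu_j \chi}$ for every $\chi \in X_n^{k_{ij}}$. The main obstacle is the case $U_{\nu_i} \cup U_{\nu_j} = \CCnr$, which can occur when $\{\nu_i,\nu_j\}$ happens to be a complete antichain of size two (e.g.\ for very small $n$ and $r$). I would overcome this by descending one further letter: for every $x \in X_n$ the union $U_{\nu_i x} \cup U_{\nu_j x}$ omits $U_{\nu_i y}$ for each $y \ne x$, hence is a proper subset of $\CCnr$, and (b) applies to each such pair. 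Setting $k_{ij} := 1 + \max_{x \in X_n} \crit_h(U_{\nu_i x}, U_{\nu_j x})$ absorbs the extra letter: any $\chi \in X_n^{k_{ij}}$ factors as $\chi = x \chi'$ with $|\chi'| \ge \crit_h(U_{\nu_i x}, U_{\nu_j x})$, which forces $h_{\nu_i \chi} = h_{\nu_j \chi}$.

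Taking $k := \max_{i \neq j} k_{ij}$ (a maximum over finitely many pairs) then produces synchronization at level $k$. The resulting equality of reached states in $B$ is independent of the representatives $\nu_i$ chosen, because $h_\mu = h_{\mu'}$ as continuous maps immediately forces $h_{\mu \chi} = h_{\mu' \chi}$ for every $\chi \in X_n^{*}$ (both are obtained from the common function by the same restrict-and-shift operation). Assembling these finitely many critical levels into their maximum yields the required synchronizing level for $B$, completing the proof.
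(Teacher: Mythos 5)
Your proof is correct and follows essentially the same route as the paper's: invoke hypothesis (a) and Remark~\ref{Remark: finitely many local actions implies finite transducer} to pass to a finite minimal transducer, then apply hypothesis (b) to finitely many pairs of local actions and take the maximum of the resulting critical levels to obtain the synchronizing level. The only divergence is in the treatment of the degenerate configuration $U_{\nu_i} \cup U_{\nu_j} = \CCnr$: the paper chooses a uniform cut depth $m$ large enough that no two length-$m$ words of $\Xnrs$ form a complete antichain and then works with all pairs of length-$m$ words, whereas you work directly with one representative per non-initial state and, when $\{\nu_i,\nu_j\}$ does form a complete antichain of size two, recover proper inclusions by descending one further letter — a mild but perfectly valid variant that treats all non-initial states uniformly from the outset.
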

\begin{proof}
Let $A_{q_0}$ be the minimal transducer representing $h$. As $h$ has finitely many local actions, by Remark~\ref{Remark: finitely many local actions implies finite transducer}, $A_{q_0}$ is a finite transducer. Applying Remark~\ref{Remark: finitely many local actions implies finite transducer}, there is an $m \in \N$ such that if  $q \in Q_{A}$ is accessible from $q_0$ by a word of length at least $m$, then  $h_{q} = h_{\eta}$ for some $\eta \in \Xnrs$ of length at least $m$. Of course by the definition of $A_{q_0}$, it is the case that $h_{\nu} = h_{p}$, for $p = \pi_{A}(\nu, q_0)$ and $\nu \in \Xnrs$. We may also choose $m$ bigger than $1$ and long enough, so that for any pair $\nu, \eta \in \Xnrs$ of length $m$, $U_{\nu} \cup U_{\eta} \ne \CCnr$. Fix such an $m \in \N$.

By assumption~\ref{Lemma:automorphisms have synch transducers assumption 2} above and Definition~\ref{Def: almost in the same fashion}, it follows that for every pair $\nu, \eta \in \Xnrs$ of length $m$, there is a minimal number $k_{\nu, \eta} \in \N$ such that for all words $\xi \in \Xn^{k_{\nu, \eta}}$, $h_{\nu \xi} = h_{\eta \xi}$. Set $k:= \max_{\nu,\eta \in \Xn^{m}}\{k_{\nu, \eta}\}$. It therefore follows that for any pair $\tau, \mu \in \Xnrs$ of length $m$, and any word $\chi \in \Xn^{k}$, $h_{\tau \chi} = h_{\mu \chi}$.

Now since any state $q$ of $A_{q_0}$ accessible from $q_0$ by a word of length at least $m$ satisfies $h_{q} = h_{\eta}$ for some $\eta \in \Xnrs$ of length $m$. It is therefore the case, by minimality of $A_{q_0}$, that for any $\chi \in \Xn^{k}$  and any pair $\nu, \eta \in \Xnrs$ of length $m$, we have $\pi_{A}(\nu\chi, q_0) = \pi_{A}{\eta\chi, q_0}$. Thus we have that if $Q_{A,\ge m} = \{\pi_{A}(\nu, q_0) \mid \nu \in \Xnrs, |\nu| \ge m \}$, then for any $\xi \in \Xn^{k}$, $|\{\pi_{A}(\chi,p) \mid p \in Q_{A, \ge m} \}| = 1$. From this we deduce that $A_{q_0}$ is synchronizing at level at least $k+m$.

\end{proof}

As a corollary we have the following:

\begin{corollary}\label{Cor: Transducers representing automorphims are synchronizing}
Let $h \in H(\CCnr)$ be such that $h^{-1}T_{n,r}h \subseteq \Tnr$ and $A_{q_0} = \gen{\dotr, \Xn, Q_A, \pi_A, \lambda_A, q_0}$ be an initial transducer such that $h_{q_0} = h$. Then $A_{q_0}$ is  synchronizing.
\end{corollary}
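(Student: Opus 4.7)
The plan is to obtain the corollary as an essentially immediate combination of the three preceding results. The strategy is to verify the two hypotheses of Lemma~\ref{Lemma:automorphisms have synch transducers} for $h$ under the assumption $h^{-1}\Tnr h \subseteq \Tnr$, and then invoke that lemma. We may (and I would) take $A_{q_0}$ to be minimal, since any initial transducer representing $h$ admits a unique minimization that is $\omega$-equivalent to it.

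First I would verify hypothesis~(a) of Lemma~\ref{Lemma:automorphisms have synch transducers}: that $h$ has only finitely many local actions. This is precisely the conclusion of Corollary~\ref{Corollary: Automorphisms have finite local actions}, which applies because $h^{-1}\Tnr h \subseteq \Tnr$. Next I would verify hypothesis~(b): that for every pair $U_{\nu}, U_{\eta} \in \Banr$ with $U_{\nu}\cup U_{\eta}\ne \CCnr$, the map $h$ acts on $U_{\nu}$ and $U_{\eta}$ almost in the same fashion. This is exactly Lemma~\ref{Lemma:  automorphisms have few local actions}, which again uses only the normalising hypothesis on $h$.

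With both hypotheses in hand, Lemma~\ref{Lemma:automorphisms have synch transducers} immediately yields that the minimal transducer representing $h$ is synchronizing. Since we reduced to the minimal case at the start, this gives the stated conclusion for $A_{q_0}$. There is no genuine obstacle at this stage, since all of the hard work, namely the transitivity argument for $\Tnr$ packaged in Lemma~\ref{Lemma: can swap things with same node distance} and the germ/tail-class arguments feeding into Proposition~\ref{Prop: conjugators have same local action at pair of nodes} and Corollary~\ref{Cor:conjugators have same local action at pair of nodes with any node distance}, has already been carried out in the earlier lemmas; the corollary itself is just their formal combination.
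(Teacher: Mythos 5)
Your proof is correct and follows essentially the same route as the paper: verify the two hypotheses of Lemma~\ref{Lemma:automorphisms have synch transducers} via Corollary~\ref{Corollary: Automorphisms have finite local actions} and Lemma~\ref{Lemma:  automorphisms have few local actions}, then conclude. The only (cosmetic) difference is that the paper also applies the same argument to $h^{-1}$ to assert bi-synchronicity, which goes beyond what the stated corollary requires; your version, applying the lemma to $h$ alone, is the cleaner match to the claim.
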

\begin{proof}
By Corollary~\ref{Corollary: Automorphisms have finite local actions} and Lemma~\ref{Lemma:  automorphisms have few local actions} it follows that $h$ and $h^{-1}$ satisfy the conditions of Lemma~\ref{Lemma:automorphisms have synch transducers}. By definition of bi-synchronicity, the minimal transducer $A_{q_0}$ representing $h$ is bi-synchronizing.
\end{proof}

Now we may prove the main result.

\begin{Theorem}
Let $h \in H(\CCnr)$ then $h \in N(\Tnr)$ if and only if there is a bi-synchronizing transducer $A_{q_0}$ such that $h_{q_0} = h$ and preserves $\simeq$.
\end{Theorem}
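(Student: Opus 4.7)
The plan is to prove the two directions separately, leaning on the previously-established identification $N(\Tnr) = N_{H(\CCnr)}(\Tnr)$, on Corollary~\ref{Cor: Transducers representing automorphims are synchronizing}, and on the BCMNO characterisation of $\aut{\Gnr}$ as the bi-synchronizing subgroup $\Bnr$ of $\Rnr$ (realised via conjugation in $H(\CCnr)$).

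For the forward direction, suppose $h \in N(\Tnr)$. Every element of $N(\Tnr)$ is, by the construction in Section~\ref{Section:fromcantorspacetothecircle}, a lift via $\phi$ of some element of $N_{H(S_r)}(\Tnr)$; in particular $h$ preserves $\simeq$ (and so does $h^{-1}$). Since $N(\Tnr)$ is a group, both $h$ and $h^{-1}$ satisfy the hypothesis $g^{-1}\Tnr g \subseteq \Tnr$ of Corollary~\ref{Cor: Transducers representing automorphims are synchronizing}. Applying that corollary to $h$ shows the minimal transducer for $h$ is synchronizing; applying it to $h^{-1}$ shows the minimal transducer for $h^{-1}$ is synchronizing. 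Hence $h$ is represented by a bi-synchronizing transducer.

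For the reverse direction, suppose $h$ is represented by a bi-synchronizing transducer $A_{q_0}$ and preserves $\simeq$. Since $h \in \Bnr$, the BCMNO result $\Bnr \cong \aut{\Gnr}$ (with the isomorphism given by conjugation in $H(\CCnr)$) implies $h\Gnr h^{-1} = \Gnr$. So for any $f \in \Tnr \subseteq \Gnr$ we have $hfh^{-1} \in \Gnr$. Because both $h$ and $f$ preserve $\simeq$, so does $hfh^{-1}$, and it therefore descends to a homeomorphism $\overline{hfh^{-1}}$ of the circle $S_r = \CCnr/\simeq$. Since $\bar h \in H(S_r)$ and $S_r$ is a circle, $\bar h$ is either orientation-preserving or orientation-reversing; $\bar f$ is orientation-preserving (this is the content of the cyclic-shift definition of $\Tnr$, Lemma~\ref{Lemma: Tnr preserves tail classes and acts transitively on each tail class} together with the definition of $\Tnr$). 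In either case $\bar h \bar f \bar h^{-1}$ preserves the cyclic order on $S_r$.

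To conclude, observe that an element of $\Gnr$ whose induced map on $S_r$ preserves the cyclic order is, by the definition of $\Tnr$ inside $\Gnr$ via cyclic permutations of complete antichains, an element of $\Tnr$. Applied to $hfh^{-1}$ this gives $hfh^{-1} \in \Tnr$, so $h \in N_{H(\CCnr)}(\Tnr) = N(\Tnr)$. The main obstacle is the orientation step: one must verify that the bi-synchronizing, $\simeq$-preserving $h$ really does descend to a self-homeomorphism of $S_r$ (so that ``orientation-preserving vs.~reversing'' is well-defined for $\bar h$) and that the correspondence between $\Tnr$ and the orientation-preserving elements of $\Gnr$ acting on $S_r$ applies to the conjugate $hfh^{-1}$; the remaining steps are then formal consequences of what has already been established.
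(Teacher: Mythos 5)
Your forward direction coincides with the paper's: combine the fact that $N(\Tnr)$ is a group with Corollary~\ref{Cor: Transducers representing automorphims are synchronizing} applied to both $h$ and $h^{-1}$, and observe that elements of $N(\Tnr)$ preserve $\simeq$ by the construction of the lift $\phi$ in Section~\ref{Section:fromcantorspacetothecircle}. Your reverse direction, by contrast, is a genuinely different route. The paper establishes $h^{-1}Th\in\Gnr$ for $T\in\Tnr$ by a direct transducer calculation: it forms the product $B_{q_0^{-1}}\ast C_{p_0}\ast A_{q_0}$, tracks the synchronizing levels, and concludes that the core is the single-state identity transducer, which is the transducer-side criterion for $\Gnr$-membership inside $\TBnr$; $\simeq$-preservation then upgrades $\Gnr$ to $\Tnr$. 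You instead import the BCMNO theorem $N_{H(\CCnr)}(\Gnr)=\Bnr$ as a black box to obtain $hfh^{-1}\in\Gnr$ at once, and then pass to $S_r$ and invoke an orientation count. Your route is shorter and more modular, at the cost of leaning on the full BCMNO characterisation; the paper's re-derivation keeps the argument self-contained within its own transducer framework. Two small remarks on your write-up: the orientation step is in fact redundant, since any element of $\Gnr$ that preserves $\simeq$ must send the shared boundary point of adjacent arcs to the shared boundary point of adjacent arcs, which forces its prefix bijection to be a cyclic shift, so $\Gnr\cap(\text{preserving }\simeq)=\Tnr$ without tracking orientation (this is what the paper's terse final sentence implicitly uses); and showing $hfh^{-1}\in\Tnr$ for all $f\in\Tnr$ gives only $h\Tnr h^{-1}\subseteq\Tnr$ --- to conclude $h\in N_{H(\CCnr)}(\Tnr)$ one should also note that the hypothesis is symmetric in $h$ and $h^{-1}$, so the reverse inclusion follows by the same argument applied to $h^{-1}$ (the paper has the same elision).
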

\begin{proof}
It follows by Corollary~\ref{Cor: Transducers representing automorphims are synchronizing} that if $A_{q_0}$ is a transducer such that $h_{q_0} \in N(\Tnr)$ then $A_{q_0}$ is synchronizing. Since $N(\Tnr)$ is a group it follows that $A_{q_0}$ is bi-synchronizing. This proves the forward implication.

For the reverse implication, suppose that $A_{q_0} = \gen{\dotr, \Xn, Q_A, \pi_A, \lambda_A, q_0}$ is a bi-synchronising transducer such that $h_{q_0}$ preserves $\simeq$. Let $T \in T_{n,r}$ observe that since $h_{q_0}$ preserves $\simeq$ it follows that $h^{-1}_{q_0}T h_{q_0}$ also preserves $\simeq$. Let $C_{p_0} = \gen{\dotr, \Xn, Q_{C}, \pi_{C}, \lambda_{C}, p_0}$ be an initial  transducer such that $h_{p_0} = T$. Observe that $C_{p_0}$ is synchronizing and $\core(C_{p_0})$ is the single state identity transducer, let us denote this state by $\iota$. Let $B_{q_0^{-1}} = \gen{\dotr, \Xn, Q_B, \pi_B, \lambda_B, q_0^{-1}}$ be a minimal initial transducer representing the inverse of $A_{q_0}$. 

Observe that since $A_{q_0}$ is bi-synchronizing, that $B_{q_0}^{-1}$ is also bi-synchronizing. Let $k_1 \in \N$ be minimal so that $A_{q_0}$ and $B_{q_0}^{-1}$ are synchronizing at level $k_1$. let $k_2 \in \N$ be minimal so that for any word $\Gamma \in \Xnr^{k_2}$  we have $\pi_{C}(\Gamma, p_0) = \iota$. Let $k = \max\{k_1, k_2\}$. Let $j \in \N$ be the minimal number greater than $k$ such that for any word $\Delta \in \Xnr^{j}$  we have that $|\lambda_{B}(\Delta, q_0^{-1})|> 2k$. Therefore after processing any word of length $j$, the active state of the product transducer $B_{q_0}^{-1} \ast C_{p_0} A_{q_0}$ is of the for $(r, \iota, q )$ for some state $r$ of $\core(B_{q_0^{-1}})$ and some state $q$ of $A_{q_0}$. Notice that as $\iota$ is the single state identity transducer we may identify $(r, \iota, q)$ with the state $(r, q)$ of $B_{q_0^{-1}} \ast A_{q_0}$.

Now let $\Lambda \in \Xnr^{*}$ be a word such that $\pi_{B}(\Lambda, q_0^{-1}) = r$ and let $\Delta \in \Xn$ be such that $\lambda_{B}(\Delta, r)$ is greater than $k$ (recall that states of $\core(B_{q_0^{-1}}$ only process elements from $\Xn$). Let $q'$ be the state of  $A_{q_0}$ forced by  $\lambda_{B}(\Delta,r)$ and let $r'$  be the state $\pi_{B}(\Delta, r)$. Then observe that after processing the word $\Lambda\Delta$ the active state of $B_{q_0^{-1}} \ast A_{q_0}$ is $(r', q')$. However after processing $\Delta$ from the state $(r,q)$ the active state is also $(r', q')$. However since $B_{q_0^{-1}}\ast A_{q_0}$ is $\omega$-equivalent to the single state identity transducer, we see that $(r',q')$ which is a state of $\core(B_{q_0^{-1}}\ast A_{q_0})$ is $\omega$-equivalent to a map which produces some finite initial prefix and then acts as the identity.

From this we may conclude that there some $N \in \N$ such that $B_{q_{0}^{-1}} \ast C_{p_0} \ast A_{q_0}$ acts as the identity after processing a word of length $N$. This means that since $h_{q_0}^{-1} T h_{q_0}$ preserves $\simeq$ and is a homeomorphism that  $h_{q_0}^{-1} T h_{q_0} \in \Tnr$.
\end{proof}

Thus we have the following:

\begin{Theorem}
 The group $\aut{\Tnr}$ is isomorphic to the subgroup $\mathcal{T}\Bnr \le \Bnr$ consisting of those elements of $\Rnr$ which may be represented by finite, initial, bi-synchronizing transducers which preserve $\simeq$. 
\end{Theorem}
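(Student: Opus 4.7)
The plan is to assemble the preceding results of this and the previous two sections. By the Theorem stated in Section~\ref{Section:fromcantorspacetothecircle} (after Brin and Guzm\'an, via Rubin's theorem) we have $\aut{\Tnr} \cong N_{H(S_r)}(\Tnr)$. The homomorphism $\phi$ constructed there identifies $N_{H(S_r)}(\Tnr)$ with the subgroup $N(\Tnr) \le H(\CCnr)$, and the subsequent lemmas of Section~\ref{Section:fromcantorspacetothecircle} show that $N(\Tnr) = N_{H(\CCnr)}(\Tnr)$. Composing these isomorphisms yields $\aut{\Tnr} \cong N(\Tnr)$ as groups.

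Next, I would invoke the theorem just proved, which characterises $N(\Tnr)$ as exactly the set of $h \in H(\CCnr)$ admitting a bi-synchronizing representing transducer that preserves $\simeq$. To conclude that such an $h$ actually lies in $\Rnr$ — that is, that its minimal bi-synchronizing representative is \emph{finite} — I would appeal to Corollary~\ref{Corollary: Automorphisms have finite local actions}, which shows that any $h$ normalising $\Tnr$ has only finitely many local actions, together with Remark~\ref{Remark: finitely many local actions implies finite transducer}, which states that a homeomorphism with finitely many local actions has a finite minimal representing transducer. This gives $N(\Tnr) \subseteq \T{T}\Bnr$.

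The reverse inclusion is immediate: any element of $\T{T}\Bnr$ is, by definition, a homeomorphism of $\CCnr$ represented by a finite, initial, bi-synchronizing transducer which preserves $\simeq$, and hence by the preceding theorem it belongs to $N(\Tnr)$. Chaining the identifications and observing that $\phi$ is a group homomorphism, and that composition of transducers realises composition of the induced homeomorphisms (as noted in Section~\ref{Section:AutTnr}), produces a group isomorphism $\aut{\Tnr} \cong \T{T}\Bnr$.

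I do not expect any substantial obstacle here, as the statement is essentially the final packaging of the machinery developed in Sections~\ref{Section:fromcantorspacetothecircle}, \ref{Section:AutTnr}, and~\ref{Section: N(Tnr) is a subgroup of Bnr}. The one step that warrants an explicit (though brief) check is that $\T{T}\Bnr$ is genuinely closed under products and inverses within $\Bnr$: the product of two bi-synchronizing transducers is bi-synchronizing (this is inherited from $\Bnr$), and the class of transducer-represented homeomorphisms preserving $\simeq$ is obviously closed under composition and inversion, so $\T{T}\Bnr$ really is a subgroup of $\Bnr$ and the bijection above is a group isomorphism rather than merely a bijection of sets.
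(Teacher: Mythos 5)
Your proposal is correct and follows the same route the paper intends: the statement is presented there as an immediate consequence (``Thus we have the following'') of the preceding theorem characterising $N(\Tnr)$, together with the chain $\aut{\Tnr}\cong N_{H(S_r)}(\Tnr)\cong N(\Tnr)=N_{H(\CCnr)}(\Tnr)$ from Section~\ref{Section:fromcantorspacetothecircle} and the finiteness argument via Corollary~\ref{Corollary: Automorphisms have finite local actions} and Remark~\ref{Remark: finitely many local actions implies finite transducer}. Your explicit check that $\T{T}\Bnr$ is closed under products and inverses is a small but sensible addition that the paper leaves implicit.
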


\begin{Remark}\label{Remark:elements of Tnr correspond to bisynch transducers with trivial cores}
Notice that since all elements of $\Tnr$ eventually act as the identity, then a minimal transducer $T_{q_0}$ representing an element $t \in \Tnr$  satisfies $\core(T_{q_0})$ is the single states identity transducer over  $\CCn$ which we denote $\id$. Furthermore any element of $\TBnr$ which may be represented by a bi-synchronizing transducer with trivial core, is an element of $\Tnr$. This is because $\Tnr \le \TBnr$ is uniquely defined by the fact that all elements act as the identity after modifying an  finite initial prefix.
\end{Remark}

\section{\texorpdfstring{$\out{\Tnr}$}{Lg}}\label{Section: out(Tnr)}

The paper \cite{BCMNO} introduces a group $\On$ which contains the group $\Onr\cong \out{G_{n,r}}$ for all valid $r$. In this section we shall demonstrate that there is a corresponding subgroup $\TOn \le \On$ containing the groups $\TOnr:= \out{\Tnr}$ for all valid $r$. 

Let $\widetilde{\Bnr}$ be the set of minimal transducer $A_{q_0}$ such that $h_{q_0} \in \Bnr$. The set $\widetilde{\Bnr}$ of transducers becomes a group isomorphic to $\Bnr$ with the product defined by for elements $A_{q_0}, B_{p_0}  \in   \widetilde{\Bnr}$, $AB_{(q_0,p_0)}$ is the minimal initial transducer representing the product $A_{q_0} \ast B_{p_0}$. Notice that since $h_{(q_0, p_0)} = h_{q_0} \circ h_{p_0}$ then we have that $AB_{(q_0,p_0)} \in \widetilde{\Bnr}$. To simplify the discussion below we shall identify $\Bnr$ with the group $\widetilde{\Bnr}$ of transducers. In particular we shall no longer distinguish between an element $h \in \Bnr$ and the minimal transducer $A_{q_0} \in \widetilde{\Bnr}$ representing $h$, thus we shall use the symbol $\Bnr$ for both $\widetilde{\Bnr}$ and $\Bnr$. Likewise we shall longer distinguish between elements of $\TBnr$ and the minimal initial transducers representing them.

\begin{lemma}
Let $A_{q_0}, B_{p_0} \in \TBnr$ such that $\core(A_{q_0}) = \core(B_{p_0})$. Then $A_{q_0}B_{p_0}^{-1} \in \Tnr$.
\end{lemma}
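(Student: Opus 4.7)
The plan is to invoke Remark~\ref{Remark:elements of Tnr correspond to bisynch transducers with trivial cores}, which characterises $\Tnr$ inside $\TBnr$ as exactly the subgroup of elements acting as the identity after modifying a finite initial prefix --- equivalently, those elements whose minimal bi-synchronizing transducer has the single-state identity transducer as its core. My aim is therefore to verify that the homeomorphism $h$ represented by $A_{q_0} \ast B_{p_0}^{-1}$ has this property.

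The first key observation is that $\core(A_{q_0}) = \core(B_{p_0})$ forces also $\core(A_{q_0}^{-1}) = \core(B_{p_0}^{-1})$. This is because the inverse core is canonically determined by the core through the identification arising from $A \ast A^{-1} \sim_{\omega} \id$: to each state $q$ of the common core one canonically assigns a state $r(q)$ of the inverse core such that $A_q \circ A^{-1}_{r(q)}$ acts as the identity (up to bounded incomplete response), and since this correspondence depends only on the common core, the inverse cores must coincide. Fix a common synchronization level $k$ for $A_{q_0}^{\pm 1}$ and $B_{p_0}^{\pm 1}$; the shared cores also ensure that the synchronising functions of $A$ and $B$ agree on inputs of length at least $k$, and similarly for $A^{-1}$ and $B^{-1}$.

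Next, I would pick $\Gamma \in \Xnr^{+}$ long enough that $|\Gamma| \ge k$ and $|\Delta_A| \ge k$, where $\Delta_A := \lambda_A(\Gamma, q_0)$. Let $q := \pi_A(\Gamma, q_0) \in \core(A) = \core(B)$ and $r := \pi_{B^{-1}}(\Delta_A, p_0^{-1}) \in \core(B^{-1}) = \core(A^{-1})$; by the coincidence of synchronising functions, $r$ is also the state $\pi_{A^{-1}}(\Delta_A, q_0^{-1})$ one would reach from $A^{-1}$. The local action at the product state $(q, r)$ of $A_{q_0} \ast B_{p_0}^{-1}$ is then $A_q \circ B^{-1}_r$; using $A_q = B_q$ and $B^{-1}_r = A^{-1}_r$ (both consequences of the common cores), this equals $A_q \circ A^{-1}_r$. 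But reading $\Gamma$ through $A_{q_0} \ast A_{q_0}^{-1} \sim_{\omega} \id$ ends at the very state $(q, r)$, so $A_q \circ A^{-1}_r$ is forced to be a ``prepend-then-identity'' map $x \mapsto \sigma_{(q,r)} \cdot x$, where $\sigma_{(q,r)}$ is a finite word (depending only on $(q,r)$) recording the bounded discrepancy between $|\lambda_{A^{-1}}(\Delta_A, q_0^{-1})|$ and $|\Gamma|$. Substituting,
\begin{equation*}
  h(\Gamma x) \;=\; \lambda_{B^{-1}}(\Delta_A, p_0^{-1}) \cdot B^{-1}_r(A_q(x)) \;=\; \lambda_{B^{-1}}(\Delta_A, p_0^{-1}) \cdot \sigma_{(q,r)} \cdot x,
\end{equation*}
so $h$ sends $\Gamma x$ to a finite word (depending only on $\Gamma$) followed by $x$ unchanged.

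Since this holds for every sufficiently long $\Gamma$, the homeomorphism $h$ acts as the identity after a finite initial-prefix modification, and hence lies in $\Tnr$ by Remark~\ref{Remark:elements of Tnr correspond to bisynch transducers with trivial cores}. The principal obstacle will be justifying the precise ``prepend-then-identity'' form of $A_q \circ A^{-1}_r$ through a careful analysis of incomplete response in the product $A \ast A^{-1}$: in particular, one must confirm that reading $\Gamma$ through $A_{q_0} \ast A_{q_0}^{-1}$ really does land at the pair $(q, r)$ identified separately via $A$ and via $B^{-1}$, which in turn rests on the canonical bijection between the states of a core and those of its inverse core.
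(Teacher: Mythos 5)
Your proof is correct, but it takes a more explicit route than the paper's. The paper's proof is a single sentence: after invoking Remark~\ref{Remark:elements of Tnr correspond to bisynch transducers with trivial cores}, it simply cites a result from \cite{BCMNO} stating that two bi-synchronizing transducers with the same core have a trivial-cored product. Your proposal instead unpacks that cited result: you use the bi-synchronizing hypothesis to pick $\Gamma$ long enough that both $\Gamma$ and $\Delta_A := \lambda_A(\Gamma,q_0)$ exceed the synchronizing level, observe that the active state pair $(q,r)$ after reading $\Gamma$ through $A_{q_0}*B_{p_0}^{-1}$ coincides with the pair you would reach through $A_{q_0}*A_{q_0}^{-1}$, and then use $\omega$-equivalence of the latter with the identity to pin down the local action as prepend-then-identity.

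Your argument is sound, with one caveat worth flagging: the auxiliary fact that $\core(A_{q_0}^{-1})=\core(B_{p_0}^{-1})$ follows from $\core(A_{q_0})=\core(B_{p_0})$ is itself one of the \cite{BCMNO} results that the paper quotes in Section~\ref{Section: out(Tnr)} (the well-definedness of the inverse in $\Onr$ and the existence of a construction of the inverse from the core alone). Your one-line justification via the ``canonical bijection'' between core states and inverse-core states is a correct sketch of that result but would need the BCMNO construction of $A_{(\epsilon,q_0)}$ (and the fact that its core depends only on $\core(A_{q_0})$) to be a standalone proof. So in effect you have replaced one black-box citation by a more detailed argument that still leans on a smaller BCMNO black box. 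What you gain is a concrete explanation of \emph{why} the core is trivial: the product eventually acts as $A_q\circ A^{-1}_r$ on long enough cones, and $A*A^{-1}\sim_\omega\id$ forces this to be a prefix-then-identity map. What the paper's citation buys is brevity, since the authors of BCMNO already isolated exactly this statement for the $G_{n,r}$ setting.
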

\begin{proof}
By Remark~\ref{Remark:elements of Tnr correspond to bisynch transducers with trivial cores} it suffices to show that $\core(A_{q_0}B_{p_0}^{-1})$ is equal to the single state identity transducer $\id$.
However it is a consequence of a result in \cite{BCMNO}, that $\core(A_{q_0}B_{p_0}^{-1})$ is trivial.
\end{proof}

As a corollary we have that an element of  $\out{\Tnr}$ corresponds to a subset of elements of $\TBnr$ which all have the same core. Therefore we may identify an element $[A_{q_0}] \in \out{\Tnr}$, for $A_{q_0}\in \TBnr$, with the element $\core(A_{q_0})$. 

Let $\Onr:= \{ \core(A_{q_0}) \mid A_{q_0} \in \Bnr \}$. This set inherits a product from $\Bnr$ as follows. For $g_1, g_2 \in \Onr$, let $A_{q_0}, B_{p_0} \in \Bnr$ be elements such that $\core(A_{q_0}) = g_1$ and $\core(B_{q_0}) = g_2$, and set $g_1g_2 := \core( AB_{(q_0, p_0)})$. Now observe that $\core( AB_{(q_0, p_0)})$ depends only on the states of $\core(g_1 \ast g_2)$ by a result in \cite{BCMNO} which states that $\core (A_{q_0}* B_{q_0}) = \core(\core(A_{q_0})* \core(B_{q_0}))$ and since removing states of incomplete response and identifying $\omega$-equivalent depends only on $\core(g_1 \ast g_2)$ (see \cite{GriNekSus}).  Therefore it follows that for any other element  $A'_{q'_0}$ and $B'_{p'_0}$ with cores $g_1$ and $g_2$ respectively we have $\core(A'B'_{(q'_{0}p'_{0})}) = \core(AB_{(q_0,p_0)})$. Thus setting the  product $g_1g_2 := \core( AB_{(q_0, p_0)})$ for elements $A_{q_0}, B_{q_0} \in \Bnr$ with cores $g_1$ and $g_2$ respectively, results in a well-defined product.

The authors of \cite{BCMNO} show that this product is equivalent to the following. Let $g, h \in \Onr$,  observe that  $g$ and $h$ are core non-initial transducers. Fix a state $q_1$ and $q_2$ of $g$ and $h$ respectively. Let  $gh := \core( (gh)_{(q_{1},q_{2})})$ where $(gh)_{(q_1,q_2)}$ is the minimal transducer representing the product ${g}_{q_1} \ast {h}_{q_2}$. The set $\Onr$ is isomorphic to $\out{G_{n,r}}$ under this product.

The inverse of an element of $\Onr$ can likewise be defined in two equivalent ways as demonstrated in \cite{BCMNO}. The first approach is as follows. Given an element $g \in \Onr$ let $A_{q_0} \in \Bnr$ be such that $\core(A_{q_0}) = g$. Let $B_{p_0}$ be the minimal transducer representing the inverse of $A_{q_0} $ then set $g^{-1} = \core(B_{q_0})$. It turns out (see \cite{BCMNO}) that this inverse is independent of the the choice of transducer $A_{q_0} \in \Bnr$ with core equal to $g$ and so is well-defined. The second approach makes use only of the states of $g$ to construct the inverse of $g$, thus removing the need of finding an element of $\Bnr$ with core equal to $g$.

Thus having removed the need to find elements of $\Bnr$ with cores equal to the relevant transducers in order to compute products and inverses, the paper \cite{BCMNO} introduces the group $\On:= \cup_{1 \le r < n} \Onr$ with product of two elements constructed as in the second approach, likewise for inverses. The groups $\Onr$ are moreover subgroups of $\On$. Another way of defining the group $\On$ (see \cite{BCMNO}) is as the group of all  bi-synchronizing transducers (note that as we have a method for computing inverses, our use of the word bi-synchronizing makes sense) all of whose states induce injective maps of $\CCn$ with clopen images. 

Let $\TOnr := \{\core(A_{q_0}) \mid A_{q_0} \in \TBnr \} \subset  \Onr$. Then $\TOnr$ is a subgroup of $ \Onr$, under the product inherited from $\On$, moreover $\TOnr \cong \out{\Tnr}$ under this product.

The following lemma essentially solves one half of the membership problem of $\TOnr$ in $\Onr$.

\begin{lemma}\label{Lemma: states of TBnr induces continuous functions from the interval to itsel }
Let $A_{q_0} \in \TBnr$, then for all states $q$ of $\core(A_{q_0})$ the maps $h_{q} : \CCn \to \CCn$ either all preserve or all reverse the lexicographic ordering of $\CCn$ and preserve the relation $\simeq_{{\bf{I}}}$ on $\CCn$.
\end{lemma}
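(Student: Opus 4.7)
The strategy is to pull global properties of $h := h_{q_0}$ back to local actions, and then to identify local actions with states of $\core(A_{q_0})$. By the characterisation of $\aut{\Tnr}$ just established, $h \in N(\Tnr) \subseteq N$, so $h$ preserves the equivalence relation $\simeq$ on $\CCnr$ and comes with a single global sign $\sigma \in \{+,-\}$: either $h$ is locally orientation preserving at every point of $\CCnr$ (case $\sigma = +$), or locally orientation reversing at every point (case $\sigma = -$). By Remark~\ref{Remark: finitely many local actions implies finite transducer} applied to the minimal finite transducer $A_{q_0}$, every $q \in \core(A_{q_0})$ satisfies $h_q = h_\nu$ for some $\nu \in \Xnrp$, and because $A_{q_0}$ is synchronizing we may take $|\nu|$ arbitrarily large.

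Fix $q \in \core(A_{q_0})$ and choose $\nu$ with $h_q = h_\nu$ carefully. Using uniform continuity of $h$ on the compact space $\CCnr$, taking $|\nu|$ large forces both $U_\nu$ and $(U_\nu)h$ to have small diameter; in particular we can arrange that $U_\nu$ avoids $\dot{0}00\ldots$, $\dot{r-1}n-1n-1\ldots$ and their two $h$-preimages, so that simultaneously $(U_\nu)h$ avoids the two origin points $\dot{0}00\ldots$ and $\dot{r-1}n-1n-1\ldots$. Under such a choice, the restriction of $\simeq$ to $U_\nu$ (respectively to the cylinder containing $(U_\nu)h$) becomes, after stripping the prefix $\nu$ (respectively $(\nu)\theta_h$), exactly $\simeqI$ on $\CCn$: the only pair of $\simeq$ not already captured by $\simeqI$ is the origin identification, which by construction lies outside both cylinders. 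Combining this with the identity $(\nu\delta)h = (\nu)\theta_h \cdot (\delta)h_\nu$ and the fact that $h$ preserves $\simeq$ (applied also to $h^{-1}$) yields that $h_\nu$ preserves $\simeqI$ in both directions. For the lex order, prefixing by $\nu$ (respectively by $(\nu)\theta_h$) is an order-isomorphism between $\CCn$ and $U_\nu$ (respectively $U_{(\nu)\theta_h}$); choosing for each $t \in \CCnr$ a basic clopen neighbourhood on which $h$ is orientation preserving or reversing according to $\sigma$, compactness of $\CCnr$ gives a finite subcover, and so for $|\nu|$ large the cylinder $U_\nu$ is contained in one of these neighbourhoods. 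Therefore $h|_{U_\nu}$ preserves (respectively reverses) lex order, and $h_\nu$ inherits this behaviour on $\CCn$.

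Finally, $\sigma$ is a property of $h$ alone and does not depend on $q$, so either every state of $\core(A_{q_0})$ preserves lex order (if $\sigma = +$) or every state reverses lex order (if $\sigma = -$), while all states preserve $\simeqI$. The main delicate point is in the second paragraph: one needs uniform continuity of $h$ on compact $\CCnr$ to ensure that $U_\nu$ and its $h$-image simultaneously avoid the two origin points, which is what allows the global equivalence relation $\simeq$ to be pulled back to $\simeqI$ on the local factor.
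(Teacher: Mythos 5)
Your proof is correct, and it takes a genuinely different route to the conclusion than the paper does. The paper argues by ``transfer'': it exhibits a single $\nu$ on whose cylinder $h_{q_0}$ preserves the lexicographic order (this exists because $h_{q_0}$ is WLOG orientation-preserving on $S_r$), extracts the core state $q=\pi_A(\nu,q_0)$, and then propagates the conclusion to every other core state $p$ via the synchronizing transitions: any sufficiently long $\gamma$ forcing $p$ satisfies $\pi_A(\gamma,q)=p$, so $h_p$ appears as a prefix-stripped restriction of $h_q$ and inherits lex-preservation; the $\simeqI$ clause is asserted with the same mechanism. Your proof instead verifies the claim directly for each core state $q$ by choosing a representative $\nu$ with $h_q=h_\nu$ for which the global structure of $h$ is purely ``local'' on $U_\nu$: uniform continuity and a Lebesgue-number argument force $U_\nu$ into a small orientation-controlled neighbourhood, and avoiding the two preimages of the origin points keeps the $\simeq$-identifications visible from $U_\nu$ disjoint from the wrap-around pair, so they descend exactly to $\simeqI$ after prefix-stripping.

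The trade-offs are these. The paper's version is shorter and leans on the transducer structure (strong connectivity of the core under forcing words) to do the bookkeeping; its treatment of the origin identification is left implicit, and the reader is expected to see why the restriction of $\simeq$ along a transition does not accidentally include the wrap-around pair. Your version is more explicitly topological, handles the origin identification head-on, and makes clear why a single global sign $\sigma$ for $h$ forces uniform behaviour across all core states, at the cost of invoking compactness/uniform continuity machinery that the paper avoids. Both are valid, and your argument has the small advantage of not leaving the ``propagation to all states'' step to the reader. One minor tightening you could make: you do not actually need $U_\nu$ itself to avoid the origin points — since both members of a $\simeq$-pair in $U_\nu$ share the prefix $\nu$, they can never be the origin pair — only the image $(U_\nu)h$ must avoid them, so avoiding the two $h$-preimages suffices.
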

\begin{proof}
We consider only elements $A_{q_0} \in \TBnr$ such that $h_{q_0}$ induces an orientation preserving homeomorphism of $S_{r}$. The proof is similar in the other case.

Observe that since $A_{q_0} \in \TBnr$ this means that $h_{q_0}: \CCnr \to \CCnr$ is a homeomorphism which preserve $\simeq$. Hence there is a $\nu \in \Xnrs$ such that $h_{q_0}$  preserve the lexicographic ordering  on $U_{\nu}$. By restricting to a small enough open set contained in $\nu$ we may assume that assume that $\nu$ is longer than the synchronizing level of $A_{q_0}$. Let $\gamma \in \Xns$ be longer than the synchronizing level of $A_{q_0}$ such that the state of $A_{q_0}$ forced by $\gamma$ is a state $p \in \core(A_{q_0})$. Let $q = \pi_{A}(\nu, q_0) \in \core(A_{q_0})$. Observe that as $h_{q_0}$ preserves the lexicographic ordering of $U_{\nu}$, it must be the case that $h_{q}: \CCn \to \CCn$ preserves the lexicographic ordering on $U_{\nu}$. Thus $h_{p}$ must also preserve the lexicographic ordering on $\CCn$ otherwise $h_{q}$ does not preserve the lexicographic ordering. Moreover since $h_{q_0}$ preserves $\simeq$ we must also have that $h_{p}$ preserves $\simeq_{{\bf{I}}}$ otherwise $h_{q_0}$ would not preserve $\simeq$. Now since $A_{q_0}$ is synchronizing for all states $p'$ of $\core(A_{q_0})$ there is a  word $\gamma'$ in $\Xns$ longer than the synchronizing length of $A_{q_0}$ such that the state of $A_{q_0}$ forced by $\gamma'$ is $p'$. This concludes the proof.
\end{proof}

This prompts the following definition:

\begin{Definition}\label{Def: noninitial transducers preserving relations}
Let $A$ be a non-initial transducer of over $\CCn$, and let $P$ be some relation on $\CCn$. Then we say that \emph{$A$ preserves $P$} if for all states $q$ of $A$ the map $h_{q}$ preserves the relation $P$.
\end{Definition}

\begin{lemma}\label{Lemma: images of states of core of elements of TBnr are connected}
Let $A_{q_0} \in \TBnr$ then for all states $q \in \core(A_{q_0})$ there is a finite antichain  $\ac{u} = \{u_1, u_2, \ldots, u_{m}\}$ such that $\im(q) =  \cup_{1 \le i \le m}U_{u_i}$. Moreover for $1 \le i \le m-1$ we have that $u_in-1 n-1\ldots \simeqI u_{i+1}00\ldots$.
\end{lemma}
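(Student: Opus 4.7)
The statement has two parts and I would tackle them in sequence. For the first claim, that $\im(q)=\bigsqcup_{i=1}^m U_{u_i}$ is a finite disjoint union indexed by an antichain $\ac{u}$, the plan is to appeal directly to the characterisation of $\On$ recalled in Section~\ref{Section: out(Tnr)}: since $A_{q_0}\in\TBnr\subseteq\Bnr$, the non-initial transducer $\core(A_{q_0})$ is an element of $\Onr\subseteq\On$, and so every state of $\core(A_{q_0})$ induces an injective continuous map of $\CCn$ onto a \emph{clopen} image. Because $\CCn$ is compact and totally disconnected, any clopen set is a finite disjoint union of basic cylinders, and taking the unique maximal such family ordered lexicographically produces the required antichain $\ac{u}=\{u_1,\dots,u_m\}$ with $u_1\lelex\cdots\lelex u_m$ and $\im(q)=\bigsqcup_{i=1}^m U_{u_i}$.

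For the adjacency claim my plan is to push the whole picture through the quotient $\pi\colon\CCn\to\CCn/\simeqI$, which is homeomorphic to a compact interval $I$ via the map sending an $n$-ary expansion to the real number it represents. The preceding Lemma~\ref{Lemma: states of TBnr induces continuous functions from the interval to itsel } says that $h_q$ respects $\simeqI$, so by the universal property of the quotient topology $h_q$ descends to a continuous self-map $\bar h_q\colon I\to I$ satisfying $\pi\circ h_q=\bar h_q\circ\pi$. Since $I$ is connected, the image $\bar h_q(I)$ must be a connected subset of $I$, hence a single closed subinterval, and this subinterval is exactly $\pi(\im(q))=\pi(h_q(\CCn))$.

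Finally, each cylinder $U_{u_i}$ projects under $\pi$ onto the closed subinterval $[\pi(u_i00\dots),\pi(u_i(n-1)(n-1)\dots)]\subseteq I$. Because $u_1\lelex\cdots\lelex u_m$ is an antichain and $\pi$ is order-preserving, these subintervals appear in $I$ from left to right and meet at most at endpoints. For their union $\pi(\im(q))=\bar h_q(I)$ to be a single closed subinterval consecutive ones must share endpoints, that is $\pi(u_i(n-1)(n-1)\dots)=\pi(u_{i+1}00\dots)$ for all $1\le i\le m-1$, which unwinds to precisely the relation $u_i(n-1)(n-1)\dots\simeqI u_{i+1}00\dots$. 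The only technical wrinkle I expect is the bookkeeping that identifies $\CCn/\simeqI$ with an interval and checks that $\pi$ sends the lex order to the usual order so that the cylinders project in the expected positions; once that is set up the entire argument reduces to the topological fact that a subinterval of $I$ cannot have gaps.
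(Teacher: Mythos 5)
Your proof is correct and takes essentially the same route as the paper's, which simply observes via Lemma~\ref{Lemma: states of TBnr induces continuous functions from the interval to itsel } that $h_q$ descends to a continuous self-map of the interval $\CCn/\simeqI$, concludes that $\im(q)/\simeqI$ is connected, and leaves the remaining bookkeeping as an easy consequence. Your write-up just fills in the two pieces the paper leaves implicit: the clopenness of $\im(q)$ (from the definition of $\On$) giving the finite antichain $\ac{u}$, and the observation that the lexicographically ordered cylinder projections can only tile a connected subinterval if consecutive ones abut, which is exactly the $\simeqI$-adjacency condition.
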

\begin{proof}
Observe that by  Lemma \ref{Lemma: states of TBnr induces continuous functions from the interval to itsel } it follows that for every state $q \in \core(A_{q_0}$ $h_{q}$ induces a continuous function from the interval to itself. However since the interval is connected, and the continuous image of a connected topological space is connected, it follows that $\im(q)/\simeqI$ is a connected. The lemma is now an easy consequence of this.
\end{proof}

 The result below was demonstrated first by Brin in his seminal paper \cite{MBrin2} describing the automorphisms of R. Thompson's groups $F$ and $T$ and we have simple translated it into the language of transducers. 

\begin{Theorem}
Let $n = 2$, and let $A_{q_0} \in \TBnr$, then  $\core(A_{q_0})$ is either the single state identity transducer or the single state transducer mapping induce the transposition $(0 \ 1)$ on $X_{2}$. More specifically we have $\TOn \cong C_2$.
\end{Theorem}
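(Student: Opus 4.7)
The plan is to combine the correspondence $\TOn \cong \out{\Tnr}$ established in Sections~\ref{Section:AutTnr} and \ref{Section: out(Tnr)} with the classical result of Brin, cited from \cite{MBrin2}, that $\out{T} \cong C_2$. Since $n = 2$ forces $r = n - 1 = 1$, we have $\Tnr = T_{2,1} = T$ and $\TOn = \TOns{1}$.

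First I would exhibit the two candidate cores explicitly. The single-state identity transducer $\id$ over $\Xn$ (with $\pi(i, \id) = \id$ and $\lambda(i, \id) = i$) is trivially an element of $\TOn$. Let $\tau$ denote the single-state transducer over $\Xn$ with $\pi(i, \tau) = \tau$ and $\lambda(i, \tau) = 1 - i$ for $i \in \{0, 1\}$; then $\tau$ is strongly connected, bi-synchronizing at level $1$, and equals its own core, while the induced map $h_\tau \colon \CCn \to \CCn$ is bitwise complement. To verify $\tau \in \TOn$, form the initial transducer $A_{q_0}$ over $\CCnr$ obtained by prepending a single state $q_0 \in R$ that reads $\dot 0 \in \dotr$ and transitions to $\tau$ with output $\dot 0$. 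Under the identification $\CCnr/\simeq \,\cong\, S_1$, the induced homeomorphism $h_{q_0}$ descends to the orientation-reversing flip $x \mapsto -x \pmod{1}$ of the circle; this flip manifestly preserves $\simeq$, and by \cite{MBrin2} it lies in $N_{H(S_1)}(T)$. Hence $A_{q_0} \in \TBnr$ and $\core(A_{q_0}) = \tau$, so $\tau \in \TOn$.

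Next I would invoke Brin \cite{MBrin2}, who proved $\out{T} \cong C_2$, with the nontrivial coset represented precisely by the circle-flip above. Under the isomorphism $\TOn \cong \out{T}$ from Section~\ref{Section: out(Tnr)}, the element $\tau \in \TOn$ corresponds to Brin's flip and is therefore nontrivial. Since $\id \neq \tau$ in $\TOn$, $\tau^{2} = \id$ (two bitwise complements cancel), and $|\TOn| = 2$, we conclude $\TOn = \{\id, \tau\} \cong C_2$. For the first assertion, any $A_{q_0} \in \TBnr$ satisfies $\core(A_{q_0}) \in \TOn = \{\id, \tau\}$ by definition of $\TOn$, so $\core(A_{q_0})$ is either the single-state identity transducer or the single-state transducer inducing the transposition $(0 \ 1)$ on $X_2$.

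The main obstacle is the verification that $\tau$ genuinely represents Brin's flip; once one checks that the bitwise-complement transducer descends to the circle-flip of $S_1$ and preserves $\simeq$, the remainder is a counting argument combining Brin's classical theorem with the correspondence between $\TBnr$ and $N(\Tnr)$ developed in Sections~\ref{Section:fromcantorspacetothecircle}--\ref{Section: out(Tnr)}.
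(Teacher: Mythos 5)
Your proof is correct and takes essentially the same route as the paper: the paper simply states the theorem with a citation to Brin \cite{MBrin2} and remarks that it is Brin's $\out{T}\cong C_2$ result "translated into the language of transducers," and your argument is precisely that translation written out in full (explicitly exhibiting the two single-state cores, checking the bitwise-complement transducer descends to the circle flip, and invoking the isomorphism $\TOn\cong\out{T}$).
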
 

In order to show the subgroup $\TOnr \le \Onr$ is completely characterised by the fact that all of its states  preserve the relation $\simeqI$ and either all preserve or  all reverse the lexicographic ordering of $\CCn$ we need the following notions from  \cite{BCMNO}. Recall that $\On$ is the group of core, bi-synchronizing transducers all of whose states induce injective maps of $\CCn$ with clopen images. 

\begin{Definition}[\cite{BCMNO}]\label{Def: Viable combination}
Let $g \in \On$. A \emph{viable combination} $\mathfrak{v}_{j}$ for $g$ is a pair of tuples $$((\rho_1, \ldots, \rho_j), (p_1, \ldots, p_j))$$ satisfying 
\begin{enumerate}
\item $\bigcup_{1\le i \le j} \rho_i\im(p_i) = \CCn$ \label{Def: Viable combination union fills cantor space}
\item  for $1 \le i < l \le j$ we have $\rho_i \im(p_i) \cap \rho_l \im(p_l) = \emptyset$ \label{Def: Viable combination unions are disjoint}.
\end{enumerate}
 where the $\rho_{i} \in \Xns$, $1 \le i \le j$ are not necessarily incomparable or distinct, and  $p_i$, $1 \le i \le j$ are not necessarily distinct states of $g$.
\end{Definition} 

\begin{Definition}[Single expansions of viable combinations \cite{BCMNO}] \label{Def: expansions of viable combinations}
Let $T = (X_n, Q, \pi, \lambda) \in \On$ and let 
$\mathfrak{v}_{j} = ((\rho_1, \ldots, \rho_j), (p_1, \ldots, 
p_j))$ be  a viable combination for $T$. Fix an $i$ such that $1 
\le i \le j$, and let $\rho_{i,l} := \rho_i\lambda(l, 
p_i)$ and $p_{i,l} := \pi(l, p_i)$ for $l \in X_n$, then 
$$\mathfrak{v}_{j+n-1} := ((\rho_1, \ldots, \rho_{i-1}, 
\rho_{i,0}, \ldots,\rho_{i,(n-1)}, \rho_{i+1}, \ldots, \rho_{j} 
), (p_1, \ldots, p_{i-1}, p_{i,0}, \ldots, p_{i, (n-1)}, 
p_{i+1},\ldots, p_{i,k}))$$ is called a \emph{single expansion of 
$\mathfrak{v}_{j}$}. 
\end{Definition}

\begin{Remark}\label{Remark: sequence of expansions of viable combinations give rise to viable combinations}
Clearly a single expansion of a viable combination for an element $T \in \On$ results in a new viable combination for $T$. Therefore a sequence of single expansions applied to a viable combination for $T$ also results in a new viable combination for $T$. See \cite{BCMNO} for more detail.
\end{Remark}

The following lemma is proved in \cite{BCMNO}:

\begin{lemma}\label{Lemma: g is in Onr if and only if there is a viable combination with the right modulo arithmetic properties}
Let $g \in \On$ and let $\mathfrak{v}_{g}$ denote the set of all viable combinations of $g$. Let $1 \le r \le n-1$. There is a transducer $A_{q_0} \in \Bnr$ whose core is equal to $g$ if and only if there is a sequence $\mathfrak{v}_{j_1}, \mathfrak{v}_{j_2} \ldots, \mathfrak{v}_{j_m}$ of elements of $\mathfrak{v}_{g}$ such that $r \equiv \sum_{1 \le i \le m} j_i \equiv m \mod{n-1}$.
\end{lemma}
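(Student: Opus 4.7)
The plan is to analyse the \emph{initial tree} of a transducer $A_{q_0} \in \Bnr$---the portion of $A_{q_0}$ traversed from $q_0$ up to (but not including) the first entrance into $\core(A_{q_0})$---and to match its branching data with viable combinations of $g = \core(A_{q_0})$.

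For the forward direction, take $A_{q_0} \in \Bnr$ with $\core(A_{q_0}) = g$ and, using bi-synchronicity, extract the set $\ac{u} \subset X_{n,r}^{+}$ of minimal first-arrival words: those $u$ with $\pi_{A}(u, q_0) \in \core(A_{q_0})$ but no proper prefix of $u$ having this property. Beginning from $\dotr$ (an antichain of size $r$) and subdividing (each subdivision adds $n-1$ new elements), one obtains $|\ac{u}| \equiv r \pmod{n-1}$. For each $u \in \ac{u}$ the output $\lambda_{A}(u, q_0)$ must lie in $X_{n,r}^{+}$ (else the total output on $u\delta$ would contain no letter of $\dotr$), so it factors uniquely as $\dot{b}_u \rho_u$ with $\dot{b}_u \in \dotr$ and $\rho_u \in \Xns$, and $(U_u)h_{q_0} = \dot{b}_u \rho_u \im(\pi_{A}(u, q_0)) \subseteq U_{\dot{b}_u}$. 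Grouping the elements of $\ac{u}$ by the value of $\dot{b}_u$ and identifying $U_{\dot{b}} \cong \CCn$ by stripping the leading dot, the tuples $((\rho_u)_{u:\dot{b}_u = \dot{b}}, (\pi_{A}(u, q_0))_{u:\dot{b}_u = \dot{b}})$ form a viable combination for $g$. This produces $m = r$ viable combinations whose sizes $j_{\dot{b}}$ satisfy $\sum_{\dot{b}} j_{\dot{b}} = |\ac{u}| \equiv r \pmod{n-1}$.

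For the reverse direction, given the hypothesised sequence $\mathfrak{v}_{j_1},\ldots,\mathfrak{v}_{j_m}$, first normalise to $m = r$ using two moves. \emph{Merging} takes any $n$ viable combinations in the sequence, prepends the letter $l-1 \in X_n$ to the $\rho$-coordinates of the $l$-th ($l = 1, \ldots, n$), and combines them into a single viable combination of size $\sum_{l} j^{(l)}$; this decreases $m$ by $n-1$ and preserves $\sum_i j_i$. \emph{Splitting}, after enough single expansions (Definition~\ref{Def: expansions of viable combinations}) so that every letter of $X_n$ occurs as the first letter of some $\rho_i$, partitions a single viable combination into $n$ sub-combinations indexed by this initial letter, increasing $m$ by $n-1$. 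Since $m \equiv r \pmod{n-1}$, iterating merging (when $m > r$) or splitting (when $m < r$) reduces to exactly $r$ viable combinations $\mathfrak{v}^{(1)},\ldots,\mathfrak{v}^{(r)}$, which we index by $\dotr$ via a chosen bijection $\dot{b} \mapsto \mathfrak{v}^{(\dot{b})}$. Apply further single expansions (each of which leaves each $j_i$ unchanged modulo $n-1$) until $J := \sum_{\dot{b}} j_{\dot{b}}$ is large enough to be expressible as $\sum_{\dot{a}} L_{\dot{a}}$ with each $L_{\dot{a}} \equiv 1 \pmod{n-1}$ and $L_{\dot{a}} \geq 1$; this is possible precisely because $J \equiv r \pmod{n-1}$. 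Then construct $A_{q_0}$ with initial state $q_0$ having $r$ branches, the $\dot{a}$-th being an $n$-ary tree of $R$-states with $L_{\dot{a}}$ leaves (realisable because antichains of $\Xns$ have sizes $\equiv 1 \pmod{n-1}$). Distribute the leaves so that exactly $j_{\dot{b}}$ leaves are assigned entries of $\mathfrak{v}^{(\dot{b})}$, and set the final $R$-to-$S$ transition at the leaf corresponding to entry $(\rho_i, p_i)$ to emit $\dot{b}\rho_i$ and land at $p_i$.

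The main obstacle is the splitting step: verifying that after finitely many single expansions every $l \in X_n$ actually appears as the first letter of some $\rho_i$. This follows from the partition identity $\bigsqcup_i \rho_i \im(p_i) = \CCn = \bigsqcup_{l \in X_n} U_l$, since each subcone $U_l$ must absorb contributions, and expanding any entry with $\rho_i$ too short to distinguish the target subcone eventually produces descendants whose $\rho$-prefix begins with $l$. Once this is in hand, verifying that the assembled $A_{q_0}$ is minimal, bi-synchronising, and induces a homeomorphism of $\CCnr$ with core $g$ is routine: injectivity follows from the disjointness in each viable combination, surjectivity from the covering property $\bigsqcup_i \rho_i \im(p_i) = \CCn$ applied to each $U_{\dot{b}}$, and bi-synchronicity from finiteness of the initial tree combined with bi-synchronicity of $g$.
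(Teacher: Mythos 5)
The paper does not actually prove this lemma: it is imported directly from BCMNO (``The following lemma is proved in~\cite{BCMNO}''), so there is no proof here to compare your argument against. On its own terms your proof is sound and is almost certainly the intended argument: the forward direction reads off $r$ viable combinations (one per letter of $\dotr$) from the complete antichain of first-arrival words into $\core(A_{q_0})$, noting that such an antichain has size $\equiv r\pmod{n-1}$ and that the grouped images $\rho_u\im(\pi_A(u,q_0))$ partition each $U_{\dot b}$; the reverse direction normalises the given sequence to exactly $r$ viable combinations via your merge/split moves and assembles the pre-core tree of $A_{q_0}$.

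Three small remarks. First, synchronicity (not bi-synchronicity) is what guarantees the first-arrival antichain exists. Second, the step ``apply further single expansions until $J$ is large enough'' is redundant: since every $j_{\dot b}\ge 1$, one already has $J\ge r$, and the excess $J-r$ is a nonnegative multiple of $n-1$ that can be dumped entirely on a single $L_{\dot a}$. Third, the closing ``routine'' verification should at least note why passing to the minimal transducer does not change the core: because $g$ is a strongly connected, minimal non-initial transducer in $\On$, the core of $A_{q_0}$ contains no states of incomplete response and no $\omega$-equivalent pairs, so minimisation only affects the tree of $R$-states and leaves $\core(A_{q_0})=g$ intact.
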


Now we have the following lemma which solves the other half of the membership problem of $\TOnr \in \Onr$. However we make the following definition first.

\begin{Definition}\label{Def: lexicographic viable combinations}
Let $g \in \On$. A \emph{lexicographic viable combination} $\mathfrak{v}_{j}$ for $g$ is a viable combination $$\mathfrak{v}_{j}:=((\rho_1, \ldots, \rho_j), (p_1, \ldots, p_j))$$ satisfying: for $1 \le a < b \le j$  $\rho_{a}\im(p_a) \lelex \rho_{b}\im(p_b)$.
\end{Definition}

\begin{lemma}\label{Lemma: g in Onr induces map on the line iff it has lexicographic viable combination}
Let $g \in \Onr$ be such that $g$ preserves  $\simeqI$ and preserves (reverses) the lexicographic ordering on $\CCn$ then any viable combination of   $g$ may be re-ordered to get a lexicographic viable combination.
\end{lemma}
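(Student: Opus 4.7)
The plan is to show that the sets $V_i := \rho_i\im(p_i)$ for $1 \le i \le j$ are pairwise $\lelex$-comparable; once this is established, the lexicographic viable combination is obtained by sorting both tuples of the given combination by the resulting total order.

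First I would establish the structural description of each $V_i$. The hypotheses on $g$ say that for every state $p$ of $g$, the induced map $h_p$ on $\CCn$ is monotonic and respects $\simeqI$, so the argument of Lemma~\ref{Lemma: images of states of core of elements of TBnr are connected} transports verbatim (it uses only these two properties via the induced continuous map on the quotient $\CCn/\simeqI \cong [0,1]$): $\im(p)$ is a union of incomparable cylinders $U_{u_1}, \ldots, U_{u_m}$, lex-ordered, with $u_k(n-1)(n-1)\ldots \simeqI u_{k+1}00\ldots$ for $1 \le k < m$. Because prepending a prefix preserves $\simeqI$, $V_i = U_{\rho_i u^i_1} \cup \cdots \cup U_{\rho_i u^i_{m_i}}$ is likewise a lex-increasing union of $\simeqI$-adjacent cylinders.

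The main step is to show $V_i$ and $V_l$ are $\lelex$-comparable whenever $i \ne l$. Project each to $\CCn/\simeqI \cong [0,1]$; each projects to a closed subinterval $I_i, I_l$. Because $V_i \cap V_l = \emptyset$ by the viable-combination assumption and each interior point of $[0,1]$ has either a unique preimage or a $\simeqI$-class of two preimages in $\CCn$, any overlap of $I_i$ and $I_l$ in the interior would either produce a common point in $V_i \cap V_l$ or force both elements of a size-two $\simeqI$-class into $V_i \cap V_l$; both contradict disjointness. Hence $I_i$ and $I_l$ meet in at most a single common endpoint. If they are strictly separated, comparability of $V_i$ and $V_l$ is immediate. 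Otherwise, after possibly swapping the roles of $i$ and $l$, the right endpoint of $I_i$ equals the left endpoint of $I_l$, giving $x := \rho_i u^i_{m_i}(n-1)(n-1)\ldots \simeqI y := \rho_l u^l_1 00\ldots$ with $x \ne y$; the definition of $\simeqI$ then writes $x = w(a-1)(n-1)(n-1)\ldots$ and $y = wa\, 00\ldots$ for some $w$ and $a$, forcing $\rho_i u^i_{m_i}$ and $\rho_l u^l_1$ to be incomparable words with $\rho_i u^i_{m_i} \lelex \rho_l u^l_1$, so $U_{\rho_i u^i_{m_i}} \lelex U_{\rho_l u^l_1}$ and hence $V_i \lelex V_l$ (since all remaining cylinders of $V_i$ are $\lelex U_{\rho_i u^i_{m_i}}$ and all remaining cylinders of $V_l$ are $\geq_{\mathrm{lex}} U_{\rho_l u^l_1}$).

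Applied pairwise, this equips $\{V_1, \ldots, V_j\}$ with a total order under $\lelex$; letting $\sigma$ be the sorting permutation, the reindexed tuples $(\rho_{(i)\sigma})_i$ and $(p_{(i)\sigma})_i$ still form a viable combination (the conditions of Definition~\ref{Def: Viable combination} are permutation invariant) and now satisfy Definition~\ref{Def: lexicographic viable combinations}. The only genuinely delicate point is the boundary case of the comparability argument, where two cylinder-unions share a single $\simeqI$-endpoint; everything else is routine bookkeeping transported through the quotient $\CCn \to \CCn/\simeqI \cong [0,1]$.
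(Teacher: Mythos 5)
Your argument is correct and is, to my eye, a cleaner route than the paper's. Both use Lemma~\ref{Lemma: images of states of core of elements of TBnr are connected} together with the disjointness clause of Definition~\ref{Def: Viable combination}, but the paper builds the reordering explicitly in two stages: it first groups the $\rho_i$ by the antichain $R_{\mathfrak{v}_l}$ of prefix-minimal words and sorts each group in short-lex order with multiplicities, then sorts the images inside each root cone, and finally notes that the cross-cone order falls out because the root cones are an antichain. You compress all of this bookkeeping into a single topological observation: each $V_i = \rho_i\im(p_i)$ is a contiguous union of cylinders, hence projects to a nondegenerate closed subinterval of $[0,1]$, and the disjointness of the $V_i$ forces these subintervals to have disjoint interiors, so the $V_i$ are pairwise $\lelex$-comparable and one simply sorts. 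Your remark that the connectedness lemma's proof transports to the present hypotheses is not cosmetic: the paper's own proof opens by invoking an $A_{q_0}\in\TBnr$ with $\core(A_{q_0})=g$, which would already require $g\in\TOnr$ --- exactly what the subsequent Lemma~\ref{Lemma: g in Onr is in TOnr if g prserve  simeqI and lex order} is proving \emph{from} the present one --- so your observation is what actually breaks that circularity. One step to make explicit in your boundary case: from $\rho_i u^i_{m_i}(n-1)(n-1)\ldots \simeqI \rho_l u^l_1 00\ldots$ with the two points distinct, the incomparability of $\rho_i u^i_{m_i}$ and $\rho_l u^l_1$ does not follow from the $\simeqI$ decomposition alone (a priori one of the two could be a prefix of $w$); you need one more appeal to disjointness, since a prefix relation between them would nest one cylinder inside the other and produce a point of $V_i\cap V_l$.
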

\begin{proof}

Since $g \in \Onr$ there is an element $A_{q_0} \in \TBnr$ such that $\core(A_{q_0}) = g$. Now by Lemma~\ref{Lemma: g is in Onr if and only if there is a viable combination with the right modulo arithmetic properties} it follows that there are viable combinations  $\mathfrak{v}_{j_1}, \mathfrak{v}_{j_2}, \ldots, \mathfrak{v}_{j_{m}}$ of $g$. Thus the set  $\mathfrak{v}_{g}$ of viable combinations of $g$ is non-empty. Let $\mathfrak{v}_{l} \in \mathfrak{v}_{g}$. We shall now make use of Lemma~\ref{Lemma: images of states of core of elements of TBnr are connected} to re-order $\mathfrak{v}_{l}$ to obtain a  lexicographic viable combination of $g$. 

Suppose $\mathfrak{v}_{l} = ((\rho_1, \ldots, \rho_l),(p_1, \ldots, p_l))$. Let $R_{\mathfrak{v}_{l}} := \{ \rho_k \mid 1 \le k \le l \mbox{ such that }\rho_{t} \not\leq \rho_{k} \forall  1\le t \le l \}$. Observe that $R_{\mathfrak{v}_{l}}$ is an antichain by construction, thus we may assume that the set $R_{\mathfrak{v}_{l}}$ is totally ordered in the lexicographic order. Fix $\rho_{k} \in  R_{\mathfrak{v}_{l}}$. Let $D(\rho_{k}) = \{ \rho_{t} \mid \rho_{k} \le \rho_{t} \}$. We assume that $D(\rho_{k})$ is ordered according to the short-lex ordering. Observe that $\rho_{k} \in D(\rho_{k})$ and in particular is the smallest element  of $\rho_{k}$. Let $d_{\rho_{k}} = | D(\rho_{k})|$. Now for each $\rho_{t} \in D(\rho_{k})$ let $\mu_{\rho_{t}}$ be the number of times it occurs in the tuple $(\rho_1, \ldots, \rho_k)$.  Let  $\bar{l}:= |R_{\mathfrak{v}_{l}}|$. 

Given a tuple $(y_1, y_2, \ldots, y_t)$ we shall use the notation $(y_1, \ldots, y_s^{p}, \ldots, y_t)$ to mean that $y_s, \ldots, y_{s+p-1}$ are all equal to $y_s$. Now let  $\xi_1, \ldots \xi_{\bar{l}}$ be the elements of $R_{\mathfrak{v}_{l}}$ in lexicographic ordering. For $1 \le s \le \bar{l}$ let $\xi_{(s,1)}, \ldots \xi_{(s, d_{\xi_{s}})}$ be the elements of $D(\xi_{s})$ in short-lex ordering. Then we have 
\begin{equation}\label{Equation: first rearrangement of tuple}
\left(\xi_1^{\mu_{\xi_1}}, \xi_{(1,1)}^{\mu_{\xi_{(1,1)}}}, \ldots, \xi_{(1, d_{\xi_1})}^{\mu_{\xi_{(1, d_{\xi_1})}}}, \ldots, \xi_{\bar{l}}^{\mu_{\xi_{\bar{l}}}}, \xi_{(\bar{l},1)}^{\mu_{\xi_{(\bar{l},1)}}}, \ldots, \xi_{(\bar{l},d_{\xi_{\bar{l}}})}^{\mu_{\xi_{(\bar{l},d_{\xi_{\bar{l}}})}}}\right)
\end{equation}
is a reordering of $(\rho_1, \ldots, \rho_l)$. Let $(q_1, \ldots, q_l)$ be the induced reordering on the tuple of states $(p_1,\ldots, p_l)$.

Now fix $1 \le s \le \bar{l}$ and consider the sequence 
$\xi_s^{\mu_{\xi_s}}, \xi_{(s,1)}^{\mu_{\xi_{(s,1)}}}, \ldots, 
\xi_{(s, d_{\xi_1})}^{\mu_{\xi_{(s, d_{\xi_s})}}}$  let 
$$q_{(s_0,1)}, \ldots, q_{(s_0, \mu_{\xi_s})}, \ldots, 
q_{(s_{d_{\xi_s}},1)} ,\ldots , q_{(s_{d_{\xi_s}}, \mu_{\xi_{(s, 
d_{\xi_{s}})}})}$$ be the corresponding states. Then observe that $$ 
\xi_{s}\im(q_{(s_0,1)} \cup \ldots \cup \xi_s \im(q_{(s_0, \mu_{\xi_s})}) \cup \bigcup_{1 \le k \le 
d_{\xi_s}} \xi_{(s,k)} \im(q_{(s_{k},1)}) \cup \ldots \cup \xi_{(s,k)} \im(q_{(s_{k}, \mu_{\xi_{(s, k)})}})  =  
U_{\xi_{1}}$$ by construction and the definition of viable 
combinations. Now let  $0 \le k < d_{\xi_s}$ and let $k \le k' \le  d_{\xi_s}$, for $1\le t_1 \le \mu_{\xi_{(s,l)}}$ and $1 \le t_2 \le \mu_{\xi_{(s, k')}}$ consider $\xi_{(s,k)} \im(q_{(s_{k}, t_1)})$  and $\xi_{(s,k')} \im(q_{(s_{k'},t_2)})$ (note that we set $\xi_{(s,0)} \im(q_{(s_{0},t_1)}):= \xi_{s}\im(q_{s_{0}, t_1})$). Observe that by the ordering of the set $D(\xi_{s})$, by Lemma~\ref{Lemma: images of states of core of elements of TBnr are connected} and by  Definition~\ref{Def: Viable combination} part~\ref{Def: Viable combination unions are disjoint} of viable combinations, we must have that exactly one of the following holds: 
\begin{enumerate}[label = (\roman*)]
\item $\xi_{(s,k)} \im(q_{(s_{k}, t_1)}) \lelex \xi_{(s,k')} \im(q_{(s_{k'},t_2)})$ or,
\item $\xi_{(s,k')} \im(q_{(s_{k'},t_2)}) \lelex \xi_{(s,k)} \im(q_{(s_{k}, t_1)})$ or,
\item $k = k'$ and $t_1 = t_2$.
\end{enumerate}

Thus we may reorder  the tuple \eqref{Equation: first rearrangement of tuple} to obtain a new tuple

\begin{equation}\label{Equation: second rearrangement of tuple}
\left(\chi_1^{\mu_{\chi_1}}, \chi_{(1,1)}^{\mu_{\chi_{(1,1)}}}, \ldots, \chi_{(1, d_{\xi_1})}^{\mu_{\chi_{(1, d_{\xi_1})}}}, \ldots, \chi_{\bar{l}}^{\mu_{\chi_{\bar{l}}}}, \chi_{(\bar{l},1)}^{\mu_{\chi_{(\bar{l},1)}}}, \ldots, \chi_{(\bar{l},d_{\xi_{\bar{l}}})}^{\mu_{\chi_{(\bar{l},d_{\chi_{\bar{l}}})}}}\right)
\end{equation} 

satisfying the following conditions. Let $1 \le s \le  \bar{l}$ and consider the subsequence  $$\chi_s^{\mu_{\chi_s}}, \chi_{(s,1)}^{\mu_{\chi_{(s,1)}}}, \ldots, 
\chi_{(s, d_{\xi_s})}^{\mu_{\chi_{(s, d_{\xi_s})}}}$$ then:
\begin{enumerate}[label = (\roman*)]
\item  Each $\chi_{(s, r)}^{\mu_{\chi_{(s,r)}}}$ for $0 \le r \le d_{\xi_{s}}$ (where $\chi_{(s,0)} = \chi_{s}$) corresponds to precisely one element  $\xi_{(s,r')}^{\mu_{\xi_{(s,r')}}}$ for $0 \le r' \le d_{\xi_{s}}$ (where $\xi_{(s,0)} = \xi_{s}$). \label{List: rearrangement 2 cond 1}
\item The subsequence $\chi_s^{\mu_{\chi_s}}, \chi_{(s,1)}^{\mu_{\chi_{(s,1)}}}, \ldots, 
\chi_{(s, d_{\xi_s})}^{\mu_{\chi_{(s, d_{\xi_s})}}}$   is ordered so that  the following is true. If $q_{(s_0,1)}, \ldots, q_{(s_0, \mu_{\chi_s})}, \ldots, 
q_{(s_{d_{\xi_s}},1)} ,\ldots , q_{(s_{d_{\xi_s}}, \mu_{\chi_{(s, 
d_{\xi_{s}})}})}$ are the set of states corresponding to the sub-sequence $\chi_s^{\mu_{\chi_s}}, \chi_{(s,1)}^{\mu_{\chi_{(s,1)}}}, \ldots, 
\chi_{(s, d_{\xi_1})}^{\mu_{\chi_{(s, d_{\xi_s})}}}$ then for $0 \le k < d_{\xi_{s}}$, for $k \le k' \le d_{\xi_{s}}$, for $1 \le t_1 \le \mu_{\chi_{(s,k)}}$ and $1 \le t_2 \le \mu_{\chi_{(s,k')}}$ such that if $k = k'$ and $\mu_{\chi_{(s,k)}} >1$ then $t_1 \ne t_2$, we have  $\chi_{(s,k)} \im(q_{(s_{k}, t_1)}) \lelex \chi_{(s,k')} \im(q_{(s_{k'},t_2)})$.  
\end{enumerate}
 
 Notice condition~\ref{List: rearrangement 2 cond 1} above means all elements of the subsequence $\chi_s^{\mu_{\chi_s}}, \chi_{(s,1)}^{\mu_{\chi_{(s,1)}}}, \ldots, 
 \chi_{(s, d_{\xi_s})}^{\mu_{\chi_{(s, d_{\xi_s})}}}$ have prefix $\xi_{s}$.    

Now observe that for $1 \le s < s' \le \bar{l}$, for $ 0 \le k \le d_{\xi_{s}}$, for  $0 \le k' \le d_{\xi_{s'}}$, for $1 \le t \le \mu_{\chi_{(s, k)}}$ and for $1 \le t' \le \mu_{\chi_{(s', k')}}$ so that if  the $t$\textsuperscript{th} copy of $\chi_{(s, k)}$ ($\chi_{s}$ if $k=0$) and the $t'$\textsuperscript{th} copy of $\chi_{(s', k')}$ ($\chi_{s'}$ if $k'=0$) in the sequence \eqref{Equation: second rearrangement of tuple} correspond to the states $q_{(s_{k}, t_1)}$ and $q_{(s'_{k}, t_2)}$ respectively, then $\chi_{(s,k)} \im(q_{(s_{k}, t_1)}) \lelex \chi_{(s',k')} \im(q_{(s'_{k'},t_2)})$ in the lexicographic ordering on $\CCn$. This follows from the observation in the previous paragraph and because $\xi_{s} \lelex \xi_{s'}$.

Let $$(\zeta_1, \ldots, \zeta_{l}) := \left(\chi_1^{\mu_{\chi_1}}, \chi_{(1,1)}^{\mu_{\chi_{(1,1)}}}, \ldots, \chi_{(1, d_{\xi_1})}^{\mu_{\chi_{(1, d_{\xi_1})}}}, \ldots, \chi_{\bar{l}}^{\mu_{\chi_{\bar{l}}}}, \chi_{(\bar{l},1)}^{\mu_{\chi_{(\bar{l},1)}}}, \ldots, \chi_{(\bar{l},d_{\xi_{\bar{l}}})}^{\mu_{\chi_{(\bar{l},d_{\chi_{\bar{l}}})}}}\right)$$ and let 
\begin{IEEEeqnarray*}{rCl}
(q_1', \ldots q_l') := (& q_{(1_0,1)}& , \ldots, q_{(1_0, \mu_{\chi_{1}})}, \ldots, q_{(1_{d_{\xi_{1}}},1)},\ldots, q_{(1_{d_{\xi_{1}}}, \mu_{\chi_{(1, d_{\xi_1})}})}, \ldots,  \\
  & q_{(\bar{l}_0,1)},&   \ldots, q_{(\bar{l}_0, 
 \mu_{\chi_{\bar{l}}})}, \ldots, 
 q_{(\bar{l}_{d_{\xi_{1}}},1)},\ldots, 
 q_{(\bar{l}_{d_{\xi_{\bar{l}}}}, \mu_{\chi_{(\bar{l}, 
 d_{\xi_{\bar{l}}})}})})
\end{IEEEeqnarray*}

Observe that  $((\zeta_1, \ldots, \zeta_l), (q_1', \ldots, q_l'))$ is a viable combination for $g$. Moreover, by discussion above, for $1 \le a < b \le l$ it satisfies   $\zeta_{a} \im(q_a) \lelex \zeta_{b}\im(q_b)$ in the lexicographic ordering of $\CCn$ i.e  $((\zeta_1, \ldots, \zeta_l), (q_1', \ldots, q_l'))$ is a lexicographic  viable combination of $g$.  
\end{proof}

Let $ \pi_{R,n} \in \On$ be the single state transducer which induces the permutation $i \mapsto n-1- i$ on $\Xn$. Then  $\pi_{R,n}$ has order $2$, preserves $\simeqI$, and reverses the lexicographic ordering on  $\CCn$. It is also not hard to see that $\pi_{R,n} \in \TOnr$ for all $1 \le r < n-1$. For instance the map $ f_{\pi_{R,n}}: \CCnr \to \CCnr$ given by $ \dot{a} \delta \mapsto \dot{r-a +1} (\delta)h_{\pi_{R,n}}$ is induces by an element of $\TBnr$ with core equal to $\pi_{R,n}$ since it replaces some finite prefix before acting by $\pi_{R,n}$.

The following lemma follows immediately from the fact that $\pi_{R,n}$ has order $2$ and reverses the lexicographic ordering on $\CCn$.

\begin{lemma}\label{Lemma: multiplication by piR bijection from orientation preserving to reversing and vice versa}
Multiplication by $\pi_{R,n}$ induces a bijection from the subset of $\Onr$ preserving (reversing) the lexicographic ordering, to the subset of $\Onr$ reversing (preserving) the lexicographic ordering.
\end{lemma}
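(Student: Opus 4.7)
The plan is to exploit the simple fact that $\pi_{R,n}$ is an involution (so right multiplication by it is automatically a bijection on $\Onr$) together with the fact that composing with a lex-reversing map swaps the ``orientation type'' of a state. So the only real content to verify is that multiplication by $\pi_{R,n}$ swaps the two subsets; the bijectivity is then free.

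First I would observe that $\pi_{R,n} \in \Onr$: we have $\pi_{R,n} \in \TOnr \subseteq \Onr$ by the paragraph immediately preceding the lemma (and in any case $\pi_{R,n}$ is a core bi-synchronizing transducer whose single state is a homeomorphism of $\CCn$ with clopen image). Since $\Onr$ is a group and $\pi_{R,n}^{2}$ is the identity in $\On$, the map $g \mapsto g\pi_{R,n}$ is an involution on $\Onr$, and in particular a set bijection from $\Onr$ to itself.

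Next, I would show that $g \mapsto g\pi_{R,n}$ exchanges the lex-preserving and lex-reversing subsets. Let $g \in \Onr$ and let $A$ be a bi-synchronizing transducer with $\core(A) = g$; form the product $A \ast \pi_{R,n}$, whose states are pairs $(q, \pi_{R,n})$ and which satisfy $h_{(q,\pi_{R,n})} = h_{q} \circ h_{\pi_{R,n}}$. Passing to the minimal representative and its core only identifies $\omega$-equivalent states and trims states of incomplete response, neither of which alters whether a state's induced map preserves or reverses the lexicographic order on $\CCn$. Hence the states of $g\pi_{R,n}$ are $\omega$-equivalent to maps of the form $h_{q} \circ h_{\pi_{R,n}}$ with $q$ a state of $g$. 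Since $h_{\pi_{R,n}}$ reverses $\leqlex$ on $\CCn$, composing with it flips orientation: if every $h_{q}$ preserves $\leqlex$ then every $h_{q}\circ h_{\pi_{R,n}}$ reverses it, and symmetrically if every $h_{q}$ reverses $\leqlex$ then every $h_{q}\circ h_{\pi_{R,n}}$ preserves it.

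Combining the two observations, $g\mapsto g\pi_{R,n}$ is an involution on $\Onr$ whose effect on the distinguished subsets is to interchange ``preserves $\leqlex$'' with ``reverses $\leqlex$''. In particular its restriction to either subset is a bijection onto the other, which is the statement of the lemma. I do not expect a serious obstacle here; the only subtlety worth spelling out is that taking cores and minimising does not interfere with the lex-preserving/reversing property, which is clear because $\omega$-equivalent states induce the same map on $\CCn$.
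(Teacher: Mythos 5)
Your argument is correct and is exactly the argument the paper has in mind: the lemma in the text is prefaced only by the one-line remark that it "follows immediately from the fact that $\pi_{R,n}$ has order $2$ and reverses the lexicographic ordering," and you have simply spelled out that reasoning (bijectivity from the involution property, exchange of the two subsets from composing with a lex-reversing map, plus the minor but worthwhile observation that minimisation and taking cores do not disturb the lex-preserving/reversing property of the states). No gap, and no genuinely different route from the paper.
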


\begin{lemma}\label{Lemma: g in Onr is in TOnr if g prserve  simeqI and lex order}
Let $g \in \Onr$ be such that $g$ preserves $\simeqI$ and preserves (reverses) the lexicographic ordering on $\CCn$ then $g \in \TOnr$.
\end{lemma}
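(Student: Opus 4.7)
The strategy is to reduce to the orientation-preserving case, then to upgrade a viable combination of $g$ (which exists because $g \in \Onr$) to a lexicographic one, and finally to assemble from this data an initial bi-synchronizing transducer whose core is $g$ and whose top-level action respects $\simeq$.

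\emph{Reduction.} Suppose first $g$ reverses the lexicographic order. Since $\pi_{R,n}$ is a single state transducer which reverses the lexicographic ordering and preserves $\simeqI$, Lemma~\ref{Lemma: multiplication by piR bijection from orientation preserving to reversing and vice versa} (and inspection of the states) gives that $\pi_{R,n}g$ preserves $\simeqI$ and preserves the lexicographic ordering. As $\pi_{R,n}\in \TOnr$ and $\TOnr$ is a group, it suffices to show $\pi_{R,n}g \in \TOnr$. So from now on assume $g$ preserves $\simeqI$ and preserves the lexicographic ordering.

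\emph{Lexicographic viable data.} Since $g \in \Onr$, Lemma~\ref{Lemma: g is in Onr if and only if there is a viable combination with the right modulo arithmetic properties} supplies a sequence $\mathfrak{v}_{j_1},\ldots,\mathfrak{v}_{j_m}$ of viable combinations of $g$ with $r \equiv \sum_i j_i \equiv m \pmod{n-1}$. By Lemma~\ref{Lemma: g in Onr induces map on the line iff it has lexicographic viable combination}, each $\mathfrak{v}_{j_k}$ may be reordered into a lexicographic viable combination; the reordering does not change the multiset of pairs $(\rho_i, p_i)$, so the arithmetic congruences on $m$ and $\sum_i j_i$ are preserved.

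\emph{Assembling an element of $\TBnr$.} We now follow the construction behind Lemma~\ref{Lemma: g is in Onr if and only if there is a viable combination with the right modulo arithmetic properties} to produce an initial transducer $A_{q_0}\in \Bnr$ with $\core(A_{q_0})=g$: a repeated use of the single expansions of Definition~\ref{Def: expansions of viable combinations} lets us refine the lexicographic viable combinations into a list of $r$ lexicographically ordered blocks of equal-length words, one block per letter $\dot{a}\in\dotr$, with the appropriate states of $g$ attached. The initial state $q_0$ reads $\dot{a}$, outputs the corresponding prefix from the $a$\textsuperscript{th} block, and transitions into the associated state of $\core(A_{q_0})=g$. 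Because each block, and the concatenation of blocks as $\dot{a}$ runs through $\dotr$, is built from a lexicographic viable combination, the clopen pieces $(U_{\dot{a}w})h_{q_0}$ march through $\CCnr$ in lexicographic order as $\dot{a}w$ does; by Lemma~\ref{Lemma: images of states of core of elements of TBnr are connected} applied to each state of $g$, neighbouring pieces share a single $\simeq$ boundary point, so $h_{q_0}$ preserves $\simeq$ and restricts to an orientation-preserving homeomorphism of $S_r$. Thus $A_{q_0}\in\TBnr$ and $g = \core(A_{q_0}) \in \TOnr$.

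\emph{Main obstacle.} The genuine work is the last step: verifying that refining the lexicographic viable combinations into equally long blocks of length $r$ modulo $n-1$ can be done while keeping the lex ordering of the image pieces intact, and that the resulting initial transducer actually induces a bijection on $\CCnr$ which respects $\simeq$ at the seams between adjacent $\dot{a}$'s (in particular between $\dot{r-1}$ and $\dot{0}$). This is essentially bookkeeping guided by Definition~\ref{Def: lexicographic viable combinations} and Lemma~\ref{Lemma: images of states of core of elements of TBnr are connected}, but it is where the hypothesis that $g$ preserves $\simeqI$ is crucially used, as it ensures the pieces glue up into arcs of the circle rather than disconnected subsets.
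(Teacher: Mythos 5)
Your high-level strategy matches the paper's: reduce to the orientation-preserving case via $\pi_{R,n}$, invoke Lemma~\ref{Lemma: g is in Onr if and only if there is a viable combination with the right modulo arithmetic properties} and Lemma~\ref{Lemma: g in Onr induces map on the line iff it has lexicographic viable combination} to obtain lexicographic viable combinations, then assemble an element of $\TBnr$ with core $g$. The reduction step is fine (though one should note that $\simeqI$-preservation passes to the product $\pi_{R,n}g$ simply because it is a composition of $\simeqI$-preserving local actions, or better, use Lemma~\ref{Lemma:preservinglexsuffices} to avoid tracking $\simeqI$ separately).

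The gap is exactly the step you label the ``main obstacle'': you gesture at refining the lexicographic viable combinations into ``$r$ lexicographically ordered blocks of equal-length words, one block per letter $\dot{a} \in \dotr$'' and then reading $\dot a$ into a block, but you never say what these blocks are or why such a refinement exists, and in fact equal-length blocks indexed by $\dotr$ is not quite how the construction goes. The paper's proof assembles $f$ differently and concretely: take a complete antichain $\ac{v}$ of $\Xnrs$ of length $j=\sum_i j_i$ and another $\ac{w}$ of length $m$ (both exist since $j\equiv m\equiv r \pmod{n-1}$ and both are nonzero); split $\ac{v}$ into consecutive lex-ordered sub-antichains $\ac{v}_1,\ldots,\ac{v}_m$ of sizes $j_1,\ldots,j_m$; for each $i$ use the lexicographic viable combination $\viable{j_i}$ to build a lex-preserving homeomorphism $f_{\viable{j_i},w_i}\colon U(\ac{v}_i)\to U_{w_i}$; and let $f$ be the union of these. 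Because each $f_{\viable{j_i},w_i}$ acts as a state of $g$ after a bounded prefix change, $f$ lies in $\TBnr$ with $\core(f)=g$. Note also that the wrap-around ($\dot 0 \leftrightarrow \dot{r-1}$) issue you worry about resolves itself: a lex-preserving bijection of $\CCnr$ must fix the minimum $\dot 0 0^\omega$ and maximum $\dot{r-1}(n-1)^\omega$, which are identified under $\simeq$, so preservation of lex order plus $\simeqI$ already forces preservation of $\simeq$. Your plan is the right one but it is not a proof until this assembly, which is the substance of the argument, is actually carried out.
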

\begin{proof}
By Lemma~\ref{Lemma: multiplication by piR bijection from orientation preserving to reversing and vice versa} it suffices to prove this for elements $g \in \Onr$ which preserves $\simeqI$ and the lexicographic ordering on $\CCn$. Since if $g \pi \in \TOnr$, then as $\TOnr$ is a group containing $\pi_{R,n}$ by an observation above, then $g \pi_{R,n} \pi_{R,n} = g \in \TOnr$ also. Thus fix $g \in \Onr$ an element which preserves $\simeqI$ and the lexicographic ordering on $\CCn$.

Since $g \in \Onr$ by Lemma~\ref{Lemma: g is in Onr if and only if there is a viable combination with the right modulo arithmetic properties} there are viable combinations $\viable{j_1}, \viable{j_2},\ldots,\viable{j_m}$ of $g$ such that $r \equiv \sum_{1 \le i \le m} j_{i} \equiv m \mod{n-1}$. Let $j= \sum_{1 \le i \le m} j_i$. By Remark \ref{Remark: sequence of expansions of viable combinations give rise to viable combinations} we may assume that $j_i > 1$ for $1 \le i \le m$. Now by Lemma~\ref{Lemma: g in Onr induces map on the line iff it has lexicographic viable combination}  we may assume that all the  $\viable{j_{i}}$ for $1\le i \le m$ are lexicographic viable combinations for $g$.

Let $1 \le i \le m$, and consider the viable combination $\viable{j_i}$ of $g$. Suppose $\viable{j_{i}} = ((\rho_1, \ldots, \rho_{j_i}), (p_1, \ldots, p_{j_i}))$. Let $\ac{u}= \{ u_1, u_2, \ldots, u_{j_i}\}$ be an antichain of $\Xnrs$  of length $j_i$, and let $w \in \Xnrp$. Recall that all antichains are assumed to be ordered in the lexicographic ordering. Now if $g$  preserves the lexicographic ordering of $\CCn$ then define a map $f_{\mathfrak{v}_{j_i},w}: U_{u_1} \sqcup \ldots \sqcup U_{u_{j_i}} \to  U_{w}$ by  $u_a \delta \mapsto w\rho_a (\delta)h_{p_{a}}$ for $1 \le a \le j_i$ and $\delta \in \CCn$. O Then $f_{\mathfrak{v}_{j_i},w}$ is homeomorphism from its domain to its range (by the definition of a viable combination), moreover  $f_{\mathfrak{v}_{j_i},w}$ preserves the lexicographic ordering on $U_{u_1} \sqcup \ldots \sqcup U_{u_l}$ since $\viable{j_i}$ is a lexicographic viable combination and for all states $q$ of $g$ $h_{q}$ preserves the lexicographic ordering on $\CCn$ and the relation $\simeqI$  (Lemma~\ref{Lemma: states of TBnr induces continuous functions from the interval to itsel }). 

Now let $\ac{v}$ be a complete antichain of $\Xnrs$ of length $j$ and let $\ac{w}$ be a complete antichain of $\Xnrs$ of length $m$. These antichains exist since $j \equiv m \equiv r \mod{n-1}$. Recall that as $ 1 \le r < n-1$ we must have that $j$ and $m$ are both non-zero. Let $\ac{v}_1, \ldots, \ac{v}_m$ be disjoint subsets of $\ac{v}$ such that each $\ac{v}_i$, $1 \le i \le m$ is an antichain (ordered lexicographically) of length $j_i$ and for $1 \le a < b \le m$ we have $\ac{v}_{a} \lelex \ac{v}_{b}$. Note that the stipulation that $|\ac{v}_{i}| = j_i$ for $1 \le i \le m$ means that $\sqcup_{1 \le i \le m} \ac{v}_{i} = \ac{v}$. Suppose that $\ac{w} = \{ w_1, w_2, \ldots, w_m\}$. 

Recall that for a subset  $Z \subseteq \Xns \sqcup \Xnrs$ we  denote by $U(Z)$ the set $\{ U_{z} \mid z \in Z \}$. Let $f: \CCnr \to \CCnr$ be defined such that $f \restriction _{U(\ac{v}_{i})} = f_{\ac{v}_i,w_i}$ for $1 \le i \le m$. Then clearly $f$ is a homeomorphism, and since each $f_{\ac{v}_i,w_i}$, $1 \le i \le m$, preserves the lexicographic ordering of  $U(\ac{v_i})$ and $\simeqI$ then, $f$ preserves the lexicographic ordering and the relation $\simeqI$ on $\CCnr$. Moreover since $f$ replaces some initial prefix before acting as a state of  $g$, it follows that there is a transducer $A_{q_0} \in \TBnr$ with $h_{q_0} = f$.

\end{proof}

\begin{Remark}\label{Remark: direct proof that g in Onr preseving simeq and reversing lex is in TOnr}
One may also prove Lemma~\ref{Lemma: g in Onr is in TOnr if g prserve  simeqI and lex order} above for $g \in \Onr$ which preserves $\simeqI$ and reverses the lexicographic ordering by directly constructing a transducer $A_{q_0} \in \TBnr$ which induce an orientation reversing homeomorphism of the line. As in the proof above, we make use of the lexicographic viable combinations to induce maps on $\CCnr$ which preserve $\simeqI$ and reverse the lexicographic ordering.
\end{Remark}

\begin{lemma}\label{Lemma:preservinglexsuffices}
Let $g \in \Onr$ be such that $g$ preserves or reverses the lexicographic ordering, then $g$ also preserves $\simeqI$.
\end{lemma}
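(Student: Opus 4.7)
The plan is first to reduce to the lex-preserving case. By Lemma~\ref{Lemma: multiplication by piR bijection from orientation preserving to reversing and vice versa}, the single-state transducer $\pi_{R,n}$ reverses lex and preserves $\simeqI$ (the latter checked directly on the defining pairs $w(a-1)(n-1)^{\omega}\simeqI wa0^{\omega}$, which get mapped to $\bar w(n-a)0^{\omega}$ and $\bar w(n-1-a)(n-1)^{\omega}$, again a $\simeqI$-pair). Hence if $g$ reverses lex we may argue about $g\pi_{R,n}$ instead; so assume every state of $g$ preserves the lex ordering.

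Second, reformulate $\simeqI$-preservation geometrically. Fix a state $q$ and write $\im(h_q)=U_{v_1}\sqcup\cdots\sqcup U_{v_k}$ in lex order. Since $h_q$ is a continuous order-preserving bijection onto a clopen set, it is an order isomorphism $\CCn\to\im(h_q)$, and hence bijects consecutive (jump) pairs. The jumps of $\CCn$ in lex order are exactly the $\simeqI$-pairs, while the jumps of $\im(h_q)$ split into intra-piece $\simeqI$-pairs and the inter-piece boundary pairs $(v_s(n-1)^{\omega},\,v_{s+1}0^{\omega})$. Thus $h_q$ preserves $\simeqI$ iff each inter-piece pair is itself $\simeqI$ in $\CCn$, i.e.\ iff $\im(h_q)$ is order-convex in $\CCn$.

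Third, prove order-convexity of every state image. Take a viable combination $\mathfrak{v}=((\rho_1,\ldots,\rho_j),(p_1,\ldots,p_j))$ of $g$ (Lemma~\ref{Lemma: g is in Onr if and only if there is a viable combination with the right modulo arithmetic properties}), and by Remark~\ref{Remark: sequence of expansions of viable combinations give rise to viable combinations} expand until every state of $g$ appears as some $p_{i_0}$. If $q=p_{i_0}$ had a non-$\simeqI$ boundary pair in $\im(h_q)$, a short prefix-length check shows that prefixing by $\rho_{i_0}$ preserves that non-$\simeqI$ relation, and so $\rho_{i_0}\im(h_q)$ also has a genuine gap: a basic clopen $U_u\subseteq U_{\rho_{i_0}}$ disjoint from $\rho_{i_0}\im(h_q)$. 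The disjoint coverage $\bigsqcup_m\rho_m\im(p_m)=\CCn$ forces other pieces to fill $U_u$, but these are also images of lex-preserving states, so the same dichotomy recurs at every new boundary that appears when $U_u$ is tiled. Iterating together with the finiteness of state patterns at depths past the synchronizing level eventually forces the endpoint pair of the original gap to be $\simeqI$, contradicting the supposed genuine gap.

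The main obstacle is this iterate-and-collapse step. I expect the cleanest way through is to prove alongside that any viable combination of $g$ can be reordered to a lexicographic viable combination using only lex-preservation of the $p_i$'s, adapting the proof of Lemma~\ref{Lemma: g in Onr induces map on the line iff it has lexicographic viable combination} to avoid its $\simeqI$-preservation hypothesis; once this reordering is available, the construction in Lemma~\ref{Lemma: g in Onr is in TOnr if g prserve  simeqI and lex order} produces an element of $\TBnr$ with core equal to $g$, placing $g$ in $\TOnr$, and Lemma~\ref{Lemma: states of TBnr induces continuous functions from the interval to itsel } then delivers the $\simeqI$-preservation we sought.
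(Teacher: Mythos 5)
Your steps (1) and (2) are sound: the reduction to lex-preserving via $\pi_{R,n}$ works, and the reformulation of $\simeqI$-preservation for a lex-preserving continuous injection with clopen image as order-convexity of the image is correct. But step (3) — establishing order-convexity — is the entire content of the lemma, and you have not actually proved it. Your ``iterate-and-collapse'' argument is a heuristic: you observe that a gap in $\im(h_q)$ forces other pieces $\rho_m\im(p_m)$ of a viable combination to tile the hole, and assert that ``finiteness of state patterns past the synchronizing level eventually forces the endpoint pair to be $\simeqI$,'' but you give no mechanism for why this tiling process must terminate in a contradiction rather than simply producing a consistent (non-convex) tiling; you explicitly flag this as ``the main obstacle.'' The alternative route you sketch is also problematic: the proof of Lemma~\ref{Lemma: g in Onr induces map on the line iff it has lexicographic viable combination} uses Lemma~\ref{Lemma: images of states of core of elements of TBnr are connected} precisely to conclude that the sets $\rho_m\im(p_m)$ are pairwise lex-comparable (a prerequisite for reordering them), and that lemma is exactly the order-convexity you are trying to establish. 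Without it, two disjoint non-convex clopen sets can interleave and need not be lex-comparable, so the sort does not go through, and it is not clear that expanding a viable combination (where some outputs $\lambda(l,p_i)$ may be empty) always separates the prefixes. So the alternative has a circularity worry that you do not resolve.

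For comparison, the paper's proof does not go through viable combinations at all. It fixes a state $q$, takes a cone $U_\nu\subseteq\im(h_q)$, uses continuity of $h_q^{-1}$ to produce a subcone $U_\mu$ with $(U_\mu)h_q=U_\nu$, so that $h_q\restriction_{U_\mu}$ is a lex-order isomorphism between two cones and therefore preserves $\simeqI$ on $U_\mu$; it then uses the synchronizing/core property of $g$ (any state $p$ is reached as $\pi_g(\Gamma,q)$ for a word $\Gamma$ with $U_\Gamma\subseteq U_\mu$) to transfer this local conclusion into a global $\simeqI$-preservation statement for $h_p$ on all of $\CCn$. This local-to-global argument is considerably more economical than the tiling approach and does not run the circularity risk; you may want to study it, and in particular note how the synchronizing hypothesis (not just injectivity or clopenness) is what makes the propagation step work.
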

\begin{proof}
Fix $q$ any state of $g$. We observe that, by definition $q$ induces a continuous injection function from $\CCn$ to itself with clopen image. Let $x, y \in \CCn$ be such that  $x \ne y$, $x \lelex y$, $x \simeqI y$ and there is a word $\nu \in \Xnrp$ such that $(x)h, (y)h \in  U_{\nu}$. As there is no point $y' \in \CCn$  not equal to $x$ or $y$ satisfying $x \lelex y' \lelex y$, then it must be the case that $(x)h \simeq (y')h$.  Now as $(U_{\nu})h_{q} ^{-1}$ is open, there is a $\mu \in \Xnrp$ such that $(U_{\mu})h_{q} = U_{\nu}$ and so $h_{q}$ preserves the relation $\simeqI$ on $U_{\mu}$. Finally, observe that as $g$ is synchronizing for any other state $p$ of $g$, there is a word $\Gamma \in \Xnrp$ such that $\pi_{g}(\Gamma, q) = p$. Therefore, we deduce that  all states of $g$ preserve $\simeqI$ as required.
\end{proof}

Putting together Lemmas~\ref{Lemma: g is in Onr if and only if there is a viable combination with the right modulo arithmetic properties}, \ref{Lemma: g in Onr induces map on the line iff it has lexicographic viable combination}, Lemma~\ref{Lemma:preservinglexsuffices}, and \ref{Lemma: g in Onr is in TOnr if g prserve  simeqI and lex order}, we obtain the following result.

\begin{Theorem}\label{Thm: Equivalent conditions for an element of $On$ to belong to TOnr}
Let $g \in \On$. The following are equivalent:
\begin{enumerate}[label =(\alph*)]
\item $g \in \TOnr$
\item  $g$ preserves or reverses the lexicographic ordering, and there are lexicographic viable combinations $\viable{{j_1}}, \ldots, \viable{{j_m}}$ such that $r \equiv \sum_{1 \le i \le m} j_{i} \equiv m \mod{n-1}$.
\item  $g$ preserves or reverses the lexicographic ordering, and there are viable combinations $\viable{{j_1}}, \ldots, \viable{{j_m}}$ such that $r \equiv \sum_{1 \le i \le m} j_{i} \equiv m \mod{n-1}$.
\item $g \in \Onr$ and preserves or reverses the lexicographic ordering  on $\CCn$.
\end{enumerate}

\end{Theorem}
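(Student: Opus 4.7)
The plan is to prove the cycle of implications (a) $\Rightarrow$ (d) $\Rightarrow$ (c) $\Rightarrow$ (b) $\Rightarrow$ (a), invoking the lemmas already established in this section so that essentially no new work is needed.

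For (a) $\Rightarrow$ (d), I would observe that $\TOnr \subseteq \Onr$ by definition, so any $g \in \TOnr$ lies in $\Onr$, and Lemma~\ref{Lemma: states of TBnr induces continuous functions from the interval to itsel } then gives that the states of $g$ uniformly preserve or uniformly reverse the lexicographic ordering on $\CCn$.

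For (d) $\Rightarrow$ (c), I would apply Lemma~\ref{Lemma: g is in Onr if and only if there is a viable combination with the right modulo arithmetic properties}, which says that membership in $\Onr$ is equivalent to the existence of viable combinations $\viable{j_1}, \ldots, \viable{j_m}$ with $r \equiv \sum j_i \equiv m \pmod{n-1}$; the lex-preserving/reversing hypothesis is carried along unchanged.

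For (c) $\Rightarrow$ (b), I would first use Lemma~\ref{Lemma:preservinglexsuffices} to upgrade the lex-preserving/reversing hypothesis into the statement that $g$ also preserves $\simeqI$. With both properties in hand, Lemma~\ref{Lemma: g in Onr induces map on the line iff it has lexicographic viable combination} permits reordering each $\viable{j_i}$ into a lexicographic viable combination without changing the tuple lengths, so the modular conditions $r \equiv \sum j_i \equiv m \pmod{n-1}$ are preserved.

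Finally, for (b) $\Rightarrow$ (a), the existence of the viable combinations with the required modular properties gives $g \in \Onr$ via Lemma~\ref{Lemma: g is in Onr if and only if there is a viable combination with the right modulo arithmetic properties} (a lexicographic viable combination is in particular a viable combination), and since $g$ already preserves or reverses the lex ordering, Lemma~\ref{Lemma:preservinglexsuffices} again supplies preservation of $\simeqI$, so Lemma~\ref{Lemma: g in Onr is in TOnr if g prserve  simeqI and lex order} yields $g \in \TOnr$, closing the cycle. No single step looks technically delicate here; the real content has already been concentrated in the preceding lemmas, so the main task is just bookkeeping to verify that the hypotheses of each invoked lemma are genuinely satisfied at each step of the cycle.
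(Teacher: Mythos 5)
Your proposal is correct and takes essentially the same approach as the paper: the paper's proof is simply the one-line remark that the theorem follows by combining Lemmas~\ref{Lemma: g is in Onr if and only if there is a viable combination with the right modulo arithmetic properties}, \ref{Lemma: g in Onr induces map on the line iff it has lexicographic viable combination}, \ref{Lemma:preservinglexsuffices}, and \ref{Lemma: g in Onr is in TOnr if g prserve  simeqI and lex order}, and your cycle (a)$\Rightarrow$(d)$\Rightarrow$(c)$\Rightarrow$(b)$\Rightarrow$(a) is exactly that bookkeeping made explicit. The only minor point worth spelling out is that in the step (c)$\Rightarrow$(b) one should first pass through Lemma~\ref{Lemma: g is in Onr if and only if there is a viable combination with the right modulo arithmetic properties} to conclude $g \in \Onr$ before invoking Lemma~\ref{Lemma:preservinglexsuffices}, whose hypothesis requires $\Onr$-membership.
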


\section{ The enveloping group $\TOn$}\label{Section:nestingproperties1}

We make the following definition.

\begin{Definition}\label{Def: External description of TOn.}
Let $\TOn \subset \On$ consists of those element which reverse or preserve the lexicographic ordering on $\CCn$.
\end{Definition}

\begin{Remark}\label{Remark: TOn is the union of the TOnr's}By an observation in \cite{BCMNO}, for an element $g \in \On$ the set $\viable{g}$ of viable combinations of $g$ is non-empty. From this it follows that $\TOn = \cup_{1\le  r \le n-1}  \TOnr$.
\end{Remark}

\begin{proposition} \label{Proposition:TOn is a subgroup of On}
The set $\TOn$ is a subgroup of $\On$.
\end{proposition}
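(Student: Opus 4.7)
The plan is to verify the three group axioms for $\TOn$ as a subset of $\On$ (identity, closure under products, closure under inverses), using two characterizations already established: Definition~\ref{Def: External description of TOn.} (an element of $\On$ lies in $\TOn$ iff all its states uniformly preserve or uniformly reverse the lexicographic ordering on $\CCn$), and Remark~\ref{Remark: TOn is the union of the TOnr's} ($\TOn = \bigcup_{1\le r \le n-1}\TOnr$, where each $\TOnr \cong \out{\Tnr}$ is a group).

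For the identity, the identity of $\On$ is the one-state identity transducer, whose unique state induces the identity map on $\CCn$, which preserves lex order; hence it lies in $\TOn$. For closure under products, let $g,h\in\TOn$. The product $gh\in\On$ is obtained from the product transducer $g\ast h$ (whose states are pairs $(q,p)$ with $q$ a state of $g$ and $p$ a state of $h$) by restricting to the core and then minimizing. The map induced by a state $(q,p)$ of $g\ast h$ is the composition of $h_q$ and $h_p$. Because the composition of two lex-preserving maps is lex-preserving, of one preserving and one reversing is lex-reversing, and of two reversing is lex-preserving, and because all states of $g$ (respectively of $h$) are uniformly of one type, the same uniformity holds for all states of $g\ast h$ and so, after taking the core, for $gh$. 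Minimization only merges $\omega$-equivalent states and removes states of incomplete response; the induced maps are unchanged on equivalence classes, so the uniform preserve/reverse property survives. Hence $gh\in\TOn$.

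For closure under inverses, the cleanest route is via the union decomposition. Let $g\in\TOn$; by Remark~\ref{Remark: TOn is the union of the TOnr's} there exists $1\le r\le n-1$ with $g\in\TOnr$. Since the isomorphism $\TOnr\cong\out{\Tnr}$ makes $\TOnr$ a subgroup of $\On$ in its own right, we obtain $g^{-1}\in\TOnr\subseteq\TOn$. (Alternatively, one may argue directly: pick $A_{q_0}\in\TBnr$ with $\core(A_{q_0})=g$, observe that $A_{q_0}^{-1}\in\TBnr$ because $\TBnr\cong\aut{\Tnr}$ is a group, and conclude $g^{-1}=\core(A_{q_0}^{-1})\in\TOn$.)

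The only step requiring care is the product argument, specifically checking that neither passage to the core nor minimization can destroy the uniform preserve/reverse type. I expect no genuine obstacle here, since both operations are intrinsic to $\On$ and preserve the induced continuous maps at each remaining state; consequently the lex-monotonicity of each state map is inherited unchanged from the corresponding state of $g\ast h$.
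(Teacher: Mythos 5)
Your proof is correct and follows essentially the same route as the paper: the identity is handled trivially, closure under inverses is obtained by passing through $\TBnr$ (equivalently through the group $\TOnr$), and closure under products comes from the observation that all states of the product transducer induce compositions $h_q\circ h_p$, hence are uniformly lex-preserving or lex-reversing, a property that survives taking the core and minimizing. Your remark that one must check core-passage and minimization do not disturb the monotonicity type makes explicit a point the paper treats as immediate, but the substance is the same.
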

\begin{proof}
It suffices to show that $\TOn$ is closed under inverses and products. It is clear that the single state identity transducer is an element of $\TOn$.

Let $g \in \TOn$. Then there is an $r \in \{1,2, \ldots, n-1\}$ such that there is an $A_{q_0} \in \TBnr$ with $\core(A_{q_0}) = g$. Let $B_{p_0}$ be the minimal transducer representing the inverse of $A_{q_0}$, then  $B_{p_0} \in \TBnr$ and $g^{-1} = \core(B_{p_0}) \in \TOnr \subset \TOn$.

Now let $h \in \TOn$. Let $q$ be a state of $g$ and $p$ be a state of $h$. Let $gh_{(p,q)}$ be the minimal transducer representing the product $g_{p} \ast h_{q}$. Then observe that  $gh_{(p,q)}$ induces the function $g \circ h : \CCn \to \CCn$. Thus since $g$ and $h$ either preserve or reverse the lexicographic ordering on $\CCn$ it follows that the states of $gh_{(p,q)}$ either all reverse or all preserve the lexicographic ordering on $\CCn$. Therefore $gh = \core(gh_{(p,q)}) \in \TOn$.
\end{proof}

\begin{Definition}
Let $\widetilde{\TOn}$ be the subset of $\TOn$ consisting of all those elements which preserve the lexicographic ordering on $\CCn$. Set $\WTOnr:=  \WTOn \cap  \TOnr$. Then we call elements of $\WTOn$ \emph{orientation preserving}, and elements of $\TOn \backslash \WTOn$ \emph{orientation reversing}. Likewise set $\widetilde{\TBnr}$ to be those elements of $\TBnr$ with core in $\widetilde{\TOn}$, then $\widetilde{\TBnr}$ are precisely those elements of $\TBnr$ which induce orientation preserving maps of $S_{r}$. We will also call elements of $\widetilde{\TBnr}$ \emph{orientation preserving} and the elements of $\TBnr \backslash \widetilde{\TBnr}$ \emph{orientation reversing}.
\end{Definition}

The following proposition is straightforward and so we omit its proof.

\begin{proposition}
The set $\WTOn$ is an index 2 subgroup of $\TOn$ and so a normal subgroup of $\TOn$. The set $\WTOnr$ is an index 2 subgroup of $\TOnr$ and so a normal subgroup of $\TOnr$. 
\end{proposition}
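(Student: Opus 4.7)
The plan is to exhibit a single explicit order-reversing element of $\TOn$, namely $\pi_{R,n}$, and use it to split $\TOn$ (resp.\ $\TOnr$) into exactly two cosets of $\WTOn$ (resp.\ $\WTOnr$). Everything else will then be routine.

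First I would check that $\WTOn$ is a subgroup of $\TOn$. The single-state identity transducer obviously induces an order-preserving homeomorphism of $\CCn$. For closure under products, if $g, h \in \WTOn$ then their induced maps $h_g, h_h : \CCn \to \CCn$ both preserve $\leqlex$, and the product $gh \in \TOn$ is the core of the minimal transducer $gh_{(q,p)}$ representing $h_g \circ h_h$, which again preserves $\leqlex$; so every state of $gh$ preserves $\leqlex$ (using that $gh$ is synchronizing, each state is reached from some prefix and thus reflects the global order-behavior, as in the proof of Lemma~\ref{Lemma: states of TBnr induces continuous functions from the interval to itsel }). Closure under inverses is similar: an order-preserving homeomorphism has an order-preserving inverse, so $g^{-1} \in \WTOn$.

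Next, to establish $[\TOn : \WTOn] = 2$, I invoke the element $\pi_{R,n} \in \TOn$ introduced before Lemma~\ref{Lemma: multiplication by piR bijection from orientation preserving to reversing and vice versa}: it has order $2$ and reverses the lexicographic ordering on $\CCn$, so in particular $\pi_{R,n} \in \TOn \setminus \WTOn$. For any $g \in \TOn \setminus \WTOn$, the map $g$ reverses $\leqlex$ by Definition~\ref{Def: External description of TOn.}, so the composition $\pi_{R,n} g$ preserves $\leqlex$ (two reversals compose to a preservation) and lies in $\WTOn$ by the previous paragraph applied to the subgroup property. Since $\pi_{R,n}^2$ is the identity, this gives $g \in \pi_{R,n}\WTOn$, so
\[
\TOn \;=\; \WTOn \;\sqcup\; \pi_{R,n}\WTOn,
\]
and hence $[\TOn : \WTOn] = 2$. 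Any index-two subgroup is normal, so $\WTOn \unlhd \TOn$.

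For the second statement I repeat the argument inside $\TOnr$. The same subgroup checks show $\WTOnr = \WTOn \cap \TOnr$ is a subgroup of $\TOnr$. By the discussion above Lemma~\ref{Lemma: multiplication by piR bijection from orientation preserving to reversing and vice versa}, $\pi_{R,n} \in \TOnr$ for every valid $r$ (alternatively, this follows from $\pi_{R,n} \in \TOns{1} \le \TOnr$ via Corollary~\ref{Corollary:OniequalsOnjifgcdequal}), and it reverses $\leqlex$, so $\pi_{R,n} \in \TOnr \setminus \WTOnr$. Any orientation-reversing $g \in \TOnr$ satisfies $\pi_{R,n}g \in \TOnr$ (as $\TOnr$ is a group) and $\pi_{R,n}g$ preserves $\leqlex$, hence $\pi_{R,n}g \in \WTOnr$. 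Thus $\TOnr = \WTOnr \sqcup \pi_{R,n}\WTOnr$, giving $[\TOnr : \WTOnr] = 2$ and $\WTOnr \unlhd \TOnr$. The only mildly subtle point is the verification in paragraph one that the order-preserving property at the level of induced homeomorphisms of $\CCn$ transfers to every state of the core transducer, but this is exactly the content already used in Lemma~\ref{Lemma: states of TBnr induces continuous functions from the interval to itsel } combined with synchronicity, so there is no real obstacle.
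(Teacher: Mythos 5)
Your proposal is correct. The paper omits the proof entirely (it is labelled ``straightforward''), and your argument is precisely the natural one the authors intend: verify that $\WTOn$ is a subgroup by examining states of products and inverses, then use the explicit order-reversing involution $\pi_{R,n}$ as a coset representative to conclude that there are exactly two cosets, and likewise inside $\TOnr$. The only slight imprecision is the phrase ``their induced maps $h_g, h_h : \CCn \to \CCn$'': since $g$ and $h$ are non-initial transducers there is no single induced map, so the closure-under-products step should really be phrased state-by-state (each state of the minimal transducer for $g_q * h_p$ induces a composition $h_{q'} \circ h_{p'}$ of order-preserving injections, hence is order-preserving), but this is exactly what you go on to invoke via synchronicity and strong connectedness of the core, so the content of the argument is sound.
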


We now investigate how the groups $\TOn$ intersect each other. For this we require the following definition, which in fact applies to the elements of the group $\On$.

\begin{Definition}[Signature]\label{Definition:signature}
Let $T \in \On$, for each state $q \in Q_{T}$ let $m_q$ be the size of the smallest subset $V$ of $\Xns$ such that $U(V) = \{ U_v \mid v \in V\}$  is a clopen cover of $\im(q)$ and $U_{v} \subset \im(q)$ for all $v \in V$. Let $k \in \N$ be the minimal synchronizing level of $T$ an order the elements of the set $\Xn^{k}$ lexicographically as follows: $x_1 < x_2 < \ldots < x_{n^{k}}$. Let $(q_{x_1}, q_{x_2}, \ldots, q_{x_{n^{k}}}) \in Q_{T}^{n^{k}}$ be such that, for all $1 \le i \le n^{k}$, $q_{x_i}$ is the unique state of $T$ forced by $x_i$. Set $(T)\sig = \sum_{1 \le i \le n^{k}} m_{q_{x_i}}$, we call $(T)\sig$ the \emph{signature} of $T$; set $(T)\rsig = (T)\sig \mod{n-1}$, we call $(T)\rsig$ the \emph{reduced signature of $T$}.
\end{Definition}

We have the following proposition. We prove the proposition below for the group $\TOn$ noting that a similar result holds, with almost identical proof, in the  group $\On$.

\begin{proposition}\label{Proposition:sigdeterminesmembership}
Let $T \in \TOn [T \in \On]$, and $1 \le r < n$, then $T \in \TOnr [T \in \Onr]$ if and only if $r (T)\sig \equiv r \mod{n-1}$ (equivalently $r((T)\sig-1) \equiv 0 \mod {n-1}$.
\end{proposition}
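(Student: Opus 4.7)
The plan is to combine the viable-combination characterisation of $\Onr$-membership from Lemma~\ref{Lemma: g is in Onr if and only if there is a viable combination with the right modulo arithmetic properties} with the following arithmetic identity, which will be the main ingredient: for every viable combination $\viable{j}$ of $T \in \On$, $j \cdot (T)\sig \equiv 1 \pmod{n-1}$. Writing $\sigma := (T)\sig$, this identity forces $\sigma$ to be a unit in $\Z_{n-1}$ with $\sigma^{-1} \equiv j \pmod{n-1}$ for every choice of viable combination; in particular $j \bmod (n-1)$ is a genuine invariant of $T$.

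To prove the identity, fix a viable combination $\viable{j} = ((\rho_1, \ldots, \rho_j), (p_1, \ldots, p_j))$ of $T$ and let $k$ be the minimal synchronizing level. Expanding each pair $k$ successive times in the sense of Definition~\ref{Def: expansions of viable combinations} produces, by Remark~\ref{Remark: sequence of expansions of viable combinations give rise to viable combinations}, a viable combination whose pairs take the form $(\rho_i \lambda_T(\gamma, p_i), \pi_T(\gamma, p_i))$ for $1 \le i \le j$ and $\gamma \in \Xn^k$; synchronicity yields $\pi_T(\gamma, p_i) = q_\gamma$, independent of $i$. Refining further, decompose each $\im(q_\gamma) = \bigsqcup_{l=1}^{m_{q_\gamma}} U_{v_l^{q_\gamma}}$ into its minimal antichain cover from Definition~\ref{Definition:signature}. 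The resulting family $\{\rho_i \lambda_T(\gamma, p_i) v_l^{q_\gamma}\}$ is a complete antichain of $\Xns$ tiling $\CCn$, and has cardinality $j \cdot \sum_{\gamma \in \Xn^k} m_{q_\gamma} = j \sigma$. Since any complete antichain of $\Xns$ has length $\equiv 1 \pmod{n-1}$ (the trivial antichain $\{\epsilon\}$ has length $1$ and each subdivision adds $n-1$), we conclude $j\sigma \equiv 1 \pmod{n-1}$.

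With the key identity in hand, the Proposition is almost immediate. For the forward direction, if $T \in \Onr$ then Lemma~\ref{Lemma: g is in Onr if and only if there is a viable combination with the right modulo arithmetic properties} supplies viable combinations $\viable{j_1}, \ldots, \viable{j_m}$ with $r \equiv \sum_{i=1}^{m} j_i \equiv m \pmod{n-1}$; multiplying $\sum_i j_i \equiv r$ through by $\sigma$ and using $j_i \sigma \equiv 1$ for each $i$ yields $r\sigma \equiv m \equiv r \pmod{n-1}$. Conversely, given $r\sigma \equiv r \pmod{n-1}$, pick any viable combination $\viable{j_0}$ of $T$ (which exists since $T \in \On = \bigcup_{1 \le s \le n-1} \Ons{s}$), and take $m = r$ copies of it: multiplying $r\sigma \equiv r$ by $j_0$ and using $j_0 \sigma \equiv 1$ gives $r j_0 \equiv r \pmod{n-1}$, so the hypotheses of Lemma~\ref{Lemma: g is in Onr if and only if there is a viable combination with the right modulo arithmetic properties} are met and $T \in \Onr$. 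The same argument handles $T \in \TOn$ and $\TOnr$ verbatim, since $\TOnr = \TOn \cap \Onr$ and the signature depends only on the underlying non-initial transducer. The principal delicate point is the verification, inside the key identity, that the refined family is a complete antichain of $\Xns$: disjointness comes from Definition~\ref{Def: Viable combination}(\ref{Def: Viable combination unions are disjoint}) combined with the incomparability of each $\{v_l^{q_\gamma}\}$, and the covering condition from Definition~\ref{Def: Viable combination}(\ref{Def: Viable combination union fills cantor space}).
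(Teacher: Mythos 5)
Your proof is correct, and it takes a genuinely different route from the paper's.

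The paper's argument for the forward implication fixes a transducer $A_{q_0} \in \TBnr$ (or $\Bnr$) with $\core(A_{q_0}) = T$, reads all words of length $j+k$ from $q_0$, and shows that the resulting collection of output cones forms a complete antichain of $\Xnrs$ of cardinality $rn^{j-1}(T)\sig \equiv r \pmod{n-1}$. Its reverse implication is a hands-on construction of an explicit homeomorphism $f \in \TBnr$ with core $T$, requiring care about lexicographic order in the $\TOn$ case. Your proof instead isolates the cleaner universal identity $j \cdot (T)\sig \equiv 1 \pmod{n-1}$ for every viable combination $\viable{j}$ of $T$, proved by expanding the combination to the synchronizing level $k$ and tiling $\CCn$ by the minimal antichain covers of the images of the $k$-forced states. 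Once this identity is in hand, \emph{both} implications become modular arithmetic fed into Lemma~\ref{Lemma: g is in Onr if and only if there is a viable combination with the right modulo arithmetic properties}: the forward direction by multiplying $\sum j_i \equiv r$ through by $\sigma$, and the converse by taking $r$ copies of any one viable combination. The $\TOn$ case is then inherited from $\TOnr = \TOn \cap \Onr$ (Theorem~\ref{Thm: Equivalent conditions for an element of $On$ to belong to TOnr}), which is correct since that theorem precedes this Proposition in the paper. What your approach buys is economy (no explicit homeomorphism to build in the converse) and a sharper observation made visible — that $j \equiv \sigma^{-1} \pmod{n-1}$ is an invariant independent of the choice of viable combination, which is morally the same content as the paper's later Proposition~\ref{Proposition:signatureisequaltosizeofimagecovermodnminus1} but reached independently and earlier. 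What the paper's route buys is a more constructive demonstration of membership, yielding an explicit element of $\TBnr$ rather than invoking a cited equivalence. One small point worth making explicit in your write-up: that the $j\sigma$ words in your refined family are pairwise distinct follows from the fact that their cones are pairwise disjoint and nonempty — otherwise the cardinality count would need more care.
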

\begin{proof}
We begin with the forward implication. First suppose that $T \in \TOnr$ and $T$ has minimal synchronizing level $k$. Since $T \in \TOnr$, there is an element $A_{q_0} \in \TBnr$ with $\core(A_{q_0}) = T$. Let $j \in \N$ be minimal such that after reading a word of length $j$ from the state $q_0$ of $A$, the resulting state is a state of $T$. Let $\{\mu_i \mid 1 \le i \le rn^{k+j-1}\}$ be the set of all words of length $j+k$ in $\Xnrs$ ordered lexicographically. For $1 \le i \le rn^{k+j-1}$, set  $\nu_i  = \lambda_{A}(\mu_i, q_0)$ and  $q_{\mu_i}$ to be the state of $T$ forced by $\mu_i$. Observe that the state  $q_{\mu_i}$ depends only on the last $k$ letters of $\mu_i$, hence if the elements of $\Xn^{k}$ are ordered lexicographically as  $x_1 < x_2< \ldots < x_{n^{k}}$, the sequence ${(q_{\mu_i})}_{1 \le i \le rn^{j+k-1}}$, where $q_{x_i}$, $1 \le i \le n^{k}$, is the state of $T$ forced by $x_i$, is precisely the sequence $q_{x_1}, \ldots, q_{x_{n^{k}}}$ repeated $rn^{j-1}$ times.  Since $\bigcup_{1 \le i \le rn^{j+k-1}} \nu_i \im(q_{\mu_i}) = \CCnr$, it must therefore be the case that $rn^{j-1}(\sum_{1\le i \le n^{k}} m_{q_{x_i}}) \equiv r \mod n-1$. This is because if, for each $q_{\mu_i}$, $1 \le i \le rn^{k+j-1}$, $V_{q_{\mu_i}}$ is the smallest subset of $\Xns$ such that $U(V_{q_{\mu_i}})$  is a cover of $\im(q)$ and $U_{v} \subset \im(q)$ for all $v \in V_{q_{\mu_i}}$, then $ \bigcup_{1 \le i \le rn^{j+k-1}}\{ \nu_i v \mid v \in V(q_{\mu_i}) \}$ must be a complete antichain of $\CCnr$ (otherwise $A_{q_0}$ is not a homeomorphism). Therefore, setting $m_{q_{x_i}} =  |V_{q_{x_i}}|$ we have, $$\left| \bigcup_{1 \le i \le rn^{j+k-1}}\{ \nu_i v \mid v \in V(q_{\mu_i}) \}\right |  = rn^{j-1}(\sum_{1\le i \le n^{k}} m_{q_{x_i}}) \equiv r \mod n-1.$$
Since $n \equiv 1 \mod{n-1}$ we therefore have that $r(T)\sig \equiv r \mod n-1$ as required.

For the reverse implication  let $T \in \TOn$ and $1 \le r < n-1$ be such that $r (T)\sig \equiv r \mod{n-1}$. Let $k \in \N$ be such that $T$ is synchronizing at level $k$, once more assume that the set $\Xn^{k}$ is  ordered lexicographically as  $x_1 < x_2< \ldots < x_{n^{k}}$. For $1 \le i \le n^{k}$, let $q_{x_i}$ be the state of $T$ forced by $x_i$. For each $1 \le i \le n^{k}$, let $V_{q_{x_i}}$ be the smallest subset of $\Xns$, with size $m_{q_{x_i}}$, such that $U(V_{q_{x_i}})$ is a clopen cover of $\im(q)$ consisting of clopen subsets of $\im(q)$; let $M_{i} = \max\{\left|v\right| \mid v \in V_{q_{x_i}}\}$ and set $M = \max_{1 \le i \le n^{k}} M_{i}$. Let $j \in \N$ be minimal such that for any word $\Gamma \in \Xn^{j}$ and any state $q$ of $T$, $|\lambda_{T}(\Gamma, q)|> M$. Order the set $\Xn^{j}$  lexicographically  as $y_1 < y_2 < \ldots < y_{n^{j}}$. For each state $q_{x_i}$, $1 \le i \le n^{k}$, order the set $V_{q_{x_i}}$ lexicographically as $\nu_{i,1} < \nu_{i,2}< \ldots < \nu_{i,m_{q_{x_i}}}$, and for all $1 \le l \le n^{j}$ let $\mu_{i,l}\varphi_{i,l} = \lambda_{T}(y_l, q_{x_i})$, for some $\mu_{i,l} \in V_{q_{x_i}}$, $\varphi_{i,l} \in \Xnp$, and $p_{i,l} = \pi_{T}(y_l, q_{x_i})$. Now since, for $1 \le i \le n^{k}$, $|V_{q_{x_i}}| = m_{q_{x_i}}$, each $\mu_{i,l} = \nu_{i,a}$ for some $1 \le a \le m_{q_{x_i}}$ and we may write, for all $1 \le l \le n^{j}$, $\mu_{i,l}\varphi_{i,l} = \nu_{i,a}\rho_{i,l_a}$ where $\nu_{i,a} = \mu_{i,l}$ and  $\rho_{i,l_a} = \varphi_{i,l}$ for some $1 \le a \le m_{q_{x_i}}$, we also adopt the same notation for the set of $p_{i,l}$ and write $p_{i,l_a}$, for $1 \le a \le m_{q_{x_i}}$, where $\nu_{i,a} \rho_{i,l_a} = \mu_{i,l}\varphi_{i,l}$.

Now let  $\ac{u}$ be a maximal antichain of $\CCnr$ of length $r n^{j+k}$ and let $\ac{v}$ be a maximal antichain of $\CCnr$ of length $r (T)\sig$. Write $\ac{u} = \cup_{1 \le t \le r}\ac{u}_t$ where each $\ac{u}_{t}$ is ordered lexicographically, $| \ac{u}_t| = n^{j+k}$ and for $1 \le t_1 < t_2 \le r$ all elements  of $\ac{u}_{t_1}$ are strictly less than all elements of $\ac{u}_{t_2}$ in the lexicographic ordering on $\Xns$. Let $\ac{v} = \cup_{1 \le t \le r}\ac{v}_t$ where each $\ac{v}_{t}$ is ordered lexicographically, $| \ac{v}_{t}| = (T)\sig$ and for $1 \le t_1 < t_2 \le r$ all elements  of $\ac{v}_{t_1}$ are strictly less than all elements of $\ac{v}_{t_2}$ in the lexicographic ordering on $\Xns$.

Fix $1 \le t \le r$ and consider $\ac{u}_{t}$. Write $\ac{u}_{t}= \{u_{t,i,l} \mid 1 \le i \le n^{k}, 1 \le l \le n^{j}\}$. We further assume that, for a fixed $1 \le i \le n^{k}$, the set $\{u_{t,i,l} \mid 1 \le l \le n^{j}\}$ is ordered lexicographically and, for $1 \le i_1 < i_2 \le n^{k}$ and for all $1 \le l_1, l_2 \le n^{j}$,  $u_{t,i_1, l_1} \lelex  u_{t,i_2, l_2}$. Likewise write $\ac{v}_{t} = \{ v_{t,i,a} \mid 1 \le i \le n^{k}, 1 \le a \le m_{q_{x_i}} \}$.  We further assume that, for a fixed $1 \le i \le n^{k}$, the set $\{v_{t,i,a} \mid 1 \le a \le m_{q_{x_i}}\}$ is ordered lexicographically and, for $1 \le i_1 < i_2 \le n^{k}$,  $v_{t,i_1,a} \lelex  v_{t,i_2,b}$ for all $1 \le a \le m_{q_{x_{i_1}}}$ and $1 \le b \le m_{q_{x_{i_2}}}$. Furthermore, whenever, for $1 \le a \le m_{q_{x_i}}$ and $1 \le l \le n^{j}$, we have $\nu_{i,a}\rho_{i,l_a} =  \mu_{i,l}\varphi_{i,l}$ we set $\eta_{t,i,l} := v_{t,i,a}$.

Define a map $f$ from  $\CCnr$ to itself as follows. For $1 \le t \le r$, $f$ acts on elements with prefix in the set $\ac{u}_{t}$ as follows. For $1 \le i \le n^{k}$ and $1 \le l \le n^{j}$, and $\Gamma \in \CCn$, $u_{t,i,l}\Gamma \mapsto \eta_{t,i,l}\varphi_{i,l}(\Gamma)p_{i,l}$. Observe that for a fixed $1 \le i \le n^{k}$, and a fixed $1 \le a \le m_{q_{x_i}}$, since we have $\cup \{ \mu_{i,l}\varphi_{i,l}\im(p_{i,l}) \mid \mu_{i,l} = \nu_{i,a} \} = U_{\nu_{i,a}}$ then it is the case that, for a fixed $1 \le t \le r$,  $\cup \{ \eta_{t,i,l}\varphi_{i,l}\im(p_{i,l}) \mid \eta_{i,l} = v_{t,i,a} \} = U_{v_{t,i,a}}$.  Now since, for a given $1 \le i \le n^{k}$,  $\bigcup_{1 \le l \le n^{j}} \mu_{i,l} \varphi_{i,l} \im(p_{i,l}) = \im(q)$ and as $q$ is injective and preserves the lexicographic ordering of $\CCn$, we see that $f \restriction_{U(\ac{u}_{t})}$ is a bijection unto the set $U(\ac{v}_{t})$ which preserves the lexicographic ordering of $\CCn$. More specifically, since $f$ acts by a state of $T$ after a finite depth, we see that $f$ is a homeomorphism of $\CCnr$ which is in fact an element of $\TBnr$.

The proof of the other reading proceeds in an analogous fashion only here we do not have to worry about preserving the lexicographic ordering on $\CCnr$.
\end{proof}

\begin{Remark}
It follows from results in \cite{GriNekSus} that given an element $T \in \TOn [T \in\On]$ and $1 \le r < n$, then it is possible to decide in finite time if $T \in \TOnr [T \in\Onr]$. Furthermore, by the above proposition,  $T \in \TOns{1} [T \in \Ons{1}]$ if and only if  $(T)\sig \equiv 1 \mod{n-1}$.
\end{Remark}

The following result is a consequence of Proposition~\ref{Proposition:sigdeterminesmembership}:

\begin{lemma}\label{Lemma:TOni is a subset of TOnj if mi is congruent to j mod n-1}
Let $n \in \N$ and suppose that $n\ge 2$. Let $i, j, m \in \Z_{n}$ such that, $i$ and $j$ are non-zero and $m i \equiv j \mod n-1$ then $\TOns{i} \subseteq \TOns{j}$ [$\Ons{i} \subseteq \Ons{j}$].
\end{lemma}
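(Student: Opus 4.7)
The plan is to deduce this immediately from the numerical criterion in Proposition~\ref{Proposition:sigdeterminesmembership}. That proposition characterises membership in $\TOns{r}$ (respectively $\Ons{r}$) in terms of a single congruence on the signature: for $T \in \TOn$ (respectively $T \in \On$) and $1 \le r < n$, one has $T \in \TOns{r}$ iff $r((T)\sig - 1) \equiv 0 \pmod{n-1}$. Once translated into this form, the nesting of subgroups becomes a trivial statement about divisibility in $\Z_{n-1}$.

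First I would pick $T \in \TOns{i}$ (or $T \in \Ons{i}$). By Proposition~\ref{Proposition:sigdeterminesmembership}, this is equivalent to $i((T)\sig - 1) \equiv 0 \pmod{n-1}$. Multiplying both sides of this congruence by $m$ yields $mi((T)\sig - 1) \equiv 0 \pmod{n-1}$, and invoking the hypothesis $mi \equiv j \pmod{n-1}$ gives $j((T)\sig - 1) \equiv 0 \pmod{n-1}$. Applying the proposition in the other direction (note that $j$ lies in the correct range $1 \le j < n$ since $j \in \Z_n$ is non-zero), we conclude $T \in \TOns{j}$ (respectively $T \in \Ons{j}$), which is exactly the claimed inclusion.

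There is essentially no obstacle; the work has all been done in Proposition~\ref{Proposition:sigdeterminesmembership}. The only small point worth verifying is that the proof of that proposition applies verbatim in the $\On$ case, which was flagged in the paper by the parenthetical remark that an analogous argument works; this is the reason the lemma statement brackets both versions together. I would state the argument once, write \emph{mutatis mutandis} for the $\On$ version, and leave it at that.
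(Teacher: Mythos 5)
Your proof is correct and is in fact identical in substance to the paper's own argument: both start from $T\in\TOns{i}$, invoke Proposition~\ref{Proposition:sigdeterminesmembership} to get $i\bigl((T)\sig-1\bigr)\equiv 0\pmod{n-1}$, multiply by $m$ and substitute $mi\equiv j$, then apply the proposition again. The only cosmetic difference is that the paper uses the equivalent form $r(T)\sig\equiv r$ rather than $r((T)\sig-1)\equiv 0$.
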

\begin{proof}
Let $i,j, m$ be as in the statement of the lemma. Let $T \in \TOns{i} [T \in \Ons{i}]$, then by Proposition~\ref{Proposition:sigdeterminesmembership} $i (T)\sig \equiv i \mod{n-1}$, therefore $mi (T)\sig \equiv mi \mod{n-1}$ and so $j (T)\sig \equiv j \mod{n-1}$ and $T \in \TOns{j} [T \in \Ons{j}]$  again by  Proposition~\ref{Proposition:sigdeterminesmembership}.
\end{proof}

\begin{Remark}
Observe that the lemma above implies that whenever $j \in \Z_{n}$ is co-prime to $n-1$ then $\TOns{j} = \TOns{1}$ [$\Ons{j} = \Ons{1}$]. Thus it follows that for $n$ a natural number bigger than $2$ such that $n-1$ is prime, if $T \in \TOn$ [$T \in \On$],  satisfies $(T)\sig \not\equiv 1 \mod{n-1}$ then $T \in \TOn \backslash \{\TOns{1}\}$[$T \in \On \backslash \{\Ons{1}\}$]. The following result generalises this observation.
\end{Remark}

An immediate corollary of the Lemma~\ref{Lemma:TOni is a subset of TOnj if mi is congruent to j mod n-1} is the following result, an analogous result appears in \cite{BCMNO} for the groups $\Onr$:

\begin{Theorem}\label{Theorem:TOnisubsetofTOnjifidividesjinZnminus1}
Let $n \in \N$, $n \ge 2$, then for all non-zero $i \in \Z_{n}$ we have $\TOns{i} \subseteq \TOns{n-1}$. Hence $\TOns{n-1} = \TOn$ and   $\TOns{1} \subseteq \TOns{i}$ for all $1 \le i \le n-1$, hence $\cap_{1 \le r \le n-1} \TOns{r} = \TOns{1}$.
\end{Theorem}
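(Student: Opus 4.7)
The theorem is almost an immediate corollary of Lemma~\ref{Lemma:TOni is a subset of TOnj if mi is congruent to j mod n-1} combined with Remark~\ref{Remark: TOn is the union of the TOnr's}; my plan is simply to spell out the three containments and to check carefully that the hypotheses of the lemma are met in each application. The key observation is that, modulo $n-1$, the residue class of $n-1$ is $0$, and this is what makes everything work.

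First I would handle the inclusion $\TOns{i} \subseteq \TOns{n-1}$ for each non-zero $i \in \Z_n$. To apply Lemma~\ref{Lemma:TOni is a subset of TOnj if mi is congruent to j mod n-1} with $j := n-1$, one needs $m \in \Z_n$ such that $m i \equiv n-1 \equiv 0 \pmod{n-1}$. The choice $m = n-1$ works (and indeed one may equally well take $m = (n-1)/\gcd(i,n-1)$), and since $n-1$ is non-zero in $\Z_n$ for $n \ge 2$, the hypotheses of the lemma are satisfied, giving $\TOns{i} \subseteq \TOns{n-1}$. Combining this with Remark~\ref{Remark: TOn is the union of the TOnr's}, which states $\TOn = \bigcup_{1 \le r \le n-1} \TOnr$, one obtains the chain $\TOn = \bigcup_{r} \TOnr \subseteq \TOns{n-1} \subseteq \TOn$, and therefore $\TOns{n-1} = \TOn$.

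Next I would prove $\TOns{1} \subseteq \TOns{i}$ for every $1 \le i \le n-1$. Here I apply Lemma~\ref{Lemma:TOni is a subset of TOnj if mi is congruent to j mod n-1} in the other direction, with $i$ in the lemma replaced by $1$, with $j := i$, and with $m := i$. The congruence $m \cdot 1 = i \equiv i \pmod{n-1}$ holds trivially, and $1, i$ are both non-zero in $\Z_n$, so the lemma applies and delivers $\TOns{1} \subseteq \TOns{i}$.

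Finally, the statement $\bigcap_{1 \le r \le n-1} \TOns{r} = \TOns{1}$ follows formally: the inclusion from left to right is immediate since $1 \in \{1, \ldots, n-1\}$, and the inclusion from right to left follows at once from the preceding paragraph, which shows that $\TOns{1}$ is contained in each $\TOns{r}$. There is no real obstacle here; the only point of care is to verify the non-triviality hypotheses on the residues appearing in the lemma, which I have noted in the first two paragraphs.
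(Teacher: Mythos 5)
Your proof is correct and follows essentially the same route as the paper: both apply Lemma~\ref{Lemma:TOni is a subset of TOnj if mi is congruent to j mod n-1} with $m = n-1$ (noting $(n-1)i \equiv 0 \equiv n-1 \bmod{n-1}$) to get $\TOns{i} \subseteq \TOns{n-1}$, and with $m = i$ (noting $i \cdot 1 \equiv i \bmod{n-1}$) to get $\TOns{1} \subseteq \TOns{i}$, then combine with Remark~\ref{Remark: TOn is the union of the TOnr's} for the equality $\TOns{n-1} = \TOn$ and the obvious set-theoretic argument for the intersection. You are somewhat more explicit than the paper in spelling out the appeal to the remark and in verifying the nonvanishing hypotheses, which is a harmless improvement.
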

\begin{proof}
Let $i$ be non-zero in $\Z_{n}$, then observe that $(n-1)\ast i \equiv n-1 \mod{n-1}$ and $i*1 \equiv i \mod{n-1}$. Therefore by Lemma~\ref{Lemma:TOni is a subset of TOnj if mi is congruent to j mod n-1} above, we have $\TOns{i} \subseteq \TOns{n-1}$ and $\TOns{1} \subseteq \TOns{i}$. Thus $\TOns{n-1} = \TOn$ and $\cap_{1 \le r \le n-1} \TOns{r} = \TOns{1}$.
\end{proof}

The following result is again a corollary of Lemma~\ref{Lemma:TOni is a subset of TOnj if mi is congruent to j mod n-1}, we observe that the result for $\Onr$ was proved in \cite{BCMNO}:

\begin{corollary}\label{Corollary:TOniisequaltoTOnjforsomejdividingnminus1}
Let $n$ be a natural number bigger than $2$, $j \in \Z_{n}\backslash \{0\}$ and $d$ be the greatest common divisor of $n-1$ and $j$, then $\TOns{j} = \TOns{d}$ [$\Ons{j} = \Ons{d}$]. 
\end{corollary}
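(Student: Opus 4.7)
The plan is to obtain both inclusions directly from Lemma~\ref{Lemma:TOni is a subset of TOnj if mi is congruent to j mod n-1}, with the only non-trivial ingredient being a standard Bezout-type observation to get the reverse inclusion. Throughout, I reason in the $\TOn$-case and note that every step carries over verbatim with $\TOns{\cdot}$ replaced by $\Ons{\cdot}$, since Lemma~\ref{Lemma:TOni is a subset of TOnj if mi is congruent to j mod n-1} has the same bracketed form.

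For the inclusion $\TOns{d} \subseteq \TOns{j}$: since $d=\gcd(n-1,j)$ divides $j$ and $j\in \Z_n\setminus\{0\}$, there is a positive integer $m$ with $md=j$. In particular $md\equiv j\pmod{n-1}$, and both $d$ and $j$ lie in $\Z_n\setminus\{0\}$ (note $1\le d\le n-1$), so Lemma~\ref{Lemma:TOni is a subset of TOnj if mi is congruent to j mod n-1} applied with $i=d$ yields $\TOns{d}\subseteq \TOns{j}$.

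For the reverse inclusion $\TOns{j}\subseteq \TOns{d}$: by Bezout, since $d=\gcd(n-1,j)$, there exist integers $a,b\in\Z$ with $aj+b(n-1)=d$, hence $aj\equiv d\pmod{n-1}$. By replacing $a$ with $a+k(n-1)/d$ for sufficiently large $k\in\N$ (which does not change $aj\bmod(n-1)$, since $j\cdot(n-1)/d$ is an integer multiple of $n-1$), we may assume $a$ is a positive integer, hence an element $m\in\Z_n\setminus\{0\}$ (lifting to its residue). Then $mj\equiv d\pmod{n-1}$, and Lemma~\ref{Lemma:TOni is a subset of TOnj if mi is congruent to j mod n-1} applied with $i=j$ gives $\TOns{j}\subseteq \TOns{d}$.

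Combining the two inclusions yields $\TOns{j}=\TOns{d}$, and the same argument with the bracketed reading of Lemma~\ref{Lemma:TOni is a subset of TOnj if mi is congruent to j mod n-1} gives $\Ons{j}=\Ons{d}$. The only point requiring care is the Bezout step, where we must ensure the multiplier used in Lemma~\ref{Lemma:TOni is a subset of TOnj if mi is congruent to j mod n-1} can be taken to be an honest positive integer rather than a negative one arising from Bezout; the adjustment above handles this uniformly.
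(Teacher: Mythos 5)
Your proof is correct and takes essentially the same approach as the paper: both inclusions are obtained from Lemma~\ref{Lemma:TOni is a subset of TOnj if mi is congruent to j mod n-1}, with the divisibility $d \mid j$ giving $\TOns{d}\subseteq\TOns{j}$ and a Bezout-type identity giving $\TOns{j}\subseteq\TOns{d}$. The paper applies Bezout to the coprime cofactors $j/d$ and $(n-1)/d$ and then multiplies through by $d$, whereas you apply it directly to $j$ and $n-1$; these are mathematically identical, and your extra care about choosing a nonnegative multiplier residue is a reasonable, if largely cosmetic, point of hygiene given the lemma's phrasing.
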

\begin{proof}
Let $n,j,d$ be as in the statement of the corollary. Since $d$ is the greatest common divisor of $n-1$ and $j$, there are co-prime numbers $a, b \in \Z_{n}\backslash\{0\}$ such that $j = da$ and $n-1 = db$. Since $a$ and $b$ are co-prime, there are numbers $u, v \in \Z$ such that $ua = 1 + vb$. Multiplying both sides of the equation by $d$ it follows that $uad = d + vbd$ and so $uj = d + v(n-1)$. Therefore there is some $m_1 \in \Z_{n-1}$ such that $m_1j \equiv d \mod {n-1}$. Moreover, since $d$ divides $j$, there is some $m_2 \in \Z_{n-1}$ such that $m_2 d = j \mod{n-1}$. It therefore follows by Lemma~\ref{Lemma:TOni is a subset of TOnj if mi is congruent to j mod n-1} that $\TOns{j} = \TOns{d}$[$\Ons{j} = \Ons{d}$].
\end{proof}

Corollary~\ref{Corollary:TOniisequaltoTOnjforsomejdividingnminus1} should be compared with the result of Pardo \cite{EPardo} showing that $G_{n,r} \cong G_{m,s}$ if and only if $n=m$ and $\gcd(n-1,r) = \gcd(n-1,s)$. It is a question in \cite{BCMNO} whether or not $\Ons{r} \cong \Ons{s}$ if and only if $\gcd(n-1,r) = \gcd(n-1,s)$. Below (Remark~\ref{Remark:negativesolutiontoquestionofCollinetal}) we show that this question has a negative answer.  

The following Lemma can be thought of as a partial converse to Lemma~\ref{Lemma:TOni is a subset of TOnj if mi is congruent to j mod n-1}.

\begin{lemma}\label{Lemma:partialconverselemma}
Let $T \in \TOn$ [$T \in \On$] and let $r$ be minimal such that $T \in \TOnr$ [$T \in \Onr$], then $T \in \TOns{j}$ [$T \in \Ons{j}$] for some $1 \le j \le n-1$ if and only if  there is some $m \in \Z_{n}$ such that $mr = j$.
\end{lemma}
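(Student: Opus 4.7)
The plan is to reduce the lemma to Proposition~\ref{Proposition:sigdeterminesmembership}, which characterises membership $T \in \TOns{r}$ by the single arithmetic condition $r((T)\sig - 1) \equiv 0 \pmod{n-1}$. Writing $s := (T)\sig$ and setting
$$R_T := \{r \in \{1, \ldots, n-1\} \mid T \in \TOns{r}\},$$
the proposition identifies $R_T$ with $\{r \in \{1,\ldots,n-1\} \mid r(s-1) \equiv 0 \pmod{n-1}\}$. The whole task is then to describe $R_T$ explicitly and relate it to its minimum.

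The structural description is elementary number theory. Set $d := \gcd(s-1, n-1)$ (using the convention $\gcd(0,n-1) = n-1$ to cover the case $s \equiv 1 \pmod{n-1}$) and $n' := (n-1)/d$. A standard calculation shows $r(s-1) \equiv 0 \pmod{n-1}$ if and only if $n' \mid r$, so $R_T = \{n', 2n', \ldots, d n' = n-1\}$. In particular the minimum element of $R_T$ is exactly $n'$, which is therefore the value $r$ hypothesised in the lemma, and every other element of $R_T$ is an integer multiple of this $r$.

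Given this description both directions are immediate. For the forward direction, if $T \in \TOns{j}$ then $j \in R_T$, so $j = mr$ for some $m \in \{1,\ldots,d\}$; viewing $m$ as an element of $\Z_n$ (which is possible since $d \leq n-1 < n$) produces the witness required by the statement. For the reverse direction, if $mr \equiv j \pmod{n-1}$ for some $m \in \Z_n$, then Lemma~\ref{Lemma:TOni is a subset of TOnj if mi is congruent to j mod n-1} applied with $i = r$ gives $\TOns{r} \subseteq \TOns{j}$, and combining this with the hypothesis $T \in \TOns{r}$ yields $T \in \TOns{j}$. The $\Ons{j}$ reading needs no separate argument, since Proposition~\ref{Proposition:sigdeterminesmembership} and Lemma~\ref{Lemma:TOni is a subset of TOnj if mi is congruent to j mod n-1} are both stated in the $\On$ setting as well. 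There is no real obstacle here: once the signature invariant is in hand, the lemma reduces to the observation that the annihilator of $s-1$ in the cyclic group $\Z_{n-1}$ is itself cyclic, with generator $(n-1)/\gcd(s-1,n-1)$.
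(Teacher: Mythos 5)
Your proof is correct, and it takes a genuinely different route from the paper for the ``only if'' direction. Both arguments are built on the same two ingredients, Proposition~\ref{Proposition:sigdeterminesmembership} (membership in $\TOns{r}$ is exactly the arithmetic condition $r((T)\sig - 1) \equiv 0 \pmod{n-1}$) and Lemma~\ref{Lemma:TOni is a subset of TOnj if mi is congruent to j mod n-1} (which supplies the ``if'' direction in both proofs). The paper, however, proves ``only if'' by an iterated-subtraction descent: from $T \in \TOnr$ and $T \in \TOns{j}$ with $j > r$ it deduces $T \in \TOns{j-r}$, and keeps subtracting $r$ until minimality forces the remainder to equal $r$, yielding $j = (k+1)r$. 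You instead compute the membership set $R_T = \{\,r : T \in \TOnr\,\}$ in closed form, observing that $r(s-1) \equiv 0 \pmod{n-1}$ (with $s = (T)\sig$) is equivalent to $(n-1)/\gcd(s-1,n-1)$ dividing $r$, so that $R_T$ is precisely the arithmetic progression of multiples of $n' := (n-1)/\gcd(s-1,n-1)$ inside $\{1,\ldots,n-1\}$. This is slightly stronger than what the lemma asserts: it identifies the minimal $r$ explicitly as $(n-1)/\gcd((T)\sig - 1, n-1)$ and characterises $R_T$ completely in one step, whereas the paper's descent only establishes that any $j \in R_T$ is an integer multiple of the minimum. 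Your treatment of the boundary case $(T)\sig \equiv 1 \pmod{n-1}$ via the convention $\gcd(0,n-1) = n-1$ correctly gives $R_T = \{1,\ldots,n-1\}$, matching the fact that such $T$ lie in $\Ons{1}$, and the transfer to the $\On$ reading goes through verbatim. The only cosmetic point is that the displayed equality ``$mr = j$'' should be read through the lens of Lemma~\ref{Lemma:TOni is a subset of TOnj if mi is congruent to j mod n-1}, i.e.\ $mr \equiv j \pmod{n-1}$ with $m \in \Z_n$; since in your description $j = m n'$ holds as an honest integer identity with $1 \le m \le \gcd(s-1,n-1) \le n-1$, the stronger integer statement holds anyway, as it does in the paper's version.
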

\begin{proof}
The forward implication is a consequence of Lemma~\ref{Lemma:TOni is a subset of TOnj if mi is congruent to j mod n-1}. Therefore let $T \in \TOn$ and $r$ be minimal such that $T \in \TOnr$ Let $1 \le j \le n-1$ be such that $T \in \TOns{j}$. By assumption we must have that $r < j$. Since $T \in \TOnr$ then by Proposition~\ref{Proposition:sigdeterminesmembership}, $r(T)\sig \equiv r \mod{n-1}$ and since $T \in \TOns{j}$, then $j (T)\sig \equiv j \mod{n-1}$. Thus we deduce that $(j-r)(T)\sig \equiv (j-r) \mod{n-1}$. If $(j-r) > r$, then we may repeat the process otherwise $j-r = r$, by the minimality assumption on $r$ and Proposition~\ref{Proposition:sigdeterminesmembership}, in which case $j = 2r$. Inductively there is some $k \in \N$ such that $j-kr = r$ and so $j = (k+1)r$ which concludes the proof. 

The other reading of the lemma is proved analogously, simply replace $\TOn$ with $\On$ in the paragraph above.
\end{proof}

We now show that the map $\rsig: \On \to \Z_{n-1}$ is a homomorphism from $\On$ to the group of units of $\Z_{n-1}$ with kernel $\Ons{1}$.

We begin with the following result:

\begin{proposition}\label{Proposition:signatureisequaltosizeofimagecovermodnminus1}
Let $T \in \On$, $q$ be any state of $T$ and $m_q$ be the size of the smallest subset $V$ of $\Xns$ such that $U(V) = \{ U_v \mid v \in V\}$  is a clopen cover of $\im(q)$ and $U_{v} \subset \im(q)$ for all $v \in V$, then  $m_{q} \mod{n-1} = (T)\rsig$.
\end{proposition}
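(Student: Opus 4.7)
The proof rests on one simple invariance principle: if $W \subseteq \CCn$ is clopen and $V \subset \Xns$ is an antichain with $U(V) = W$, then $|V|$ is determined by $W$ modulo $n-1$. Granting this, I will exhibit one particular antichain decomposition of $\im(q)$ of size exactly $(T)\sig$, and compare it with the minimal one of size $m_q$.

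\emph{Step 1 (invariance mod $n-1$).} Suppose $V, V' \subset \Xns$ are two antichains with $U(V) = U(V')$. Let $V''$ be the set of maximal elements of $V \cup V'$ with respect to the prefix order; since every element of $V \cup V'$ is comparable to some element of $V''$, $V''$ is an antichain with $U(V'') = U(V) = U(V')$. For any $v \in V \setminus V''$ there is a set $\{v_1,\dots,v_s\}\subset V''$ of proper extensions of $v$ whose basic cones partition $U_v$; starting from $\{v\}$ one obtains this set by a finite sequence of single-node subdivisions (replacing a node $u$ by $u0,\dots,u(n-1)$), each of which increases the size of the current antichain by exactly $n-1$. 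Repeating this for every non-maximal element of $V$ turns $V$ into $V''$, and yields $|V''| \equiv |V| \pmod{n-1}$. By symmetry, $|V''| \equiv |V'| \pmod{n-1}$, hence $|V| \equiv |V'| \pmod{n-1}$. In particular, $m_q$ may be replaced by the cardinality of \emph{any} antichain in $\Xns$ whose union of basic cones equals $\im(q)$, provided we work modulo $n-1$.

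\emph{Step 2 (a distinguished antichain decomposition of $\im(q)$).} Let $k$ be the minimal synchronizing level of $T$, and for each $w \in \Xn^k$ let $q_w$ be the unique state forced by $w$, so that $\pi_T(w,p) = q_w$ for every state $p$ of $T$; in particular $\pi_T(w,q) = q_w$. Since $T \in \On$, the state $q$ induces an injection of $\CCn$, so
\begin{equation*}
\im(q) = \bigsqcup_{w \in \Xn^{k}} \lambda_T(w,q)\,\im(q_w).
\end{equation*}
For each $w$ choose a minimal antichain $V_w \subset \Xns$ with $U(V_w) = \im(q_w)$ and $U_v \subset \im(q_w)$ for all $v \in V_w$, so that $|V_w| = m_{q_w}$. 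Then
\begin{equation*}
V := \bigsqcup_{w \in \Xn^{k}}\bigl\{\lambda_T(w,q)\,v : v \in V_w\bigr\}
\end{equation*}
is an antichain in $\Xns$ with $U(V) = \im(q)$ and $U_\mu \subset \im(q)$ for each $\mu \in V$, and $|V| = \sum_{w \in \Xn^{k}} m_{q_w} = (T)\sig$.

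\emph{Step 3 (conclusion).} Applying Step 1 to the minimal antichain (of size $m_q$) and to the antichain $V$ of Step 2 (of size $(T)\sig$), both decomposing $\im(q)$, gives $m_q \equiv (T)\sig \equiv (T)\rsig \pmod{n-1}$, as required. The main point requiring care is Step 1: every legitimate ``refinement'' between antichains must be decomposable into single-node subdivisions, each of which adjusts cardinality by exactly $n-1$. This is the heart of why $(T)\rsig$ is a well-defined numerical invariant, and once it is in hand the rest is immediate from the synchronizing decomposition of $\im(q)$.
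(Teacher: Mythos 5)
Your proof is correct, and it takes the same underlying strategy as the paper — exhibit a second antichain decomposition of $\im(q)$ built from the synchronizing structure, and compare its cardinality with $m_q$ via the mod-$(n-1)$ invariance of antichain decompositions. The organization differs in a small but real way. You decompose $\im(q)$ directly at the synchronizing level $k$, via $\im(q)=\bigsqcup_{w\in\Xn^k}\lambda_T(w,q)\,\im(q_w)$, which yields an antichain of size exactly $\sum_{w\in\Xn^k}m_{q_w}=(T)\sig$; the disjointness and the antichain property both come for free from injectivity of $h_q$. The paper instead pads to a level $j>k$ chosen so that every output prefix $\lambda_T(\Gamma,q)$, $\Gamma\in\Xn^j$, already contains an element of $V(q)$ as a prefix; this lets it argue that the resulting collection of words forms a complete antichain for the forest rooted at $V(q)$ and compare it with $V(q)$ itself, at the cost of picking up an extra factor $n^{j-k}\equiv 1\pmod{n-1}$ at the end. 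Your version is a bit cleaner: it avoids the padding and the spurious factor, at the price of a slightly more careful justification that the level-$k$ decomposition really is an antichain and that your Step 1 invariance lemma holds in the generality stated (any two antichains in $\Xns$ covering the same clopen set by basic cones contained in it). Both of those facts do hold, and your sketch of the common-refinement / single-subdivision argument is the right way to see the invariance; it would be worth spelling out explicitly that for non-maximal $v\in V$ the maximal elements extending $v$ are precisely $\{v'\in V': v'>v\}$, which is where both the antichain property of $V'$ and the equality $U(V)=U(V')$ get used.
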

\begin{proof}

For any state  $p$ of $T$ let $V(p)= \{v_{1,p}, v_{2,p}, \ldots
, v_{m_p,p}\} \subset \Xns$ be such that $\im(p) = \cup_{1 \le i \le m_{p}} U_{v_{i,p}}$. Now fix a state $q$ of $T$, let $k$ be the minimal synchronizing level of $T$ and $j \in \N_{k}$ be such that for any word $\Gamma \in \Xn^{j}$ and any state $p$ of $T$, $|\lambda_{T}(\Gamma, p)| \ge  \max\{ |v_{1,q}| \mid v_{1,q} \in V(q) \}$. Order the set  $X_{n}^{j}$ in the lexicographic ordering as follows $x_1 < x_2 < x_3 \ldots < x_{n^{j}}$, for $1 \le a \le n^{j}$ let $q_{x_a}$ be the unique state of $T$ forced by $x_a$ and $\rho_{x_a} = \lambda_{A}(x_a, q)$. Observe that for all $1 \le a \le n^{j}$, $|\rho_{x_a}| \ge \max\{ |v_{1,q}| \mid v_{1,q} \in V(q) \}$ and so $\rho_{x_a}$ has a prefix in $V(q)$. Now since the state $q$ is induces a homeomorphism from $\CCn$ unto its image, it follows that the set $\cup_{1 \le a \le n^{j}} \{\rho_{x_a}v_{i,q_{x_a}} \mid 1 \le i \le m_{q_{x_a}} \}$ is  a complete antichain for the subset of $\Xns$ consisting of all elements with prefix in $V(q)$. From this we deduce that  $|\cup_{1 \le a \le n^{j}} \{\rho_{x_a}v_{i,q_{x_a}} \mid 1 \le i \le m_{q_{x_a}} \}| \equiv m_q \mod{n-1}$.

On the other hand 
\[
|\cup_{1 \le a \le n^{j}} \{\rho_{x_a}v_{i,q_{x_a}} \mid 1 \le i \le m_{q_{x_a}} \}| = \sum_{1 \le a \le n^{j}} m_{q_{x_a}}.
\]
However, since $T$ is synchronizing at level $k$, the sequence $(q_{x_a})_{1 \le a \le n^j}$ is in fact equal to the sequence $(q_{x_a})_{(1 \le a \le n^{k})}$ repeated $n^{j-k}$ times. Therefore we have,
\[
|\cup_{1 \le a \le n^{j}} \{\rho_{x_a}v_{i,q_{x_a}} \mid 1 \le i \le m_{q_{x_a}} \}| = n^{j-k}\sum_{1 \le a \le n^{k}} m_{q_{x_a}} 
\]
 and so we conclude that $m_{q} \mod {n-1} = n^{j-k}\sum_{1 \le a \le n^{k}} m_{q_{x_a}} \mod{n-1} = (T)\rsig$. 
\end{proof}

\begin{Theorem}\label{Theorem:rsigisahomomorphism}
The map $\rsig: \On \to \Z_{n-1}$ is a homomorphism from $\On$ into the group of units of $\Z_{n-1}$ with kernel $\Ons{1}$.
\end{Theorem}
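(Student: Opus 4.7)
The kernel claim is immediate from Proposition~\ref{Proposition:sigdeterminesmembership} specialised to $r=1$: the condition $r((T)\sig - 1) \equiv 0 \pmod{n-1}$ reads $(T)\sig \equiv 1 \pmod{n-1}$, i.e.\ $(T)\rsig = 1$. Once $\rsig$ is shown to be a group homomorphism, its image automatically lies in the units of $\Z_{n-1}$: the identity transducer has reduced signature $1$, hence $(T)\rsig\,(T^{-1})\rsig \equiv (T T^{-1})\rsig = 1 \pmod{n-1}$, so every value $(T)\rsig$ is invertible in $\Z_{n-1}$.

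The main content is therefore the multiplicativity $(T_1 T_2)\rsig \equiv (T_1)\rsig\,(T_2)\rsig \pmod{n-1}$. The plan is to leverage Proposition~\ref{Proposition:signatureisequaltosizeofimagecovermodnminus1}, which identifies $(T)\rsig$ with the cardinality $m_q \pmod{n-1}$ of a minimal disjoint cone cover of $\im(q)$ for \emph{any} state $q$ of any $T \in \On$. A key preliminary observation is that any two disjoint cone covers $U(V), U(V')$ of the same clopen subset of $\CCn$ satisfy $|V| \equiv |V'| \pmod{n-1}$, since they are related by iterated subdivision (each subdivision adjusts cardinality by $n-1$); hence any disjoint cone cover of $\im(q)$ has size $\equiv (T)\rsig \pmod{n-1}$.

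I would then take a state $(q,p)$ of $\core(T_1 * T_2)$ with $q$ a state of $T_1$ and $p$ a state of $T_2$. The product state induces the composition ``run $T_1$ from $q$, then feed the output into $T_2$ from $p$'', so starting with a disjoint cone cover $V(q) = \{v_1, \ldots, v_{m_q}\}$ of $\im(q)$, each $(U_{v_i})h_p = \lambda_{T_2}(v_i, p)\cdot\im\bigl(\pi_{T_2}(v_i,p)\bigr)$ admits a disjoint cone cover of size $m_{\pi_{T_2}(v_i,p)}$. Concatenating prefixes assembles a disjoint cone cover of $\im(q,p)$ of total size $\sum_{i=1}^{m_q} m_{\pi_{T_2}(v_i,p)}$. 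Reducing modulo $n-1$ and using Proposition~\ref{Proposition:signatureisequaltosizeofimagecovermodnminus1} for each factor gives
\[
m_{(q,p)} \;\equiv\; \sum_{i=1}^{m_q} m_{\pi_{T_2}(v_i,p)} \;\equiv\; m_q \cdot (T_2)\rsig \;\equiv\; (T_1)\rsig\,(T_2)\rsig \pmod{n-1}.
\]

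It remains to match the left-hand side with $(T_1 T_2)\rsig$. Since $T_1 T_2 \in \On$ is obtained from $T_1 * T_2$ by restricting to the core and identifying $\omega$-equivalent states, and since $\omega$-equivalent states induce identical functions (hence share $\im$ and cone-cover sizes), $m_{(q,p)} \equiv (T_1 T_2)\rsig \pmod{n-1}$ by Proposition~\ref{Proposition:signatureisequaltosizeofimagecovermodnminus1} applied to the corresponding state of $T_1 T_2$. This completes the homomorphism check. The main technical subtlety I expect is precisely this bookkeeping between a state of $\core(T_1 * T_2)$ and its image in $T_1 T_2$ after minimisation; the fact that minimisation only collapses $\omega$-equivalent states, which have identical images, is what makes the calculation well-defined.
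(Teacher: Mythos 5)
Your proof follows essentially the same route as the paper's: fix a state $(q,p)$ of $\core(T_1 \ast T_2)$, start with a minimal disjoint cone cover $\{v_1,\ldots,v_{m_q}\}$ of $\im(q)$, push each $v_i$ through $T_2$ from state $p$ to assemble a disjoint cone cover of $\im((q,p))$ of cardinality $\sum_i m_{\pi_{T_2}(v_i,p)}$, and then apply Proposition~\ref{Proposition:signatureisequaltosizeofimagecovermodnminus1} to each factor together with the mod-$(n-1)$ invariance of cone-cover sizes; the kernel and units-group claims are handled exactly as in the paper. One minor point: passing from $T_1 \ast T_2$ to $T_1 T_2$ involves not only restricting to the core and identifying $\omega$-equivalent states, but also removing states of incomplete response, and the latter \emph{does} change $\im((q,p))$ (it strips off the greatest common prefix); the argument still closes because stripping a common prefix leaves the cardinality of a minimal disjoint cone cover unchanged, but this step should be said explicitly rather than folded into the $\omega$-equivalence observation, as the paper does.
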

\begin{proof}
Let $T, U \in \On$,  $j, k \in \N$ be the minimal synchronizing levels of $T$ and $U$. Consider the transducer product $T \ast U$. Let $(p,q)$ be any state in the core of  $T \ast U$, and for $\sharp \in \{p,q\}$ let $V(\sharp)= \{v_{1,\sharp}, v_{2,\sharp}, \ldots
, v_{m_p,\sharp}\} \subset \Xns$ be such that $\im(\sharp) = \cup_{1 \le i \le m_{\sharp}} U_{v_{i,\sharp}}$. We may assume that $V_{q}$ is the smallest subset of $\Xns$ with this property. Let  $m = \max\{ |v_{i,q} | \mid v_{i,q} \in V(q) \}$ and $l \in \N$ be such that for any state  $p'$ of $T$ and any word $\Gamma \in \Xn^{l}$, $|\lambda_{T}(\Gamma, p')| \ge m$. We may further assume that $V(p)$ is the smallest subset of $\Xn^{l}$ satisfying $\im(p) = \cup_{1 \le i \le m_{p}} U_{v_{i,p}}$. For each $1 \le i \le m_{p}$ let $q_i = \pi_{U}(v_{i,p}, q)$ and $\rho_i = \lambda_{U}(v_{i,p}, q)$. Since $\im(p) = \cup_{1 \le i \le m_{p}} U_{v_{i,p}}$, it follows that $\im((p,q)) = \sqcup_{1 \le i \le m_{p}} \{ \rho_i \im(q_i)\}$. For each $1 \le i \le m_{p}$, let $m_{q_i}$ be the size of the smallest subset $V(q_i) \subset \Xns$ such that $\cup_{v \in V(q_i)} U_{v} = \im(q_i)$, it follows that $V:=\cup_{1 \le i \le m_p} \{ \rho_i v \mid v \in V(q_i) \}$ satisfies $\cup_{v \in V}(U_{v}) = \im((p,q))$. Notice that since both $p$ and $q$ are injective, then $|V| = \sum_{1 \le i \le m_{p}} m_{q_i}$.

Observe for any other set $V' \subset \Xns$ such that $\cup_{v \in V'} U_{v} =  \im((p,q))$ it must be the case that $|V'| \equiv |V| \mod{n-1}$. Thus if $V'$ is the smallest subset of $\Xns$ with this property, we have $|V'| \equiv |V| \mod{n-1}$. Furthermore we observe that removing incomplete response from the states of $T\ast U$ simply removes the greatest common prefix of $\im(p,q)$. Therefore if the state $s$ of $TU$ is equal to the state $(p,q)$ after removing the incomplete response, then for $V'' \subset \Xns$ minimal such that $\cup_{v \in V''} U_{v} =  \im(s)$, we have $|V''| \equiv |V| \mod{n-1}$. By Proposition~\ref{Proposition:signatureisequaltosizeofimagecovermodnminus1} it follows that
\[
 (TU)\rsig \equiv |V''| \equiv \sum_{1 \le i \le m_{p}} m_{q_i} \equiv  (T)\rsig(U)\rsig\mod{n-1}.
\]

Since $(\id)\rsig = 1$, it follows that $\rsig$ is a homomorphism from $\On$ to the group of units of $\Z_{n-1}$.

To see that $\ker(\rsig) = \Ons{1}$, observe that by Proposition~\ref{Proposition:sigdeterminesmembership} $(T)\rsig = 1$ if and only if $T \in \Ons{1}$.

\end{proof}

Given $T \in \On$ the following result enables us to compute the reduced signature of $T^{-1}$ directly from $T$ i.e. without computing the inverse.

\begin{proposition}
Let $T \in \On$ and $q$ be any state of $T$. Let $\nu \in \CCn$ be such that $U_{\nu} \subset \im(q)$ and $j \in \N$ be such that for any word $\Gamma \in \Xn^{j}$, $|\lambda_{T}(\Gamma, q)| \ge |\nu|$. Let $W \subset \Xn^{j}$ be maximal such that for any word $\Delta \in W$, $\nu$ is a prefix $\lambda_{T}(\Delta, \Gamma)$. Let $1 \le  w \le n-1$ be such that $w \equiv |W| \mod{n-1}$, then $w$ depends only on $T$, in particular $w = (T^{-1})\rsig$.
\end{proposition}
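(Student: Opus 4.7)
First I would identify the disjoint union $\bigsqcup_{\Delta \in W} U_{\Delta}$ with the clopen set $h_{q}^{-1}(U_{\nu})$. Since $T \in \On$, the state $q$ induces a homeomorphism $h_{q}\colon \CCn \to \im(q)$, so $h_{q}^{-1}(U_{\nu})$ is clopen in $\CCn$. For $\Delta \in \Xn^{j}$, the hypothesis $|\lambda_{T}(\Delta, q)| \ge |\nu|$ forces $\lambda_{T}(\Delta, q)$ either to have $\nu$ as a prefix (placing $\Delta$ in $W$ and $U_{\Delta} \subseteq h_{q}^{-1}(U_{\nu})$) or to be incomparable with $\nu$ (so $U_{\Delta} \cap h_{q}^{-1}(U_{\nu}) = \emptyset$). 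Since $\{U_{\Delta} : \Delta \in \Xn^{j}\}$ partitions $\CCn$, it follows that $h_{q}^{-1}(U_{\nu}) = \bigsqcup_{\Delta \in W} U_{\Delta}$.

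Next I would use the general fact that for any clopen $A \subseteq \CCn$, all finite cylinder decompositions of $A$ have sizes that agree modulo $n-1$: any two such decompositions admit a common refinement obtained by successive single-letter subdivisions $U_{\mu} \rightsquigarrow \{U_{\mu 0}, \ldots, U_{\mu(n-1)}\}$, each of which changes the count by $n - 1 \equiv 0 \pmod{n-1}$. Write $|A|_{n-1} \in \Z_{n-1}$ for the common residue. Consequently $|W| \equiv |h_{q}^{-1}(U_{\nu})|_{n-1} \pmod{n-1}$, so $w$ depends only on the clopen set $h_{q}^{-1}(U_{\nu})$ (in particular, not on the choice of $j$). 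Moreover $|A|_{n-1}$ is invariant under stripping a common prefix: if every element of $A$ begins with a fixed word $\tau \in \Xns$, then $|A|_{n-1} = |\{\delta : \tau \delta \in A\}|_{n-1}$.

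The main step is to exhibit a state $p$ of the core bi-synchronizing transducer $T^{-1} \in \On$ whose image equals $h_{q}^{-1}(U_{\nu})$ shifted by its greatest common prefix $\tau := (\nu)L_{q}$. Using the inverse-transducer construction recalled in the excerpt, a short calculation from $U_{\nu} \subseteq h_{q}(U_{\tau}) = \lambda_{T}(\tau,q) \cdot \im(\pi_{T}(\tau, q))$ shows that $|\lambda_{T}(\tau, q)| \le |\nu|$; set $q' := \pi_{T}(\tau, q)$ and let $\nu''$ be the suffix of $\nu$ after $\lambda_{T}(\tau, q)$. One checks that $(\nu'', q')$ is a legitimate state of the inverse initial transducer (that is, $(\nu'')L_{q'} = \epsilon$ and $U_{\nu''} \subseteq \im(q')$) inducing the function $y \mapsto h_{q'}^{-1}(\nu'' y)$, with image equal to the $\tau$-shift of $h_{q}^{-1}(U_{\nu})$. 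After $\omega$-reduction, and by the bi-synchronicity of $T^{-1}$ together with the coreness of $T$, this becomes a state $p$ of $T^{-1} \in \On$ with the same image. Applying Proposition~\ref{Proposition:signatureisequaltosizeofimagecovermodnminus1} to $T^{-1}$ at $p$ gives $|\im(p)|_{n-1} \equiv (T^{-1})\rsig \pmod{n-1}$, and combining with shift-invariance and the earlier congruences yields $w = (T^{-1})\rsig$. The main obstacle is the third paragraph: verifying that $(\nu'', q')$ descends to a state of the core transducer $T^{-1}$ requires appealing to the bi-synchronicity of $T^{-1}$ so that sufficiently deep reads into the inverse always reach the core.
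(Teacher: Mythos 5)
Your proposal is correct and follows essentially the paper's own route: identify $h_q^{-1}(U_\nu)=\bigsqcup_{\Delta\in W}U_\Delta$, strip the greatest common prefix $\tau=(\nu)L_q$, recognise the resulting clopen set as $\im$ of the state $(\nu'',q')$ of the inverse transducer construction, descend to a state of $\core$ via bi-synchronicity, and invoke Proposition~\ref{Proposition:signatureisequaltosizeofimagecovermodnminus1}. The only cosmetic difference is that you factor out the ``cylinder decompositions agree mod $n-1$ and are stable under prefix-stripping'' observation as an explicit lemma, whereas the paper uses it implicitly by directly exhibiting a cylinder cover of $\im((\nu_2,p))$ of size exactly $|W|$.
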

\begin{proof}
By Proposition~\ref{Proposition:signatureisequaltosizeofimagecovermodnminus1} it suffices to show that there is a state $p'$ of $T^{-1}$ and a subset $V  \subset \Xns$  such that $\cup_{\mu \in V} U_{\mu} = \im(p)$ and $|V| = |W|$.

Let $A_{q_0}$ a bi-synchronizing transducer with $\core(A_{q_0}) = T$ and $k \in \N_{1}$ be the minimal bi-synchronizing level of $A_{q_0}$. Let $\varphi =  (\nu)L_{q}$ i.e. $\varphi$ is the greatest common prefix of the set $h_{q}^{-1}(U_{\nu})$. Let $\nu_1 = \lambda_{T}(\varphi, q)$, $p = \pi_{T}(\varphi, q)$ and $\nu_2 = \nu - \nu_1$. We claim that $(\nu_2,p)$ is  a state of $A_{(\epsilon, q_0)}$ and is $\omega$-equivalent to a state of $T^{-1}$.

 For, let $l \in \N$ be such that for any word $\Gamma \in \Xn^{l}$, $|\lambda_{A}(\Gamma, q_0)| \ge k$.
 Let $\Gamma \in \Xn^{l}$ be such that $\pi_{A}(\Gamma, q_0) = q \in \core(A_{q_0})$. Such a word $\Gamma$ exists because of the bi-synchronizing condition. Let $\Delta = \lambda_{A}(\Gamma, q_0)$. Then observe that $(\Delta\nu)L_{q_0} = \Gamma (\nu)L_{q} = \Gamma \varphi$ since $U_{\nu} \subset \im(q)$. Thus, in the transducer $A_{(\epsilon, q_0)}$, $\pi_{A}(\Delta\nu, q_0) = (\Delta\nu - \lambda_{A}(\Gamma \varphi, q_0), \pi_{A}(\Gamma\varphi, q_0))= (\nu_2,p)$. Now since $A_{(\epsilon, q_0)}$ has no states of incomplete response, is $\omega$-equivalent to the minimal transducer $B_{p_0}$ representing $h_{q_0}^{-1}$, and by the choice of $l$, we therefore have that $(\nu_2,p)$ is $\omega$-equivalent to a state in $\core(B_{p_0}) = T^{-1}$.
 
 Now, suppose $W = \{ \rho_1', \rho_2', \ldots, \rho_{m}'\}$. We observe that by definition of $W$, $\varphi$ is the greatest common prefix of $W$, thus for $1\le i \le m$, let $\rho_i \in \Xns$ be such that $\varphi\rho_i = \rho_i'$. Observe that for any $\delta \in \CCn$, there is a unique $1 \le i \le m$,  and $\xi \in \CCn$ such that $\lambda_{A}(\rho_i'\xi, q) = \nu \delta$, therefore $\lambda_{A}(\rho_i \xi, p) = \nu_2\delta$. Moreover, for any $1 \le i \le m$ and $\xi \in \CCn$, $\lambda_{A}(\rho_i\xi, p) = \nu_2 \delta$ for some $\delta \in \CCn$. Since $\im((\nu_2,p)) = \{(\nu_2\delta)L_{p} \mid \delta \in \CCn \} = (U_{\nu_2})h_{p}^{-1}$, it follows that $\im((\nu_2,p)) = \cup_{1 \le i \le m} U_{\rho_i'}$. Therefore it follows, from Proposition~\ref{Proposition:signatureisequaltosizeofimagecovermodnminus1}, that $m = |W| \equiv (T^{-1})\rsig \mod{n-1}$.

\end{proof}

The corollary below follows straight-forwardly from Theorem~\ref{Theorem:rsigisahomomorphism}

\begin{corollary}
Let  $1< r< n$, then the following hold:
\begin{enumerate}[label = (\alph*)]
\item $\Onr$ is a normal subgroup of $\Ons{n-1} = \On$, in particular for non-zero $i, j \in \Z_{n}$ such that $i$ divides $j$ in the additive group $\Z_{n-1}$, $\Ons{i} \unlhd \Ons{j}$,
\item $[\On, \On] \le \Onr$,
\item $\Onr/\Ons{1}$ is isomorphic to a subgroup of the group of units of $\Z_{n-1}$ and $\On/\Onr$ is isomorphic to a quotient of the group of units of $\Z_{n-1}$.   
\end{enumerate}
   
\end{corollary}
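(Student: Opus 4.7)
The plan is to derive all three parts directly from Theorem~\ref{Theorem:rsigisahomomorphism}, which gives us the homomorphism $\rsig : \On \to \Z_{n-1}^\times$ with kernel $\Ons{1}$, combined with the signature characterisation of membership in $\Onr$ given by Proposition~\ref{Proposition:sigdeterminesmembership} and the containments established in Lemma~\ref{Lemma:TOni is a subset of TOnj if mi is congruent to j mod n-1}.

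For part (a), first I would establish the stronger statement that $\Ons{i} \unlhd \On$ for every non-zero $i \in \Z_n$. Given $T \in \On$ and $S \in \Ons{i}$, since $\rsig$ is a homomorphism into the abelian group of units, we have
\[
(TST^{-1})\rsig \;=\; (T)\rsig\,(S)\rsig\,(T^{-1})\rsig \;=\; (S)\rsig.
\]
By Proposition~\ref{Proposition:sigdeterminesmembership}, $S \in \Ons{i}$ is equivalent to $i((S)\rsig - 1) \equiv 0 \bmod{n-1}$, and the identity of reduced signatures above then yields $i((TST^{-1})\rsig - 1) \equiv 0 \bmod{n-1}$, so $TST^{-1} \in \Ons{i}$. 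In particular $\Onr \unlhd \On$. The nested statement then follows immediately: if $i$ divides $j$ in $\Z_{n-1}$, Lemma~\ref{Lemma:TOni is a subset of TOnj if mi is congruent to j mod n-1} gives $\Ons{i} \le \Ons{j}$, and since $\Ons{i}$ is normal in the larger group $\On$, it is certainly normal in $\Ons{j}$.

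For part (b), since $\Z_{n-1}^\times$ is abelian, the image $\rsig(\On)$ is abelian, so $[\On, \On] \le \ker(\rsig) = \Ons{1}$. Combining with the inclusion $\Ons{1} \le \Onr$ obtained from Theorem~\ref{Theorem:TOnisubsetofTOnjifidividesjinZnminus1} (or directly from Lemma~\ref{Lemma:TOni is a subset of TOnj if mi is congruent to j mod n-1} applied with $i=1$), we conclude $[\On, \On] \le \Onr$.

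For part (c), the first isomorphism theorem applied to $\rsig$ identifies $\On/\Ons{1}$ with the subgroup $\rsig(\On) \le \Z_{n-1}^\times$. The subgroup $\Onr/\Ons{1} \le \On/\Ons{1}$ therefore corresponds to a subgroup of $\Z_{n-1}^\times$. For the second assertion, using that $\Ons{1} \unlhd \Onr \unlhd \On$ (from part (a)), the third isomorphism theorem gives
\[
\On/\Onr \;\cong\; (\On/\Ons{1})\big/(\Onr/\Ons{1}) \;\cong\; \rsig(\On)/\rsig(\Onr),
\]
which exhibits $\On/\Onr$ as a quotient of a subgroup of $\Z_{n-1}^\times$; under the assumption that $\rsig$ is surjective (as in Theorem~\ref{Theorem:surjectivityofrsigpartial}) this is literally a quotient of $\Z_{n-1}^\times$. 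None of the three parts presents a real obstacle; the only mildly delicate point is ensuring that the normality in part (a) uses \emph{only} the abelianness of the codomain of $\rsig$, so that no appeal to surjectivity is needed.
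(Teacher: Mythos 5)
Your argument follows the paper's essentially verbatim: part (a) comes from the identity $(TST^{-1})\rsig = (S)\rsig$ (abelianness of the codomain) together with Proposition~\ref{Proposition:sigdeterminesmembership}; part (b) from $[\On,\On]\le\ker(\rsig)=\Ons{1}\le\Onr$; and part (c) from the correspondence and third isomorphism theorems. The subtlety you flag in part (c) is genuine as far as it goes — as written, $\On/\Onr\cong\rsig(\On)/\rsig(\Onr)$ is only visibly a quotient of the subgroup $\rsig(\On)\le\Z_{n-1}^{\times}$, and the paper's own proof has the same implicit slip — but it can be closed without any appeal to surjectivity: $\Z_{n-1}^{\times}$ is a finite abelian group, every subgroup of a finite abelian group is isomorphic to a quotient of it (e.g.\ via Pontryagin duality for finite abelian groups), and a quotient of a quotient is a quotient, so a quotient of a subgroup of $\Z_{n-1}^{\times}$ is again isomorphic to a quotient of $\Z_{n-1}^{\times}$.
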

\begin{proof}
Since the group of units of $\Z_{n-1}$ is abelian, it follows that $[\On, \On] \le \Ons{1}$ and by Theorem~\ref{Theorem:TOnisubsetofTOnjifidividesjinZnminus1} $[\On, \On] \le \Ons{r}$.

To see that $\Onr$ is normal in $\On$, we observe that given $T \in \Onr$, and $U \in \On$, then $(U^{-1}TU)\rsig \equiv (U^{-1})\rsig(T)\rsig(U)\rsig \mod{n-1}$ and so $(U^{-1}TU)\rsig = (T)\rsig$ and by Proposition~\ref{Proposition:sigdeterminesmembership}, we have $U^{-1}TU \in \Onr$.

That $\Onr/\Ons{1}$ is isomorphic to a subgroup of the group of units of $\Z_{n-1}$, follows by the correspondence theorem  since $\Ons{1} \le \Onr$. 

That $\On/\Onr$ is isomorphic to a quotient of the group of units of $\Z_{n-1}$, follows from the third isomorphism theorem: $\On/\Onr \cong (\On/\Ons{1})/(\Onr/\Ons{1})$.
\end{proof}

\begin{Question}\label{Question:issigsurjective}
Is it the case that $[\On, \On] = \Ons{1}$? Is the map $\rsig$ from $\On$ to the group of units of $\Z_{n-1}$ surjective?
\end{Question}

\begin{Remark}
Since $\TOn \le \On$ the results above and questions can be restated with $\TOn$ in place of $\On$.
\end{Remark}

\section{ Nesting properties of the groups $\TOnr$}\label{Section:nestingproperties2}

In this section we focus on the group $\TOn$ however, {\bfseries{all the results below are equally valid when all occurrences of $\TOn$ are replaced with $\On$}}.

The following result is a direct consequence corollary of Proposition~\ref{Proposition:sigdeterminesmembership} and generalises Corollary~\ref{Corollary:TOniisequaltoTOnjforsomejdividingnminus1}:

\begin{corollary}\label{Corollary:OnrisequaltoOnjifandonlyif}
Let $1\le r,s<n$ be natural numbers and let $(\TOn)\rsig$  $[(\On)\rsig]$ denote the image of $\TOn$ [$\On$] under the map $\rsig$. Then $\TOnr = \TOns{s}$ [$\Onr = \Ons{s}$] if and only if for all $j \in (\TOn)\rsig$ [$j \in (\On)\rsig$], $r(j-1)\equiv s(j-1) \equiv 0 \mod{n-1}$.
\end{corollary}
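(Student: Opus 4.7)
The plan is to read this off directly from Proposition~\ref{Proposition:sigdeterminesmembership}, which gives a clean membership test in terms of the signature. I will handle the $\TOnr$ and $\Onr$ statements in parallel, since the arguments are identical once one replaces $\TOn$ by $\On$ throughout.

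First I would recast Proposition~\ref{Proposition:sigdeterminesmembership} in terms of the reduced signature: for $T \in \TOn$, we have $T \in \TOnr$ if and only if $r((T)\rsig - 1) \equiv 0 \pmod{n-1}$, and similarly $T \in \TOns{s}$ if and only if $s((T)\rsig - 1) \equiv 0 \pmod{n-1}$. This is just the observation that $(T)\rsig \equiv (T)\sig \pmod{n-1}$, so the congruence $r(T)\sig \equiv r \pmod{n-1}$ provided by Proposition~\ref{Proposition:sigdeterminesmembership} is equivalent to $r((T)\rsig - 1) \equiv 0 \pmod{n-1}$.

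Next I would translate the set equality $\TOnr = \TOns{s}$ into a pointwise statement. Since $\TOnr, \TOns{s} \subseteq \TOn$ (Remark~\ref{Remark: TOn is the union of the TOnr's}), $\TOnr = \TOns{s}$ amounts to the assertion that for every $T \in \TOn$, the two congruences displayed above hold simultaneously or fail simultaneously. Because each congruence depends on $T$ only through $(T)\rsig$, this is equivalent to: for every $j \in (\TOn)\rsig$, $r(j-1) \equiv 0 \pmod{n-1}$ if and only if $s(j-1) \equiv 0 \pmod{n-1}$, which is the intended reading of the stated condition.

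There is no real obstacle: the corollary is essentially Proposition~\ref{Proposition:sigdeterminesmembership} restated one element at a time and then grouped according to the value of the reduced signature. The one point requiring any care is the quantification, namely that it suffices to range $j$ over the image $(\TOn)\rsig$ rather than over all of $\Z_{n-1}$; this is immediate from the fact that both $\TOnr$ and $\TOns{s}$ are contained in $\TOn$, so only signatures actually realised by elements of $\TOn$ can distinguish the two subgroups. The $\Onr$ version is proved by the same argument with $\On$ substituted for $\TOn$ everywhere.
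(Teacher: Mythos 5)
Your proof is correct and takes exactly the route the paper intends: the corollary is presented as a direct consequence of Proposition~\ref{Proposition:sigdeterminesmembership}, and your argument is the expected one-liner — recast membership of $T$ in $\TOnr$ as $r((T)\rsig-1)\equiv 0 \pmod{n-1}$, then quantify over realised signatures. You are also right to flag the reading of the displayed condition: taken literally, ``$r(j-1)\equiv s(j-1)\equiv 0 \pmod{n-1}$ for all $j\in(\TOn)\rsig$'' would force both congruences to hold for every realised $j$, which would make the ``only if'' direction fail already for the trivial equality $\TOns{1}=\TOns{1}$; the biconditional reading you substitute (``$r(j-1)\equiv 0 \Leftrightarrow s(j-1)\equiv 0$'') is the one that is both true and consistent with the $n=7$ computation in Remark~\ref{Remark:negativesolutiontoquestionofCollinetal}, where $\Oms{7}{1}=\Oms{7}{2}$ holds even though $2\cdot(5-1)\not\equiv 0\pmod 6$.
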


\begin{Remark}\label{Remark:negativesolutiontoquestionofCollinetal}
Observe that when $n=7$, then $(\On)\rsig \subseteq  \{1,5\}$. Further observe, by Corollary~\ref{Corollary:TOniisequaltoTOnjforsomejdividingnminus1}, that for $1 \le r \le 6$, $\Oms{7}{r}$ is equal to  $\Oms{7}{a}$ for $a \in \{1,2,3,6\}$. Now notice that as $12$ is divisible by $6$, then $\Oms{7}{3} = \Oms{7}{6}$ and, since $10$ is not a multiple of $6$, $\Oms{7}{1} = \Oms{7}{2}$.  However $\gcd(1,6) \ne \gcd(2,6)$ and $\gcd(3,6) \ne \gcd(6,6)$. Therefore it is not the case that for $1 \le r,s \le n$ $\Onr \cong \Ons{s}$ if and only if $\gcd(r,n-1) = \gcd(s, n-1)$. This yields a negative solution to a question in the paper \cite{BCMNO}. We notice that the same result holds if $\On$ is replaced with $\TOn$. More specifically, $\TOms{7}{a} = \TOms{7}{b}$ for $(a,b) \in \{(1,2), (3,6)\}$. 
\end{Remark}

Remark~\ref{Remark:negativesolutiontoquestionofCollinetal} proves the following result:

\begin{Theorem}
There are numbers $n \in \N$, $n >2$, and $1 \le r, s \le n-1$ such that, for $\T{X}= \T{TO}, \T{O}$, $\XOnr = \XOns{s}$ but $\gcd(n-1, r) \ne \gcd(n-1, s)$.
\end{Theorem}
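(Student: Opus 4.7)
The plan is to exhibit an explicit counterexample at $n=7$ and then invoke the machinery assembled in the previous section. With $n-1 = 6$, the group of units of $\Z_{n-1}$ is $\{1,5\}$, so by Theorem~\ref{Theorem:rsigisahomomorphism} both $(\On)\rsig$ and $(\TOn)\rsig$ are contained in $\{1,5\}$. By Corollary~\ref{Corollary:OnrisequaltoOnjifandonlyif}, to verify $\XOnr = \XOns{s}$ it suffices to check the congruence $r(j-1) \equiv s(j-1) \equiv 0 \mod{6}$ for every $j$ in the relevant image of $\rsig$. The case $j=1$ is automatic, and the case $j=5$ amounts to the single condition $4r \equiv 4s \equiv 0 \mod{6}$, equivalently $3 \mid r$ and $3 \mid s$.

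Next, I would use Corollary~\ref{Corollary:TOniisequaltoTOnjforsomejdividingnminus1} to reduce attention to the divisors $r,s \in \{1,2,3,6\}$ of $6$. Among these, $\{3,6\}$ satisfy $4r \equiv 0 \mod{6}$ while $\{1,2\}$ do not. Hence, in either of the settings $\T{X} \in \{\T{O}, \T{TO}\}$, Corollary~\ref{Corollary:OnrisequaltoOnjifandonlyif} gives $\XOms{7}{1} = \XOms{7}{2}$ and $\XOms{7}{3} = \XOms{7}{6}$. The first coincidence already proves the theorem, since $\gcd(1,6) = 1 \ne 2 = \gcd(2,6)$; the second furnishes an additional example, as $\gcd(3,6) = 3 \ne 6 = \gcd(6,6)$.

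There is no real technical obstacle here: the argument reduces to elementary modular arithmetic once the signature homomorphism is in hand. The one point worth emphasising is that the proof is genuinely uniform in $\T{X} \in \{\T{O}, \T{TO}\}$. The criterion of Corollary~\ref{Corollary:OnrisequaltoOnjifandonlyif} depends only on the containment of $(\XOn)\rsig$ in $\{1,5\}$, and this holds for both choices since $\TOn \le \On$. If it were the case that $(\TOn)\rsig$ were strictly smaller than $(\On)\rsig$, the coincidences among the $\TOnr$ would only become more numerous, so the witness $(r,s) = (1,2)$ at $n = 7$ survives unchanged in the $\TOn$ setting.
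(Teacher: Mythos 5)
Your proposal is correct and follows essentially the same route as the paper's own argument (given in Remark~\ref{Remark:negativesolutiontoquestionofCollinetal}): take $n=7$, note that $(\XOn)\rsig$ lands in the units $\{1,5\}$ of $\Z_6$, reduce to the divisors $\{1,2,3,6\}$ of $6$ via Corollary~\ref{Corollary:TOniisequaltoTOnjforsomejdividingnminus1}, and read off $\XOms{7}{1}=\XOms{7}{2}$ and $\XOms{7}{3}=\XOms{7}{6}$ from the signature criterion. Your closing observation that the witness is uniform in $\T{X}\in\{\T{O},\T{TO}\}$ because $(\TOn)\rsig\subseteq(\On)\rsig$ makes explicit a point the paper only asserts.
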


Partition the set $\Z_{n} \backslash \{0\}$ as follows. For $i \in \Z_{n} \backslash \{0\}$, set $[i] := \{j \in \Z_{n} \backslash \{0\} \mid \TOns{j}  = \TOns{i}\} \subset  \Z_{n}$, $[\Z_{n}^{0}] \seteq \{ [i] \mid i \in \Z_{n} \backslash \{0\} \}$ and, for $[i] \in [\Z_{n}^{0}]$, $\TOns{[i]} \seteq \TOns{i}$. Observe that the set $[\Z_{n}^{0}]$ inherits an ordering from $\Z_{n}$ where $[i] < [j]$ if the smallest element of $[i]$ is less than the smallest element of $[j]$. Further observe that if $n-1$ is prime, then $[\Z_{n}^{0}]$ has size at most $2$ by Theorem~\ref{Theorem:TOnisubsetofTOnjifidividesjinZnminus1}.

\begin{Definition}[Atoms]
Let $[r] \in [\Z_{n}^{0}]$, then we say $[r]$ is \emph{an atom (of $[\Z_{n}^0]$)} if  there is an element $T \in \TOns{[r]}$ which is not an element of $\TOns{[s]}$ for any $[s] < [r]$. Let $[i] \in [\Z_{n}^{0}]$, an \emph{atom of $[i]$} is an atom $[r]$ of $[\Z_{n}^0]$ such that $\TOns{[r]} \le \TOns{[i]}$. 
\end{Definition}

\begin{Remark}\label{Remark:atomsdividenminus1}
Let $[r] \in [\Z_{n}^0]$ be an atom with $r$ the minimal element of $r$, then as a consequence of Lemma~\ref{Lemma:partialconverselemma} and Theorem~\ref{Theorem:TOnisubsetofTOnjifidividesjinZnminus1}, we have that $r|n-1$. Observe that by Remark~\ref{Remark:negativesolutiontoquestionofCollinetal} it is not always the case that for every element $r$ dividing $n-1$ that $[r]$ is an atom of $[\Z_{n}^{0}]$. 
\end{Remark}

Observe that if the map $\rsig$ is surjective, then for each $i \in \Z_{n} \backslash \{0\}$,  we may completely determine the elements of $[i]$. 

\begin{Question}
Is it the case that all elements $[i] \in [\Z_{n}^{0}]$ are atoms?
\end{Question}

In the interim we make the following definitions.

\begin{Definition}
Let $[i],[j] \in [\Z_{n}^{0}]$ where $i,j \in \Z_{n}$ are the minimal elements of $[i]$ and $[j]$ respectively. We say that $[i]$ \emph{divides} $[j]$ if $r|j$ for any atom $[r]$ of $[i]$ with $r$ the minimal element of $[r]$.
\end{Definition}

\begin{Definition}
Given two elements $[i],[j] \in [\Z_{n}^0]$, the \emph{lowest common multiple of $[i]$ and $[j]$} is the smallest element $[l]$  of $[\Z_{n}^0]$ such that $[i]$ and $[j]$ divide $[l]$. Notice that by Remark~\ref{Remark:atomsdividenminus1} the lowest common multiple of any pair of numbers $[i], [j] \in [\Z_{n}^0]$ always exists. We extend the definition of lowest common multiple to tuples of elements of $[\Z_{n}^0]$ in the usual way.
\end{Definition}

\begin{Definition}
 Let $i, j \in \Z_{n} \backslash \{0\}$ be the minimal elements of $[i]$ and $[j]$ respectively, then we define the \emph{greatest common divisor of $[i]$ and $[j]$}  to be  $[r] \in [\Z_{n}^0]$ such that $r$, the minimal element of $[r]$, is the greatest common divisor of $i$ and $j$.
\end{Definition}

\begin{Remark}
Notice that for $[i], [j], [r] \in [\Z_{n}^{0}]$ where $i,j,r$ are the minimal elements of $[i]$, $[j]$ and $[r]$ respectively, if $[r]$ is an atom of $[i]$ then, by Lemma~\ref{Lemma:partialconverselemma}, $r|i$ and so if $i|j$ then $r|j$ and $[i]|[j]$. Further observe that for $[i] \in [\Z_{n}^{0}]$ either $[i]$ is an atom or $\TOns{[i]}$ is a union of groups $\TOns{[r]}$ for atoms $[r]$ of $[i]$. This is because  for any element $T \in \TOns{[i]}$, either $T$ is not an element of $\TOns{[r]}$ for any $[r]<[i]$  and so $[i]$ is an atom, or there is a  minimal $[j] \in [\Z_{n}^0]$ such that $T \in \TOns{[j]}$ and so $[j]$ is an atom of $[i]$ by Lemma~\ref{Lemma:partialconverselemma} and Lemma~\ref{Lemma:TOni is a subset of TOnj if mi is congruent to j mod n-1}.
\end{Remark}

 As a consequence of Lemma~\ref{Lemma:partialconverselemma} we have the following result.

\begin{proposition}\label{Proposition:partiallatticestructure}
Let $1 \le i, \le j \le n-1$ be integers such $i$ is the smallest element of $[i]$ and $j$ is the smallest element of $[j]$, then the following things hold:
\begin{enumerate}[label=(\alph*)]
\item $\TOns{[i]} \le \TOns{[j]}$ if and only if $[i]$ divides $[j]$ \label{Proposition:partiallatticestructure order},
\item if $[r]$ is the lowest common multiple of $[i],[j]$, then  $[r]$ is minimal such that $\gen{\TOns{[i]}, \TOns{[j]} } \le \TOns{[r]}$ \label{Proposition:partiallatticestructure join},
\item if $[r]$, where $r \in \Z_{n}$ is the smallest element of $[r]$, is the greatest common divisor of $[i]$ and $[j]$   then $\TOns{[i]} \cap \TOns{[j]} = \TOns{[r]}$. \label{Proposition:partiallatticestructure meet}
\end{enumerate}
 
\end{proposition}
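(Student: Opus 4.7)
The plan is to reduce every assertion in the proposition to two arithmetic criteria already proved in the excerpt: Proposition~\ref{Proposition:sigdeterminesmembership} (membership in $\TOnr$ is detected by the reduced signature modulo $n-1$), Lemma~\ref{Lemma:TOni is a subset of TOnj if mi is congruent to j mod n-1} (the ``sufficient'' direction for an inclusion $\TOns{i}\subseteq\TOns{j}$), and Lemma~\ref{Lemma:partialconverselemma} (the converse, stating that a minimal index divides every other valid index). The other essential ingredient is the decomposition $\TOns{[i]}=\bigcup_{[r]}\TOns{[r]}$ where $[r]$ ranges over the atoms of $[i]$: given $T\in\TOns{[i]}$, the minimal $[r]\in[\Z_n^0]$ with $T\in\TOns{[r]}$ is automatically an atom, and by Lemma~\ref{Lemma:partialconverselemma} it divides $[i]$.

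For part~\ref{Proposition:partiallatticestructure order}, I would do the easy reverse direction first: if $[i]\mid[j]$, then for every atom $[r]$ of $[i]$ with $r=\min[r]$ we have $r\mid j=\min[j]$, so Lemma~\ref{Lemma:TOni is a subset of TOnj if mi is congruent to j mod n-1} gives $\TOns{[r]}\subseteq\TOns{[j]}$, and taking the union over atoms yields $\TOns{[i]}\subseteq\TOns{[j]}$. For the forward direction, assume $\TOns{[i]}\le\TOns{[j]}$ and let $[r]$ be any atom of $[i]$ with minimum $r$. By the atom property there exists $T\in\TOns{[r]}$ with $T\notin\TOns{[s]}$ for all $[s]<[r]$; I would verify the key observation that any $s'\in\{1,\dots,r-1\}$ satisfies $[s']<[r]$ (because $\min[s']\le s'<r=\min[r]$), so $r$ is actually minimal with $T\in\TOns{r}$. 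Since $T\in\TOns{[j]}=\TOns{j}$, Lemma~\ref{Lemma:partialconverselemma} forces $r\mid j$, proving $[i]\mid[j]$. Part~\ref{Proposition:partiallatticestructure join} is then immediate: part~\ref{Proposition:partiallatticestructure order} applied to each of $[i]\mid[r]$ and $[j]\mid[r]$ gives $\gen{\TOns{[i]},\TOns{[j]}}\subseteq\TOns{[r]}$, and if $\gen{\TOns{[i]},\TOns{[j]}}\subseteq\TOns{[r']}$ then part~\ref{Proposition:partiallatticestructure order} forces both $[i]\mid[r']$ and $[j]\mid[r']$, whence $[r]\le[r']$ by the defining minimality of the lowest common multiple.

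For part~\ref{Proposition:partiallatticestructure meet}, one inclusion is again a direct consequence of part~\ref{Proposition:partiallatticestructure order} since $[r]$ divides both $[i]$ and $[j]$. For the reverse, I would use B\'ezout: let $T\in\TOns{[i]}\cap\TOns{[j]}=\TOns{i}\cap\TOns{j}$, so Proposition~\ref{Proposition:sigdeterminesmembership} gives $i((T)\sig-1)\equiv 0\equiv j((T)\sig-1)\pmod{n-1}$; writing $\gcd(i,j)=ai+bj$ yields $\gcd(i,j)((T)\sig-1)\equiv 0\pmod{n-1}$, so Proposition~\ref{Proposition:sigdeterminesmembership} in the opposite direction places $T$ in $\TOns{\gcd(i,j)}=\TOns{[r]}$.

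The main obstacle is technical rather than deep: one must take care with the atom decomposition used in the forward direction of part~\ref{Proposition:partiallatticestructure order}, namely checking that the atom condition ``$T\notin\TOns{[s]}$ for $[s]<[r]$'' genuinely rules out $T\in\TOns{s'}$ for every integer $s'<r$, so that Lemma~\ref{Lemma:partialconverselemma} applies with $r$ as the actual minimal index. Everything else is a routine translation between the language of classes $[i]$ and the arithmetic of $\Z_{n-1}$.
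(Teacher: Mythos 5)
Your argument for parts~\ref{Proposition:partiallatticestructure order} and~\ref{Proposition:partiallatticestructure join} matches the paper's: both go through the decomposition of $\TOns{[i]}$ into the atoms of $[i]$ and appeal to Lemma~\ref{Lemma:TOni is a subset of TOnj if mi is congruent to j mod n-1} one way and Lemma~\ref{Lemma:partialconverselemma} the other; your explicit check that $s'<r$ in the integers forces $[s']<[r]$ in $[\Z_n^0]$ (so that the witness $T$ genuinely has $r$ as its minimal index before Lemma~\ref{Lemma:partialconverselemma} is invoked) fills in a step the paper leaves implicit, which is a good instinct. Where you genuinely diverge is part~\ref{Proposition:partiallatticestructure meet}: the paper stays inside the atom machinery, arguing that any $T$ in the intersection belongs to some atom $[s]$ of both $[i]$ and $[j]$, that $s\mid\gcd(i,j)=r$, and hence $\TOns{[s]}\le\TOns{[r]}$; you instead bypass atoms entirely by observing that $i((T)\sig-1)\equiv j((T)\sig-1)\equiv 0\pmod{n-1}$ and applying B\'ezout to conclude $\gcd(i,j)((T)\sig-1)\equiv 0\pmod{n-1}$, which by Proposition~\ref{Proposition:sigdeterminesmembership} places $T$ in $\TOns{r}$ directly. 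Your route is shorter and more self-contained: it avoids the claim (asserted but not carefully justified in the paper) that every $T$ in the intersection lies in a common atom of $[i]$ and $[j]$, and makes the ``meet $=$ gcd'' statement a one-line consequence of the signature criterion. The paper's route has the virtue of treating all three parts uniformly via the atom lattice, but your B\'ezout computation is the cleaner proof of part~\ref{Proposition:partiallatticestructure meet}.
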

\begin{proof}
 For part~\ref{Proposition:partiallatticestructure order}, let $i, j$ be elements of $\Z_{n} \backslash \{0\}$ which are the minimal elements of sets $[i]$ and $[j]$ respectively. Suppose $[i]$ divides $[j]$. If $[i]$ is an atom then  $i|j$, and so  by Lemma~\ref{Lemma:TOni is a subset of TOnj if mi is congruent to j mod n-1} we have that $\TOns{i} \le \TOns{j}$, in particular $\TOns{[i]} \le \TOns{[j]}$. If $[i]$ is not an atom, then $\TOns{[i]}$ is the union of the groups $\TOns{[r]}$ over all atoms $[r]$ of $[i]$. Therefore let $[r]$ be any atom of $[i]$, where $r$ is the smallest element of $[r]$, since $[i]$ divides $[j]$ then $r|j$ and so $\TOns{r} \le \TOns{j}$ by Lemma~\ref{Lemma:TOni is a subset of TOnj if mi is congruent to j mod n-1} once more. Since $[r]$ was an arbitrary atom of $[i]$, we conclude that $\TOns{[i]} \le \TOns{[j]}$.  On the other hand suppose that $\TOns{i} \le \TOns{j}$.If $[i]$ is an atom then $i|j$ and so $[i]$ divides $[j]$. If $[i]$ is not an atom, then let $[r]$ be any atom of $[i]$ where $r$ is the minimal element of $[r]$. Since $\TOns{[i]} \le \TOns{[j]}$, we also have $\TOns{[r]} \le \TOns{[j]}$ by definition of an atom of $[i]$. Now making use of the definition of an atom together with Lemma~\ref{Lemma:partialconverselemma}, we have that $r|j$. Therefore, since $[r]$ was an arbitrarily chose atom of $[i]$, we conclude that $[i]$ divides $[j]$.
 
 Part~\ref{Proposition:partiallatticestructure join} follows from Part~\ref{Proposition:partiallatticestructure order} since for any $s \in \Z_{n}\backslash\{0\}$ such that $\gen{\TOns{[i]}, \TOns{[j]}}\le \TOns{[s]}$, then  $[i]$ and $[j]$ divide $[s]$. 
 
 For Part~\ref{Proposition:partiallatticestructure meet} let $[r]$ be the greatest common divisor of $[i]$ and $[j]$. First observe that if $[l]$ is an atom of $[r]$, where $l$ is the minimal element of $[l]$, then $l|i$ and $l|j$. Therefore $[r]$ divides $[i]$ and $[j]$ and by Part~\ref{Proposition:partiallatticestructure order}, $\TOns{[r]} \le \TOns{[i]} \cap \TOns{[j]}$. Now let $[s]$, with $s$ the minimal element of $[s]$ be  an atom of $[i]$  and $[j]$. This means that $s|i$ and $s|j$ and so $s|r$, since $r$ is the greatest common divisor of $i$ and $j$. By Part~\ref{Proposition:partiallatticestructure order} once more, we conclude that $s$ is an atom of $[r]$ and so $\TOns{[s]} \le \TOns{[r]}$. Now since every element $T \in \TOns{[i]} \cap \TOns{[j]}$ lies in some atom of both $[i]$ and $[j]$, we conclude that  $\TOns{[i]} \cap \TOns{[j]} = \TOns{[r]}$.
\end{proof}

The proposition above prompts the following question:

\begin{Question}\label{Question:canjoinbereplacedwithubgroupgenerated}
Let $n$ be a natural number and let $[i], [j] \in [\Z_{n}^0]$ is it true that $\gen{\TOns{[i]}, \TOns{[j]}} = \TOns{[r]}$ where $[r]$ is the lowest common multiple of $[i]$ and $[j]$? 
\end{Question}

The proposition below addresses this question by showing that in the case where the map $\rsig$ is unto the group of units of $\Z_{n-1}$, then Question~\ref{Question:canjoinbereplacedwithubgroupgenerated} has an affirmative answer.

\begin{proposition}
Let $\T{X}= \T{TO}, \T{O}$ and  suppose the map $\rsig: \XOn  \to  \Z_{n-1}$ is unto the group of units of $\Z_{n-1}$. Then for $[i],[j] \in [Z_{n}^0]$, $\gen{\XOns{[i]},\XOns{[j]}} = \XOns{[r]}$ for $[r]$ the lowest common multiple of $[i]$ and $[j]$.
\end{proposition}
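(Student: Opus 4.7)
The plan is to translate the statement, via the surjective homomorphism $\rsig$ of Theorem~\ref{Theorem:rsigisahomomorphism}, into an equality of subgroups of the abelian group of units of $\Z_{n-1}$, and then verify the latter by the Chinese Remainder Theorem. By Proposition~\ref{Proposition:partiallatticestructure}\ref{Proposition:partiallatticestructure join}, one inclusion, $\gen{\XOns{[i]},\XOns{[j]}} \le \XOns{[r]}$, is already in hand; only the reverse requires work.

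Under the surjectivity hypothesis, $\rsig\colon \XOn \to (\Z_{n-1})^{\times}$ has kernel $\XOns{1}$, and Proposition~\ref{Proposition:sigdeterminesmembership} gives $\XOns{[s]} = \rsig^{-1}(U_s)$ where
\[
U_s := \{\,a \in (\Z_{n-1})^{\times} \,:\, s(a-1) \equiv 0 \mod{n-1}\,\}.
\]
Since $\XOns{1} = \ker\rsig$ is contained in $\gen{\XOns{[i]},\XOns{[j]}}$ and $\rsig(\gen{\XOns{[i]},\XOns{[j]}}) = \gen{U_i, U_j} = U_i U_j$ (using that the target is abelian), we have $\gen{\XOns{[i]},\XOns{[j]}} = \rsig^{-1}(U_i U_j)$, so it suffices to prove $U_i U_j = U_r$.

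For this last step, one checks routinely that $U_s$ depends only on $d_s := \gcd(s, n-1)$ and in fact equals the kernel of the natural reduction $(\Z_{n-1})^{\times} \to (\Z_{(n-1)/d_s})^{\times}$. Under the surjectivity hypothesis, the distinct subgroups $U_s$ therefore biject with divisors of $n-1$ through $s \mapsto d_s$, the minimal representative of any $[s] \in [\Z_n^0]$ is $d_s$, and the divisibility order and LCM operation on $[\Z_n^0]$ coincide with ordinary integer divisibility and LCM on divisors of $n-1$; in particular the minimal representative of $[r]$ is $d_r = \mathrm{lcm}(d_i, d_j)$. Writing $n-1 = \prod_k p_k^{e_k}$ and applying CRT, $(\Z_{n-1})^{\times} = \prod_k (\Z_{p_k^{e_k}})^{\times}$ and $U_d = \prod_k U_d^{(k)}$, where $U_d^{(k)}$ is the subgroup of $(\Z_{p_k^{e_k}})^{\times}$ of elements congruent to $1$ modulo $p_k^{e_k - v_{p_k}(d)}$. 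For fixed $k$ these subgroups are totally ordered by $v_{p_k}(d)$, so $U_{d_i}^{(k)} \cdot U_{d_j}^{(k)} = U^{(k)}_{\max(v_{p_k}(d_i), v_{p_k}(d_j))}$; taking the product over $k$ yields $U_i U_j = U_{\mathrm{lcm}(d_i, d_j)} = U_r$.

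The main obstacle is not any deep content but the careful bookkeeping that identifies the equivalence-class formalism of $[\Z_n^0]$ (with its atom-based divisibility and LCM) with the divisor lattice of $n-1$ under surjectivity. Once that identification is in place, the translation via $\rsig$ and the CRT computation are entirely formal, and no new transducer-theoretic input is required.
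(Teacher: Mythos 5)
Your proposal follows essentially the same route as the paper's proof: both push the problem through the homomorphism $\rsig$ into the abelian group $(\Z_{n-1})^{\times}$ and then resolve the resulting subgroup-product identity by a Chinese Remainder Theorem computation, which is exactly the content of the paper's Claim~\ref{Claim:groupgeneratedbypairequallcmgroup}. Your identification of $U_d$ with the kernel of the reduction $(\Z_{n-1})^{\times}\to(\Z_{(n-1)/d})^{\times}$ is a cleaner packaging of the CRT step than the paper's direct manipulation of the stabilizer subgroups $(\Z_{n-1}^{\ast})_i$, and the correspondence-theorem translation back to $\XOn$ tidies the paper's explicit element-chase $(T)\rsig=uv$, $TU^{-1}V^{-1}\in\XOns{1}$. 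So this is the same proof, slightly better organized.

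One intermediate claim you make is, however, false: the distinct subgroups $U_s$ do \emph{not} biject with divisors of $n-1$, and the minimal representative of $[s]$ need not equal $\gcd(s,n-1)$. Inside $(\Z_6)^{\times}=\{1,5\}$ one has $U_1=U_2=\{1\}$, so $d\mapsto U_d$ is not injective on divisors of $6$; this collapse is precisely what Remark~\ref{Remark:negativesolutiontoquestionofCollinetal} exploits. What \emph{is} true, and what your CRT computation actually uses, is only that $U_s=U_{\gcd(s,n-1)}$ and $U_{d_i}U_{d_j}=U_{\mathrm{lcm}(d_i,d_j)}$ for $d_i,d_j\mid n-1$; the collapse is harmless for those identities. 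But you cannot then read off that the minimal representative of the $[\Z_n^0]$-lcm $[r]$ equals $\mathrm{lcm}(d_i,d_j)$ from an alleged isomorphism with the divisor lattice of $n-1$. Instead, having established $\gen{\XOns{[i]},\XOns{[j]}}=\rsig^{-1}\bigl(U_{\mathrm{lcm}(d_i,d_j)}\bigr)=\XOns{\mathrm{lcm}(d_i,d_j)}$, you should appeal to Proposition~\ref{Proposition:partiallatticestructure} to identify $[\mathrm{lcm}(d_i,d_j)]$ with the lowest common multiple $[r]$; this is also how the paper closes its argument.
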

\begin{proof}
The result is essentially a consequence  of the following claim, as we have not been able to find a reference for it, we also provide a proof.

\begin{claim}\label{Claim:groupgeneratedbypairequallcmgroup}
Let $n \in \N_{2}$ and consider the integer ring $\Z_{n}$. Let  $\Z_{n}^{*}$ denote the group of units, and for each $i \in \Z_{n}$ let $(\Z_{n}^{*})_{i} := \{ a \in \Z_{n}^{*} \mid ai \equiv i \mod{n}\} \le \Z_{n}^{*}$. Let $i,j \in \Z_{n}$ let $i_1 = \gcd(i,n)$, $j_1 = \gcd(j,n)$ and $r = lcm(i_1,j_1)$ then $ \gen{ (\Z_{n}^{*})_{i_1}, (\Z_{n}^{*})_{j_1}} = \gen{ (\Z_{n}^{*})_{i}, (\Z_{n}^{*})_{j}} = (\Z_{n}^{*})_{r}$.
\end{claim}
\begin{proof}
We first observe that for $i \in \Z_{n}$ the group $(\Z_{n}^*)_{i} = (\Z_{n}^*)_{d}$ where $d = \gcd(i, n)$. This is because for any $a \in \Z_{n}^{*}$ such that $ad \equiv d \mod{n-1}$ then $ai \equiv i \mod{n-1}$ as well. If, on the other hand $ai \equiv i \mod{n-1}$, then observe that, as, by Bezout's lemma, there is a $u \in \Z_{n}$ such that $iu \equiv d \mod{n-1}$, then $ad \equiv d \mod{n-1}$ also.

Thus let $i, j \in \Z_{n}$ be divisors of $n$ with $r = lcm(i,j)$ (a divisor of $n$). First assume that $n = p^{\alpha}$ for a prime $p$. Then $i = p^{\beta}$ and $j = p^{\gamma}$ for $\beta, \gamma \le \alpha$. Without loss of generality we may assume that $i \le j$, and so, in particular, that $i$ divides $j$. However, it then follows that $(\Z_{n}^{*})_{i} \le (\Z_{n}^{*})_{j}$, from which we conclude that $\gen{ (\Z_{n}^{*})_{i}, (\Z_{n}^{*})_{j}} = (\Z_{n})^{*}_{j}$ noting that $lcm(i,j) = j$.

Now suppose that $n= p_1^{\alpha_{1}}p_2^{\alpha_{2}}\ldots p_m^{\alpha_{m}}$ where the  $p_{l}$'s, $1 \le l \le m$ are distinct primes. We may further assume that $\gcd(i,j) = 1$. This is because if $\gcd(i,j) = d$, then setting $i_1 = i/d$ and $j_1 = j/d$ we observe that $(\Z_{n}^{\ast})_{i_1} \le (\Z_{n}^{\ast})_{i}$, $(\Z_{n}^{\ast})_{j_1} \le (\Z_{n}^{\ast})_{j}$ and $lcm(i,j) = lcm(i_1, j_1)$. Thus, noting that $\gen{ (\Z_{n}^{\ast})_{i}, (\Z_{n}^{\ast})_{j}} \le \gen{(\Z_{n}^{\ast})_{lcm(i,j)}}$, the result holds for $i_1$ and $j_1$ precisely if it holds for $i$ and $j$.  

Making use of the Chinese remainder theorem, there is a ring isomorphism from $\Z_{n} \to \Z_{p_1^{\alpha_{1}}} \times \Z_{p_2^{\alpha_{2}}} \times \ldots \times \Z_{p_m^{\alpha_{m}}}$ defined by $k \mapsto (k\mod{p_1^{\alpha_1}},k\mod{p_2^{\alpha_1}}, \ldots, k\mod{p_m^{\alpha_m}})$. For $1 \le l \le m$ let $i_l = i \mod{p_l^{\alpha_{m}}}$ likewise define the sequence $j_{l}$ and $r_l$, where $r = ij$. It  follows that $(\Z_{n}^{\ast})_{i} \cong (\Z_{p_1^{\alpha_{1}}}^{\ast})_{i_1} \times (\Z_{p_1^{\alpha_{2}}}^{\ast})_{i_2} \times \ldots \times (\Z_{p_m^{\alpha_{m}}}^{\ast})_{i_m}$, likewise $(\Z_{n}^{\ast})_{j} \cong (\Z_{p_1^{\alpha_{1}}}^{\ast})_{j_1} \times (\Z_{p_1^{\alpha_{1}}}^{\ast})_{j_2} \times \ldots \times (\Z_{p_m^{\alpha_{m}}}^{\ast})_{j_m}$. Observe that $$\gen{\left((\Z_{p_1^{\alpha_{1}}}^{\ast})_{i_1} \times (\Z_{p_2^{\alpha_{2}}}^{\ast})_{i_2} \times \ldots \times (\Z_{p_m^{\alpha_{m}}}^{\ast})_{i_m}\right),\left((\Z_{p_1^{\alpha_{1}}}^{\ast})_{j_1} \times (\Z_{p_2^{\alpha_{2}}}^{\ast})_{j_2} \times \ldots \times (\Z_{p_m^{\alpha_{m}}}^{\ast})_{j_m}\right) }$$ is precisely the group  $$\gen{(\Z_{p_1^{\alpha_{1}}}^{\ast})_{i_1}, (\Z_{p_1^{\alpha_{1}}}^{\ast})_{j_1}} \times \gen{(\Z_{p_2^{\alpha_{2}}}^{\ast})_{i_2}, (\Z_{p_2^{\alpha_{2}}}^{\ast})_{j_2}} \times \ldots \times \gen{(\Z_{p_m^{\alpha_{m}}}^{\ast})_{i_m}, (\Z_{p_m^{\alpha_{m}}}^{\ast})_{j_m}}.$$ Since $i$ and $j$ are divisors of $n$ then $i = p_1^{\beta_1}p_2^{\beta_2} \ldots p_{m}^{\beta_m}$, $j= p_1^{\gamma_1}p_2^{\gamma_2} \ldots p_{m}^{\gamma_m}$ where for $1 \le l \le m$, $0 \le \beta_l, \gamma_{l} \le \alpha_{l}$ and $\gamma_{l}+ \beta_{l} = \max\{\gamma_{l}, \beta_{l}\}$ (since $i,j$ are assumed co-prime). It therefore follows that $i \mod p_l^{\alpha_{l}}$ is either a unit  of $\Z_{p_l^{\alpha_{l}}}$ otherwise it is  equal to a unit of $\Z_{p_l^{\alpha_{l}}}$ times a non-trivial (appropriate) power of $p_l$ and likewise for $j$. We note that if one of $i_l$ or $j_l$ is equal to  a unit of $\Z_{p_l^{\alpha_{l}}}$ times a non-trivial (appropriate) power of $p_l$, then the other must be equal to a unit of $\Z_{p_l^{\alpha_{l}}}$. Therefore for $1 \le l \le m$, $\gen{ (\Z_{p_l^{\alpha_{l}}}^{\ast})_{i_l}, (\Z_{p_l^{\alpha_{l}}}^{\ast})_{j_l} }$ is either equal to the trivial group or it is equal to $(\Z_{p_l^{\alpha_{l}}}^{\ast})_{p_l^{\delta}}$ where $p_l^{\delta} \ne 1$ is the maximum power of $p_l$ dividing $ij$. In the case that $\gen{ (\Z_{p_l^{\alpha_{l}}}^{\ast})_{i_l}, (\Z_{p_l^{\alpha_{l}}}^{\ast})_{j_l} }$ is the trivial group, then $i_l$ and $j_l$ are both units of $\Z_{p_l^{\alpha_{l}}}$ and so  $r_l = i_lj_l \mod p_{l}^{\alpha_{l}}$ is also a unit of $\Z_{p_l^{\alpha_{l}}}$. In the case that $\gen{ (\Z_{p_l^{\alpha_{l}}}^{\ast})_{i_l}, (\Z_{p_l^{\alpha_{l}}}^{\ast})_{j_l} }$ is equal to $(\Z_{p_l^{\alpha_{l}}}^{\ast})_{p_l^{\delta}}$ where $p_l^{\delta} \ne 1$ is the maximum power of $p_l$ dividing $ij$, then it follows that one of $i_l$ or $j_l$ is a unit and the other is equal to a unit times $p_{l}^{\delta}$. In this case we have that $r_l$ is equal to a unit time $p_{l}{\delta}$ and so $(\Z_{p_l^{\alpha_{l}}}^{\ast})_{p_l^{\delta}} = (\Z_{p_l^{\alpha_{l}}}^{\ast})_{r_l}$. In either case we see that $(\Z_{p_l^{\alpha_{l}}}^{\ast})_{r_l} = \gen{ (\Z_{p_l^{\alpha_{l}}}^{\ast})_{i_l}, (\Z_{p_l^{\alpha_{l}}}^{\ast})_{j_l} }$. Thus we conclude that that $\gen{(\Z_{n}^{\ast})_{i}, (\Z_{n}^{\ast})_{j}} = (\Z_{n}^{\ast})_{r}$ as required.
\end{proof}

We can now prove the proposition. We observe that for $1 \le r < n$ since $\rsig$ is surjective, then $(\XOnr)\rsig = (\Z_{n-1}^*)_{r}$. Let $1 \le i,j \le  n-1$ such that $i$ the the minimal element of $[i]$ and $j$ is the minimal element of $[j]$ so that both $i$ and $j$ are divisors of $n$. Let $r = lcm(i,j)$. By Claim~\ref{Claim:groupgeneratedbypairequallcmgroup} we have that $\gen{(\Z_{n-1}^*)_{i}, (\Z_{n-1}^*)_{j}} = (\Z_{n-1}^*)_{r}$. Thus, let $T \in \XOnr$ be any element. By Proposition~\ref{Proposition:sigdeterminesmembership} $(T)\rsig \in (\Z_{n-1}^*)_{r}$ and so there are elements $u \in (\Z_{n-1}^*)_{i}$ and $v \in (\Z_{n-1}^*)_{j}$ such that $(T)\rsig = uv \mod{n-1}$. Let $U \i \XOns{i}$ and $V \in \XOns{j}$ be such that $u= (U)\rsig$ and $v = (V)\rsig$, then $(TU^{-1}V^{-1})\rsig = 1$ and so $TU^{-1}V^{-1} \in \XOns{1} \le  \gen{\XOns{i}, \XOns{j}}$. It therefore follows that $\XOns{r}\le \gen{\XOns{i}, \XOns{j}}$. Proposition~\ref{Proposition:sigdeterminesmembership} guarantees that $\gen{\XOns{i}, \XOns{j}} \le \XOns{r}$ Thus by Proposition~\ref{Proposition:partiallatticestructure} $[r]$ is the lowest common multiple of $[i]$ and $[j]$.
\end{proof}

We conclude our investigation of the nesting properties of the groups $\TOns{r}$ for $1 \le r < n$ by making  use of Proposition~\ref{Proposition:sigdeterminesmembership} to construct an element of $\T{TO}_{4}$ which is not an element of $\T{TO}_{4,r}$ for any $1 \le r \le 3$. This indicates that in general the group $\TOns{r}$ depends on $r$.

Let $g$ be the transducer below:

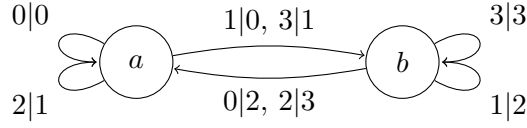
\begin{figure}[H]
\centering
\begin{tikzpicture}[shorten >= .5pt,node distance=3cm,on grid,auto] 
   \node[state] (q_0)   {$a$};
   \node[state, xshift=0.5cm] (q_1) [right=of q_0] {$b$}; 
    \path[->] 
    (q_0) edge[in=170, out=10]  node {$1|0$, $3|1$} (q_1)
          edge[in=180, out=150,loop] node[swap] {$0|0$} ()
          edge[in=180, out=210, loop] node {$2|1$} () 
    (q_1) edge[in=350, out=190] node {$0|2$, $2|3$} (q_0)
          edge[in=0, out=30,loop] node {$3|3$}()
          edge[in=0, out=330,loop] node[swap] {$1|2$}();
\end{tikzpicture}
\caption{An element  $g \in \T{TO}_{4}$ which is not in $\T{TO}_{4,r}$ for any $0 \le r <3$}.
\label{Figure:elementinTo4butnotinanyotherTOnr}
\end{figure}

We make the following observations about $g$ which the reader may verify:
\begin{enumerate}[label=(\roman*)]
\item  For all states $\alpha$ of $g$ the map $g_{\alpha}: \CCn \to \CCn$ preserves the lexicographic ordering on $\CCn$. Thus, since $g^2 = \id$ under the product defined for $\TOn$, by Theorem~\ref{Thm: Equivalent conditions for an element of $On$ to belong to TOnr} there is some non-zero $r \in \Z_{4}\backslash$ such that $g \in \TOnr$. \label{Observation:gisanelementofTOn}
\item We have $\im(a) \cap \im(b) = \emptyset$ and $\im(a)\sqcup\im(b) = \CCmr{4}$. In particular we  have $\im(a) = U_{0} \cup U_{1}$ and $\im_{b} = U_{2}\cup U_{3}$.\label{Observation:gneedstostatestofillacone}

\item  The following fact is not essential to our discussion here but is nevertheless worth mentioning. The paper forthcoming article \cite{BleakCameronOlukoya} shows that there is subgroup $\T{L}_{n} \le \On$ which is isomorphic to the quotient $\aut{\Xn^{\Z}, \sigma_{n}}/\gen{\sigma_{n}}$ of the automorphisms of the two-sided full shift on $n$ letters by the group generated by the shift ($\sigma_{n}$ denotes the shift map on $n$ letters). The transducer $g$ is in fact an element of $\T{L}_{4}$. 
\end{enumerate}

\begin{lemma}
Let $1 \le r \le 3$ and let $A_{q_0} \in \TBmr{4}{r}$ be such that $\core(A_{q_0})$ is equal to the transducer $g$ of Figure~\ref{Figure:elementinTo4butnotinanyotherTOnr}, then $r = 3$.
\end{lemma}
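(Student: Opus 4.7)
The plan is to invoke Proposition~\ref{Proposition:sigdeterminesmembership}: in our setting $n=4$, so $g \in \TOmr{4}{r}$ iff $r((g)\sig - 1) \equiv 0 \mod 3$. Observation~\ref{Observation:gisanelementofTOn} already guarantees that $g$ lies in some $\TOmr{4}{r}$ (with $1 \le r \le 3$), so the task reduces to computing $(g)\sig$ and reading off the admissible values of $r$.

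First I would verify directly from the transition table in Figure~\ref{Figure:elementinTo4butnotinanyotherTOnr} that $g$ is synchronizing at level $k=1$: reading the letter $0$ or $2$ from either state forces the active state to be $a$, and reading $1$ or $3$ forces $b$. Hence the sequence $(q_{x_1}, q_{x_2}, q_{x_3}, q_{x_4})$ for $x_1 < x_2 < x_3 < x_4$ the lex-ordered elements of $X_4^1$ is $(a, b, a, b)$. Next I would use Observation~\ref{Observation:gneedstostatestofillacone} to see that the minimal covers are $\{U_0, U_1\}$ for $\im(a)$ and $\{U_2, U_3\}$ for $\im(b)$; neither image can be written as a single $U_\nu$, since the only $U_\nu$ contained in $\im(a)$ (respectively $\im(b)$) are those with $\nu \ge 00$ or $\nu \ge 10$ (respectively $\nu \ge 20$ or $\nu \ge 30$), and none of these alone equals $\im(a)$ (respectively $\im(b)$). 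Thus $m_a = m_b = 2$ and therefore
\[
(g)\sig \;=\; m_{q_{x_1}} + m_{q_{x_2}} + m_{q_{x_3}} + m_{q_{x_4}} \;=\; 2+2+2+2 \;=\; 8.
\]

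Finally I would substitute this into the membership criterion: the condition $r((g)\sig - 1) \equiv 0 \mod 3$ becomes $7r \equiv 0 \mod 3$, i.e. $r \equiv 0 \mod 3$. Among $1 \le r \le 3$ the only solution is $r = 3$, which is the claim.

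I expect no serious obstacle; the only subtle point is the verification that $m_a$ and $m_b$ equal $2$ rather than $1$, which rests on noting that $\im(a) = U_0 \cup U_1$ is a proper open subset of $\CCmr{4}$ but is not itself of the form $U_\nu$ for any single $\nu \in X_4^*$ (and similarly for $\im(b)$). Everything else is a direct unwinding of Definition~\ref{Definition:signature} and Proposition~\ref{Proposition:sigdeterminesmembership}.
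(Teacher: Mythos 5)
Your proposal is correct and follows essentially the same route as the paper: both compute the synchronizing level ($k=1$), identify the forced states, find $m_a = m_b = 2$, obtain $(g)\sig = 8 \equiv 2 \bmod 3$, and apply Proposition~\ref{Proposition:sigdeterminesmembership} to conclude $r = 3$. Your added justification that $\im(a)$ and $\im(b)$ are not single cones (hence $m_a, m_b \neq 1$) is a detail the paper leaves implicit, but the argument is the same.
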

\begin{proof}

We compute $(g)\sig$. First observe that $g$ is synchronizing at level 1 and moreover the states of $g$ forced by $0$ and $2$, $q_{0}$ and  $q_{2}$ respectively are both equal to  $a$ and the states of $g$ forced by $1$ and $3$, $q_{1}$ and $q_3$ respectively are both equal to $b$. Further observe that $m_{a} = 2$ and $m_{b} = 2$ (where $m_{a}$ and $m_{b}$ are as in Definition~\ref{Definition:signature}). Therefore $(g)\sig = 2.4 = 8 \equiv 2 \mod 3$. The only number $r \in \Z_{4} \backslash \{0\}$ such that $ 2r \equiv r \mod{n-1}$ is $3$. Therefore by Proposition~\ref{Proposition:sigdeterminesmembership} we conclude that if $g \in \TBmr{4}{r}$ then $r$ must be equal to 3.

\end{proof}

In the next section we address the question of the surjectivity of the homomorphism $\rsig$.

\section{On the surjectivity of \texorpdfstring{$\rsig$}{Lg}}\label{Section:onsurjectivityofrsig}

In this section we show that there are infinitely many numbers $n \in \N$ such that the  map $\rsig$ from $\On$ to the group of units of $\Z_{n-1}$ is surjective. More specifically, for $n \in \N$ we show that for every  divisor $d$  of $n$, $1 \le d <n$, there is an element $T \in \On$ with $(T)\rsig = d$. We then argue that there are infinitely many numbers $n \in \N$ such that the divisors of $n$ generate the group of units of $\Z_{n-1}$.

We first define some subgroups of $\On$ highlighted in the paper \cite{BCMNO} and outline an algorithm, `the collapsing procedure', given in  that paper for determining whether or not an automata (or transducer) is synchronizing.

\begin{Definition}
Let $\T{L}_{n}$ be the subgroup of $\On$ consisting of all elements of $T \in \On$, such that if $q \in Q_{T}$ and $\Gamma \in \Xns$ satisfies, $\pi_{T}(\Gamma, q) = q$, then $|\lambda_{T}(\Gamma, q)| = |\Gamma|$. Let $\hn{n}$ be the subgroup of $\T{L}_{n}$ consisting of synchronous, invertible transducers. 
\end{Definition}

Let $A$ be an automaton, the collapsing procedure constructs a new automaton $A_1$ as follows. For each $q \in Q_{A}$ set $[q] := \{p \in Q_{A} \mid \pi_{A}(x, p) = \pi_{A}(x, q) \mbox{ for all } x \in \Xn\}$ and set $Q_{A_1}:= \{ [q] \mid q \in Q_{A}\}$. Let $\pi_{A_1}: \Xn \times Q_{A_1} \to Q_{A_1}$ be defined by  $\pi_{A}(x, [q])  = [\pi_{A}(x,q)]$ for all $x \in \Xn$ and let $A_1= \gen{\Xn, Q_{A_1}, \pi_{A_1}}$. Let $A = A_0$, $A_1, A_2 \ldots $ be a sequence of automaton, where $A_{l+1}$, for $l \in \N$ is obtained from $A_{l}$ by applying the collapsing procedure. We observe that for $l \in \N$ either $|A_{l}| > |A_{l+1}|$ or $A_{l} = A_{l+1}$. Thus let $k$ be minimal in $\N$ such that $A_{k} = A_{k+1}$. The automaton $A$ is synchronizing if and only if $A_{k}$ is the single state automaton over $\Xn$ (\cite{BCMNO}). If $T = \gen{\Xn, Q_{T}, \pi_{T}, \lambda_{T}}$ is a transducer, then $T$ is synchronizing if and only if the automaton  $\T{A}(T) = \gen{\Xn, Q_T, \pi_T}$ is synchronizing. Typically we when applying the collapsing procedure, we shall not distinguish between  $T$ and $\T{A}(T)$.

We have the following general construction, which is in some sense an extension of a construction given in the paper \cite{Olukoya1} for embedding direct sums of the group of automorphisms of one-sided shift over alphabets sizes summing to $n$ into the group of automorphisms of the one-sided shift on $n$ letters.

Let $n \in \N_{2}$ and $d$ be a divisor of $n$ not equal to $n$. Let $m \in \N$ be such that $md =n$ and fix an element $T \in \hn{d}$. For $0\le i \le m-1$, partition the set $\Xn$ into sets $X_{n,i}:= \{di, di+1, \ldots, d(i+1)-1 \}$, likewise form sets $Q_{T,i} = \{ q(i) \mid q \in Q_{T}\}$. Form a transducer $\oplus_{d}T = \gen{ \Xn, \cup_{0 \le i \le m-1} Q_{T,i}, \pi_{\oplus_{d}T}, \lambda_{\oplus_{d}T}}$ with transition and output defined such that:

\begin{enumerate}[label=(\arabic*.)]
\item the restriction $\oplus_{d}T\restriction_{Q_{T,i}} = \gen{X_{n,i}, Q_{T,i},\pi_{\oplus_{d}T}\restriction_{Q_{T,i}}, \lambda_{\oplus_{d}T}\restriction_{Q_{T,i}}}$ of $\oplus_{d}T$ to the set of states $Q_{T,i}$ is a transducer equal to $T$ up to relabelling the alphabet and the set of states,

\item for $0 \le i,j \le m-1$ such that $i \ne j$, for any $b \in X_{d}$, and for any $q \in Q_{T}$,  $\pi_{\oplus_{d}T}(dj +b, q(i)) = (q_{b})(j)$, where $\pi_{T}(b, q_{b})= b$, and  $\lambda_{\oplus_{d}T}(dj+b, q(i)) = di+b$.

\end{enumerate}

We have the following result:

\begin{proposition}\label{Propositon:forncompositeforeverydivisorofnthereisanelementwithsigequaltodivisor}
Let $n \in \N_{2}$, $d$ be a divisor of $n$ such that $n= md$ for some $m \in \N_{2}$, and $T \in \hn{d}$. Then, the transducer $\oplus_{d}T$ is bi-synchronizing, and is fact an element of $\Ln{n}$. Moreover, for $0 \le i \le m-1$, and any state $q(i) \in Q_{T,i}$, $\bigcup_{0 \le b \le d-1} U_{di+b}=\im(q_i)$. In particular, $(\oplus_{d}T)\rsig = d$.
\end{proposition}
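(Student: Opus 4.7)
The proposition makes four assertions about $\oplus_d T$: (a) it is bi-synchronizing, (b) it lies in $\Ln{n}$, (c) the image of each state $q(i)$ equals $\bigcup_{0\le b\le d-1}U_{di+b}$, and (d) its reduced signature equals $d$. My plan is to establish (c) first as the combinatorial backbone, then derive (a) and (b) from the structural properties of the construction, and finally deduce (d) from (c) via Proposition~\ref{Proposition:signatureisequaltosizeofimagecovermodnminus1}.

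For (c), I will argue that from any state $q(i)$ the first output letter necessarily lies in $X_{n,i}$: by the first defining clause of $\oplus_d T$, any input letter in $X_{n,i}$ is processed by a copy of $T$ acting on $X_{n,i}$, so the output letter stays in $X_{n,i}$; by the second defining clause, any input letter $dj+b$ with $j\ne i$ outputs $di+b\in X_{n,i}$. This gives the inclusion $\im(q(i))\subseteq \bigcup_b U_{di+b}$. For the reverse inclusion, I will use the invertibility of $T$ on $\CCmr{d}$ (since $T\in \hn{d}$) to show that inside sector $i$ the restricted dynamics is surjective onto $U(X_{n,i})\cap U_{X_{n,i}}$-suffixes, and the cross-sector clauses can be inverted letter by letter (reading $di+b$ at state $q(i)$, with $j\ne i$, arises from inputs $dj+b$ at any source state). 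Combined with continuity, this forces the image to be exactly $\bigcup_b U_{di+b}$.

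For (b), synchronicity of $\oplus_d T$ is automatic from the construction (each transition emits exactly one letter), so I only need to verify the $\Ln{n}$ cycle condition, which is trivial for synchronous transducers, once (a) places us in $\On$. For (a), let $k$ be the synchronizing level of $T$; I claim $\oplus_d T$ synchronizes at level $k$. Given any word $\Gamma$ of length $k$, consider the maximal suffix $\Gamma'$ of $\Gamma$ whose letters all lie in a single sector $X_{n,j}$. If $|\Gamma'|=k$ then $T$'s own synchronization pins down a unique state in $Q_{T,j}$. Otherwise the letter immediately preceding $\Gamma'$ lies in some sector $X_{n,j'}$ with $j'\ne j$, and the second defining clause sends \emph{every} source state to the single state $(q_b)(j)$, independent of the starting state; then $T$'s dynamics within sector $j$ deterministically evolves this state under the suffix $\Gamma'$. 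The inverse transducer has the same symmetric cross-sector structure (with the role of $T$ replaced by $T^{-1}\in\hn{d}$), so the same argument yields synchronization of the inverse, completing bi-synchronicity. Together with (c), which shows each state induces an injection onto a clopen set with clopen image, this places $\oplus_d T$ in $\On$ and hence in $\Ln{n}$.

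For (d), apply Proposition~\ref{Proposition:signatureisequaltosizeofimagecovermodnminus1} to any state $q(i)$: by (c), $\im(q(i))$ is the union of the $d$ pairwise incomparable single-letter cones $U_{di},\ldots,U_{d(i+1)-1}$. Since $d<n$, these $d$ cones do not combine to form the image of any shorter cone, so the minimal clopen cover by basis elements contained in $\im(q(i))$ has size exactly $d$, giving $m_{q(i)}=d$. As $d\mid n$ with $d<n$ forces $d\le n/2\le n-1$ (with equality only in the degenerate case $n=2$), we conclude $(\oplus_d T)\rsig = d\bmod (n-1) = d$. The principal obstacle I expect is the verification of bi-synchronicity in the mixed-sector case, since the interaction between the externally specified cross-sector transitions and $T$'s intrinsic synchronization must be tracked carefully; however, once it is observed that cross-sector transitions \emph{forget} the source state entirely, the argument reduces to a clean case analysis on the suffix structure of the input word.
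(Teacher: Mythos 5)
Your outline of (c) and (d) is sound and tracks the paper's argument reasonably well, but the bi-synchronicity argument contains a genuine gap that cannot be patched without changing the approach.

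The central flaw is the assertion that ``the inverse transducer has the same symmetric cross-sector structure (with the role of $T$ replaced by $T^{-1}$).'' This is false, and the reason is structural. Trace the sector dynamics of $\oplus_d T$: at a state in sector $i$, reading a letter in sector $j$ produces an output letter in sector $i$ (the \emph{current} sector) and moves the state to sector $j$ (the \emph{input} sector). Consequently, if the input letters have sector sequence $(s_1,s_2,\dots)$ and the processing starts in sector $s_0$, the output letters have sector sequence $(s_0,s_1,s_2,\dots)$. The inverse must undo this, i.e.\ shift the sector sequence back by one: its first output letter must lie in the sector of its \emph{second} input letter. No transducer with the $\oplus_d$ cross-sector structure can do that, because the sector of its first output is dictated by its current state, not by an unread letter. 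A transducer with the $\oplus_d(T^{-1})$ structure would transform $(s_0,s_1,s_2,\dots)$ into $(s_0',s_0,s_1,\dots)$ rather than into $(s_1,s_2,\dots)$, so it is not even $\omega$-equivalent to $(\oplus_d T)^{-1}$. The paper handles this by constructing the inverse explicitly as a transducer $U$ whose states have the form $(di+a,\,q(i))$ — the first coordinate is a buffered letter, providing the one-step lookahead that the shift demands — and then verifying synchronization of $U$ directly. That verification is the technically heaviest part of the proof, and it does not reduce to ``run the forward argument with $T^{-1}$.''

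Two smaller remarks. First, the claimed synchronization level $k$ for $\oplus_d T$ may be off by one: if all $k$ letters of $\Gamma$ lie in one sector $X_{n,j}$, then starting from a state in sector $i\ne j$, the first (cross-sector) letter is consumed jumping into sector $j$, leaving only $k-1$ intra-sector letters, which is not enough for $T$'s own synchronization to fire; the paper's collapsing-procedure argument gives the safe bound $k+1$. Second, the reverse inclusion in (c) — that $\bigcup_{0\le b<d} U_{di+b}\subseteq\im(q(i))$ — is stated but needs the inductive argument in the paper (that for every target word in sector $i$ you can steer the source into any desired sector while reproducing an arbitrary next output letter); your mention of ``invertibility of $T$'' plus ``continuity'' is too thin to establish this on its own.
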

\begin{proof}
We begin by arguing that the transducer  $\oplus_{d}T$ is synchronizing. However, this follows straight-forwardly from the observation that for $0 \le i,j \le m-1$ such that $i \ne j$, any pair of states $q(i), p(i) \in  Q_{T,i}$, and any $0 \le b < d$, $\pi_{\oplus_{d}T}(dj+b, q(i)) = \pi_{\oplus_{d}T}(dj+b, p(i)) = (q_b)(i)$. Thus since, each transducer $\oplus_{d}(T)\restriction_{Q_{T,i}}$ for $0 \le i \le m-1$ at the same level $k \in \N$, then after applying the collapsing procedure at most $k+1$ times we have reduced $\oplus_{d}T$ to the single state automaton.

Further observe that as $T$ is in fact a synchronous transducer, then if $T \in \On$ it is in fact an element of $\Ln{n}$.

Therefore in order to show that $\oplus_{d}T \in \Ln{n}$ it suffices to  show that each state is injective and has clopen image and that it has a synchronizing inverse.

 We begin by showing that each state of $\oplus_{d}T$ is injective and has clopen image. Let $0 \le i \le m-1$ and $q(i)$ be a state of $T$. Let  $\mu, \nu \in \Xnp$ such that $|\mu| = |\nu|  \ge 2$ and $\mu \ne \nu$. We now show that either $\lambda_{\oplus_{d}T}(\mu, q(i)) \ne \lambda_{\oplus_{d}T}(\nu, q(i))$ or for any $x \in \Xn$, $\lambda_{\oplus_{d}T}(\mu, q(i)) \ne \lambda_{\oplus_{d}T}(\nu, q(i))$. If $\mu, \nu \in X_{n,i}^{+}$, then this follows from the fact that $T \in \hn{n}$ and so each state of $T$ induces a permutation of $X_{d}$. Thus, we may assume that there are $1 \le j_1, j_2  \le m-1$ with $j_1$ and $j_2$ not both equal to  $i$, $a_1 \in X_{n, j_1}$, $a_2 \in X_{n, j_2}$, and  $\mu_1, \nu_1 \in \Xn^{+}$ such that $\mu = a_1 \mu_1$ and $\nu= a_2 \nu_1$. If $j_2 \ne j_2$, then as $\pi_{\oplus_{d}T}(a_1, q(i))  \in Q_{T, j_1}$ and $\pi_{\oplus_{d}T}( a_2, q(i)) \in Q_{T, j_2}$, it follows that the first letter of $\lambda_{\oplus_{d}T}(\mu_1, \pi_{\oplus_{d}T}(a_1, q(i)))$ is an element of $X_{n, j_1}$ and the first letter of $\lambda_{\oplus_{d}T}(\nu_1, \pi_{\oplus_{d}T}(a_2, q(i)))$ is an element of $X_{n, j_2}$. Thus $\lambda_{\oplus_{d}T}(\mu, q(i)) \ne \lambda_{\oplus_{d}T}(\nu, q(i))$. Therefore we may assume that $j_1 = j_2 = j$. In this case, since $\mu \perp \nu$, there are words $\mu_2, \nu_2, \phi \in \Xn^{\ast}$ and $a \ne b \in \Xn$ such  that $\mu= \phi a \mu_2$ and $\nu = \phi b \nu_2$. If $a, b \in X_{n,l}$ for some $0 \le l \le m-1$, then since for any state $t \in \cup_{0\le i \le m-1} Q_{T,i}$, $\lambda_{\oplus_{d}T}(a,t) \ne \lambda_{\oplus_{d}T}(b, t)$, we have $\lambda_{\oplus_{d}T}(\mu, q(i)) \ne \lambda_{\oplus_{d}T}(\nu, q(i))$. Thus, we may assume that $a \in X_{n, l_1}$, $b \in X_{n,l_2}$, by adding a letter to  $\mu_2$ and $\nu_2$ if necessary, we may assume they are non-empty. In this case, the fact that $\lambda_{\oplus_{d}T}(\mu, q(i)) \ne \lambda_{\oplus_{d}T}(\nu, q(i))$ follows from the observation above that for any state $t \in \cup_{0\le i \le m-1} Q_{T,i}$, the first letter of $\lambda_{\oplus_{d}T}(\mu_2, \pi_{\oplus_{d}T}(a,t))$ is an element of $X_{n,l_1}$ and the first letter of $\lambda_{\oplus_{d}T}(\nu_2, \pi_{\oplus_{d}T}(b,t))$ is an element of $X_{n,l_2}$. Therefore, since $T$ is synchronous, each state must induce an injective map from $\CCn$ to itself. 
 
 Let $q(i) \in Q_{T,i}$ be an arbitrary state of $T$, $b \in \Z_{d}$ be arbitrary. We now show, by induction, that $U_{di+b} \subset \im(q_i)$. 
 
 Let $j \in \Z_{m}$ and $a \in \Z_{d}$ be arbitrary so that $jd +a \in X_{n,j} \subset \Xn$ is arbitrary. There is a letter $x \in X_{n,j}$ such that $\lambda_{\oplus_{d}T}(x, q(i)) = di+b$ and $\pi_{\oplus_{d}T}(x, q(i)) \in Q_{T, j}$. Since $\pi_{\oplus_{d}T}(x, q(i)) \in Q_{T, j}$, by construction, for any $l \in \Z_{m}$, there is a letter $y \in X_{n,l}$ such that $\lambda_{A}(y, \pi_{\oplus_{d}T}(x, q(i)))  = jd +a$ and $\pi_{\oplus_{d}T}(y, \pi_{\oplus_{d}T}(x, q(i))) \in Q_{T,l}$. Thus, for any $j \in \Z_{m}$, $a \in \Z_{d}$  and any $l \in   \Z_{m}$, there is a word $xy \in \Xn^{2}$ such that $\pi_{\oplus_{d}T}(xy, q(i)) \in Q_{T,l}$ and $\lambda_{\oplus_{d}T}(xy, q(i)) = (di+b) (jd+a)$.
 
 Assume by induction that there for any $\nu \in \Xn^{M}$ and any $l \in \Z_{m}$, there is a word $\mu \in \Xn^{M+1}$, such that $\lambda_{\oplus_{d}T}(\mu, q(i)) = (di+b) \nu$ and $\pi_{\oplus_{d}T}(\mu, q(i)) \in Q_{T,l}$. Let $\xi \in \Xn^{M+1}$ be arbitrary. Let $j \in \Z_{m}$, $a \in \Z_{d}$ be such that $ \xi = \chi(jd + a)$ for some $\chi \in \Xn^{M}$. By the inductive assumption, there is a word $\zeta \in \Xn^{M+1}$ such $\lambda_{\oplus_{d}T}(\zeta, q(i)) = (di+b) \chi$ and $\pi_{\oplus_{d}T}(\zeta, q(i)) = p(j) \in Q_{T,j}$. Now by construction, for any $l \in \Z_{m}$, there is a word $y \in X_{n,l}$ such that $\lambda_{\oplus_{d}T}(y, p(j)) = dj + a$ and  $\pi_{\oplus_{d}T}(y, p(j)) \in Q_{T,l}$. Therefore, we have that for any $l \in \Z_{m}$, there is a $y \in X_{n,l}$ such that $\lambda_{\oplus_{d}T}(\zeta y, q(i)) = (di+b) \chi (dj+a)$ and  $\pi_{\oplus_{d}T}(\zeta y, q(i)) \in Q_{T,l}$. Therefore we see, by transfinite induction, that $U_{di+b} \subset \im(q_i)$. Therefore every state of $T$ is injective and has clopen image.
 
 We now show that $\oplus_{d}T$ has an inverse in $\On$. We do this by showing that for any transducer $A_{q_0}$, with $\core(A_{q_0}) = \oplus_{d}T$, then set of states of $A_{(\epsilon, q_0)}$ which are reached by arbitrarily long words, form a synchronizing transducer. To this end, fix a transducer $A_{q_0}$ with $\core(A_{q_0}) = \oplus_{d}T$. 
 
 Let $0 \le i \le m-1$ and $q(i) \in  Q_{T,i}$, we observe that for $a \in \Z_{d}$, $(di + a)L_{q(i)} = \epsilon$. Set $Q_{U} = \{ (di + a, q(i)) \mid i \in \Z_{m}, a \in \Z_{d} \}$. Define  $\lambda_{U}:  \Xn \times Q_{U} \to \Xn$ by $\lambda_{U}(dj+ b, (di+a, q(i))) = ((di+a)(dj+b))L_{q(i)}$ for all $i, j \in \Z_{m}$ and $a,b  \in \Z_{d}$. Further define $\pi_{U}: \Xn \times Q_{U} \to Q_{U}$ by $\pi_{U}(dj+ b, (di+a, q(i))) = ((di+a)(dj+b))L_{q(i)} = ( (di+a)(dj+b) - \lambda_{A}(((di+a)(dj+b))L_{q(i)}, q(i)), \pi_{A}(((di+a)(dj+b))L_{q(i)}, q(i)))$. Now, observe that $((di+a)(dj+b))L_{q(i)}$ is precisely the element $x \in X_{n,j}$ such that $\lambda_{\oplus_{d}T}(x, q(i)) = di +a$, thus if $p(j) = \pi_{\oplus_{d}T}(x, (q(i)))$, then $\pi_{U}(dj+ b, (di+a, q(i))) = (dj+b, p(j))$. Let $U = \gen{ \Xn, Q_{U}, \pi_{U}, \lambda_{U} }$. We now show that $U$ is synchronizing, then we argue that the set of states of $A_{(\epsilon, q_0)}$ reached by arbitrarily long words forms a transducer precisely equal to $U$.
 
 We partition the sets of $U$ as follows: for $0 \le i \le m-1$ and $a \in  \Z_{d}$, set $Q_{U,i,a}:= \{ (di+ a, q(i)) \mid  q(i) \in Q_{T,i} \}$. Fix $0 \le i \le m-1$ an $a \in \Z_{d}$. 
 
 Let $0 \le j \le n-1$ such that $i \ne j$ and $(di+ a, q(i)), (di+ a, p(i)) \in Q_{U,i,a}$  and $dj +b$, $b \in \Z_{d}$, be  arbitrary. We observe that, by construction, if $x, y \in X_{n,j}$ satisfy $\lambda_{\oplus_{d}T}(x, q(i)) = \lambda_{\oplus_{d}T}(y, p(i)) = di+a$, then $x = y = dj +a$. Observe, moreover, that $\pi_{\oplus_{d}T}(dj + a, q(i)) = \pi_{\oplus_{d}T}(dj + a, p(i)) = (q_{a})(j)$. Therefore it follows that $\pi_{U}(dj+ b, (di+a, q(i))) = (dj+b, (q_a)(j)) = \pi_{U}(dj+ b, (di+a, p(i)))$.  Thus we conclude that for any $0 \le i \le m-1$ and any $a \in \Z_{d}$, the set of elements $Q_{U,i,a} := \{ (di+a, q(i)) \mid q(i) \in Q_{T,i}\}$ transition identically on all elements of $X_{n,j}$ for $j \ne i$.

  Now let, $T^{-1} = \gen{ \Xn, Q_{T^{-1}}, \pi_{T^{-1}}, \lambda_{T^{-1}}}$, a synchronous and synchronizing transducer,  denote the inverse of $T$ and denote by $(a)q^{-1}$, for $a \in  \Z_{d}$, the element of $X_{d}$  such that $\lambda_{T}((a)q^{-1}, q) = a$. Consider a letter $di +c$, we observe that $\pi_{U}(di + c, (di+a,q(i))) = (di + c, \pi_{\oplus_{d}T}(di+(a)q^{-1}, q(i)))$. Note moreover that $\pi_{T^{-1}}(a, q^{-1}) = (\pi_{T}((a)q^{-1}, q))^{-1}$. Thus, $$(di + c, \pi_{\oplus_{d}T}(di+(a)q^{-1}, q(i))) = (di+c, (\pi_{T}((a)q^{-1}, q))^{-1}(i) ) = (di+c, (\pi_{T^{-1}}(a, q^{-1}))^{-1}(i)). $$ A simple induction argument now shows that given a word $\nu = v_1 v_2 \ldots v_{l} \in X_{d}^{+}$, with corresponding word $\nu(i) = (di + v+1) (di+v_2) \ldots  (di+ v_{l}) \in X_{n,i}^{+}$, if $\pi_{T^{-1}}(\nu, q^{-1}) = p^{-1}$, for a state $q \in Q_{T}$, then, setting $\nu(i)_{1} = (di+v_2) \ldots  (di+ v_{l})$,  $\pi_{U}(\nu(i)_{1}(di+b), (di+v_1,q(i))) = (di+b, p(i))$. Let $k$ be the synchronizing level of $T^{-1}$, it therefore follows that, all elements of  $Q_{U,i,a}$ transition identically on all words in $X_{n,i}^{k}$.
 
 Finally, since by an observation above we have that for any state $(dj+b, q(j)) \in Q_{U}$, and any letter $dl +c \in X_{n}$ $\pi_{U}(dl+c,(dj+b, q(j))) \in Q_{U,l,c}$, it therefore follows that $U$ is synchronizing at level $k+1$. 
 

To conclude the proof we have to demonstrate that $(\oplus_{d}T)^{-1} = U$. We do this by showing that $A_{(\epsilon, q_0)}$ is synchronizing and has core equal to $U$. Let $q(i)$ be an arbitrary state of $\oplus_{d}T$. Since $A_{q_0}$ is synchronizing, there is a word $\Gamma \in \Xn^{+}$ such that $\Delta = \lambda_{A}(\Gamma, q_0) \ne \epsilon$ and $\pi_{A}(\Gamma, q_0) = q(i)$. It therefore follows that, for $0 \le a \le d-1$, $(\Gamma (di + a))L_{q_0}= \Delta (di+a)L_{q}$ and so $\pi'_{A}(\Gamma (di+a), (\epsilon,q_0)) = (di+a, q(i))$. Thus we see that $A_{(\epsilon, q_0)}$ is synchronizing and has $\core(A_{(\epsilon, q_0)}) = U$. Therefore $T^{-1} = U$. This concludes the proof.      
\end{proof}

\begin{corollary}
Let $n \in \N_{2}$, then the image of the map $\rsig$ from  $\On$ to $\Z_{n-1}^{*}$, the group of units of $Z_{n-1}$, contains the subgroup of $\Z_{n-1}^{*}$ generated by the divisors of $n$. \qed  
\end{corollary}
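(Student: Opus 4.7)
The proof is essentially immediate given the preceding proposition, and I would present it as a short verification in two steps.

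First, I would record the elementary observation that every divisor of $n$ actually represents a unit in $\Z_{n-1}$. Since $\gcd(n,n-1)=1$, any $d$ with $d\mid n$ satisfies $\gcd(d,n-1)\mid \gcd(n,n-1)=1$, so $d \bmod (n-1)$ does lie in $\Z_{n-1}^{*}$; in particular the phrase ``subgroup of $\Z_{n-1}^{*}$ generated by the divisors of $n$'' makes sense as stated.

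Next, I would realise each such residue as $(T)\rsig$ for some explicit $T\in\On$. For the divisor $n$ itself we have $n\equiv 1\pmod{n-1}$, and $1$ is attained by the identity transducer. For every proper divisor $d$ of $n$ with $1\le d<n$, write $n=md$ with $m\ge 2$ and fix any $T\in\hn{d}$ (for instance the single--state identity on $\Xn[d]$). Proposition~\ref{Propositon:forncompositeforeverydivisorofnthereisanelementwithsigequaltodivisor} then produces $\oplus_{d}T\in\Ln{n}\subseteq\On$ together with the explicit computation $(\oplus_{d}T)\rsig=d$. Thus each divisor of $n$, read modulo $n-1$, lies in the image of $\rsig$.

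Finally, by Theorem~\ref{Theorem:rsigisahomomorphism} the map $\rsig\colon\On\to\Z_{n-1}^{*}$ is a group homomorphism, so its image is a subgroup of $\Z_{n-1}^{*}$. Since this image contains every residue $d\bmod(n-1)$ for $d\mid n$, it contains the subgroup they generate, which is the desired conclusion. There is no genuine obstacle: all the analytic and combinatorial work has been done in Proposition~\ref{Propositon:forncompositeforeverydivisorofnthereisanelementwithsigequaltodivisor} and Theorem~\ref{Theorem:rsigisahomomorphism}, and the corollary is simply the statement that their outputs combine to hit the subgroup generated by the divisors. This also yields Theorem~\ref{Theorem:surjectivityofrsigpartial} as an immediate consequence, since under the hypothesis that the divisors of $n$ generate $\Z_{n-1}^{*}$ the subgroup in question is all of $\Z_{n-1}^{*}$.
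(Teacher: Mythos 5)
Your proof is correct and is precisely the argument the paper leaves implicit by marking the corollary with $\qed$: combine Proposition~\ref{Propositon:forncompositeforeverydivisorofnthereisanelementwithsigequaltodivisor} (which realises each proper divisor $d$ of $n$ as $(\oplus_{d}T)\rsig$) with Theorem~\ref{Theorem:rsigisahomomorphism} (which makes the image a subgroup). Your preliminary remark that divisors of $n$ are automatically coprime to $n-1$, and hence genuinely lie in $\Z_{n-1}^{*}$, is a useful clarification that the paper takes for granted.
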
 
\begin{corollary}
Let $n \in \N_{2}$ be such that the group of units of $\Z_{n-1}$ is generated by the divisors of $n$, then the map $\rsig: \On \to \Z_{n-1}^{*}$ is surjective. \qed
\end{corollary}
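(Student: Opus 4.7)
The plan is to exhibit, for every divisor $d$ of $n$ with $1\le d\le n$, an explicit element $T_d\in\On$ with $(T_d)\rsig\equiv d\pmod{n-1}$; once this is done, since $\rsig$ is a group homomorphism by Theorem~\ref{Theorem:rsigisahomomorphism}, the image of $\rsig$ contains the subgroup of $\Z_{n-1}^{*}$ generated by the classes of all divisors of $n$, and the hypothesis of the theorem then immediately yields surjectivity.

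For the trivial divisor $d=1$ we simply take $T_1$ to be the single state identity transducer, which has signature $1$. For $d=n$ we may take any element of $\On$ whose core is a single state over $\Xn$ inducing a non-trivial permutation (e.g. $\pi_{R,n}$), whose signature is $n\equiv 1\pmod{n-1}$, so again a unit. For any proper non-trivial divisor $d$ of $n$, the idea is to invoke the $\oplus_{d}$-construction described just above the theorem: pick any $T\in\hn{d}$ (for instance the single-state transducer implementing a non-trivial permutation of $X_d$, or more cheaply the identity on $X_d$), and form $\oplus_{d}T$. By Proposition~\ref{Propositon:forncompositeforeverydivisorofnthereisanelementwithsigequaltodivisor}, $\oplus_{d}T$ lies in $\Ln{n}\subseteq\On$, and for any of its core states $q(i)$ one has $\im(q(i))=\bigcup_{0\le b\le d-1}U_{di+b}$, so the minimal cardinality of an antichain covering the image is $d$; by Proposition~\ref{Proposition:signatureisequaltosizeofimagecovermodnminus1} this means $(\oplus_{d}T)\rsig\equiv d\pmod{n-1}$. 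Hence for every divisor $d$ of $n$ we have produced an element $T_d\in\On$ with $(T_d)\rsig$ equal to the class of $d$ in $\Z_{n-1}^{*}$ (note that this class is automatically a unit, since it is realised by the image of the homomorphism $\rsig$).

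To conclude, let $H\le \Z_{n-1}^{*}$ denote the subgroup generated by the classes modulo $n-1$ of the divisors of $n$. Since $\rsig$ is a homomorphism and $(T_d)\rsig=d\bmod(n-1)$ for each divisor $d\mid n$, the image $\rsig(\On)$ contains every generator of $H$, hence contains $H$ itself. By the standing hypothesis of the theorem, $H=\Z_{n-1}^{*}$, and therefore $\rsig:\On\to\Z_{n-1}^{*}$ is surjective, as required.

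The only non-routine step is the construction backing Proposition~\ref{Propositon:forncompositeforeverydivisorofnthereisanelementwithsigequaltodivisor}, namely verifying that $\oplus_{d}T$ is bi-synchronizing with all states injective and clopen-imaged, and that each core image is precisely the union of the $d$ cylinders $U_{di},\dots,U_{di+d-1}$; that proposition is already established in the excerpt, so the present theorem reduces to the clean homomorphism-plus-generators argument above.
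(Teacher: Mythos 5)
Your argument is essentially the paper's: you reconstruct the previous corollary (that the image of $\rsig$ contains the subgroup of $\Z_{n-1}^{*}$ generated by the classes of divisors of $n$) from Proposition~\ref{Propositon:forncompositeforeverydivisorofnthereisanelementwithsigequaltodivisor} and the homomorphism property of $\rsig$, and then apply the hypothesis. The paper treats this corollary as immediate from the one preceding it, which is why it carries a bare \qed; you have simply expanded the chain of deductions. One small slip: you claim $\pi_{R,n}$ has signature $n$, but since it is a single-state transducer it synchronizes at level $0$ and has $\sig = m_q = 1$; this is harmless because $[n]=[1]$ in $\Z_{n-1}^{*}$ and the class of $n$ is already covered by the identity transducer for $d=1$, so the conclusion stands unchanged.
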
 

\begin{corollary}
There are infinitely many number $n \in \N$ such that the map $\rsig$ from $\On$ to the group of units of $Z_{n-1}$ is surjective.
\end{corollary}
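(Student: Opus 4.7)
By the preceding corollary, it suffices to show that there are infinitely many $n \in \N$ for which the divisors of $n$ generate the unit group of $\Z_{n-1}$. The plan is to exhibit an explicit infinite family of such $n$ and invoke the preceding corollary directly.

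A useful first step is to verify the hypothesis by hand for a list of small candidates --- for instance $n \in \{3, 4, 6, 10, 12, 14, 18, 28, 30\}$ --- in order to isolate a structural pattern. In each of these examples $n-1$ is a prime power and at least one divisor of $n$ reduces to a primitive root modulo $n-1$; the cyclic unit group is then generated by that single divisor. (For example, $n=30$ works because $2$ is a primitive root modulo $29$, while $n=28$ works because $2$ is a primitive root modulo $27$.) This picks out the candidate family
\[
\mathscr{F} := \{\, n \in \N : n-1 \text{ is a prime power and some divisor of } n \text{ is a primitive root modulo } n-1 \,\},
\]
every member of which satisfies the required generation property.

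The substantive task is then the elementary number-theoretic claim $|\mathscr{F}| = \infty$. The strategy is to allow the primitive-root divisor $d$ to vary with $n$, thereby sidestepping Artin's primitive-root conjecture (which would be invoked if $d$ were fixed). Concretely, I would combine Dirichlet's theorem on primes in arithmetic progressions with quadratic reciprocity: for each small odd prime $d$, the condition ``$d$ is a quadratic non-residue modulo $q$'' is controlled by the residue of $q$ modulo $4d$, and one can further arrange $d \mid q+1$ by a compatible congruence. Choosing $q$ prime with $q-1 = 2r$ for $r$ prime (so that $(\Z/q)^*$ is cyclic of order $2r$ and every non-square other than $-1$ is a primitive root) and then taking $n = q+1$ divisible by a suitable $d$, one obtains members of $\mathscr{F}$; varying $d$ over small primes and applying Dirichlet in each case produces infinitely many distinct $n$.

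The main obstacle is the last step: arranging the dynamic choice of $d$ so that one genuinely lands on a primitive root rather than merely on a non-square, and ensuring unconditionality. This requires care about the factorisation of $q-1$ (which is why specialising to $q$ safe, with $q-1 = 2r$ and $r$ prime, is attractive --- the distinction between ``non-square'' and ``primitive root'' collapses away from $\pm 1$) and about the compatibility of the congruence conditions imposed on $q$ via Dirichlet. Once this elementary construction is carried out, infinitude of $\mathscr{F}$ follows and the corollary is then immediate from the preceding one.
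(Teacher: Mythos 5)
Your overall plan — reduce to the previous corollary and exhibit infinitely many $n$ whose divisors generate the unit group of $\Z_{n-1}$ — matches the paper, and your table of small cases is correct. The problem is the infinitude argument: the specialisation to safe primes $q$ (primes with $q-1 = 2r$, $r$ prime) relies on there being infinitely many such primes, which is the Sophie Germain conjecture and is open. The Dirichlet-plus-quadratic-reciprocity machinery would indeed let you hit quadratic non-residues, but, as you yourself flag, bridging from ``non-square'' to ``primitive root'' unconditionally is exactly where the safe-prime assumption was doing the work, and that assumption is not available. So as written the argument does not go through.

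The paper's proof is far more economical and sidesteps all of this. Take $n = 3^m + 1$ for $m \ge 1$. Then $n - 1 = 3^m$, and it is a classical elementary fact that $2$ is a primitive root modulo $3^m$ for every $m \ge 1$ (it is a primitive root mod $3$ and mod $9$, and a primitive root mod $p^2$ for odd $p$ remains one mod every higher power $p^k$). Since $3^m$ is odd, $n = 3^m + 1$ is even, so $2$ is a divisor of $n$; hence the divisors of $n$ already contain a generator of the unit group of $\Z_{n-1}$, and the previous corollary applies. Your examples $n = 4, 10, 28$ are precisely the first members of this family, so the pattern was there to be spotted; fixing the base prime $3$ and letting the exponent grow gives an unconditional infinite family, whereas letting $d$ vary over primes and searching for accompanying safe primes forces you into open territory.
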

\begin{proof}
It is an elementary result in number theory that for an $m \ge 1$, $2$ generates the group of units of $3^{m}$, thus we may take $n = 3^{m} + 1$.
\end{proof}

Thus, we are left with the question:

\begin{Question}
for $n \in \N_{2}$ and $p$ in the group $\Z_{n-1}^{\ast}$ of units of $\Z_{n-1}$ such that $p$ is not an element of the subgroup of $\Z_{n-1}$ generated by the divisors of $n$, is there an element $T \in \On$ such that $(T)\rsig = p$? 
\end{Question}

We observe that in order to answer this question, it suffices to show that for any element $p \in \Z_{n-1}$ co-prime both to $n$ and $n-1$, there is an element $T \in \On$ such that $(T)\rsig = p$.

\section{The group $\TOnr$ and $\TBnr$ for $n>2$ and $1\le r < n$ contain an isomorphic copy of Thompson's group $F$}\label{Section:OutTncontainscopyofF}

We now show that for $n >2$, $n \in N$, and for all valid $r$, the group $\TOns{r}$ is infinite. This extends the result of Brin and Guzm{\'a}n stating that the group $\out{\mathcal{T}_{n,n-1}}$ is infinite whenever $n>3$ (\cite{MBrinFGuzman}) to the groups $\out{\Tnr}$. We observe that the group $\out{T_{2}}$ is the cyclic group of order $2$ and so finite. By Theorem~\ref{Theorem:TOnisubsetofTOnjifidividesjinZnminus1} it suffices to demonstrate  that $\TOns{1}$ is infinite. First we have the following straight-forward lemma.

\begin{lemma}\label{Lemma:possessingahomeostatemeanslementsofTOn1}
Let $T \in \TOn$ be an element with a homeomorphism state, then $T \in \TOns{1}$.
\end{lemma}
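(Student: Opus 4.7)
The plan of proof is immediate from the signature machinery developed in Section~\ref{Section:nestingproperties1}. First I would invoke the definition of a homeomorphism state: by definition $h_q : \CCn \to \CCn$ is a bijection, so in particular $\im(q) = \CCn$. The singleton $V := \{\epsilon\} \subset \Xns$ therefore satisfies $U(V) = \{U_{\epsilon}\} = \{\CCn\}$, which is a clopen cover of $\im(q)$ with each $U_v \subseteq \im(q)$. Since any subset $V$ of $\Xns$ meeting the conditions of Proposition~\ref{Proposition:signatureisequaltosizeofimagecovermodnminus1} must be non-empty, the minimum $m_q$ in the statement of that proposition equals $1$.

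Next I would apply Proposition~\ref{Proposition:signatureisequaltosizeofimagecovermodnminus1} to the state $q$ to deduce $(T)\rsig \equiv m_q \equiv 1 \pmod{n-1}$. Plugging this into Proposition~\ref{Proposition:sigdeterminesmembership} with $r=1$ gives the trivially satisfied congruence $1 \cdot ((T)\sig - 1) \equiv 0 \pmod{n-1}$, so $T \in \TOns{1}$ as claimed.

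The only point requiring any verification is that $\epsilon$ is a legal element of $\Xns$ and that $U_{\epsilon}$ denotes $\CCn$; both are fixed by the conventions of Section~\ref{Section:Preliminaries}, where $\Xns = \Xn^{\ast}$ is defined to contain the empty word and $U_{\epsilon} = \CCn$ is declared in the discussion of the basis $\Ban$. Consequently no genuine obstacle arises, and the argument is essentially a one-line application of the two key propositions of the previous section.
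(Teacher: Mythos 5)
Your argument is correct, but it takes a genuinely different (and more indirect) route than the paper. The paper's proof is a one-liner: if $q$ is a homeomorphism state, then the \emph{initial} transducer $T_q$ is itself an invertible, bi-synchronizing transducer inducing a homeomorphism of $\CCn \cong \CCmr{n,1}$ (after adding the trivial reading of the single symbol $\dot{0}$), so it lies in $\TBmr{n}{1}$; since $\core(T_q) = T$, membership $T \in \TOns{1}$ follows directly from the definition $\TOns{1} = \{\core(A_{q_0}) \mid A_{q_0} \in \TBmr{n}{1}\}$. Your proof instead routes through the signature machinery: $\im(q)=\CCn$ forces $m_q = 1$ (via the cover $V=\{\epsilon\}$, and $m_q\ge1$ since the cover must be non-empty), Proposition~\ref{Proposition:signatureisequaltosizeofimagecovermodnminus1} then gives $(T)\rsig = 1$, and Proposition~\ref{Proposition:sigdeterminesmembership} with $r=1$ converts this into $T\in\TOns{1}$. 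This is valid and has the merit of making the ``signature equals $1$'' characterisation of $\TOns{1}$ explicit, but it invokes two fairly substantial propositions to reprove what is really a definitional observation; also, your phrase ``trivially satisfied'' slightly misstates the logic --- the congruence $(T)\sig \equiv 1 \pmod{n-1}$ is satisfied because you just established $(T)\rsig = 1$, not for free.
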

\begin{proof}
Let $q \in Q_{T}$ be an homeomorphism state of $T$, then the initial transducer $T_{q}$ is an element of $\TBmr{n}{1}$.
\end{proof}

Therefore to show that $\TOns{1}$ is infinite, for $n>2$, it suffices to find an element of infinite order of $\TOn$ which has a homeomorphism state.

Let $T$ be the  element of $\TOn$ depicted in Figure~\ref{Figure:elementinTon1ofinfiniteorder}. In this figure we use the symbol $x$  to represent an element of $\Xn$ strictly greater than zero and less than or equal to $n-2$.

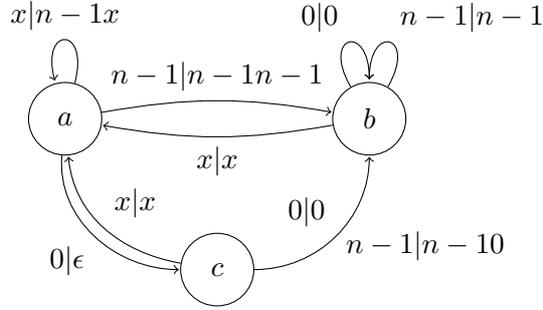
\begin{figure}[H]
\centering
\begin{tikzpicture}[shorten >= .5pt,node distance=3cm,on grid,auto] 
   \node[state] (q_0)   {$a$};
   \node[state, xshift=4cm] (q_1) {$b$}; 
   \node[state, xshift=2cm, yshift=-2cm] (q_2) {$c$};
    \path[->] 
    (q_0) edge[in=170, out=10] node {$n-1|n-1n-1$}  (q_1)
          edge[in=105, out=75,loop] node[swap] {$x|n-1x$} ()
          edge[in=180, out=265] node[swap] {$0|\epsilon$} (q_2) 
    (q_1) edge[in=350, out=190] node {$x|x$}(q_0)
          edge[in=90, out=120,loop] node {$0|0$}()
          edge[in=90, out=60,loop] node[swap] {$n-1|n-1$}()
    (q_2) edge[in=275, out=170] node[swap] {$x|x$} (q_0)
          edge[in=270, out=0] node[swap, yshift=0.2cm] {$n-1|n-10$} node {$0|0$} (q_1);
\end{tikzpicture}
\caption{An element  $T \in \T{TO}_{n,1}$ of infinite order.}
\label{Figure:elementinTon1ofinfiniteorder}
\end{figure}

The states $a$ and $b$ of $T$ are homeomorphism states, and so $T$ is in fact an element of $\TOns{1}$.

In order to show that $T$ has infinite order we make use of the action of the group $\On$, as introduced in the paper \cite{BCMNO}, on the space $\Xnz/ \gen{\sigma_{n}}$ where $\sigma_{n}$ is the shift map on $\Xnz$. Let $U \in \On$  and suppose that $k \in \N$ is the minimal synchronizing level of $U$. The action of $U$ on $\Xnz/\gen{\sigma_{n}}$ is given as follows: let $y= \ldots y_{-1}y_{-1}y_0y_1y_2\dots \in \Xnz$ represent an equivalence class of $\Xnz/\gen{\sigma_{n}}$, the image of this class under $U$ is the equivalence class of the bi-infinite word $w$ defined as follows: for $i \in \Z$, let $y(i+1,k) = y_{i+1}y_{i+1}\ldots y_{i+k}$ and $q_{y(i+1,k)}$ be the state of $U$ forced by $y(i+1,k)$, then set $w_i = \lambda_{U}(y_i,  q_{y(i+1,k)}$ and let $w = \ldots w_{-2}w_{-1}w_{-1}w_0w_1w_2\ldots$.  Consider the bi-infinite word $z:=\ldots(x n-1)(x n-1)\ldots$ where $ x \in \Xn \backslash \{0,n-1\}$, then $(z)T = \ldots(xn-1n-1)(xn-1n-1)\ldots$, $(z)T^2 = \ldots x(n-1)^3x(n-1)^3\ldots$, and $(z)T^{i} = \ldots x(n-1)^{i+1}x(n-1)^{i+1}\ldots$. Therefore we see that $z$ is on an infinite orbit under the action of $T$, demonstrating that $T$ is an element of infinite order. We have shown the following:

\begin{Theorem}\label{Theorem:Tnrinfinitewhenevernbiggerthan2}
For $n>3$ and $1 \le r \le n-1$, the group $T_{n,r}$ is infinite.
\end{Theorem}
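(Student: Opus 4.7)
The plan is to reduce the statement, via Theorem~\ref{Theorem:TOnisubsetofTOnjifidividesjinZnminus1} (which gives $\TOns{1} \le \TOns{r}$ for all $1 \le r \le n-1$), to showing that $\TOns{1}$ is infinite whenever $n > 2$. By Lemma~\ref{Lemma:possessingahomeostatemeanslementsofTOn1}, any element of $\TOn$ possessing a homeomorphism state already lies in $\TOns{1}$, so it is enough to exhibit an element of $\TOn$ of infinite order having at least one homeomorphism state. The natural candidate is the three-state transducer $T$ displayed in Figure~\ref{Figure:elementinTon1ofinfiniteorder}, and the balance of the proof is to verify both its membership in $\TOns{1}$ and that it has infinite order.

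First I would check that $T$ genuinely belongs to $\TOn$. Inspection of the diagram shows that every output word is written with letters in the same order as the inputs are read, so each state preserves the lexicographic order on $\CCn$, which by Theorem~\ref{Thm: Equivalent conditions for an element of $On$ to belong to TOnr} and Definition~\ref{Def: External description of TOn.} is all that is required for $T \in \TOn$ once $T \in \On$ is known. For membership in $\On$ it suffices to check (a) that $T$ is synchronizing — after reading any single letter the active state is determined by that letter alone, so the collapsing procedure terminates in one step — (b) that $T^{-1}$ is synchronizing (construct it directly from the definition of local inverses, or apply the collapsing procedure to the inverse automaton), and (c) that each state induces an injective map with clopen image. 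The states $a$ and $b$ both have image equal to $\CCn$, so they are homeomorphism states; hence by Lemma~\ref{Lemma:possessingahomeostatemeanslementsofTOn1}, $T \in \TOns{1}$.

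Next I would show $T$ has infinite order by invoking the action of $\On$ on the quotient space $\Xn^{\Z}/\gen{\sigma_n}$ introduced in \cite{BCMNO}: if $U \in \On$ has minimal synchronizing level $k$, then the class of $y = \ldots y_{-1} y_0 y_1 \ldots$ maps to the class of $w$, where $w_i = \lambda_U(y_i, q_{y_{i+1}\cdots y_{i+k}})$ and $q_{y_{i+1}\cdots y_{i+k}}$ is the unique state of $U$ forced by $y_{i+1}\cdots y_{i+k}$. Fix any letter $x$ with $0 < x < n-1$ — such a letter exists precisely because $n > 2$ — and consider the periodic bi-infinite word $z = \ldots (x(n-1))(x(n-1))\ldots$. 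A direct trace through the transitions of $T$, using that the state $a$ on input $x$ produces $(n-1)x$ and moves to itself (or to $b$ when consuming an $(n-1)$), gives inductively $(z)T^i = \ldots x(n-1)^{i+1} x(n-1)^{i+1}\ldots$ for every $i \ge 0$. The minimal period of this representative word is $i+2$, which is strictly increasing in $i$, so none of these classes coincide in $\Xn^{\Z}/\gen{\sigma_n}$. Consequently the orbit of $[z]$ under $T$ is infinite and $T$ has infinite order.

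The main obstacle is to be fully careful about the bi-synchronicity of $T$ and the verification that the output sequence on $(z)T^i$ is what is claimed. The synchronicity of $T^{-1}$ in particular requires a short computation, since although $T$ itself collapses after one letter, the inverse must be shown to have bounded memory; the cleanest route is to write out the inverse transducer directly from the definition in Section~\ref{Section:AutTnr}. Once these verifications are carried out, the conclusion follows immediately, yielding Theorem~\ref{Theorem:Tnrinfinitewhenevernbiggerthan2}.
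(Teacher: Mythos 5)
Your proposal is correct and follows essentially the same path as the paper: reduce via Theorem~\ref{Theorem:TOnisubsetofTOnjifidividesjinZnminus1} to showing $\TOns{1}$ is infinite, invoke Lemma~\ref{Lemma:possessingahomeostatemeanslementsofTOn1} via the homeomorphism states $a$ and $b$ of the transducer $T$ of Figure~\ref{Figure:elementinTon1ofinfiniteorder}, and exhibit infinite order through the action on $\Xnz/\gen{\sigma_n}$ using the periodic word $z = \ldots(x(n-1))(x(n-1))\ldots$, for which the images $(z)T^i$ have strictly increasing minimal periods $i+2$. Your version is slightly more scrupulous than the paper's in flagging that membership of $T$ in $\On$ (bi-synchronicity, injectivity with clopen image of each state) needs verification before Lemma~\ref{Lemma:possessingahomeostatemeanslementsofTOn1} can be applied, but this is a matter of exposition rather than a different argument.
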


Let $U$ be the  following transducer where, once more, $n >2$ and $x \in \Xn$ is any element strictly bigger than $0$ and strictly less than $n-1$:

\begin{figure}[H]
\centering
\begin{tikzpicture}[shorten >= .5pt,on grid,auto] 
   \node[state] (q_0)   {$p$};
   \node[state, xshift=4cm] (q_1) {$q$}; 
   \node[state, xshift=2cm, yshift=-4.3cm] (q_2) {$s$};
   \node[state, xshift=2cm, yshift=-2cm] (q_3) {$t$};
    \path[->] 
    (q_0) edge[in=170, out=10] node {$0|0$}  (q_1)
          edge[in=105, out=75,loop] node[swap] {$x|x$} ()
          edge[in=195, out=185] node[swap] {$n-1|n-1$} (q_2) 
    (q_1) edge[in=350, out=190] node {$x|n-1x$}(q_0)
          edge[in=10, out=260] node[swap] {$0|\epsilon$}(q_3)
          edge[in=0, out=0] node{$n-1|(n-1)^2$}(q_2)
    (q_2) edge[in=200, out=180] node[swap] {$x|x$}(q_0)
          edge[in=270, out=240,loop] node[swap, yshift=0.1cm] {$0|0$}()
          edge[in=270, out=300,loop] node[yshift=0.1cm] {$n-1|n-1$}()
    (q_3) edge[in=275, out=170] node[swap] {$x|x$} (q_0)
          edge node[swap] {$n-1|n-10$} node {$0|0$} (q_2);
\end{tikzpicture}
\caption{An element  $U \in \T{TO}_{n,1}$ of infinite order. }
\label{Figure:ThetransducerUinTOn1}
\end{figure}
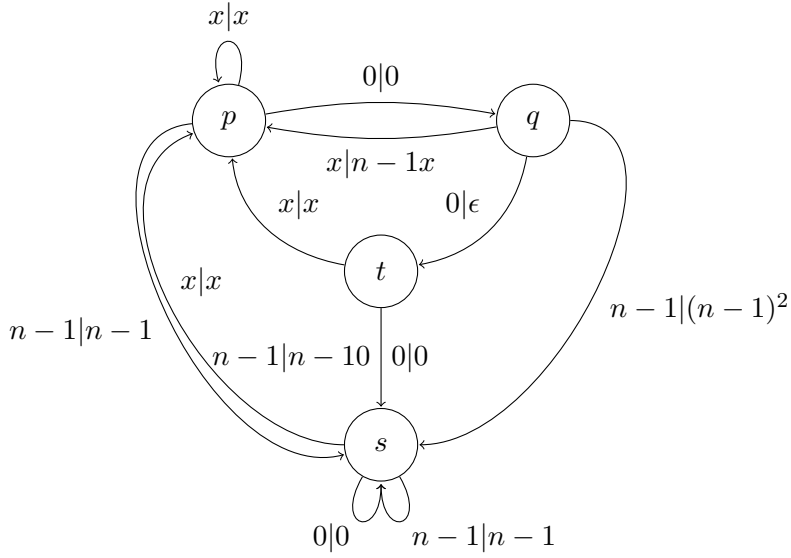

The state $p$ of $U$ is a homeomorphism state and  moreover $U$ is an element of $\TOns{1}$ of infinite order.

In order to state our next result we require the following notion and result from \cite{BCMNO}.

\begin{Definition}
Given an element $g = \gen{\Xn, Q_{g}, \pi_g, \lambda_g} \in \On$ a state $q$ of $g$ is called a \emph{loop state} if there is some $i \in \Xn$ such that $\pi_{g}(i, q) = q$.
\end{Definition}

The following lemma is proven in \cite{BCMNO}:

\begin{lemma}\label{Lemma: reading loops give bijection from set of Xns to Xns}
Let $g = \gen{\Xn, Q_g, \pi_g, \lambda_g} \in \On$ and let $w \in \Xnp$ then there is a unique state $q_{w} \in Q_{g}$ such that $\pi_{g}(w, q_w) = q_w$. Moreover for any periodic equivalence class of the tail equivalence $\sim_{t}$ with minimal period $w$ there exists a unique  $j \in \N_{1}$ and a unique circuit, decorated by some prime word $v$, in $A$ with output $w$. 
\end{lemma}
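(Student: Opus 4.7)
The plan is to exploit the bi-synchronizing condition together with the strongly connected structure of core transducers. For the first assertion, I would fix $w \in \Xnp$ and let $k$ be the minimal synchronizing level of $g$. Consider the self-map $\Phi_w \colon Q_g \to Q_g$ defined by $\Phi_w(q) = \pi_g(w, q)$. Since $g$ is core and synchronizing, for any $N$ large enough that $|w^N| \ge k$ the state $\pi_g(w^N, q)$ depends only on the last $k$ letters of $w^N$, and since these letters are determined by $w$ alone (not by $q$), the map $\Phi_w^N$ is constant on $Q_g$. Thus the sequence of images $\Phi_w^m(Q_g)$ stabilises to a single point $q_w$; applying $\Phi_w$ once more sends $q_w$ to itself, giving existence. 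Uniqueness is immediate: if $\pi_g(w, q) = q$ then iterating shows $q = \pi_g(w^N, q) = q_w$ for the $N$ chosen above.

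For the second assertion, the crucial preliminary observation is that every $g \in \On$ respects the tail equivalence relation $\sim_t$. Indeed, if $x = uz$ and $y = u'z$, then by synchronicity the states reached after processing $u$ and $u'$ agree on all sufficiently long prefixes of $z$, so the outputs of $g$ on $x$ and $y$ share a common tail. Hence $g^{-1}$ also preserves $\sim_t$, and so the periodic equivalence class containing $w^{\omega}$ is sent by $g^{-1}$ to another periodic class whose minimal period I will call $v$. Observe that $v$ is prime (aperiodic) by minimality. Then $\lambda_g(v, q_v)^{\omega} = w^{\omega}$ in $\Xn^{\omega}$, and since $w$ is the minimal period of the right-hand side, $\lambda_g(v, q_v)$ must be a power of $w$; write it as $w^j$ for the unique $j \in \N_1$. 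The resulting loop at $q_v$ read by $v$ and outputting $w^j$ is the required circuit.

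Uniqueness of the circuit and of $j$ reduces to uniqueness of the loop state: by the first part there is only one state fixed under $\pi_g(v, \cdot)$, and any circuit decorated by $v$ with output a power of $w$ must start (and end) at this loop state; its output length is then forced to be $|w^j|$, so $j$ is determined. The only remaining point is to rule out circuits decorated by cyclic conjugates of $v$, but such circuits would give rise to loop states for those conjugates and correspond (via shifting the starting point) to the same underlying closed walk, hence to the same periodic class.

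The main obstacle I anticipate is keeping the bookkeeping straight between periodic words, cyclic conjugates, and loop states of the transducer: one must be careful to identify the prime word $v$ with the unique cyclic representative that fixes $q_v$, and to confirm that the periodicity of the output forces $\lambda_g(v, q_v)$ to lie in $w^+$ rather than in some other root. Once this is done, the structure of $\Phi_w$ from the first paragraph supplies the rigidity needed to extract $j$ and $v$ uniquely from the given equivalence class.
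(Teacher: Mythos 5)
The paper does not actually prove this lemma but cites it to \cite{BCMNO}, so there is no in-house argument to compare against; I assess your proposal on its own terms. (The ``output $w$'' in the statement is evidently a typo for ``output $w^j$'', which is how you read it.) Your proof of the first assertion is correct: the synchronizing condition at level $k$ forces $\Phi_w^N$ to be constant once $|w^N|\ge k$, and both existence and uniqueness of the fixed point of $\Phi_w$ fall out at once.

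The second half uses the right idea --- bring in $g^{-1}$ --- but the uniqueness argument is incomplete. You establish that, for the specific $v$ obtained via $g^{-1}$, the circuit decorated by $v$ or any rotation of $v$ is determined and $j$ is forced, but the lemma claims there is \emph{no other} circuit whatsoever whose output is a power of (a rotation of) $w$, and you never rule out a second prime word $v'$, not a rotation of $v$, whose loop state also outputs such a power. The missing step is injectivity of the induced map $\bar{g}$ on rotation classes, which again follows from $g^{-1}$: since $g$ composed with $g^{-1}$ has trivial core, processing $(v')^\omega$ through a loop state of $g$ and then through the corresponding state of $g^{-1}$ eventually acts as the identity, so $[(v')^\omega]$ is recovered from $[w^\omega]$, forcing $[v']=[v]$. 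Two smaller loose ends, the second of which you yourself flag: (i) that $g^{-1}$ carries the periodic class of $w^\omega$ to a \emph{periodic} class does not follow merely from preservation of $\sim_{t}$ --- you need the first assertion applied to $g^{-1}$ to conclude the output is eventually periodic; (ii) $\lambda_g(v,q_v)$ a priori lies in $(w')^{+}$ for some rotation $w'$ of $w$ rather than in $w^{+}$, and this should be resolved rather than merely noted --- the cleanest fix is to observe that the minimal period of a periodic tail class is itself only defined up to rotation, so one fixes a starting edge of the circuit and then takes $w$ to be the primitive root of its output.
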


\begin{lemma}\label{Lemma:TandUgenerateF}
The elements $T$ and $U$ of $\TOns{1}$ depicted in Figures~\ref{Figure:elementinTon1ofinfiniteorder} and \ref{Figure:ThetransducerUinTOn1} generate a subgroup of $\TOns{1}$ isomorphic to R. Thompson's group $F$.
\end{lemma}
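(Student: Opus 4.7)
The plan is to show that $T$ and $U$ satisfy the defining relations of R. Thompson's group $F$ and then to rule out that the subgroup $\langle T, U \rangle$ is a proper quotient.

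For the first step, I would work with the finite presentation
\[
F = \langle A, B \mid [AB^{-1}, A^{-1}BA] = 1,\ [AB^{-1}, A^{-2}BA^{2}] = 1 \rangle
\]
with $A \mapsto T$ and $B \mapsto U$. The most transparent route is to analyse the action of $T$ and $U$ on an explicit $\langle T, U \rangle$-invariant Cantor subset $K \subseteq \CCn$ and to recognize the restricted action as that of a standard pair of generators of $F$. The transducer $T$ in Figure~\ref{Figure:elementinTon1ofinfiniteorder} is designed so that, outside a sparse set, it simply inserts one extra copy of $n-1$ after each occurrence of a letter $x \in \{1, \ldots, n-2\}$; an analogous description applies to $U$, with a different insertion site determined by its state $s$. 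Under the natural identification of sequences avoiding a suitable forbidden pattern with the interval $[0,1]$, these maps become piecewise-linear homeomorphisms that match two well-known generators of $F$. The commutator relations then become instances of the classical fact that $A^{-k}BA^{k}$ for $k \geq 1$ is supported on a region disjoint from that of $AB^{-1}$.

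For the second step, I would construct a surjective homomorphism $\langle T, U \rangle \to \Z \oplus \Z$ by reading off the germs of the induced homeomorphism at the two distinguished points $0^{\omega}$ and $(n-1)^{\omega}$ of $\CCn$, each of which is a fixed point of both $T$ and $U$ (as one sees from the loops at states $a, b$ of $T$ and at states $p, s$ of $U$). These germs are measured by integers counting the depth at which the local action stabilises, and they are easily read off the transducer diagrams. Verifying that the images of $T$ and $U$ form a basis of $\Z \oplus \Z$ will suffice, because the commutator $F' = [F, F]$ is simple, so any quotient of $F$ whose induced map on abelianisations is an isomorphism is already $F$ itself.

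The main obstacle will be the first step: identifying the invariant Cantor subset $K$ and pinning down the restricted action precisely as a pair of Thompson generators. Once this identification is in place, the remainder is routine; one may alternatively bypass the geometric identification and verify the two commutator relations directly at the level of minimal transducers, since the bi-synchronizing property established in Section~\ref{Section: N(Tnr) is a subgroup of Bnr} reduces both $[TU^{-1}, T^{-1}UT]$ and $[TU^{-1}, T^{-2}UT^{2}]$ to finite-state computations that should collapse, after removing $\omega$-equivalent states and states of incomplete response, to the single-state identity transducer.
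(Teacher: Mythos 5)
Your first step is essentially the paper's: the $\langle T,U\rangle$-invariant Cantor subset is $\{0,n-1\}^{\omega}$, and restricting $T_{a}$ and $U_{p}$ there yields the sub-transducers $A_{a}$, $B_{p}$ of Figure~\ref{Figure:subtransducersAandB}, which give a standard generating pair for $F$; the two commutator relations are then checked directly by a finite transducer computation, which is exactly what the paper does.

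Your second step diverges from the paper and contains a genuine gap. From a surjection $\langle T,U\rangle\to\Z\oplus\Z$ sending $T,U$ to a basis, the composite $F\to\langle T,U\rangle\to\Z\oplus\Z$ is the abelianisation map, so all you learn is $\ker(F\to\langle T,U\rangle)\subseteq F'$. The assertion that a quotient of $F$ inducing an isomorphism on abelianisations must be $F$ itself is false: $F\to F/F'\cong\Z^{2}$ is a counterexample. Since $F'$ is simple and every nontrivial normal subgroup of $F$ contains $F'$, the only possibilities left open by your argument are $\langle T,U\rangle\cong F$ or $\langle T,U\rangle\cong\Z^{2}$, and the germ map cannot tell these apart. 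You would need an additional witness that $\langle T,U\rangle$ is nonabelian --- checking $[T,U]\neq 1$ by a finite computation would do. The paper sidesteps this issue entirely: restriction to $\{0,n-1\}^{\omega}$ is itself a homomorphism $\langle T,U\rangle\to\langle A_{a},B_{p}\rangle\cong F$ which is a left inverse to the map $F\to\langle T,U\rangle$, so no appeal to the normal-subgroup structure of $F$ is needed. Your route, once patched with a non-commutativity check, is a softer alternative; the paper's route is more self-contained and uses only the transducer bookkeeping (the unique loop state at the symbol $x$) already set up.
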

\begin{proof}
First we make some observations about $T$ and $U$. Let $q$ be a state of $T$ or of $U$,let $x \in \Xn \backslash \{0, n-1\}$, and let $(D,d) \in \{(T,a),(U,p)\}$, then all transitions $\pi_{D}(x,q)  = d$ and $\lambda_{D}(x,q)$ end in the symbol $x$. The output of all other transitions in $T$ or $U$ do not involve the symbol $x$. Therefore the transducers $A$ and $B$ below are in fact sub-transducers of $T$ and  $U$ respectively.

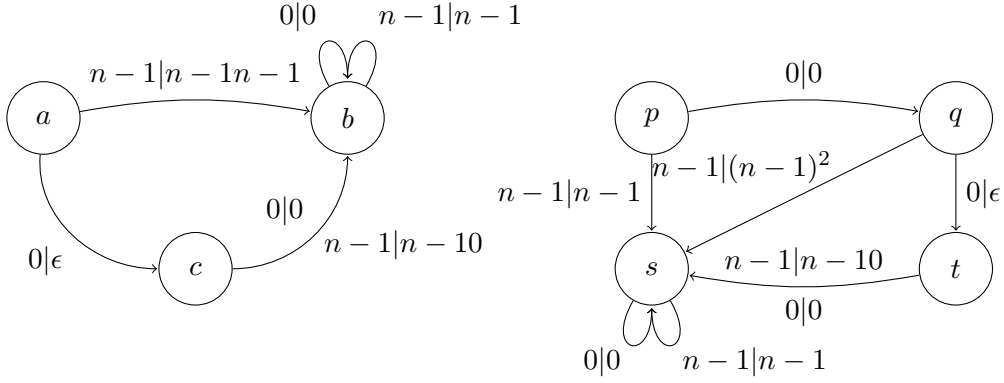
\begin{figure}[H]
\centering
\begin{tikzpicture}[shorten >= .5pt,node distance=3cm,on grid,auto] 
   \node[state] (q_0)   {$a$};
   \node[state, xshift=4cm] (q_1) {$b$}; 
   \node[state, xshift=2cm, yshift=-2cm] (q_2) {$c$};
   \node[state, xshift = 8cm] (p_0)   {$p$};
   \node[state, xshift=12cm] (p_1) {$q$}; 
   \node[state, xshift=8cm, yshift=-2cm] (p_2) {$s$};
   \node[state, xshift=12cm, yshift=-2cm] (p_3) {$t$};
    \path[->] 
    (q_0) edge[in=170, out=10] node {$n-1|n-1n-1$}  (q_1)
          edge[in=180, out=265] node[swap] {$0|\epsilon$} (q_2) 
    (q_1) edge[in=90, out=120,loop] node {$0|0$}()
          edge[in=90, out=60,loop] node[swap] {$n-1|n-1$}()
    (q_2) edge[in=270, out=0] node[swap, yshift=0.2cm] {$n-1|n-10$} node {$0|0$} (q_1)
    (p_0) edge[in=170, out=10] node {$0|0$}  (p_1)
          edge node[swap] {$n-1|n-1$} (p_2) 
    (p_1) edge node {$0|\epsilon$}(p_3)
          edge node[swap, xshift=0.5cm]{$n-1|(n-1)^2$}(p_2)
    (p_2) edge[in=270, out=240,loop] node[swap, yshift=0.1cm] {$0|0$}()
          edge[in=270, out=300,loop] node[yshift=0.1cm] {$n-1|n-1$}()
    (p_3) edge[out=190, in=350] node[swap] {$n-1|n-10$} node {$0|0$} (p_2);
\end{tikzpicture}
\caption{The subtransducers $A$ and $B$ of $T$ and $U$ respectively.}
\label{Figure:subtransducersAandB}
\end{figure}

Notice that $A_{a}$ and $B_{p}$ induce self-homeomorphisms of the Cantor space $\{0,n-1\}^{\omega}$ equal, respectively, to the restrictions of the homeomorphisms $T_{a}$ and $U_{p}$ to the space $\{0,n-1\}^{\omega}$. It is not hard to see that $F \cong \gen{A_{a}, B_{p}} \le H(\{0,n-1\}^{\omega})$. In fact $A_{a}^{-1}$ and $A_{a}^{-2}B_{p}A_{a}$ are the standard generators for $F$ acting on the space $\{0,n-1\}^{\omega}$. Moreover $A_{a}$ and $B_{p}$ satisfy the defining relations for $F$, and so we see that $\gen{A_{a}, B_{p}} = \gen{A_{a}, B_{p}\mid [B_{p}^{-1}A_{a}, A_{a}B_{p}A_{a}^{-1}], [B_{p}^{-1}A_{a}, A_{a}^{2}B_{p}A_{a}^{-2}]}$. Moreover $T_{a}^{-1} \restriction_{\{0, n-1\}^{\omega}} = A_{a}^{-1}$ and  $U_{p}^{-1}\restriction_{\{0, n-1\}^{\omega}} = B_{p}^{-1}$. We also observe (one can verify this by direct computation) that the transducers $T^{-1}$ and $U^{-1}$ representing the inverses of $T$ and $U$ in $\TOn$ respectively, have states $a^{-1}, b^{-1}$ of $T^{-1}$ and $p^{-1}, s^{-1}$ of $U^{-1}$ such that $T_{a^{-1}}^{-1} = T_{a}^{-1}$, $T_{s^{-1}}^{-1} = T_{s}^{-1}$, $U_{p^{-1}}^{-1} = U_{p}^{-1}$ and $U_{s^{-1}}^{-1} = U_{s}^{-1}$. 

We now define a homomorphism from $\gen{T, U}$ to $\gen{A_{a}, B_{p}}$. To do this we make the following observation. Let $W(T,U) = w_1 w_2 \ldots w_l$ be a word in $\{T,U,T^{-1},U^{-1}\}^{+}$. For each letter $w_i$ of $W(T,U)$ let $\alpha_{i}$ be the state of $w_i$ so that $\alpha_{i}$ is the unique loop state of $x$ in $w_i$. Observe that, for all $1 \le i \le l$, $\alpha_i \in \{a,a^{-1}, p, p^{-1}\}$. Let $S_{W(T,U)}$ be the element of $\TOns{1}$ representing the word $W(T,U) \in \gen{T,U}$. Under the product defined on  $\TOns{1}$, we have that $S_{W(T,U)}$ is $\omega$-equivalent to the core of the minimal transducer representing the product of the initial transducers $(w_{1}\ast w_{2} \ast \ldots \ast w_{l})_{(\alpha_{1},\alpha_{2},\ldots,\alpha_{l})}$. However, since $\alpha_i$ is the unique loop state state of $x$ in $w_i$, $\lambda_{w_i}(x, \alpha_i)$ ends in $x$ and for any state $q$ of $T$, $U$, $T^{-1}$ or $U^{-1}$, the resulting state when $x$ is read from $q$ is the appropriate element of the set $\{a, a^{-1}, p, p^{-1}\}$, we must have that the state $(\alpha_{1},\alpha_{2},\ldots,\alpha_{l})$ is the unique loop state of $x$ in  the product $(w_{1}\ast w_{2} \ast \ldots \ast w_{l})_{(\alpha_{1},\alpha_{2},\ldots,\alpha_{l})}$. Therefore, since the core is strongly connected, $(w_{1}\ast w_{2} \ast \ldots \ast w_{l})_{(\alpha_{1},\alpha_{2},\ldots,\alpha_{l})}$ is equal to its core. Therefore if $s$ is the unique loop state of $x$ in $S_{W(T,U)}$, we must have that $(S_{W(T,U)})_{s}$ is $\omega$-equivalent to $(w_{1}\ast w_{2} \ast \ldots \ast w_{l})_{(\alpha_{1},\alpha_{2},\ldots,\alpha_{l})}$. Now let $D(A,B) = d_1d_2\ldots d_l$ be the word in $\{A_{a}, B_{p}, A_{a}^{-1}, B_{p}^{-1} \}^{+}$ defined as follows, for all $1 \le i \le l$, if $w_i = T^{\pm 1}$ then $d_i = A_{a}^{\pm 1}$ and if $w_i = U^{\pm 1}$ then $d_i = B_{p}^{\pm 1}$. Let $C_{D(A,B)}$ be the element of $\gen{A_{a}, B_{p}}$ representing the word $D_{(A,B)}$, then it follows that $C_{D(A,B)} = (S_{W(T,U)})_{s}\restriction_{\{0,n-1\}^{\omega}}$ since for all $i$, $(w_{i})_{\alpha_{i}}\restriction_{\{0,n-1\}^{\omega}} = d_{i}$. Therefore the map $\phi: \gen{T,U} \to \gen{A_{a}, B_{p}}$ by $S \mapsto S_{s}\restriction_{\{0,1\}^{\omega}}$, where $s$ is the unique loops state of $x$ in $S$, is a surjective homomorphism. 

In order to conclude that $\gen{T,U} \cong \gen{A_{a}, B_{p}}$ it suffices to show that the map $\varphi$ which maps $A_{a}$ to $T$ and $B_{p}$ to $U$ extends to a group homomorphism $\overline{\varphi}: \gen{T,U} \to \gen{A_{a}, B_{p}}$. To see this, it suffices to show that $[U^{-1}T, TUT^{-1}] =1$ and $[U^{-1}T, T^{2}UT^{-2}] =1$. This is easily verified by direct computation.

\end{proof}

\begin{Remark}\label{Remark:TaandUpgenerateF}
Notice that, for $T$ and $U$ as in Figures~\ref{Figure:elementinTon1ofinfiniteorder} and \ref{Figure:ThetransducerUinTOn1}, the proof above shows that the subgroup $\gen{T_{a}, U_{p}} \le \TBmr{n}{1}$, for $n \ge 3$ is isomorphic to $F$. 
\end{Remark}

As a corollary we have the following theorem generalising the result of Brin and Guzman (\cite{MBrinFGuzman}) stating that the groups $\out{T_{n,n-1}}$ and $\aut{T_{n,n-1}}$ contain an isomorphic copy of $F$.

\begin{Theorem}
For $n \ge 3$ and $1 \le r \le n-1$, the groups $\TOnr \cong \out{T_{n,r}}$ and $\TBnr \cong \aut{T_{n,r}}$ contain an isomorphic copy of Thompson's group $F$.
\end{Theorem}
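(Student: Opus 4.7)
The first half, that $\TOnr \cong \out{\Tnr}$ contains a copy of $F$ for every $1 \le r \le n-1$, will follow immediately by combining Lemma~\ref{Lemma:TandUgenerateF}, which embeds $F$ into $\TOns{1}$, with Theorem~\ref{Theorem:TOnisubsetofTOnjifidividesjinZnminus1}, which provides the inclusion $\TOns{1} \subseteq \TOnr$ for all such $r$.

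For the $\TBnr \cong \aut{\Tnr}$ half, the case $r=1$ is Remark~\ref{Remark:TaandUpgenerateF}. For $1 < r \le n-1$ the plan is to lift the generators $T_a, U_p \in \TBmr{n}{1}$ to initial transducers $B_T, B_U \in \TBnr$ acting diagonally on $\CCnr \cong \dotr \times \CCn$. Concretely, I would take $B_T$ to have state set $\{q_0\} \sqcup Q_T$, with $q_0$ the new initial state satisfying $\pi(\dot{i}, q_0) = a$ and $\lambda(\dot{i}, q_0) = \dot{i}$ for every $\dot{i} \in \dotr$, and with all transitions inside $Q_T$ inherited from $T$; define $B_U$ analogously using the loop state $p$ of $U$. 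Thus $B_T$ acts on $\CCnr$ by $\dot{i}\delta \mapsto \dot{i}(\delta)h_{T_a}$ and $B_U$ by $\dot{i}\delta \mapsto \dot{i}(\delta)h_{U_p}$.

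The main technical step will be verifying that $B_T, B_U \in \TBnr$. Bi-synchronicity is inherited from $T, U$ since exactly one letter is consumed before $\core(B_T) = T$ (respectively $\core(B_U) = U$) is reached, and the same applies to the inverses. For preservation of $\simeq$: identifications of the form $\dot{a}wi00\ldots \simeq \dot{a}w(i-1)n-1n-1\ldots$ survive because $T_a$ and $U_p$ preserve $\simeqI$ on $\CCn$ by Lemma~\ref{Lemma: states of TBnr induces continuous functions from the interval to itsel }, while the boundary identification $\dot{0}00\ldots \simeq \dot{r-1}n-1n-1\ldots$ survives because a direct inspection of Figures~\ref{Figure:elementinTon1ofinfiniteorder} and~\ref{Figure:ThetransducerUinTOn1} shows that $T_a$ and $U_p$ both fix $00\ldots$ and $n-1n-1\ldots$. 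Preservation or reversal of the lexicographic order on $\CCnr$ will follow from the identity action on the $\dotr$ coordinate combined with the corresponding property of $T_a, U_p$ on $\CCn$.

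Once membership in $\TBnr$ is confirmed, the identification $\gen{B_T, B_U} \cong F$ is essentially formal. I would define $\psi: \gen{B_T, B_U} \to H(\CCn)$ by sending $h$ to the unique homeomorphism $\bar h$ of $\CCn$ satisfying $h(\dot{i}\delta) = \dot{i}\bar h(\delta)$ for all $\dot{i} \in \dotr$, $\delta \in \CCn$; such a $\bar h$ exists because $B_T$ and $B_U$ each act diagonally on $\dotr \times \CCn$, and this property is closed under products and inverses. The map $\psi$ is clearly a homomorphism with $\psi(B_T) = T_a$ and $\psi(B_U) = U_p$, so its image is $\gen{T_a, U_p} \cong F$ by Remark~\ref{Remark:TaandUpgenerateF}. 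Injectivity is immediate: if $\bar h = \mathrm{id}_{\CCn}$, then $h$ fixes every cylinder $U_{\dot{i}}$ pointwise and is therefore the identity of $\TBnr$.
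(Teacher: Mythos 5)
Your proposal is correct and follows essentially the same route as the paper: for the $\TOnr$ half, combine the embedding of $F$ into $\TOns{1}$ with the inclusion $\TOns{1} \subseteq \TOnr$, and for the $\TBnr$ half, lift $T_a$ and $U_p$ to diagonal maps $\dot{i}\delta \mapsto \dot{i}(\delta)h_{T_a}$, $\dot{i}\delta \mapsto \dot{i}(\delta)h_{U_p}$ on $\CCnr$, which is exactly the paper's construction. You spell out the verification that the lifts lie in $\TBnr$ and that the extraction homomorphism to $\gen{T_a,U_p}$ is injective, whereas the paper elides these by appealing directly to $\core(f)=T$, $\core(g)=U$ and Remark~\ref{Remark:TaandUpgenerateF}; these are details, not a different argument.
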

\begin{proof}
That $\TOnr$, for $n\ge 3$ and $1 \le r \le n-1$, contains a group isomorphic to  $F$ is a consequence of Theorem~\ref{Theorem:TOnisubsetofTOnjifidividesjinZnminus1}. To deduce implications for $\TBnr$, let $T$ and $U$ be the transducers in Figures~\ref{Figure:elementinTon1ofinfiniteorder} and \ref{Figure:ThetransducerUinTOn1}. Observe that the map $f$ defined by, for all $\dot{d} \in \dotr$ and $\xi \in \CCn$, $\dot{d}\xi \mapsto \dot{d}(\xi)T_{a}$, and the map $g$ given by, for all $\dot{d} \in \dotr$ and $\xi \in \CCn$, $\dot{d}\xi \mapsto \dot{d}(\xi)U_{p}$, are elements of $\TBnr$ since $\core(f) = T$ and $\core(g) = U$. Moreover, they generate a subgroup isomorphic to $F$ by Remark~\ref{Remark:TaandUpgenerateF}. 
\end{proof}
\section{Further properties of \texorpdfstring{$\TOn$}{Lg} and the \texorpdfstring{$R_{\infty}$}{Lg} property for \texorpdfstring{$\Tn$}{Lg} }\label{Section:RftyTn}

In this section we deduce some properties of elements of $\TOn$ which will enable us to demonstrate that the groups $\Tnr$ for $1< r<n$ have the $R_{\infty}$ property. The case $n=2$ has been dealt with in the paper \cite{BMV}.

Throughout this section we shall identify elements of $[0,r]$, $1 \le r \le n-1$, with their $n$-ary expansions.

\begin{Remark}\label{Remark: homorphism from On into Sym(Xns)}
A consequence of Lemma~\ref{Lemma: reading loops give bijection from set of Xns to Xns} is that for $g \in \On$, and letting $\mathcal{X}_{n}^{+}$ be the set of equivalences classes of $\Xnp$ under the relation that two words are related if they are rotations of each other,  there is a bijection $\bar{g}: \RXnp \to \RXnp$ by $[w] \mapsto  [\lambda_g(w, q_{w})]$. Let $\mbox{Sym}(\RXnp)$ be the group consisting of bijections from the set $\RXnp$ to itself. The map from  $\On$ to $\mbox{Sym}(\RXnp)$ given by $g \mapsto \bar{g}$ is an injective homomorphism from $\On$ into $\mbox{Sym}(\RXnp)$ (this follows from results in \cite{BCMNO} and the forthcoming article \cite{BleakCameronOlukoya}). We may fix a choice of representatives for the equivalence classes of $\RXnp$ such that, for all $i \in  \Xn$, $[i] = i$.
\end{Remark}

The following Lemma explores properties of the map $\bar{g}$ when $g \in \TOn$.

\begin{lemma}
Let $g \in \WTOn$ then $0$ and $n-1$ are fixed by the map $\bar{g}$.
\end{lemma}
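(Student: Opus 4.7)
The plan is to isolate the unique loop state $q_{0}$ for the length--one word $0$ guaranteed by Lemma~\ref{Lemma: reading loops give bijection from set of Xns to Xns}, and to show directly that the output $w:=\lambda_{g}(0,q_{0})$ is a string of zeros. Granted this, $\bar{g}([0])=[w]=[0]$; the assertion for $n-1$ is proved by the symmetric argument using the loop state $q_{n-1}$ for the letter $n-1$, with $(n-1)^{\omega}$ in place of $0^{\omega}$.

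First I would unpack what $g\in\WTOn$ gives us at the state $q_{0}$. By Definition~\ref{Def: noninitial transducers preserving relations} and the definition of $\WTOn$, the map $h_{q_{0}}\colon\CCn\to\im(q_{0})\subseteq\CCn$ is a continuous injection that preserves the lexicographic ordering on $\CCn$. Since $0^{\omega}$ is the lex-minimum of $\CCn$, it must therefore map to the lex-minimum of $\im(q_{0})$. Next, because $g\in\TOns{r}$ for some $1\le r\le n-1$, there is an element $A_{q_{0}'}\in\TBnr$ whose core is $g$, so Lemma~\ref{Lemma: images of states of core of elements of TBnr are connected} applies to $q_{0}$: we can write $\im(q_{0})=U_{u_{1}}\sqcup U_{u_{2}}\sqcup\cdots\sqcup U_{u_{m}}$ with $u_{1}\lelex u_{2}\lelex\cdots\lelex u_{m}$ and $u_{i}(n-1)^{\omega}\simeqI u_{i+1}0^{\omega}$ for every $i$. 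The lex-minimum of this union is plainly $u_{1}0^{\omega}$.

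On the other hand, because $\pi_{g}(0,q_{0})=q_{0}$, iterating the output gives
\[
h_{q_{0}}(0^{\omega})=\lim_{k\to\infty}\lambda_{g}(0^{k},q_{0})=w^{\omega},
\]
where $w=\lambda_{g}(0,q_{0})$ has length $\ell\ge 1$ (the length is positive because otherwise $h_{q_{0}}$ could not produce an infinite output, contradicting $q_{0}$ being a state of a transducer in $\On$). Combining the two identifications of $h_{q_{0}}(0^{\omega})$ yields $w^{\omega}=u_{1}0^{\omega}$. The left-hand side is purely periodic with period $\ell$, while the right-hand side is eventually zero; so for each $j\in\{1,\dots,\ell\}$ we can choose an index $i>|u_{1}|$ with $i\equiv j\pmod{\ell}$, and then $w_{j}=(w^{\omega})_{i}=0$. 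Hence $w=0^{\ell}$ (and incidentally $u_{1}=\epsilon$, so $\im(q_{0})=\CCn$). This gives $\bar{g}([0])=[0^{\ell}]=[0]$, under the convention fixed in Remark~\ref{Remark: homorphism from On into Sym(Xns)}. The same argument, replacing the lex-minimum by the lex-maximum, the symbol $0$ by $n-1$, and $u_{1}$ by $u_{m}$, shows $\lambda_{g}(n-1,q_{n-1})=(n-1)^{\ell'}$ for some $\ell'\ge 1$, hence $\bar{g}([n-1])=[n-1]$.

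The only subtle point is the periodicity argument that squeezes $w$ to be a pure string of zeros; it is elementary but essential, and relies on having already identified the lex-minimum of $\im(q_{0})$ as $u_{1}0^{\omega}$ via Lemma~\ref{Lemma: images of states of core of elements of TBnr are connected}. Everything else is a routine translation between the dynamical picture (order-preserving action on $\CCn$) and the combinatorial picture (loop state and its output in the transducer).
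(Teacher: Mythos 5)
Your proof is correct and arrives at the same conclusion by a related but more computational route. The paper argues by contradiction: if $(0)\bar{g}\neq 0$, surjectivity of $\bar{g}$ (Lemma~\ref{Lemma: reading loops give bijection from set of Xns to Xns}) gives a prime word $w\neq 0$ with $(w)\bar{g}=0$, so $(w^{\omega})h_{q_{w}}=0^{\omega}$; since $0^{\omega}\lelex w^{\omega}$ and $h_{q_{w}}$ is an order-preserving injection, this would force $(0^{\omega})h_{q_{w}}\lelex 0^{\omega}$, impossible because $0^{\omega}$ is the lex-minimum of $\CCn$. You instead work directly at $q_{0}$, pinning down $\lambda_{g}(0,q_{0})=0^{\ell}$ by identifying $h_{q_{0}}(0^{\omega})$ with the lex-minimum of $\im(q_{0})$ and squeezing via periodicity. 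Both arguments turn on exactly the same order-theoretic observation (an order-preserving injection sends the lex-minimum to the lex-minimum of the image), so this is chiefly a difference in bookkeeping rather than a different key idea; what your version buys is an explicit description of the loop-state output. Two minor points worth noting: Lemma~\ref{Lemma: images of states of core of elements of TBnr are connected} is heavier machinery than you need---the only fact used is that $\im(q_{0})$ is clopen, hence a finite disjoint union of cones whose lex-minimum has the form $u_{1}0^{\omega}$, and the connectedness/adjacency content of that lemma plays no role here. Also, the parenthetical claim that $u_{1}=\epsilon$ and $\im(q_{0})=\CCn$ does not follow: from $w^{\omega}=u_{1}0^{\omega}$ with $w=0^{\ell}$ you can only conclude $u_{1}\in\{0\}^{*}$, and $\im(q_{0})$ may well be a proper clopen subset of $\CCn$. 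Neither of these affects the validity of your main argument.
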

\begin{proof}
We proceed by contradiction. We first consider the case that $(0)\bar{g} \ne 0$. By Lemma~\ref{Lemma: reading loops give bijection from set of Xns to Xns} there is some prime word $w  \in \Xnp$ such that $(w)\bar{g} = 0$. Let $q_w$ be the unique state such that $\pi_{g}(w, q_w) = w$.  Then since $h_{q_{w}}$ preserves the lexicographic ordering on $\CCn$ we must have that $(00\ldots)h_{q_w}\lelex (ww\ldots)h_{q_{w}}$ however as $(ww\ldots)h_{q_{w}} = 00\ldots$ and  $h_{q_{w}}$ is injective, we have the desired contradiction.

The case $(1)\bar{g} \ne 1$ is dealt with analogously.
\end{proof}

By applying Lemma~\ref{Lemma: multiplication by piR bijection from orientation preserving to reversing and vice versa} we immediately deduce the following result:

\begin{lemma}\label{Lemma: elements of TOn either fix 0 or swap 0 and n-1}
Let $g \in \TOn$ then if $g \in \WTOn$, $(0)\bar{g} = 0$ otherwise $(0)\bar{g} = n-1$.
\end{lemma}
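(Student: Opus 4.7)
The plan is to reduce the orientation-reversing case to the orientation-preserving case, which has just been handled by the previous lemma. First, if $g \in \WTOn$, then the conclusion $(0)\bar{g}=0$ is exactly the content of the preceding lemma, so nothing more is needed in that case.

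Now suppose $g \in \TOn \setminus \WTOn$, so $g$ reverses the lexicographic ordering on $\CCn$. The idea is to compose with $\pi_{R,n}$ in order to flip back to an orientation-preserving element. By Lemma~\ref{Lemma: multiplication by piR bijection from orientation preserving to reversing and vice versa}, the element $g\pi_{R,n}$ lies in $\WTOn$. Therefore the preceding lemma applies and gives $(0)\overline{g\pi_{R,n}} = 0$.

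Next, I would invoke the fact that the assignment $g \mapsto \bar{g}$ is a group homomorphism from $\On$ to $\mathrm{Sym}(\RXnp)$ (Remark~\ref{Remark: homorphism from On into Sym(Xns)}), so
\[
(0)\overline{g\pi_{R,n}} \;=\; ((0)\bar{g})\,\overline{\pi_{R,n}}.
\]
A direct computation on the single-state transducer $\pi_{R,n}$ yields $\overline{\pi_{R,n}}$: the unique loop state for any prime word $w$ is the unique state of $\pi_{R,n}$, and its output on $w$ is obtained by applying the permutation $i\mapsto n-1-i$ letterwise. In particular the equivalence classes $[0]$ and $[n-1]$ are interchanged by $\overline{\pi_{R,n}}$. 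Combining this with $(0)\overline{g\pi_{R,n}} = 0$ forces $(0)\bar{g} = n-1$, as required.

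The only substantive step is the verification that $\overline{\pi_{R,n}}$ swaps $0$ and $n-1$ in $\RXnp$, but this is immediate from the description of $\pi_{R,n}$; the rest is a straightforward application of the previous lemma and the homomorphism property of $g\mapsto \bar g$.
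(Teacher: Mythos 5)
Your argument is correct and is exactly the reduction the paper has in mind: the paper introduces this lemma with the single line ``By applying Lemma~\ref{Lemma: multiplication by piR bijection from orientation preserving to reversing and vice versa} we immediately deduce the following result,'' and you have simply spelled out that deduction — pass to $g\pi_{R,n}\in\WTOn$, apply the preceding lemma to fix $0$, and unwind via the homomorphism $g\mapsto\bar g$ and the fact that $\overline{\pi_{R,n}}$ interchanges $[0]$ and $[n-1]$.
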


As a corollary we have a different proof that for $A_{q_0} \in \TBnr$, $([0,r] \cap  \Z[1/n] ) h_{q_0} = [0,r] \cap  \Z[1/n]$.

\begin{corollary}\label{Corollary:given hq0 in TBnr can assume that induced map on the Sr fixes 0 and r}
Let $A_{q_0} \in \TBnr$ then $([0,r] \cap  \Z[1/n] ) h_{q_0} = [0,r] \cap  \Z[1/n]$
\end{corollary}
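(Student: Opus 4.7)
The plan is to reduce the statement to a direct analysis of the action of $h_{q_0}$ on the two countable sets of $n$-ary expansions of $n$-adic rationals, namely expansions of the form $\mu \cdot 0^{\omega}$ or $\mu \cdot (n-1)^{\omega}$ with $\mu \in \Xnrp$ (together with the endpoints $\dot{0}00\ldots$ and $\dot{r-1}(n-1)(n-1)\ldots$). Indeed $t \in [0,r] \cap \Z[1/n]$ if and only if some (equivalently every) $n$-ary expansion of $t$ eventually stabilises to $0$ or to $n-1$, so it suffices to show that $h_{q_0}$ preserves this set of eventually-constant sequences.

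First I would unpack the image $(\mu \cdot 0^{\omega}) h_{q_0}$. Write $\lambda_A(\mu, q_0) = \Gamma \in \Xnrs$ and $q = \pi_A(\mu, q_0)$. Then $(\mu \cdot 0^{\omega}) h_{q_0} = \Gamma \cdot (0^{\omega}) h_{q}$. Since $A_{q_0}$ is bi-synchronizing, processing $0$'s long enough from $q$ lands in $\core(A_{q_0})$. Inside the finite, strongly connected, core, the map $p \mapsto \pi_{\core(A)}(0, p)$ is a self-map of a finite set and so its iterates eventually stabilise on a cycle; by Lemma~\ref{Lemma: reading loops give bijection from set of Xns to Xns} the unique loop state at $0$ is the unique element of every such cycle, so repeatedly reading $0$'s from $q$ eventually reaches the loop state $q_0'$ of $\core(A_{q_0})$ for $0$. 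Hence there exist $k \in \N$ and $\Delta \in \Xns$ with
\[
(0^{\omega}) h_{q} = \Delta \cdot (0^{\omega}) h_{q_0'}.
\]
Now apply Lemma~\ref{Lemma: elements of TOn either fix 0 or swap 0 and n-1} to $g := \core(A_{q_0}) \in \TOn$: the symbol $\lambda_g(0, q_0')$ is either $0$ (orientation preserving case) or $n-1$ (orientation reversing case), so $(0^{\omega}) h_{q_0'}$ is either $0^{\omega}$ or $(n-1)^{\omega}$. In either case $(\mu \cdot 0^{\omega}) h_{q_0} = \Gamma \Delta \cdot 0^{\omega}$ or $\Gamma \Delta \cdot (n-1)^{\omega}$, which is an $n$-ary expansion of an element of $[0,r] \cap \Z[1/n]$. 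The argument for $\mu \cdot (n-1)^{\omega}$ is symmetric, using the loop state of $\core(A_{q_0})$ at $n-1$; that the two endpoint expansions $\dot{0}0^{\omega}$ and $\dot{r-1}(n-1)^{\omega}$ are handled is automatic, since they are of the above forms with $\mu = \dot{0}$ and $\mu = \dot{r-1}$ respectively.

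This shows $([0,r] \cap \Z[1/n]) h_{q_0} \subseteq [0,r] \cap \Z[1/n]$. For equality, observe that $\TBnr$ is a group, so $A_{q_0}^{-1} \in \TBnr$, and applying the same argument to $h_{q_0}^{-1}$ gives the reverse inclusion. The only real wrinkle in the proof is the slightly careful bookkeeping needed when processing the leading $\dotr$-symbol of $\mu$ from the initial state $q_0 \in R$ before arriving in the cantor-space portion of the transducer, but since $\lambda_A(\dot{a}, q_0) \in X_{n,r}^{*}$ and $\pi_A(\dot{a}, q_0) \in S \sqcup R$, this is handled transparently by the factorisation $\mu = \dot{a} \nu$ with $\nu \in \Xns$; the main content of the proof is the appeal to Lemma~\ref{Lemma: elements of TOn either fix 0 or swap 0 and n-1}, and no further obstacle arises.
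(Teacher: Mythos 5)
Your proof is correct and takes essentially the same route as the paper: both use the synchronizing property to argue that reading a long enough run of $0$'s or $(n-1)$'s forces the corresponding loop state of the core, and both then invoke Lemma~\ref{Lemma: elements of TOn either fix 0 or swap 0 and n-1} to conclude that this loop state either fixes or swaps the tails $0^{\omega}$ and $(n-1)^{\omega}$. The only cosmetic differences are that you take a small detour through the eventual-cycle behaviour of $p \mapsto \pi(0,p)$ (whereas the synchronizing property already forces the $0$-loop state after reading $0^k$ for $k$ the synchronizing level) and that you spell out the reverse inclusion via $A_{q_0}^{-1} \in \TBnr$, which the paper leaves implicit.
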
 
\begin{proof}
Observe that since $A_{q_0}$ is synchronizing then after reading a sufficiently long string of $0$'s or $n-1$'s the next input if processed from the state $q_{0}$ or $q_{n-1}$ respectively. However these states either swap $0$ with $n-1$ and vice versa or fix $0$ and $n-1$ (depending on whether or not $A_{q_0}$ is orientation preserving or reversing) by Lemma~\ref{Lemma: elements of TOn either fix 0 or swap 0 and n-1}. Hence since elements of $[0,r] \cap  \Z[1/n]$ are precisely those elements of $\CCnr$ of the form $w00\ldots$ or $w n-1 n-1 \ldots$ for $w \in \Xnrp$, the result follows.
\end{proof}

From this we may deduce the following:

\begin{lemma}\label{Lemma: automorphisms can be modified to contain fixed point}
Let $A_{q_0} \in \TBnr$ then there is an element $h \in \Tnr$ such that $h_{q_0} \circ h$ fixes the point $\dot{0}0\ldots \simeq \dot{r-1}n-1\ldots$.
\end{lemma}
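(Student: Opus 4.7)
The plan is to construct $h$ as an explicit cyclic rotation of a well-chosen complete antichain of $\Xnrs$, exploiting the rigidity of the lex-extreme elements of such antichains.

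First I would set $z_1 := (\dot{0}00\ldots)\, h_{q_0}$ and $z_2 := (\dot{r{-}1}(n{-}1)(n{-}1)\ldots)\, h_{q_0}$. By Corollary~\ref{Corollary:given hq0 in TBnr can assume that induced map on the Sr fixes 0 and r} both lie in $[0,r] \cap \Z[1/n]$, and since $h_{q_0}$ preserves $\simeq$ and is a bijection of $\CCnr$, the set $\{z_1, z_2\}$ is the pair of distinct $n$-ary expansions of a single point of $S_{r} \cap \Z[1/n]$. If that point is $0$ on $S_{r}$, so $\{z_1, z_2\} = \{\dot{0}00\ldots,\; \dot{r{-}1}(n{-}1)(n{-}1)\ldots\}$, then $h_{q_0}$ already preserves the relevant $\simeq$-class setwise and $h = \id$ suffices. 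Otherwise, from the description of $\simeq$, there exist $w \in \Xnrs$ and a letter $i$ (a nonzero digit in $X_n$ when $w \in \Xnrp$, or a nonzero element of $\dotr$ when $w = \epsilon$) with
\[
\{z_1, z_2\} = \{\, wi\,00\ldots,\ w(i{-}1)\,(n{-}1)(n{-}1)\ldots \,\}.
\]

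Next I would pick a complete antichain $\ac{u} = \{u_0, u_1, \ldots, u_{l-1}\}$ of $\Xnrs$ (listed lexicographically) containing both $w(i{-}1)$ and $wi$, produced by repeated subdivision (taking $\dotr$ itself when $w = \epsilon$), and let $h \in \Tnr$ be the element defined by the antichain pair $(\ac{u}, \ac{u})$ and the cyclic shift $u_j \mapsto u_{j-(k+1) \bmod l}$, where $w(i{-}1) = u_k$ and $wi = u_{k+1}$. Two easy observations underpin this. Firstly, $w(i{-}1)$ and $wi$ are automatically consecutive in $\ac{u}$: any $v \in \Xnrs$ incomparable to both and lex-strictly between them would, by case analysis on the first position where $v$ disagrees with $wi$, either be a proper extension of $w(i{-}1)$ or lex-smaller than $w(i{-}1)$ -- both impossible in an antichain. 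Secondly, the lex-least element of any complete antichain of $\Xnrs$ has the form $\dot{0}\beta$ with $\beta \in \{0\}^{*}$, and the lex-greatest has the form $\dot{r{-}1}\gamma$ with $\gamma \in \{n{-}1\}^{*}$; thus $u_0 = \dot{0}\beta$ and $u_{l-1} = \dot{r{-}1}\gamma$.

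Combining these, the shift sends $wi = u_{k+1} \mapsto u_0 = \dot{0}\beta$ and $w(i{-}1) = u_k \mapsto u_{l-1} = \dot{r{-}1}\gamma$, so that
\[
(wi\,00\ldots)\,h = \dot{0}00\ldots \quad\text{and}\quad (w(i{-}1)(n{-}1)(n{-}1)\ldots)\,h = \dot{r{-}1}(n{-}1)(n{-}1)\ldots,
\]
since $\beta$ is all $0$'s and $\gamma$ all $(n{-}1)$'s. Consequently $h_{q_0} \circ h$ carries the $\simeq$-class $\{\dot{0}00\ldots,\; \dot{r{-}1}(n{-}1)(n{-}1)\ldots\}$ to itself, and so the induced homeomorphism of $S_{r}$ fixes $\dot{0}00\ldots$. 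The bulk of the work is the routine bookkeeping in the two observations above; the only delicacy I anticipate is handling the degenerate shape $w = \epsilon$ uniformly with the generic case, but $\dotr$ itself then plays the role of $\ac{u}$ and no new idea is required.
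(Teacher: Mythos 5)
Your proof is correct and is essentially the argument the paper leaves implicit (the lemma is stated as a direct deduction from the preceding corollary together with the transitivity of $T_{n,r}$ on the dyadic rationals of $S_r$). The cyclic-shift element you build, together with the two bookkeeping observations (that $w(i{-}1)$ and $wi$ are necessarily adjacent in any complete antichain containing both, and that the lex-extreme elements of a complete antichain of $\Xnrs$ are all-$0$ and all-$(n{-}1)$ words), make that transitivity explicit and close the gap cleanly.
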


The following constructions demonstrate that $\Tmr{n}{r}$ has the $R_{\infty}$ property for any $1 \le  r <  n$. and naturally breaks up into two cases. We first require the following standard lemma.

\begin{lemma}\label{Lemma:changingcosetrepdoesnotaffectRinfty}
Let $A_{q_0}, B_{p_0} \in \TBnr$ and $g \in \Tnr$ be such that $h_{q_0} g = h_{p_0}$ then $\Tnr$ has infinitely many $h_{q_0}$-twisted conjugacy classes if and only if it has infinitely many $h_{p_0}$-twisted conjugacy classes. 
\end{lemma}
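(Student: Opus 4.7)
The plan is to build an explicit bijection between the set of $h_{q_0}$-twisted conjugacy classes in $\Tnr$ and the set of $h_{p_0}$-twisted conjugacy classes in $\Tnr$; once such a bijection is in place, infinitude of one set is equivalent to infinitude of the other. Since both $h_{q_0}$ and $h_{p_0}$ lie in $N_{H(\CCnr)}(\Tnr)$ by the main theorem of Section~\ref{Section:AutTnr}, each induces an automorphism of $\Tnr$ by conjugation; denote these automorphisms by $\varphi_{q_0}$ and $\varphi_{p_0}$, so that $(x)\varphi_{q_0} = h_{q_0}^{-1} x h_{q_0}$ and $(x)\varphi_{p_0} = h_{p_0}^{-1} x h_{p_0}$ for each $x \in \Tnr$. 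The hypothesis $h_{p_0} = h_{q_0} g$ with $g \in \Tnr$ translates into the identity $(x)\varphi_{p_0} = g^{-1} (x)\varphi_{q_0}\, g$, so $\varphi_{p_0}$ differs from $\varphi_{q_0}$ precisely by the inner automorphism of $\Tnr$ given by conjugation by $g$.

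The core step will be to verify that the right-translation map $F: \Tnr \to \Tnr$, $F(x) = x g^{-1}$, descends to a bijection between the two sets of twisted conjugacy classes. Unwinding the $\varphi_{p_0}$-twisted relation $y = h^{-1} x (h)\varphi_{p_0}$ and substituting $(h)\varphi_{p_0} = g^{-1}(h)\varphi_{q_0}\, g$ rearranges to $F(y) = h^{-1} F(x)\, (h)\varphi_{q_0}$, which is exactly the statement that $F(x)$ and $F(y)$ are $\varphi_{q_0}$-twisted conjugate via the same element $h \in \Tnr$. Since $g \in \Tnr$, the map $F$ is a bijection of the underlying set, and the derivation above is reversible letter for letter, so $F$ sends the $\varphi_{p_0}$-equivalence relation exactly onto the $\varphi_{q_0}$-equivalence relation. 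Passing to quotients yields the desired bijection of twisted conjugacy class sets.

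I do not anticipate a serious obstacle: this is the standard observation that the Reidemeister number $R(\varphi)$ depends only on the class of $\varphi$ in $\out{\Tnr}$, and the setup here is precisely the situation where $\varphi_{q_0}$ and $\varphi_{p_0}$ project to the same outer automorphism. The only thing requiring genuine care is bookkeeping of the side conventions, because we are writing the group action on the right and the conjugating elements $h_{q_0}, h_{p_0}$ live in the ambient homeomorphism group $H(\CCnr)$ rather than in $\Tnr$; however, the formal manipulation is identical to the classical one, and using $F(x) = xg^{-1}$ (rather than $xg$) is exactly what forces the $g$ appearing on the right of $(h)\varphi_{q_0}$ to cancel.
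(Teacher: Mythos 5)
Your proposal is correct and uses essentially the same device as the paper: both arguments show that right translation by $g^{-1}$ carries the $h_{p_0}$-twisted conjugacy relation to the $h_{q_0}$-twisted conjugacy relation, with the same witness, and then observe the manipulation is reversible. Your version is slightly more conceptual in phrasing (packaging the map as a bijection $F$ descending to quotients and invoking the Reidemeister-number heuristic), but the underlying computation is identical.
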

\begin{proof}
Let $f_1$ and $f_2$ be elements of $\Tnr$ such that $f_1$ is $h_{p_0}$-twisted conjugate to $f_2$ by an element  $f_3$.  The computation below demonstrates that $f_1g^{-1}$ and $f_2g^{-1}$ are $h_{q_0}$-twisted conjugate to each other by the element $f_3$. 
\[ f_{3}^{-1} f_1 g^{-1} h_{q_0}^{-1} f_3 h_{q_0} =  f_{3}^{-1} f_1 h_{p_0}^{-1} f_3 h_{p_0}g^{-1} = f_2g^{-1} \]  

In a similar way we see that if $f_1'$ and $f_2'$ are elements of $\Tnr$ such that $f_1'$ and $f_2'$  are $h_{q_0}$-twisted conjugate to each other by an element $f_3'$, then $f_1'g$ and $f_2'g$ are $h_{p_0}$ twisted conjugate to each other  by $f_3'$. 
\end{proof}

We also observe that given $A_{q_0} \in \TBnr$,  by straight-forward manipulations, there are infinitely many $h_{q_0}$-twisted conjugacy classes if and only if there is an infinite subset $\T{J}$  of $\Tnr$ such that for any pair $f_1, f_2 \in  \T{J}$, $f_1h_{q_0}^{-1}$ is not conjugate to $f_2h_{q_0}^{-1}$ by an element of $\Tnr$.

{\bfseries{Orientation preserving case:}} First let $A_{q_0} \in \TBmr{n}{1}$ be an orientation preserving element. By Lemma~\ref{Lemma:changingcosetrepdoesnotaffectRinfty} and Lemma~\ref{Lemma: automorphisms can be modified to contain fixed point} we may assume that $A_{q_0}$  fixes the points $00\ldots$ and $n-1n-1\ldots$ of $\CCn$. Let $\ac{u}$ be a complete antichain of $\Xns$ of length $l >0$. Suppose that $\ac{u} = \{ u_0, u_1, \ldots, u_{l-1} \}$ (recall that antichains  are always assumed to be ordered lexicographically). We form a homeomorphism of $\CCn$ as follows. Let $\sigma: \ac{u} \to \ac{ u}$ be given by $u_{i} \mapsto u_{(i+1)\mod{l}}$ for $1 \le i \le l-1$. Let $h_{l}: \CCn \to \CCn$ be given by $(u_i\delta) h_{l} = (u_{i})\sigma (\delta)h_{A_{q_0}}$. Then since $A_{q_0}$ is a homeomorphism of $\CCn$, $h_{l}$ is also homeomorphism of $\CCn$. Moreover, as $h_{q_0}$ is orientation preserving, and preserves the relation $\simeqI$, by choice of the permutation $\sigma$ it follows that $h_{l}$ is an orientation preserving element of $\TBmr{n}{1}$. Moreover,  if $B_{p_0}$ is the transducer such that $h_{p_0} = h_{l}$, then $\core(B_{p_0}) = \core(A_{q_0})$. Thus  there is an element of $f \in \Tmr{n}{1}$ such that $h_{l}f = h_{q_0}$. Furthermore, observe that as $A_{q_0}$ fixes $00\ldots$, we have that the point $00\ldots$ is on an orbit of length precisely $l$ under $h_{l}$. Moreover, any point on a finite orbit under $h_{l}$ must have orbit length a multiple of $l$. Thus for every $l \in \mathbb{N}$ such that $l \cong 1 \mod{n-1}$, let $h_{l} \in \TBmr{n}{1}$ be the map constructed as above, and let $f_{l}$ be such that $h_{l} = f_{l}h_{q_0} $.  Therefore we  conclude that for any $A_{q_0} \in \widetilde{\TBmr{n}{1}}$ there are infinitely may $h_{q_0}$-twisted conjugacy classes.

Now fix $r>1$, and let $\bar{r}$ be minimal such that $\TOns{\bar{r}} = \TOns{r}$, by Lemma~\ref{Lemma:partialconverselemma} $\bar{r}$ divides $r$ and $\bar{r}$ divides $n-1$. Let $A_{q_0} \in \TBmr{n}{\bar{r}}$ be an element which fixes $\dot{0}00\ldots$ and $\dot{r-1}n-1n-1\ldots$. Let $\ac{u}$ be a complete antichain of $X_{n,r}^{*}$ of length $l$. Observe that as $l \cong r \mod{n-1}$ hence there is an $m \in \N$ such that  $m\bar{r} = l$. For $1 \le i \le m$ let $\ac{u}_{i} : = \{ u_{i,0}, \ldots, u_{i,r-1}\} \subset \ac{u}$ be such that for $1 \le i_1 < i_2 \le m$ all elements of $\ac{u}_{i_1}$ are less than (in the lexicographic ordering) all elements of $\ac{u}_{i_2}$.  Let $\sigma: \{1,2 \ldots, m\} \to \{1,2 \ldots, m\}$ be given by $ (i)\sigma = i+1 \mod{m}$. We construct an element $h_{l} \in \TBnr$ as follows: for $\Gamma \in \CCn$ and $1 \le i \le m$ and $0 \le a \le r-1$ let $(u_{i,a}\Gamma) h_{l} =  u_{(i)\sigma,b}\delta$ if and only if $(\dot{a}\Gamma)A_{q_0} =  \dot{b}\delta$. Therefore since $A_{q_0}$ induces an orientation preserving homeomorphism on  $\CCmr{\bar{r}}$, it follows that the ma $h_{l}$ induces a orientation preserving homeomorphism of $\CCnr$. Moreover since $\core(A_{q_0}) \in \TOnr$ it follows that $h_{l}$ is in fact and element of $\TOnr$ and moreover (identifying $h_{l}$ with the initial transducer inducing inducing the homeomorphism $h_{l}$) $\core(h_{l}) = \core(A_{q_0})$. Lastly observe that the point $\dot{0}00\ldots$ is on an orbit of length $m$ under the action of $h_{l}$ and any other finite orbit of $h_{l}$ has length at least $l$.

Now let $A_{q_0} \in \TBnr$ be arbitrary. Since $\core(A_{q_0}) \in \TOns{\bar{r}} = \TOnr$, there is an element $B_{p_0} \in \TBmr{n}{\bar{r}}$ such that $\core(B_{p_0}) = \core(A_{q_0})$. As before for each $l \in \N$ such that $l \equiv r \mod n-1$, there is an element $h_{l}$ with a minimal finite orbit of length $l$ and such that $\core(h_{l}) = \core(A_{q_0})$.  Thus, there is a $f_{l} \in \Tnr$ such that $h_{q_0}f_{l}  = h_{l}$. Therefore as in the case $r=1$, we conclude that for any there are infinitely many $h_{q_0}$-twisted conjugacy classes for any element $A_{q_0} \in \TBnr$.

{\bfseries{Orientation reversing case:}} We now consider the orientation reversing case. Let $A_{s_0} \in \TBmr{n}{1}$ be orientation reversing. By Lemma~\ref{Lemma: automorphisms can be modified to contain fixed point} we may assume that $(000\ldots)A_{s_0} = n-1n-1n-1\ldots$ and $(n-1n-1n-1\ldots)A_{s_0} = n-1n-1n-1\ldots$. Let $i$ be minimal such that $\pi_{A}(0^{i}, s_0) = q_0$   and $\pi_{A}((n-1)^{i}, s_0)  = q_{n-1}$ (where $q_0$ and $q_{n-1}$ are the zero and one loop state of $A_{s_0}$ respectively). Let $(n-1)^{l_1} = (0^{i+1})A_{q_0}$ and $0^{l_2} = ((n-1)^{i+1})A_{q_0}$. Observe  that $0<\min\{l_1, l_2\}$ since $\lambda_{A}(0, q_0) = n-1$ and $\lambda_{A}(n-1, q_0) = n-1$ by Lemma~\ref{Lemma: elements of TOn either fix 0 or swap 0 and n-1}. There is an element $h \in \Tn$ such that for any $\delta \in \CCn$, $((n-1)^{l_1}\delta)h = (n-1)^{i}\delta$ and $(0^{l_2}\delta h) = 0^{i}\delta$. Let $C_{s'_0} \in \TBnr$ be such that $h_{s'_0} = h_{s_0} h$. Observe that $C_{s'_0}$ satisfies, $\pi_{C}(0^{i}, s'_0) = q'_{0}$, $\pi_{C}((n-1)^{i}, s'_0) = q'_{n-1}$, where $q'_{0}$ and $q'_{(n-1)}$ are the $0$ and $n-1$ loop states of $C_{s'_0}$ respectively, and, for $j \ge i$, $\lambda_{C}(0^{j}, s'_0) = (n-1)^{j}$ and $\lambda_{C}((n-1)^{j}, s'_0) = 0^{j}$. As before we may replace $A_{s_0}$ with $C_{s'_0}$ without loss of generality, since there infinitely many $A_{s_0}$-twisted conjugacy classes if and only if there are infinitely many $C_{s'_0}$-twisted conjugacy classes.  Therefore we may assume that there is an $i \in \N$, $i>0$ such that $A_{s_0}$ satisfies $\pi_{A}(0^{i}, s_0) = q_{0}$, $\pi_{A}((n-1)^{i}, s_0) = q_{n-1}$, where $q_{0}$ and $q_{(n-1)}$ are the $0$ and $n-1$ loop states of $A_{s_0}$ respectively, and, for $j \ge i$, $\lambda_{A}(0^{j}, s_0) = (n-1)^{j}$ and $\lambda_{A}((n-1)^{j}, s_0) = 0^{j}$.

We now construct an element $g \in \Tn$ such that $g h_{s_0}$ such $(g h_{s_0})^2$ has the point $000\ldots$ and $n-1n-1\ldots$ as attracting fixed points.

By a branch of length $k$, for $k \in \N$, $k>0$, we shall mean the finite rooted $n$-ary tree with $1+ k(n-1)$ leaves and $k$ the length of the geodesic from the root to the left-most leaf. 

Let $\bar{f} \in \Tn$ be defined as follows. Let $E$ and $F$ be the finite trees where the tree $E$ is obtained from the single caret,  by attaching  a branch of length $i+1$ to the first and penultimate leaves of the branch and the tree $F$ is obtained from the single caret by attaching a branch to the first and last leaves of the caret. Label the leaves of $E$ left to right by $1,2, \ldots, 1 + (2k+1)(n-1)$ and the leaves of $F$ from right to left by $1,2, \ldots, 1 + (2k+1)(n-1)$. The map $\bar{f}$ is now defined by, for $\Gamma \in \CCn$ and $\mu_{j}$ and $\nu_{j}$ the addresses of the $j$'th  leaves of $E$ and $F$ respectively,  $\mu_{j}\Gamma \mapsto \nu_{j}\Gamma$.

Let $f \in \TBmr{n}{1}$, be defined by, for $\Gamma \in \CCn$,  $\mu_{j}\Gamma \mapsto \nu_{j}(\Gamma)A_{s_0}$ where  $\mu_{j}$ and $\nu_{j}$ are the addresses of the $j$'th  leaves of $E$ and $F$ respectively. Essentially $f$ is obtained from $\bar{f}$ by attaching the state $s_0$ to all the leaves of $F$ the range tree of $\bar{f}$. Moreover since $\core(f) = \core(A_{s_0})$ there is an element $g \in \Tn$ such that $f = gA_{s_0} $.

Below we shall consider the orbit of cones $U_{\nu}$ for $\nu \in  X_{n}^{+}$ under $f$, and to simplify notation, we identify the cone $U_{\nu}$ with the finite word $\nu$. Let us consider the orbit of the cone $(n-1)$ (a leaf of $E$) under $f$: \[(n-1)\mapsto 00^{i} \mapsto n-1 (n-1)^i \mapsto 00^i 0^i \mapsto n-1 (n-1)^i (n-1)^i \mapsto 00^i(0)^{2i} \ldots \]
This is because for $(\alpha, \bar{\alpha}) \in \{ (0, n-1), (n-1, 0) \}$ and $j \in \N$ $j\ge i$, $\lambda_{A}(\alpha^{j}, s_0) = \bar{\alpha}^{j}$ and $\pi_{A}(\alpha^{j}, s_0) = q_{\alpha}$. Hence we see that the point $n-1n-1\ldots$ is an attracting fixed point of $f^2$ and in a similar way the point $00\ldots$ is an attracting fixed point of $f$. In particular we have, for $j \in \N_{1}$,  $(n-1)f^{2j} = (n-1)(n-1)^{ji}$ and $(00^{i})f^{2j} = 00^{i}0^{ji}$. We call the words $(n-1)^{i}$ and $0^{i}$ the attracting paths (in $f^2$) of $00\ldots$ and $n-1n-1\ldots
$ respectively. 

Let $h \in \Tn$ be arbitrary and consider $h^{-1} f h$. Now as $h$ is a prefix exchange map, by an abuse of notation for a finite word $\Gamma \in \Xn^{\ast}$ such that $h$ acts as a prefix exchange on the cone $U_{\Gamma} \subset \CCn$, we shall write, $(\Gamma)h$ for the word $\Delta \in \Xn^{*}$ such that  $(U_{\Gamma}) h = U_{\Delta}$. There is an $l \in \N$ such that $((n-1)^{l}) h$ is a finite word in $X_n^{+}$ and we may assume that $l \ge i+1$. Thus we have,
\[
 ((n-1)^{l})h h^{-1}f^2h = ((n-1)^{l})f^2h = ((n-1)^{l})h(n-1)^{i}
\]

Hence we deduce that $((n-1)^l)h h^{-1}f^{2j}h = ((n-1)^{l})h(n-1)^{ji}$ and we see that conjugation of $f$ by an element of $\Tn$ maps the attracting fixed point $n-1n-1\ldots$ to an attracting fixed point and preserves the path $(n-1)^{i}$. We also observe that since an orientation reversing homeomorphism of the circle has precisely two fixed points (on the circle), then $h$ must map the pair of fixed points of $f$ to the pair of fixed of  $h^{-1}fh$. Notice that thinking of the fixed points as elements of $\CCn$, since $\Tn$ elements  $\Z[1/n] \to \Z[1/n]$, this means that either $h$ fixes the points $00\ldots$ and $n-1 n-1 \ldots$ otherwise, the other fixed point of $f$ is an element of $\Z[1/n]$ of the form $\eta_1 \simeqI \eta_2$ (where $\eta_1 > \eta_2$ in the lexicographic ordering on $\CCn$), $(00\ldots)h =\eta_1$ and $(n-1n-1\ldots)h = \eta_2$. We make use of these facts to show that there are infinitely many $h_{s_0}$-twisted conjugacy classes.

Let $j$ be any integer greater than $i$, as before we may construct a map $f_{j} \in \TBmr{n}{1}$ such that $f_{j} = g_{j}h_{s_0}$ for some $g_{j} \in \Tn$, and $00\ldots$ and $n-1n-1\ldots$ are attracting fixed points of $f_{j}^2$ with attracting paths $0^{j}$ and $(n-1)^{j}$. We  have the following claim:

\begin{claim}
There is an infinite subset $\T{J} \subset \N_{i}$ for which the set $\{f_{j} \mid j \in \T{J}\}$ are pairwise not conjugate by an element of $\Tn$.
\end{claim}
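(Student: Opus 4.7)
The plan is to exhibit a numerical invariant of the $\Tn$-conjugacy class of $f_j$ that takes different values on different $j \in \N_i$. The natural invariant is the contraction exponent of $f_j^2$ at its attracting fixed point $\dot{0} = 0^\omega \simeq (n-1)^\omega \in S_1$: by construction we have
\[
f_j^2((n-1)^l\delta) = (n-1)^{l+j}\delta \qquad \text{and} \qquad f_j^2(0^l\delta) = 0^{l+j}\delta
\]
for every $l \ge j$ and $\delta \in \CCn$, so $f_j^2$ contracts toward $\dot{0}$ with exponent $j$ (from both sides of $S_1$).  The key point I would exploit is the standard fact that every $h \in \Tn$ which fixes $\dot{0}$ has a well-defined integer ``logarithmic slope'' $c_h \in \Z$ at $\dot{0}$, meaning that $h((n-1)^l\delta) = (n-1)^{l + c_h}\delta$ for all sufficiently large $l$; this is immediate from the piecewise-affine tree-pair description of $\Tn$-elements and the fact that $\dot{0}$ is an $n$-adic rational.

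Given these, suppose $h \in \Tn$ with $h^{-1}f_j h = f_k$.  Then $h^{-1}f_j^2 h = f_k^2$, and since $h$ acts as an orientation-preserving homeomorphism of $S_1$, it must carry attracting fixed points of $f_j^2$ to attracting fixed points of $f_k^2$.  I would argue that $\dot{0}$ is the unique attracting fixed point of each $f_j^2$ on $S_1$: by construction $f_j$ is orientation reversing on $S_1$ with exactly two fixed points (forced by the tree-pair defining $\bar{f}$ together with $A_{s_0}$), and for an orientation-preserving circle homeomorphism with finitely many fixed points the attractors and repellers alternate, so the fixed point other than $\dot{0}$ must be repelling under $f_j^2$.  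Consequently $h(\dot{0}) = \dot{0}$, and $c_h$ is defined.  The short computation
\[
h^{-1}f_j^2 h((n-1)^l\delta) = h^{-1}f_j^2((n-1)^{l+c_h}\delta) = h^{-1}((n-1)^{l+c_h+j}\delta) = (n-1)^{l+j}\delta
\]
(valid for $l$ large) must then coincide with $f_k^2((n-1)^l\delta) = (n-1)^{l+k}\delta$, forcing $j = k$.

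Hence the family $\{f_j \mid j \in \N_i\}$ is already pairwise non-conjugate under $\Tn$, and one may take $\T{J} = \N_i$.  The main obstacle, and the only step requiring real care, is the dynamical fact that $\dot{0}$ is the \emph{unique} attracting fixed point of $f_j^2$ on $S_1$ (so that $h$ cannot move it elsewhere); once this is in hand, the slope calculation is mechanical, and combined with Lemma~\ref{Lemma:changingcosetrepdoesnotaffectRinfty} it immediately yields infinitely many $h_{s_0}$-twisted conjugacy classes in the orientation-reversing case, completing the proof of the $R_\infty$ property for $\Tnr$.
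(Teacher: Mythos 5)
Your proposal hinges on the assertion that $\dot 0$ is the \emph{unique} attracting fixed point of $f_j^2$ on the circle, and you justify this by appealing to the alternation of attractors and repellers for orientation-preserving circle homeomorphisms with finitely many fixed points. That fact, however, must be applied to $f_j^2$, not to $f_j$, and the fixed-point set of $f_j^2$ may strictly contain the two fixed points of $f_j$: every period-two point of $f_j$ is also a fixed point of $f_j^2$, and the construction (which grafts the given orientation-reversing transducer $A_{s_0}$ onto the leaves of $F$) gives no a priori control over how many such points there are or how $f_j^2$ behaves near the second fixed point of $f_j$. Without establishing that the fixed-point set of $f_j^2$ is exactly $\{\dot 0, b_j\}$, or directly that $b_j$ is repelling for $f_j^2$, you cannot conclude that a conjugator $h$ must fix $\dot 0$: since $\Tn$ preserves $\Z[1/n]$, the only thing forced is that $h(\dot 0)$ is an $n$-adic point among the two fixed points of $f_k$, and $h$ could send $\dot 0$ to $b_k$ whenever $b_k$ happens to be $n$-adic. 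That possibility is precisely what blocks the stronger conclusion $\T J = \N_i$.

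The paper's proof sidesteps the uniqueness question entirely. It argues by contradiction, assuming only finitely many conjugacy classes occur among the $f_j$, and then exploits the conjugation invariance of attracting-path length at \emph{both} fixed points: if some fixed $f_{i+M}$ were conjugate to infinitely many $f_l$ with $l$ unbounded, then the conjugator is allowed to move $\dot 0$ to the other fixed point of $f_{i+M}$, but the attracting-path lengths at the two fixed points of $f_{i+M}$ are two fixed numbers, so they cannot match the unbounded sequence of lengths $l$ coming from the $f_l$. Your conjugation invariant (the contraction exponent, which is the paper's attracting-path length) is the correct one; the gap is the unsupported dynamical uniqueness claim, and this is exactly why the paper is content to exhibit an infinite $\T J$ rather than asserting pairwise non-conjugacy across all of $\N_i$.
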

\begin{proof}
Suppose there is an $N \in \N$ such that every element $f_{j}$, $j \in \N_{i}$ is conjugate to some element of $\{f_{i+1}, \ldots
 f_{i+N}\}$  by an element of $\Tn$. Since $\N_{i}$ is infinite, there is an infinite subset $\T{I} \subset \N_{i}$ and $1 \le M \le N$ such that every element of $\{f_{l} \mid l \in \T{I}\}$ is conjugate to $f_{i+M}$ by an element of $\Tn$. Let $l > i+M$. Since conjugation by an element of $\Tn$ preserves attracting paths and $l>M$ it must be the case that any conjugator $h \in \Tn$ such that $h^{-1}f_{l}h = f_{i+M}$ must satisfy $(n-1n-1\ldots)h \ne n-1 n-1 \ldots$. Hence the other fixed point of $f_{i+m}$ must be a dyadic rational of the form $\eta_1 \simeqI \eta_2 \in \CCn/\simeqI$ where $\eta_1$ is greater than $\eta_2$ in the lexicographic ordering of $\CCn$. This means by arguments above that  $(n-1n-1\ldots)h = \eta_2$ and $\eta_2$ is an attracting fixed point of $f_{i+M}$ with attracting path $(n-1)^i$. However for $l' \in \T{I}$ such that $l' >l$, it must be the case that $f_{l'}$ is not conjugate to $f_{i+M}$ since the attracting path in $f_{l'}^2$ of $n-1n-1\ldots$  is longer than the attracting path in $f_{i+M}^2$ of both attracting fixed points of $f_{i+M}$.
\end{proof}

Now fix $1< r < n$. Let $A_{s_0} \in \TBnr$ be an orientation reversing element,  $\bar{r}$ be minimal such that $\TOns{\bar{r}} = \TOns{r}$ and $B_{p_0}$ be an orientation reversing element of $\TBmr{n}{\bar{r}}$ with $\core(B_{p_0}) = \core(A_{s_0})$. As in the case $r=1$ we may assume that there is an $i \in \N$, $i>0$ such that $B_{p_0}$ satisfies $\pi_{B}(\dot{0}0^{i}, p_0) = q_{0}$, $\pi_{B}(\dot{r-1}(n-1)^{i}, p_0) = q_{n-1}$, where $q_{0}$ and $q_{(n-1)}$ are the $0$ and $n-1$ loop states of $\core(A_{s_0}) = \core(B_{p_0})$ respectively, and, for $j \ge i$, $\lambda_{B}(\dot{0}0^{j}, p_0) = \dot{r-1}(n-1)^{j}$ and $\lambda_{B}(\dot{r-1}(n-1)^{j}, p_0) = \dot{0}0^{j}$. Now as in the orientation preserving case we construct elements $f_{j}$, for $j \in \N_{i}$ with desirable properties by simulating the element $B_{p_0}$ appropriately on cones.

Let $\ac{u} = \{ \dot{a} \mid 1 \le a \le r \}$, that is, $\ac{u}$ corresponds to the roots of the disjoint union of $r$ copies of the $n$-ary tree. Since $\bar{r}$ divides $r$ there is an $M \in \N$ such that $M\bar{r} = r$. Let $\ac{u}_{k}:= \{ u_{k,1}, \ldots, u_{k,r}\}$, $1 \le k \le M$ be subsets of $\ac{u}$, such that for $1 \le k_1 < k_2 < M$, all elements of $\ac{u}_{k_1}$ are less than (in the lexicographic ordering) all elements of $\ac{u}_{k_2}$. Observe that $\ac{u}_{1}$ corresponds to the roots of the disjoint union of the $\bar{r}$ copies of the $n$-ary tree. Replace $\ac{u}_1$, still retaining the symbol $\ac{u}_{1}$ for the resulting antichain, with the antichain corresponding to attaching a branch of length $i+1$ to the root $u_{1,1}$. Likewise replace $\ac{u}_M$ with the antichain corresponding to attaching a branch of length $i+1$ to the root $u_{M, r-1}$. In a similar way let $\ac{v}_M$ be the antichain obtained from $\ac{u}_{M}$ corresponding to attaching a branch of length $i+1$ to the root  $\ac{u}_1$, and let $\ac{v}_{k}:= \ac{u}_{k}$ for $1 \le k < M$. Observe that $\ac{u}_{1}$, $\ac{u}_{M}$ and $\ac{v}_{M}$ all have equal length $d$ congruent to $\bar{r}$ modulo $n-1$. Since $\bar{r}$ divides $n-1$ let $m \in \N$ be such that $m\bar{r} = d$. Let  $\ac{u}_{1} = \cup_{1 \le a \le m} \ac{u}_{1,a}$ where $\ac{u}_{1,a} =  \{u_{1,a,b} \mid 1 \le b \le r \}$, likewise let $\ac{u}_{M} = \cup_{1 \le a \le m} \ac{u}_{M,a}$ where $\ac{u}_{M,a} =  \{u_{M,a,b} \mid 1 \le b \le r \}$ and $\ac{v}_{M} = \cup_{1 \le a \le m} \ac{v}_{M,a}$ where $\ac{v}_{M,a} =  \{v_{M,a,b} \mid 1 \le b \le r \}$. 

We construct a homeomorphism $f_{i}$ of $\CCnr$ as follows. Let $\sigma: \{1,2,\ldots,M\} \to \{1,2, \ldots M\}$ by $k \mapsto M-k +1$ and let $\rho: \{1,2,\ldots, m\} \to \{1,2,\ldots m\}$ by $a \mapsto m-a+1$. For $\Gamma \in \CCn$, $1< k < M$ and $1 \le b \le r$, $(u_{k,b}\Gamma)f_{i} = v_{(k)\sigma, b'}\delta$ if and only if $\lambda_{B}(\dot{b}\Gamma, p_0) \in U_{\dot{b'}}$ and $\delta = \lambda_{B}(\Gamma, \pi_{B}(\dot{b}, p_0))$. For $\Gamma \in \CCn$ $1 \le a \le m$ and $1 \le b \le r$, $(u_{1,a,b}\Gamma)f_{i} = v_{M,((a)\rho),b'}\delta$ if and only if $\lambda_{B}(\dot{b}\Gamma,p_0) \in U_{\dot{b'}}$ $\delta = \lambda_{B}(\Gamma, \pi_{B}(\dot{b}, p_0))$. For $\Gamma \in \CCn$ $1 \le a \le m$ and $1 \le b \le r$, $(u_{M,a,b}\Gamma)f_{i} = v_{1,((a)\rho),b'}\delta$ if and only if $\lambda_{B}(\dot{b}\Gamma,p_0) \in U_{\dot{b'}}$ $\delta = \lambda_{B}(\Gamma, \pi_{B}(\dot{b}, p_0))$. Since $B_{p_0}$  induces an orientation reversing homeomorphism on $\CCmr{\dotr}$, we see that $f_{i}$ is in fact an element of $\TBnr{r}$. Moreover, $\core(f_i) = \core(B_{p_0}) = \core(A_{s_0})$ and so there is an element $g_i \in  \Tnr$ such that $f_i = h_{s_0}g_i$.

We now argue that the points $\dot{0}0\ldots$ and $\dot{r-1}n-1n-1\ldots$ are attracting fixed points of $f_{i}$ with attracting paths $0^{i}$ and $(n-1)^{i}$. We consider the orbit of the cone  $\dot{r-1}(n-1)^{i}$:
\[
\dot{r-1}(n-1)^{i} \mapsto \dot{0}0^{i}0^{i} \mapsto \dot{r-1}(n-1)^{i}(n-1)^{i} \mapsto \dot{0}0^{i}0^{i}0^{i} \mapsto \dot{r-1}(n-1)^{i}(n-1)^{2i} \ldots
\] 

this follows by making use of the definition of $f_{i}$ and the facts: $\pi_{B}(\dot{0}0^{i}, p_0) = q_{0}$, $\pi_{B}(\dot{r-1}(n-1)^{i}, p_0) = q_{n-1}$, where $q_{0}$ and $q_{(n-1)}$ are the $0$ and $n-1$ loop states of $\core(A_{s_0}) = \core(B_{p_0})$ respectively, and, for $j \ge i$, $\lambda_{B}(\dot{0}0^{j}, p_0) = \dot{r-1}(n-1)^{j}$ and $\lambda_{B}(\dot{r-1}(n-1)^{j}, p_0) = \dot{0}0^{j}$. In general we see that, for $l \in \N_{1}$, $(\dot{r-1}(n-1)^{i})f_{i}^{2l} = \dot{r-1}(n-1)^{i}(n-1)^{l}$ and $(\dot{0}0^{i}0^{i})f_{i}^{2l} = \dot{0}0^{i}0^{i}0^{l}$.

For each  $j > i$ we may repeat the construction above to get  elements $h_{j}$ such that $f_{j} = h_{s_0}g_{j}$ for some $g_{j} \in \Tnr$ and the points $\dot{0}0\ldots$ and $\dot{r-1}n-1n-1\ldots$ are attracting fixed points of $f_{j}$ with attracting paths $0^{j}$ and $(n-1)^{j}$. Since $f_{j}$ for $j \in \N_{i}$ induces a homeomorphism on circle of length $r$, $[0,r]$ with end points identified, we may repeat the arguments as in the case $r =1$ to conclude that there is an infinite subset $\T{J} \subset \N_{i}$ for which the set $\{f_{j} \mid j \in \T{J}\}$ are pairwise not conjugate by an element of $\Tnr$.

Thus we have demonstrated the following:

\begin{Theorem}\label{Theorem:TnrhastheRftyproperty}
The group $\Tnr$ for $1 \le r < n-1$ has the $R_{\infty}$ property.
\end{Theorem}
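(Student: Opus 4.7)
The plan is to show that for every automorphism $h_{q_0}$ of $\Tnr$ (equivalently, every element $A_{q_0} \in \TBnr$), the set of $h_{q_0}$-twisted conjugacy classes in $\Tnr$ is infinite. By Lemma~\ref{Lemma:changingcosetrepdoesnotaffectRinfty}, multiplying $h_{q_0}$ on the right by any element of $\Tnr$ does not affect the number of twisted conjugacy classes; equivalently it suffices to exhibit an infinite family $\{f_l\}_{l \in \T{J}}$ in $\TBnr$ with $\core(f_l) = \core(A_{q_0})$ such that the resulting elements $f_l h_{q_0}^{-1} \in \Tnr$ are pairwise non-conjugate in $\Tnr$. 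The argument splits according to whether $h_{q_0}$ is orientation preserving or reversing, and the key observation throughout is that conjugation by an element of $\Tnr$ acts geometrically on $S_r$ and must preserve dynamical invariants (periodic orbit lengths, fixed points, and local expansion behaviour at fixed points).

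For the orientation preserving case, I would first normalise by Corollary~\ref{Corollary:given hq0 in TBnr can assume that induced map on the Sr fixes 0 and r} and Lemma~\ref{Lemma: automorphisms can be modified to contain fixed point} so that $h_{q_0}$ fixes the point $\dot{0}0\ldots \simeq \dot{r-1}(n-1)\ldots$. Letting $\bar r$ be the minimal integer with $\TOns{\bar r}=\TOns{r}$ (which divides both $r$ and $n-1$ by Lemma~\ref{Lemma:partialconverselemma}), I choose an element $B_{p_0} \in \TBmr{n}{\bar r}$ with $\core(B_{p_0})=\core(A_{q_0})$. Then for each complete antichain $\ac u$ of $X_{n,r}^\ast$ of length $l \equiv r \mod n-1$, by stacking $m = l/\bar r$ cyclically permuted copies of the action of $B_{p_0}$ on the $r$ cones $U_{\dot a}$, I construct $h_l \in \TBnr$ with $\core(h_l)=\core(A_{q_0})$ and having a periodic orbit of length exactly $m$ (containing $\dot 0 0\ldots$), while every other finite orbit has length a multiple of $m$. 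Since conjugation in $\Tnr$ preserves the multiset of periodic orbit lengths, choosing an infinite set of $l$ with distinct values of $m$ gives pairwise non-conjugate elements.

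For the orientation reversing case, the invariant being an orbit length is no longer available (the fixed points form a pair), so I would instead use the length of the \emph{attracting path} of the square. First, using Lemma~\ref{Lemma: elements of TOn either fix 0 or swap 0 and n-1} and a further modification by an element of $\Tnr$, arrange so that for some $i \ge 1$ the transducer $B_{p_0}$ representing a fixed $\bar r$-version of $h_{q_0}$ satisfies $\pi_B(\dot 0 0^i,p_0)=q_0$, $\pi_B(\dot{r{-}1}(n{-}1)^i,p_0)=q_{n-1}$, and $\lambda_B$ exchanges $\dot 0 0^j$ with $\dot{r{-}1}(n{-}1)^j$ for all $j\ge i$. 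Then, for each $j\ge i$, build $f_j \in \TBnr$ by replacing the $j$-fold edges in the domain and range trees, reflecting the indexing of cones, and running $B_{p_0}$ on the remaining suffix. One computes directly that $f_j^2$ has $\dot 0 0\ldots$ and $\dot{r{-}1}(n{-}1)\ldots$ as attracting fixed points with attracting paths $0^j$ and $(n-1)^j$ respectively. Since any orientation reversing homeomorphism of $S_r$ has exactly two fixed points, any $\Tnr$-conjugacy from $f_j$ to $f_{j'}$ must send the fixed-point pair to the fixed-point pair, and, as $\Tnr$ acts by prefix exchanges, it must preserve the attracting path lengths up to a bounded ambiguity coming from where the pair of fixed points is sent.

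The main obstacle, and the crux, will be this last claim in the orientation reversing case: a conjugator $h \in \Tnr$ with $h^{-1}f_j h=f_{j'}$ either fixes $\{\dot 0 0\ldots,\dot{r{-}1}(n{-}1)\ldots\}$ setwise or swaps it with another pair $\eta_1\simeq\eta_2 \in \Z[1/n]$, and in either alternative the attracting path lengths at the images must be obtained from $0^j,(n{-}1)^j$ by prefix exchange, forcing them to lie in a finite set depending on $h$. Running this pigeonhole argument over an infinite sequence $j \in \T{J}$ with the $j$ growing, as in the explicit claim proved in the case $r=1$ in the excerpt, will then yield an infinite set of pairwise non-conjugate $f_j h_{q_0}^{-1}$, completing the proof. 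Putting the two cases together establishes that $\Tnr$ has the $R_\infty$ property for all admissible $n,r$, with the case $n=2$ covered by \cite{BMV}.
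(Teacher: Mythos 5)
Your proposal is correct and follows essentially the same route as the paper: the same normalisation via Lemma~\ref{Lemma: automorphisms can be modified to contain fixed point} and Lemma~\ref{Lemma:changingcosetrepdoesnotaffectRinfty}, the same reduction to $\bar r$ using Lemma~\ref{Lemma:partialconverselemma}, the same construction of the maps $h_l$ (resp.~$f_j$) with the core of the given automorphism, and the same conjugacy invariants --- periodic orbit lengths in the orientation-preserving case and attracting-path lengths of the square (together with the pigeonhole argument about where a $\Tnr$-conjugator can send the fixed-point pair) in the orientation-reversing case.
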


\printbibliography 

\end{document}